\documentclass[11pt]{amsart}
\usepackage[dvipsnames]{xcolor}
\usepackage[pagebackref,backref=true,colorlinks=true, linkcolor=Sepia, citecolor=Sepia]{hyperref}
\usepackage{amsmath,amsthm,amssymb,fancyhdr,graphicx,bbm,cancel,mathrsfs,todonotes,xypic,pinlabel}
\usepackage{cleveref}
\usepackage{tikz}
\usetikzlibrary{external}
\usepackage{extarrows,tipa}
\usetikzlibrary{matrix}
\usepackage[all]{xy}
\usepackage{mathtools}
\usepackage{bm}
\usepackage{verbatim}
\usepackage{microtype}
\usepackage{subcaption}
\usepackage{tikz}
\usepackage{tikz-cd}

\theoremstyle{theorem}
\newtheorem{theorem}{Theorem}[section]
\newtheorem{theoremalpha}{Theorem}

\newtheorem{lemma}[theorem]{Lemma}
\newtheorem{proposition}[theorem]{Proposition}
\newtheorem{corollary}[theorem]{Corollary}
\theoremstyle{definition} %The commands below have bold title, standard text
\newtheorem{definition}[theorem]{Definition}
\newtheorem{example}[theorem]{Example}

\newtheorem{question}[theorem]{Question}

\newtheorem{conjecture}[theorem]{Conjecture}
\newtheorem{remark}[theorem]{Remark}
\theoremstyle{remark}

\usepackage{enumerate,pdfpages,stmaryrd}
\usepackage[margin=1.11in, marginparwidth=1in]{geometry}
\usepackage{tikz}
\usepackage{dsfont}
\usetikzlibrary{matrix}

\renewcommand{\xto}{\xrightarrow}

\newcommand{\C}{\mathbb{C}}

\newcommand{\calA}{\mathcal{A}}

\newcommand{\calB}{\mathcal{B}}
\newcommand{\calS}{\mathcal{S}}

\newcommand{\calP}{\mathcal{P}}

\newcommand{\Z}{\mathbb{Z}}
\newcommand{\Q}{\mathbb{Q}}
\newcommand{\R}{\mathbb{R}}
\newcommand{\K}{\mathbb{K}}
\newcommand{\F}{\mathbb{F}}

\newcommand{\Hyp}{\mathbb{H}}

\newcommand{\beq}{\begin{equation*}}
\newcommand{\eeq}{\end{equation*}}

\newcommand{\calO}{\mathcal{O}}

\newcommand{\On}{\mathrm{O}}

\newcommand{\GL}{\mathrm{GL}}

\DeclareMathOperator{\Diff}{Diff}

\DeclareMathOperator{\Sp}{Sp}
\DeclareMathOperator{\Aut}{Aut}

\DeclareMathOperator{\PL}{PL}
\DeclareMathOperator{\Isom}{Isom}

\DeclareMathOperator{\SO}{SO}

\newcommand{\dmo}{\DeclareMathOperator}

\newcommand{\al}{\alpha}\newcommand{\ga}{\gamma}\newcommand{\ep}{\epsilon}\newcommand{\si}{\sigma}
\newcommand{\Om}{\Omega}\newcommand{\ka}{\kappa}

\newcommand{\what}{\widehat}\newcommand{\wtil}{\widetilde}

\newcommand{\bs}{\backslash}

\newcommand{\ra}{\rightarrow}

\newcommand{\bb}[1]{\mathbb{#1}}\newcommand{\ov}[1]{\overline{#1}}

\newcommand{\wh}[1]{\widehat{#1}}

\dmo{\sgn}{sign}\dmo{\Span}{span}
\dmo{\we}{\wedge}
\dmo{\ind}{ind}\dmo{\Ind}{Ind}
\dmo{\bop}{\bigoplus}\dmo{\pic}{Pic}
\dmo{\vol}{Vol}\dmo{\gal}{Gal}\dmo{\perm}{Perm}
\dmo{\tor}{Tor}\dmo{\ext}{Ext}\dmo{\Ext}{Ext}
\dmo{\aut}{aut}

\dmo{\inn}{Inn}\dmo{\var}{Var}
\dmo{\ad}{ad}\dmo{\curl}{curl}
\dmo{\hy}{\bb H}\dmo{\Sl}{SL}
\dmo{\psl}{PSL}
\dmo{\iso}{iso}
\dmo{\conf}{Conf}
\dmo{\stab}{Stab}\dmo{\Jac}{Jac }
\dmo{\diam}{diam}\dmo{\fix}{Fixed}\dmo{\Fix}{Fix}
\dmo{\injR}{injRad}\dmo{\Ad}{Ad}
\dmo{\esv}{ess-vol}
\dmo{\nil}{Nil}\dmo{\sol}{Sol}
\dmo{\Div}{div}
\dmo{\SU}{SU}
\dmo{\rk}{rk}
\dmo{\rank}{rank}
\dmo{\psp}{PSp}\dmo{\psu}{PSU}
\dmo{\PU}{PU}\dmo{\pgl}{PGL}
\dmo{\Mod}{Mod}\dmo{\range}{Range}
\dmo{\eu}{eu}\dmo{\mi}{mi}
\dmo{\Log}{Log}\dmo{\supp}{supp}
\dmo{\maps}{Maps}\dmo{\Gr}{Gr}
\dmo{\Pin}{Pin}
\dmo{\Spin}{Spin}\dmo{\Str}{Str}
\dmo{\Sq}{Sq}\dmo{\Symp}{Symp}
\dmo{\pd}{PD}\dmo{\PD}{PD}\dmo{\sig}{Sig}
\dmo{\ev}{ev}\dmo{\St}{St}
\dmo{\Pt}{Pt}\dmo{\pt}{pt}
\newcommand{\msf}{\mathsf}

\dmo{\Pl}{PL}
\dmo{\String}{String}\dmo{\smear}{smear}

\dmo{\dev}{dev}
\dmo{\met}{Met}\dmo{\contact}{Contact}
\dmo{\teich}{Teich}\dmo{\Teich}{Teich}\dmo{\qi}{QI}
\dmo{\der}{Der}
\dmo{\cl}{Cliff}\dmo{\Cl}{Cl}
\dmo{\Pf}{Pf}
\dmo{\ch}{ch}\dmo{\diag}{diag}
\dmo{\grad}{grad}\dmo{\Char}{char}
\dmo{\spec}{Spec}\dmo{\Arg}{Arg}
\dmo{\gl}{GL}
\dmo{\sym}{Sym}\dmo{\Sym}{Sym}
\dmo{\com}{Comm}
\dmo{\Lk}{Lk}
\dmo{\CAT}{CAT}
\dmo{\Rep}{Rep}
\dmo{\Res}{Res}
\dmo{\Conf}{Conf}
\dmo{\PConf}{PConf}
\dmo{\Push}{Push}
\dmo{\Cont}{Cont}
\dmo{\sm}{\setminus}
\dmo{\vn}{\varnothing}
\dmo{\disk}{\mathbb D}
\dmo{\Trd}{Trd}\dmo{\Mat}{Mat}
\dmo{\Riem}{Riem}
\dmo{\Diffn}{\Diff_0}\dmo{\diff}{diff}
\dmo{\homeo}{Homeo}

\dmo{\Ham}{Ham}\dmo{\Met}{Met}
\dmo{\Ein}{Ein}\dmo{\CP}{\co P}
\dmo{\Per}{Per}\dmo{\Ric}{Ric}
\dmo{\Nrd}{Nrd}
\dmo{\Comp}{Comp}\dmo{\PSC}{PSC}
\dmo{\Cent}{Cent}\dmo{\Orb}{Orb}
\dmo{\aind}{a-ind}\dmo{\tind}{t-ind}
\dmo{\constant}{constant}
\dmo{\Td}{Td}
\dmo{\LMod}{LMod}
\dmo{\SMod}{SMod}
\dmo{\SDiff}{SDiff}
\dmo{\Br}{Br}
\dmo{\csch}{csch}
\dmo{\triv}{triv}
\dmo{\genus}{genus}
\dmo{\Homeq}{HomEq}
\dmo{\PP}{\mathbb{P}}
\dmo{\U}{U}
\dmo{\Gal}{Gal}
\dmo{\BDiff}{\wtil{\Diff}}
\dmo{\BAut}{\wtil{\Aut}}
\dmo{\Iso}{Iso}
\dmo{\Cone}{Cone}
\dmo{\codim}{codim}
\dmo{\II}{II}
\dmo{\I}{I}
\dmo{\InjRad}{InjRad}
\dmo{\Inn}{Inn}
\dmo{\sys}{sys}
\dmo{\Comm}{Comm}
\dmo{\PO}{PO}
\dmo{\vertex}{Vert}
\dmo{\POm}{P\Om}
\dmo{\ab}{ab}
\dmo{\PSO}{PSO}
\dmo{\CRS}{CRS}
\dmo{\Diffext}{Diffext}
\dmo{\Diffextad}{Diffextad}
\dmo{\Diffstand}{Diffstand}

\newcommand{\G}{\Gamma}

\newcommand{\gam}{\gamma}
\newcommand{\del}{\delta}
\newcommand{\lam}{\lambda}

\newcommand{\bbF}{\mathbb{F}}

\newcommand{\Hy}{\mathbb{H}}

\newcommand{\bbT}{\mathbb{T}}

\newcommand{\bbZ}{\mathbb{Z}}
\newcommand{\calD}{\mathcal{D}}

\newcommand{\calG}{\mathcal{G}}

\newcommand{\calH}{\mathcal{H}}

\newcommand{\calI}{\mathcal{I}}

\newcommand{\calQ}{\mathcal{Q}}

\newcommand{\calY}{\mathcal{Y}}
\newcommand{\calZ}{\mathcal{Z}}

\newcommand{\wt}[1]{\widetilde{#1}}

\author{Mauricio Bustamante}
\address{Departamento de Matem\'aticas, Pontificia Universidad Cat\'olica de Chile}
\email{mauricio.bustamante@uc.cl}

\author{Eduardo Reyes}
\address{Departamento de Matem\'aticas, Pontificia Universidad Cat\'olica de Chile}
\email{eduardoreyes@uc.cl}

\author{Stefano Riolo}
\address{Dipartimento di Matematica, Università di Bologna}
\email{stefano.riolo@unibo.it}
\begin{document}
\title{Stably tangential strict hyperbolization}
%\author{Mauricio Bustamante, Eduardo Reyes, and Stefano Riolo}

\maketitle
\begin{abstract} We show that the Charney--Davis strict hyperbolization procedure can preserve stable tangent bundles, answering a question of Charney and Davis. The key input is the construction of many hyperbolizing pieces, obtained using separability properties of hyperbolic cubulable groups. Moreover, these pieces may be chosen so that every face is connected, answering a question of Belegradek. We then apply this construction to suitable cubulations of flat manifolds to produce infinitely many commensurability classes of closed hyperbolic manifolds, both arithmetic and non-arithmetic, with diverse topological features. In particular, we obtain the first examples in which all the Stiefel--Whitney classes are non-trivial below the top degree, and the first orientable examples with non-trivial Pontryagin classes. 
We also construct infinite towers of finite covers of closed hyperbolic manifolds in which no cover is stably parallelizable or spin. Our methods further yield new pairs of exotic negatively curved Riemannian manifolds.
\end{abstract}

\section{Introduction}
Closed Riemannian manifolds of negative sectional curvature play a prominent role in manifold topology. They are known to satisfy the Borel conjecture \cite{farrell-jones.borelneg} if their dimension is not 4, so their topological type is completely determined by their fundamental group. Hence, much of their topology is reflected in the algebra and geometry of that group. Moreover, these manifolds exhibit various forms of geometric and topological rigidity, which provide extra tools for understanding their topology. In this way, they form a natural bridge between manifold topology and geometric group theory.

The fundamental examples of negatively curved manifolds are hyperbolic manifolds, characterized by having constant sectional curvature $-1$. Closely related are the rank-one locally symmetric spaces, such as complex and quaternionic hyperbolic manifolds, whose sectional curvatures vary between $-4$ and $-1$. Other examples of variable negative curvature arise from branched covers of (real and complex) hyperbolic manifolds \cite{mostow-siu,gromov-thurston,deraux,stover-toledo} and from suitably altering locally symmetric spaces, for instance by taking connected sums with exotic spheres (see e.g.~\cite{farrell-jones.negatively}).

There is, however, another source of examples, arguably of a different nature: the \textit{strict hyperbolization} procedure of Charney--Davis \cite{charney-davis}. This is a two-step process: first, a simplicial complex $\msf{N}$ is replaced by a non-positively curved cube complex $\calG(\msf{N})$, in a construction introduced by Gromov \cite[Section~3.4]{gromov.hypgroups} and later studied by Davis and Januszkiewicz \cite{davis-januszkiewicz}. The second step consists of turning $\calG(\msf{N})$ into a negatively curved space. In fact, Charney and Davis Charney and Davis \cite{charney-davis} design a procedure to turn a cube complex  $\msf{C}$ into a piecewise hyperbolic space. Moreover if the cube complex is non-positively curved then the resulting space is a locally $\mathrm{CAT}(-1)$ space. 
They build such a space by replacing in a consistent way the cubes of $\msf{C}$  by the faces of a certain hyperbolic manifold with right-angled corners $X$ which has the symmetries of a cube. We call such an $X$ a \textit{hyperbolizing piece} (see Section \ref{sec:strict}). The geometry and topology of the hyperbolizing piece influence that of the resulting negatively curved space, which we will denote by $\calH_X(\msf{C})$ and occasionally call the \textit{hyperbolized complex}. Indeed, a further refinement of this strict hyperbolization was provided by Ontaneda \cite{ontaneda}. He showed that when the simplicial complex $\msf{N}$ is a smooth manifold  and the hyperbolizing piece $X$ is chosen to have all its strata with ``large'' normal injectivity radius, then the singular locally $\CAT(-1)$ metric on the hyperbolized complex $\calH_X(\calG(\msf{N}))$ can be smoothed to a genuine negatively curved Riemannian metric.
This refinement of strict hyperbolization produces a negatively curved Riemannian representative in every cobordism class of smooth manifolds, and hence a vast new supply of examples. It also makes it possible to realize prescribed topological features, such as non-trivial rational Pontryagin classes or arbitrarily large Betti numbers, on manifolds of negative curvature.  Indeed, the strict hyperbolization of a non-positively curved smoothly cubulated manifold $\msf{C}$ yields not only a locally $\mathrm{CAT}(-1)$ manifold  $\calH_X(\msf{C})$, but also a \textit{hyperbolization map} 
\beq
g_{\msf{C}} \colon \calH_X(\msf{C}) \to \msf{C}.
\eeq
This map is known to induce a surjection in singular homology with coefficients in any abelian group and, if $\msf{C}$ is $R$-orientable for a commutative ring $R$, then so is $\calH_X(\msf{C})$ and $g_\msf{C}$ has degree 1. Thus, by Poincar\'e duality, it induces an injective map in cohomology with coefficients in $R$ \cite[Proposition 7.1]{charney-davis}.
Moreover, if the cube complex $\msf{C}$ is \textit{foldable}, that is,
it admits a combinatorial map $\msf{C}\to\square^n$ to the standard cube which is injective on each cell, then the hyperbolized complex admits a smooth structure for which the map $g_{\msf{C}}$ preserves the rational Pontryagin classes \cite[Proposition 7.2]{charney-davis}, \cite[Addendum]{ontaneda.smoothing}.

In this work, we show that there is an abundance of  hyperbolizing pieces $X$ for which the hyperbolization map has better tangential properties. We fix some notation before we state our results. For a smooth manifold $M$, we denote its tangent bundle by $TM$ and the trivial real vector bundle over $M$ of rank $k$ by $\varepsilon^k_M$, or simply $\varepsilon^k$ if the base space is obvious from the context. We say that $M$ is \textit{stably parallelizable} if the Whitney sum $TM\oplus\varepsilon^k$ is trivializable for some $k\geq 0$. A homeomorphism $\msf{C}\to M$ from a foldable cube complex $\msf{C}$ to $M$ which restricts to a smooth embedding on each cube of $\msf{C}$ is called a \textit{foldable smooth cubulation} of $M$. In that case, we may identify $M$ with $\msf{C}$.
\begin{theoremalpha}\label{thm:main-stably-tan} 
Given any integer $n>0$, there exist $n$-dimensional hyperbolizing pieces $X$
with the following property. Let $\msf{C}$ be a compact $n$-dimensional smooth manifold (possibly with boundary) equipped with a foldable smooth cubulation. Then the hyperbolization map $g_\msf{C}:\mathcal{H}_X(\msf{C})\to \msf{C}$ is stably tangential, that is, there exists an isomorphism of vector bundles
\beq
g_{\msf{C}}^*(T\msf{C}\oplus\varepsilon^k)\cong T\calH_X(\msf{C})\oplus\varepsilon^k 
\eeq
for some $k\geq 0$.
\end{theoremalpha}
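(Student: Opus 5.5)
Since $\msf{C}$ is foldable, the first step is to reduce the statement to a property of the piece $X$ alone. Let $f\colon \msf{C}\to\square^n$ be the folding map. By construction, $\calH_X(\msf{C})$ carries a folding map $\calH_X(\msf{C})\to X$, and the hyperbolization map $g_{\msf{C}}$ is assembled cube by cube from a fixed map $\phi\colon X\to\square^n$ that preserves the face structure. I will ask $X$ to satisfy a tangential condition relative to its faces. Namely, $\phi$ should be homotopic, through maps respecting the stratification, to a map $\psi$ with the following properties: $\psi$ is smooth on every stratum; it collapses each face to the corresponding face of the cube; and it comes with a stable bundle isomorphism $TX\oplus\varepsilon^k\cong\psi^*T\square^n\oplus\varepsilon^k=\varepsilon^{n+k}$ that is compatible with the face structure. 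Compatibility means that on each codimension-$j$ face $F$, the splitting $TX|_F=TF\oplus\nu_F$ corresponds to the standard splitting of $T\square^n$ along the face, and the normal framings of $\nu_F$ are the inward-normal framings. These framings are canonical because the corners are right-angled and $X$ has cube symmetries. In short, the goal is a \emph{stably framed} hyperbolizing piece in which every face is stably framed compatibly with the corner structure and the cube's reflection group. Given such an $X$, I would glue the local isomorphisms. Over each cube $\sigma$ of $\msf{C}$, the chart $\sigma\hookrightarrow\msf{C}$ identifies $T\msf{C}|_\sigma$ with $T\sigma$. Pulling back along the piece $X_\sigma\to\sigma$ and using the framed isomorphism gives $T\calH_X(\msf{C})|_{X_\sigma}\oplus\varepsilon^k\cong g^*T\msf{C}|_{X_\sigma}\oplus\varepsilon^k$. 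These agree on overlaps because both sides split along faces in the standard way and the reflection symmetries act compatibly. A smoothing step is needed here: the smooth structure on $\calH_X(\msf{C})$ is the one from \cite[Proposition 7.2]{charney-davis} and \cite{ontaneda.smoothing}, and near corners its tangent bundle is glued from $TX_\sigma$ by the same clutching as $T\msf{C}$ from the $T\sigma$.

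Next, I would produce such an $X$. Start with a Charney--Davis hyperbolizing piece $X_0$, an arithmetic right-angled hyperbolic manifold with corners cut out by totally geodesic hypersurfaces. Hyperbolic manifolds are stably parallelizable on suitable finite covers (Deligne--Sullivan, or Sullivan's result for arithmetic groups of simplest type), and the obstruction lives in finitely many characteristic and framing data. The plan is to pass to a finite cover of the ambient closed manifold that is stably parallelizable and in which the hypersurfaces defining the faces lift to embedded, totally geodesic, stably parallelizable submanifolds whose normal bundles are trivial. I would then rebuild $X$ from that cover by cutting along these hypersurfaces. This step uses the separability properties of hyperbolic cubulable groups promised in the abstract. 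By Agol and Haglund--Wise, the group is virtually special, so the relevant hypersurface subgroups and their intersections are separable. That lets me find a common finite-index subgroup in which everything embeds, the reflection symmetries of the cube persist (by taking a characteristic subgroup invariant under the reflection group), and the stable framings restrict.

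The main obstacle is making the stable framing of $X$ \emph{equivariant} with respect to the cube's reflection group $(\Z/2)^n$ and compatible with the normal framings of all faces at once. Pulling back the cube's framing already guarantees equivariance on the target side. So the real task is to show that $\phi$ itself, and not merely some map, is stably tangential. Equivalently, the difference class of $TX\oplus\varepsilon^k$ and $\phi^*\varepsilon$ in $[X/\partial, BO]$ must vanish relative to the faces. I would attack this inductively over faces ordered by dimension. First, make each face stably parallelizable with a compatible framing. Then extend over the next stratum, killing the relative obstruction in $\widetilde{KO}(F/\partial F)$ by passing to further finite covers. For arithmetic lattices this is possible because $KO$-classes of flat bundles of the hyperbolic structure become trivial on congruence covers. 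The separability input is what lets these covers be taken compatibly with all faces simultaneously.
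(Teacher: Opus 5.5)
There is a genuine gap in your second step, which is where the substance of the theorem lies. You pass to a stably parallelizable finite cover $M'$ of a Charney--Davis manifold $M$ and cut along lifts $Y_1',\dots,Y_n'$ of the hyperplanes. You use separability only to make the lifts embedded and to keep the $B_n$-action, but both of these are essentially free. Components of preimages of embedded hypersurfaces are embedded in every cover, and the action lifts to the cover of any $B_n$-invariant subgroup.

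What is not free is that the lifts still meet in a \emph{single} point. Write $\G=\pi_1(M,y)$, $H_i=\pi_1(Y_i,y)$, and let $\G'<\G$ be normal. Then the points of $Y_1'\cap\cdots\cap Y_n'$ correspond to the elements of $\bigcap_i\ov{H}_i\subset\G/\G'$, where $\ov{H}_i$ is the image of $H_i$. This intersection is in general non-trivial. When it is, the vertex faces of the cut-open manifold consist of several points and $g$ need not have degree one, so conditions (4)--(5) of Lemma~\ref{lem:CDpieceX} fail and you do not get a Charney--Davis piece.

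Separability of the $H_I=\bigcap_{i\in I}H_i$ and of the double cosets $H_IH_J$ does not repair this. This is exactly the gap in \cite[Lemma~6.32]{davis-book} discussed in Remark~\ref{rem:congruence}. The missing ingredient is a $B_n$-invariant finite-index normal subgroup $\G'$, contained in the given stably parallelizable subgroup, with $\G'\cap H_IH_J\subset(\G'\cap H_I)(\G'\cap H_J)$ for all non-empty $I,J$.

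The paper proves this in Theorem~\ref{thm.maingrouptheory} by iterated equivariant Dehn fillings along the peripheral structure induced by the $H_I$. It uses the malnormal special quotient theorem to keep the quotients hyperbolic and cubulable, a Greendlinger-type argument to control the filling kernels on double cosets, and induction on height. In the arithmetic case there is also the route of Section~\ref{subsec:congruenceapproach}. There, the principal congruence subgroup $\G(Im_1m_2)\cap\calO(q)$, with $m_1,m_2$ primes of distinct residue characteristics, is Charney--Davis by \cite[Lemma~6.6]{charney-davis} and stably parallelizable by Deligne--Sullivan. Your final paragraph uses congruence covers only to kill $KO$-classes, not to preserve the single intersection point. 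In any case, an arbitrary finite-index subgroup is not a principal congruence subgroup.

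Separately, your first step sets up a problem that is unnecessary and, as sketched, would not work. No face-compatible or equivariant framing is needed. With Ontaneda's normal smooth structure, $\calH_X(\msf{C})\subset\msf{C}\times X$ is a smooth embedding with trivial normal bundle, so $T\calH_X(\msf{C})\oplus\varepsilon^n\cong g_{\msf{C}}^*T\msf{C}\oplus f_X^*TX$. It therefore suffices that $TX$ be trivial, and this follows from stable parallelizability of $M$ because $X$ has non-empty boundary (Lemma~\ref{cor:tangential}).

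Your inductive relative obstruction-killing also fails on its own terms. The piece cut from a finite cover is not a cover of the previous piece, because other components of the preimages of the $Y_i$ are left uncut. So framings arranged on lower-dimensional faces do not persist to the new piece. Moreover, Deligne--Sullivan only trivializes the absolute stable tangent bundle of a cover, not relative classes in $\widetilde{KO}(F/\partial F)$.
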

The smooth structures considered here are Ontaneda's normal smooth structures \cite{ontaneda.cube, ontaneda.smoothing}, see also Lemma \ref{cor:tangential} and Section \ref{sec:smoothstructures}.
The question of whether strict hyperbolization could preserve stable tangent bundles (which, as we will see in Lemma \ref{cor:tangential}, amounts to asking whether there exists a stably parallelizable hyperbolizing piece) was posed as Question 7.4 in \cite{charney-davis}, as well as in \cite[Section~3]{belegradek} and \cite[p.~2]{ontaneda.smoothing}.
Jean Lafont has informed us that he can also prove this theorem independently.

If $\msf{N}$ is a compact $n$-dimensional smoothly triangulated manifold, Gromov and Davis--Januszkiewicz's hyperbolization procedure $\msf{N}\mapsto\calG(\msf{N})$ is known to produce a manifold equipped with a foldable smooth cubulation $\calG(\msf{N})$ \cite[Lemma 7.5]{charney-davis} and a stably tangential map $\calG(\msf{N})\to \msf{N}$ which induces a surjection in homology with arbitrary coefficients \cite[Theorem B, Corollary 1f.6]{davis-januszkiewicz}. In particular, if  $\msf{N}$ is $R$-orientable for some commutative ring $R$, so is $\calG({\msf{N}})$ and the map has degree 1. Therefore, for the hyperbolizing pieces $X$ of Theorem \ref{thm:main-stably-tan}, the composition
\beq
\calH_X(\calG(\msf{N}))
\xto{g_{\calG(\msf{N})}}\calG(\msf{N})\to \msf{N}
\eeq
is also a stably tangential map which induces a surjection in singular homology.
Furthermore, it will be apparent (see Theorem \ref{thmalpha:flexibleCD} below) that the hyperbolizing pieces in Theorem \ref{thm:main-stably-tan} can be chosen large enough so that Ontaneda's Riemannian hyperbolization can be applied. Putting this together, we obtain the following corollary.
\begin{corollary}\label{cor:Riemannian-hyp}
Let $N$ be a closed $n$-dimensional smooth manifold with $n\geq 2$ and let $\epsilon>0$. Then there is a closed $n$-dimensional Riemannian manifold $M$ whose sectional curvatures lie in the interval $[-1-\epsilon,-1]$, and a smooth stably tangential map $f:M\to N$ which induces a surjection in singular homology with coefficients in any abelian group. Moreover, if $N$ is $R$-orientable for some commutative ring $R$, then so is $M$ and the map $f$ has degree 1. By Poincar\'e duality, it induces an injective map in singular cohomology with coefficients in $R$. In particular, the (integral) Pontryagin classes and Stiefel--Whitney classes of $N$ are pulled back injectively to those of $M$.
\end{corollary}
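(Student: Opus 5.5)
The plan is to assemble the corollary from Theorem \ref{thm:main-stably-tan}, the Davis--Januszkiewicz construction, and Ontaneda's Riemannian smoothing. Throughout, $n\geq 2$ and $\epsilon>0$ are fixed.

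\textbf{Step 1 (cubulating $N$).} Choose a smooth triangulation $\msf{N}$ of $N$ (Whitehead). Apply the Gromov/Davis--Januszkiewicz hyperbolization to obtain the closed manifold $\calG(\msf{N})$. By \cite[Lemma 7.5]{charney-davis} it carries a foldable smooth cubulation and is non-positively curved. By \cite[Theorem B, Corollary 1f.6]{davis-januszkiewicz} there is a stably tangential map $h\colon\calG(\msf{N})\to N$ which is surjective in homology with arbitrary coefficients. Moreover, $h$ has degree one and $\calG(\msf{N})$ is $R$-orientable whenever $N$ is.

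\textbf{Step 2 (choosing the piece).} Choose an $n$-dimensional hyperbolizing piece $X$ with two properties: it satisfies the conclusion of Theorem \ref{thm:main-stably-tan}, and all its strata have normal injectivity radius larger than the constant Ontaneda requires for curvature pinching in $[-1-\epsilon,-1]$ in dimension $n$. I expect this to be the main obstacle, since Theorem \ref{thm:main-stably-tan} only asserts existence. The intended route is the flexibility statement Theorem \ref{thmalpha:flexibleCD} mentioned in the text. The pieces there come from finite-index subgroups produced by separability, so one can pass to deeper finite covers and thereby make the normal injectivity radii of all faces as large as desired. Stable parallelizability survives this, because it passes to finite covers: pullback of a trivial stable bundle is trivial, and by Lemma \ref{cor:tangential} stable parallelizability of $X$ is exactly what stable tangentiality requires.

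\textbf{Step 3 (hyperbolizing and smoothing).} Set $M=\calH_X(\calG(\msf{N}))$ and equip it with Ontaneda's normal smooth structure. By \cite{ontaneda}, the large-injectivity-radius hypothesis gives a Riemannian metric on $M$ with sectional curvature in $[-1-\epsilon,-1]$, smooth with respect to that structure. Let $f=h\circ g_{\calG(\msf{N})}$, after smoothing $g$ within its homotopy class. The map $g_{\calG(\msf{N})}$ is stably tangential by Theorem \ref{thm:main-stably-tan}; this uses the fact that its proof works with the normal smooth structure, cf.\ Lemma \ref{cor:tangential}. So $f$ is a composition of stably tangential maps, and hence
\[
f^*(TN\oplus\varepsilon^{a+b})\cong TM\oplus\varepsilon^{a+b}
\]
for suitable $a,b\ge 0$.

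\textbf{Step 4 (homological conclusions).} Homology surjectivity of $f$ follows from that of $g$ \cite[Proposition 7.1]{charney-davis} together with that of $h$. If $N$ is $R$-orientable, then so are $\calG(\msf{N})$ and $M$, and both maps have degree one, so $f$ has degree one. Poincaré duality (the umkehr map) then shows $f^*$ is injective on cohomology with coefficients in $R$. Finally, stable tangentiality gives $f^*p_i(N)=p_i(M)$ and $f^*w_i(N)=w_i(M)$, because characteristic classes are stable and natural. Injectivity of $f^*$ with $\Z$ and $\Z/2$ coefficients then gives the last claim. For $\Z/2$ coefficients orientability is automatic; for integral Pontryagin classes one uses $R=\Z$ when $N$ is orientable.
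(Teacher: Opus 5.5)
Your proposal is correct and follows the paper's own argument: compose the Davis--Januszkiewicz map $\calG(\msf{N})\to N$ with $g_{\calG(\msf{N})}$, using a stably parallelizable Charney--Davis piece that is also large enough for Ontaneda's smoothing, and then read off the homological statements from degree one and Poincar\'e duality. The one step that both you and the paper leave implicit is the largeness of $X$, and merely ``passing to a deeper cover'' does not give it; what does is rerunning Lemma \ref{lem.G_0} with the compact sets $C_I$ replaced by their $R$-neighbourhoods, which together with the double coset property of $\G'$ from Theorem \ref{thm.maingrouptheory} (as used in Proposition \ref{prop:CDcoversgen}) forces each $\G'$-translate of a lifted hyperplane to either contain $\wt Y_I$, meet it orthogonally, or lie at distance at least $R$ from it.
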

These results also imply the existence of arbitrarily pinched negatively curved manifolds with (or without) all sorts of stable tangential structures, such as spin, spin$^c$, spin$^h$, string, stably almost complex, stably framed, etc.

The following corollary is a consequence of having a degree $1$ stably tangential map.
\begin{corollary}
Let $\msf{C}$ be a smooth cubulation of a closed orientable $n$-manifold and $X$ be a hyperbolizing piece as in Theorem \ref{thm:main-stably-tan}. Then $g_{\msf{C}}:\calH_X(\msf{C})\to\msf{C}$ induces a surjection on any generalized homology theory.
\end{corollary}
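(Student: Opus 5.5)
The plan is to deduce the corollary from two facts about $g=g_{\msf{C}}$: it has degree one, and it is stably tangential. The first fact gives a retraction on the level of generalized homology. The second lets us choose compatible orientations for that retraction even when $E$ is a generalized theory.

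Let $E$ be any generalized homology theory, represented by a spectrum (also written $E$). Write $M=\calH_X(\msf{C})$, a closed $n$-manifold, orientable because $\msf{C}$ is, and let $g\colon M\to\msf{C}$ be the hyperbolization map. By Theorem \ref{thm:main-stably-tan}, $g$ covers a stable bundle isomorphism $T M\oplus\varepsilon^k\cong g^*(T\msf{C}\oplus\varepsilon^k)$. Therefore $g$ lifts to a map of stable normal bundles, namely $\nu_M\cong g^*\nu_{\msf{C}}$, and hence induces a map of Thom spectra $Th(\nu_M)\to Th(\nu_{\msf{C}})$. By Atiyah duality, the Thom spectrum of the stable normal bundle of a closed $n$-manifold is the $n$-shifted Spanier--Whitehead dual of the manifold, $D(N_+)\simeq \Sigma^{-n}Th(\nu_N)$. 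Taking duals of this map of Thom spectra gives a stable umkehr (transfer) map
\beq
g^!\colon \Sigma^\infty \msf{C}_+\to \Sigma^\infty M_+ .
\eeq

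The main step is to show that $g\circ g^!\colon\Sigma^\infty\msf{C}_+\to\Sigma^\infty\msf{C}_+$ is an equivalence. Since both sides are suspension spectra of finite complexes, it suffices to check this on integral homology, by the stable Whitehead theorem. On $H\Z$-homology, $g^!$ is the classical transfer, i.e.\ Poincaré duality composed with $g^*$ and with Poincaré duality again. These orientations are compatible because the normal bundles are identified and $g$ has degree $1$. So $g_*\circ g^!$ is multiplication by $\deg g=1$ on $H_*(\msf{C};\Z)$, and it is an isomorphism.

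Smashing with $E$ and taking homotopy groups, we obtain a natural map $E_*(\msf{C})\to E_*(M)$ whose composite with $g_*$ is an isomorphism. Hence $g_*\colon E_*(M)\to E_*(\msf{C})$ is surjective, and in fact split surjective.

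The delicate point is the Thom-class compatibility in the identification of $g^!$ on homology. Here the stable tangential isomorphism is essential: without it, $g^!$ would only exist for $E$-oriented bundles. I would check the compatibility using the fundamental classes in Thom space homology, namely that $Th(\nu_M)\to Th(\nu_{\msf{C}})$ has degree $\pm1$ on the bottom cell, which follows from the degree-one property.
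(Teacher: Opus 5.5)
Your argument is correct and is the standard one the paper has in mind when it says the corollary is a consequence of $g_{\msf{C}}$ being a degree one stably tangential map: the stable normal bundle map gives, via Atiyah duality, an umkehr map exhibiting $\Sigma^\infty\msf{C}_+$ as a stable retract of $\Sigma^\infty\calH_X(\msf{C})_+$, so $g_{\msf{C}}$ is split surjective on every $E_*$. One small slip: the degree-one property shows up on the top cell, not the bottom cell, of your $\mathrm{Th}(\nu_M)\to\mathrm{Th}(\nu_{\msf{C}})$ (the bottom-cell degree is $\pm1$ for any bundle map over connected bases); in any case no orientation compatibility check is needed, because $g_*\circ g^!$ is multiplication by $\deg g_{\msf{C}}=\pm1$ on $H_*(\msf{C};\Z)$, which already makes $g\circ g^!$ an equivalence.
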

Note that the map $\calG({\msf{N}})\to\msf{N}$ also has this property \cite[Theorem B]{davis-januszkiewicz} and so the composition $\calH_X(\calG(\msf{N}))\to\calG(\msf{N})\to\msf{N}$ induces a surjection on any generalized homology theory.

In a different direction, hyperbolization procedures have been used classically to construct manifolds with exotic topological properties. In fact, the Gromov--Davis--Januszkiewicz (non-strict) hyperbolization \cite{davis-januszkiewicz} can be used to construct a cube complex homotopy equivalent to a closed orientable aspherical topological spin $4$-manifold with signature 8, so this manifold cannot support a $\PL$ structure, by Rokhlin's theorem. Upgrading this example, via strict hyperbolization, to a non-$\PL$ aspherical manifold with \textit{Gromov-hyperbolic} fundamental group requires the preservation of the second Stiefel--Whitney class for strict hyperbolization. This was not available before, but is now a consequence of Theorem \ref{thm:main-stably-tan}. In combination with \cite[Theorem 1.1]{lafont-ruffoni}, we obtain the following. 
\begin{corollary}\label{cor:nonPL4manifold}
There exists a closed orientable aspherical topological $4$-manifold whose fundamental group is Gromov-hyperbolic and virtually compact special, and which is not homotopy equivalent to any $\PL$ $4$-manifold.
\end{corollary}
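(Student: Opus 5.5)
The plan is to start from a non-$\PL$ model and hyperbolize it strictly while keeping track of $w_2$. I take Freedman's $E_8$-manifold $N$. It is a closed, simply connected, topological spin $4$-manifold of signature $8$, so Rokhlin's theorem forbids any $\PL$ structure on it.

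The first step is to produce a cube complex model. Since $N$ is not $\PL$, I cannot apply $\calG(\cdot)$ to a triangulation of $N$. Instead I would follow the classical argument recalled just before the statement. We want a closed orientable smooth (hence $\PL$) $4$-manifold $N'$ whose intersection form is even, but which is not spin. Then the failure of spinness is detected rationally and in $\Z/2$ cohomology, so it survives pullback under degree-one maps.

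The Davis--Januszkiewicz route instead works with a cubulated homotopy model of $N$ itself. The stable normal data of a topological manifold are still pulled back, but with the topological tangent microbundle. The cleanest version uses \cite[Theorem 1.1]{lafont-ruffoni}, which I expect provides exactly the following: strictly hyperbolizing a suitable cubulated $4$-manifold (or pseudomanifold) with controlled characteristic classes yields a closed aspherical topological $4$-manifold $M$ with Gromov-hyperbolic, virtually compact special fundamental group. Virtual special-ness then follows from Agol/Wise, since $\pi_1$ acts geometrically on a CAT(0) cube complex; here $\calH_X(\msf{C})$ is built from pieces with cubulable fundamental groups.

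The key step is then to apply Theorem \ref{thm:main-stably-tan} in the form of Corollary \ref{cor:Riemannian-hyp}. The point of the hyperbolization is that $g\colon M\to N_0$ is a degree-one stably tangential map to a manifold $N_0$ with signature $8$ and $w_2=0$. Because $g$ is stably tangential, $w_2(M)=g^*w_2(N_0)=0$. Because $g$ has degree one, the signature of $M$ is $8$; equivalently, $p_1$ pulls back injectively and $\sigma=p_1/3$ evaluated on the fundamental class is preserved.

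Now suppose a $\PL$ $4$-manifold $P$ is homotopy equivalent to $M$. In dimension $4$, $\PL$ equals smooth. Its intersection form, and hence its signature $8$, are homotopy invariants. So is $w_2$, by the Wu formula $w_2=v_2+v_1^2$ in dimension $4$. Hence $P$ would be a smooth spin $4$-manifold of signature $8$, contradicting Rokhlin's theorem.

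The main obstacle is the input in the first step. We need a cubulated, foldable source $\msf{C}$ with $w_2=0$ and signature $8$ to which Theorem \ref{thm:main-stably-tan} applies, but such an $\msf{C}$ cannot be a smooth manifold, again by Rokhlin. I would resolve this by working with the homotopy-theoretic version: Wu classes and the signature are pulled back along degree-one maps of Poincaré complexes. For the topological manifold structure and the hyperbolicity of the output, I would rely on \cite{lafont-ruffoni}, checking that the hyperbolizing pieces of Theorem \ref{thm:main-stably-tan} meet their hypotheses.
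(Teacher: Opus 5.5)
There is a genuine gap. It sits exactly at the obstacle you flag in your last paragraph, and the way you propose around it does not work.

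\textbf{The degree-one claim is false.} You assert that Wu classes and the signature pull back along degree-one maps of Poincar\'e complexes. A counterexample is the collapse map $\C P^2\to S^4$: it has degree one, yet $w_2(\C P^2)=v_2(\C P^2)\neq 0$ and $\sigma(\C P^2)=1$. The same happens for $N\#\C P^2\to N$ with $N$ the $E_8$-manifold. For a degree-one map $f\colon M\to N$ you only learn two things: $v_2(M)-f^*v_2(N)$ annihilates $f^*H^2(N;\F_2)$, and the form of $N$ is an orthogonal summand of that of $M$. The complementary summand, which is where $w_2$ and $\sigma$ can change, is uncontrolled. This is precisely why stable tangentiality is needed, and Theorem \ref{thm:main-stably-tan} supplies it only for smooth inputs. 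Three consequences follow.
\begin{enumerate}
\item Your fourth paragraph applies Corollary \ref{cor:Riemannian-hyp} to a smooth $N_0$ with $w_2=0$ and $\sigma=8$. As you note yourself, Rokhlin forbids such an $N_0$, so this step has no input.
\item The earlier digression on a smooth non-spin $N'$ goes in the wrong direction, since its hyperbolization would be non-spin.
\item ``$\sigma(M)=8$ because $g$ has degree one'' is not a valid reason. What preserves the signature is the preservation of rational Pontryagin classes.
\end{enumerate}

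\textbf{The missing idea.} This is the Davis--Januszkiewicz device the paper alludes to: model $N$ not by a manifold but by a PL homology manifold with isolated singularities.
\begin{enumerate}
\item Cone off the boundary of the $E_8$-plumbing, which is the Poincar\'e homology sphere $\Sigma$. This gives $K$, which is smooth and spin away from one point and has even form of signature $8$.
\item $\calG(K)$ is an NPC foldable cube complex, smooth away from finitely many vertices whose links are $\Sigma$.
\item Apply Theorem \ref{thm:main-stably-tan} to the complement of the open stars of these vertices. This complement is a compact smooth manifold with boundary, which the theorem allows.
\item Since $\Sigma$ is a homology sphere, deleting these points does not change $H^2$. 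Hence $w_2$ still vanishes, and the signature is controlled through the rational Pontryagin classes.
\item Strict hyperbolization preserves links, so $\calH_X(\calG(K))$ is a locally $\CAT(-1)$ homology manifold with isolated singular points of link $\Sigma$. Its fundamental group is hyperbolic and, by \cite[Theorem~1.1]{lafont-ruffoni}, virtually compact special.
\item The topological manifold structure does not come from \cite{lafont-ruffoni}, which concerns $\pi_1$; it comes from Freedman. Replace each cone on $\Sigma$ by a contractible topological $4$-manifold bounded by $\Sigma$.
\item This yields a closed manifold homotopy equivalent to $\calH_X(\calG(K))$. It is therefore aspherical with the same fundamental group, spin, and of signature $8$.
\end{enumerate}
Your concluding step (homotopy invariance of $w_2$ and $\sigma$, $\PL$ equals smooth in dimension $4$, then Rokhlin) is correct and goes through once this input is in place.
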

Theorem \ref{thm:main-stably-tan} and Corollary \ref{cor:Riemannian-hyp} 
can also be used in combination with the topological non-invariance of the (integral) Pontryagin classes to show the existence of non-diffeomorphic smooth structures on arbitrarily pinched negatively curved manifolds.
\begin{corollary}\label{cor:exotic-smoothings}
Fix $\epsilon>0$ and $n\geq 9$. Then there exist pairs of closed orientable $n$-dimensional smooth manifolds $M$ and $N$ with the following properties:
\begin{enumerate}
\item $M$ and $N$ are homeomorphic but not diffeomorphic; 
\item $M$ and $N$ admit Riemannian metrics with sectional curvatures in the interval $[-1-\epsilon, -1]$; 
\item $M$ and $N$ are not homeomorphic to any locally symmetric manifold of real rank one, or to any of the Gromov--Thurston 
\cite{gromov-thurston}, or Mostow--Siu \cite{mostow-siu}, or Deraux 
\cite{deraux}, or 
Stover--Toledo \cite{stover-toledo} 
manifolds.
\end{enumerate}
\end{corollary}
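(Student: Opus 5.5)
The plan is to find a closed smooth manifold $N_0$ for which two smooth structures differ only in their integral Pontryagin classes. We then transport both structures through the stably tangential Riemannian hyperbolization of Corollary \ref{cor:Riemannian-hyp}, keeping the same underlying topological manifold.

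\textbf{The seed pair.} Fix $n\geq 9$. We need a closed orientable topological $n$-manifold $N_0$ with two smooth structures $N_0^{(1)}, N_0^{(2)}$ whose first (or some) integral Pontryagin classes differ, and in a way not absorbed by self-homeomorphisms of $N_0$. The classical sources are Novikov-type examples: rational Pontryagin classes are topological invariants, but integral ones are not. Concretely, take a product of spheres such as $S^4\times S^{k}$, possibly times a torus to reach dimension $n$, with $k\geq 5$. The smooth structures on it fibred over $S^k$ can be classified by surgery / smoothing theory. Their $p_1$ can differ by a torsion-free multiple on the $H^4$ summand while the homeomorphism type is fixed, using $\pi_{k}(\mathrm{Top}/\mathrm{O})$ or a known torsion in $\pi_*(\mathrm{Top}/\mathrm{O})$ detected by $p_1$ mod something. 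We fix a homeomorphism $h:N_0^{(1)}\to N_0^{(2)}$.

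\textbf{Hyperbolize both compatibly.} Triangulate $N_0^{(1)}$ smoothly and apply $\calG$ to get a foldable smooth cubulation $\msf{C}_1$. The key point is to use the \emph{same} cube complex for both structures, so that the hyperbolized spaces are the same topological manifold $\calH_X(\msf{C})$. Here $X$ is a large stably parallelizable hyperbolizing piece as in Theorem \ref{thm:main-stably-tan} and Theorem \ref{thmalpha:flexibleCD}.

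The second smooth structure pulled back along $h$ gives a smoothing $\Sigma$ of the topological manifold $N_0$. This is a lift of $N_0\to B\mathrm{Top}$ to $B\mathrm{O}$, differing from the first by a class in $[N_0,\mathrm{Top}/\mathrm{O}]$. We pull this smoothing back along the degree-one map $f=g\circ(\calG\text{-map}):M\to N_0$. Since $\calH_X(\msf{C})$ is a topological manifold and $f$ is covered by a stable map of tangent microbundles (being stably tangential), smoothing theory ($n\ge 5$) produces a second smooth structure $M'$ on the same topological manifold $M$ with $TM'\cong_s f^*TN_0^{(2)}$. We keep the original $M$, with $TM\cong_s f^*TN_0^{(1)}$. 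By injectivity of $f^*$ on integral cohomology (Corollary \ref{cor:Riemannian-hyp}), $p_i(M)\neq p_i(M')$.

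\textbf{Curvature and non-diffeomorphism.} We must show $M'$ also carries a $[-1-\epsilon,-1]$ metric. The intended route is Ontaneda's smoothing in its relative/normal form: his normal smooth structures are built cube by cube from the smooth structure on the cells, and the $\mathrm{Top}/\mathrm{O}$ twist can be concentrated on a top cell (or controlled via $f$ collapsing all but one piece). This is the main obstacle, and I would try first to realize the second smoothing as Ontaneda's construction applied to $\msf{C}$ with cubes smoothly embedded in $N_0^{(2)}$ instead.

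To get non-diffeomorphism, it is not enough that $p_i$ differ under the identity. We need that no diffeomorphism exists at all. By Mostow/Farrell–Jones rigidity, any homeomorphism $M\to M'$ is homotopic to one induced by an outer automorphism of $\pi_1 M$, and $\mathrm{Out}(\pi_1M)$ is finite for hyperbolic groups in dimension $\geq 3$. So it suffices to make the Pontryagin class difference not lie in the finite orbit, e.g. by choosing the seed difference of infinite order and scaling it by a large integer multiple.

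Finally, for (3), the non-homeomorphism to locally symmetric and branched-cover examples follows from invariants: $M$ has nonzero rational Pontryagin classes pulled back from $N_0$ (choose $N_0$ with some nonzero rational class or Stiefel–Whitney classes), which real hyperbolic manifolds lack stably (they are stably parallelizable up to finite cover arguments, i.e. flat conformal). It also follows from dimension parity and Kähler/quaternionic obstructions, and the listed exotic examples are low dimensional or complex, so they are excluded by dimension or characteristic-class counts.
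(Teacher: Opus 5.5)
Your proposal has genuine gaps, starting with the seed pair. By Novikov's theorem rational Pontryagin classes are topological invariants. So for any homeomorphism $h\colon N_0^{(1)}\to N_0^{(2)}$, the difference $h^*p_i(N_0^{(2)})-p_i(N_0^{(1)})$ is always a \emph{torsion} class. Since $S^4\times S^k\times T^m$ has torsion-free integral cohomology, its smoothings cannot be distinguished by integral Pontryagin classes at all. More generally, there is never a difference ``of infinite order'' that you could scale beyond the finite $\mathrm{Out}(\pi_1 M)$-orbit. So both your example and your non-diffeomorphism argument fail. The paper needs no rigidity here. It takes closed orientable smooth $n$-manifolds $Q,R$ with the \emph{same underlying PL manifold} and with $p_j(Q)=0$, $p_j(R)\neq 0$ (a torsion class). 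These come from Crowley's $9$-manifolds \cite{crowley-p2}, connected sum with $\C P^2\times S^5$, then products with spheres. Corollary \ref{cor:Riemannian-hyp} turns this into $p_j(M)=0\neq p_j(N)$, and vanishing of $p_j$ is preserved by any diffeomorphism.

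The second gap is the curvature of $M'$, which you flag but leave open. A smoothing produced abstractly, by pulling back a class of $[N_0,\mathrm{Top}/\mathrm{O}]$ along $f$, has no reason to admit a pinched negatively curved metric. Your fallback is the right idea: run Ontaneda's construction on the same $\msf{C}$ with cubes smoothly embedded in $N_0^{(2)}$. But it requires a triangulation that is smooth for \emph{both} structures, i.e.\ the two smoothings must induce the same PL manifold, and a difference class in $[N_0,\mathrm{Top}/\mathrm{O}]$ does not guarantee this. Once you impose it, smoothing theory becomes superfluous. Choose smooth triangulations $\msf{Q},\msf{R}$ that are simplicially isomorphic after subdivision, and set $M=\calH_X(\calG(\msf{Q}))$, $N=\calH_X(\calG(\msf{R}))$. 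Corollary \ref{cor:Riemannian-hyp}, applied to each, gives the metrics and the Pontryagin classes, and functoriality of $\calG$ and $\calH_X$ gives the homeomorphism $M\cong N$.

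Finally, item (3) cannot be settled by dimension counts: complex hyperbolic manifolds exist in every even dimension, and Gromov--Thurston manifolds in every dimension $\geq 4$. Nor do Stiefel--Whitney classes help, since they can be nonzero on hyperbolic manifolds (cf.\ Theorem \ref{thm:Pontryagin}). One needs three separate obstructions:
\begin{enumerate}
\item a nonzero \emph{rational} Pontryagin class, which excludes real hyperbolic and Gromov--Thurston manifolds \cite{ardanza};
\item the failure of property (T) for $\pi_1(M)$ \cite{lafont-ruffoni}, which excludes quaternionic and octonionic manifolds;
\item the fact that $\pi_1(M)$ is not K\"ahler \cite{belegradek}, which excludes complex hyperbolic manifolds and the Mostow--Siu, Deraux and Stover--Toledo examples, whose rational Pontryagin classes need not vanish.
\end{enumerate}
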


 To the authors' knowledge, these examples are new. All previously known examples of ``exotic negatively curved pairs'' are homeomorphic to a locally symmetric space or to a Gromov--Thurston branched cover, and all constructions involved connected sums with exotic spheres or removing neighborhoods of closed totally geodesic submanifolds and regluing them with a twist (see \cite{farrell-jones.negatively,fj-noncpt,fj-complex,fjo-pl1,ardanza,aravinda-farrell-cayley,aravinda-farrell-quaternionic}). 
\subsection{Applications to hyperbolic manifolds}
Hyperbolization procedures have typically been used to obtain aspherical negatively curved manifolds that are ``far'' from being locally symmetric. However, strict hyperbolization actually yields, for the appropriate input, manifolds that are hyperbolic (i.e. constant negatively curved). Surprisingly, this observation has never been used to construct closed hyperbolic manifolds (although strict hyperbolization has been used to construct \emph{infinite volume} hyperbolic manifolds \cite{liu}). Moreover, as a consequence of Theorem \ref{thm:main-stably-tan}, some of these hyperbolic manifolds exhibit characteristic-class behavior not realized by any previously known examples.

To make these ideas more precise, we recall how hyperbolizing pieces are obtained. One starts with a closed, orientable, hyperbolic $n$-manifold $M$ together with a collection $\calY$ of embedded totally geodesic codimension-$1$ submanifolds invariant under an isometric action of the group of symmetries of the $n$-cube, and satisfying certain conditions. 
These conditions ensure that, after cutting $M$ along the submanifolds in $\calY$, one obtains a connected hyperbolic $n$-manifold with corners $X$ which has the symmetries of the $n$-cube. This manifold $X$ is a hyperbolizing piece for the strict hyperbolization procedure. 
We call any manifold $M$ satisfying these conditions a \emph{Charney--Davis manifold}, and refer the reader to Definition \ref{def:CDmanifold} for its precise definition. 

As mentioned before, with a suitable hyperbolizing piece $X$, the $\mathrm{CAT}(-1)$ space $\calH_X(\calG(\msf{N}))$ can be given a smooth Riemannian metric of negative sectional curvature, no matter what the input triangulated manifold $\msf{N}$ is. On the other hand, if the universal cover of a cube complex $\msf{C}$ is isomorphic to the standard cubulation of $\R^n$ 
(in other words, $\msf{C}$ is isometric to a flat manifold), then $\calH_X(\msf{C})$ comes naturally equipped with a hyperbolic metric, no matter what the input hyperbolizing piece $X$ is. 
We call any such $\msf{C}$ a \emph{flat cube complex}; some examples arise as cubulations of flat manifolds with diagonalizable holonomy representation (see Sections \ref{sec:foldableflatmflds} and \ref{sec:hyperbolizingflat}). It is known that some of these flat manifolds have many non-trivial Pontryagin and Stiefel--Whitney classes (see Section \ref{sec:characteristicclassesLS}). Moreover, in this case Ontaneda's normal smooth structure inherits a hyperbolic metric (Lemma \ref{lem:normal coordinates}), which can be combined with Theorem \ref{thm:main-stably-tan} to obtain the next result. 

In the sequel, $w_i(M)\in H^i(M;\F_2)$ and $p_i(M)\in H^{4i}(M;\Z)$ denote the $i$-th Stiefel--Whitney and Pontryagin class of the tangent bundle of the smooth manifold $M$, respectively. 
\begin{theoremalpha}\label{thm:Pontryagin}\hfill
\begin{enumerate} 
\item For all $n\geq 2$ there exist closed hyperbolic $n$-manifolds $M$ such that $w_i(M)\neq 0$ for $1\leq i\leq n-1$. Moreover, if $n=6k+4$, $k\geq 1$, then $p_i(M)\neq 0$ for $i\leq k$.
\item For all $n\geq 8k$, $k\in\{1,2\}$, there exist closed orientable hyperbolic $n$-manifolds $M$ such that $p_i(M)\neq 0$ for $i\leq k$.
\item For all $n\geq 4$ there exist closed orientable hyperbolic $n$-manifolds $M$ such that $w_2(M)\neq 0$. Consequently they do not admit a spin structure.
\item For all $n\geq 5$ there exist closed orientable hyperbolic $n$-manifolds $M$ such that $w_3(M)\neq 0$. Consequently, they do not admit a spin$^{c}$-structure.
\item For all $n\geq 10$ there exist closed orientable hyperbolic $n$-manifolds $M$ such that $w_5(M)\neq 0$. Consequently, they do not admit a spin$^{h}$-structure.
\end{enumerate}
Furthermore, in each of the cases above, there are infinitely many arithmetic commensurability classes and infinitely many non-arithmetic commensurability classes of such manifolds.
\end{theoremalpha}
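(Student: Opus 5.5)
The plan is to apply Theorem \ref{thm:main-stably-tan} to flat cube complexes. Fix a stably parallelizable hyperbolizing piece $X$ of dimension $n$, obtained by cutting a Charney--Davis manifold as in Definition \ref{def:CDmanifold}. Let $\msf{C}$ be a foldable cubulation of a closed flat manifold $F$ with diagonalizable holonomy, so that the universal cover of $\msf{C}$ is the standard cubulation of $\R^n$. Then $\calH_X(\msf{C})$ is glued from copies of $X$ along right-angled faces, with a flat (Euclidean) link pattern. It therefore carries a genuine hyperbolic metric, and by Lemma \ref{lem:normal coordinates} this metric is smooth for Ontaneda's normal smooth structure. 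Theorem \ref{thm:main-stably-tan} gives $g_\msf{C}^*(TF\oplus\varepsilon^k)\cong T\calH_X(\msf{C})\oplus\varepsilon^k$. Hence $w_i(M)=g^*w_i(F)$ and $p_i(M)=g^*p_i(F)$ for $M=\calH_X(\msf{C})$.

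Next I use injectivity. When $F$ is orientable, $g$ has degree $1$, so $g^*$ is injective on integral and mod $2$ cohomology by Poincaré duality. When $F$ is non-orientable (which is needed for item (1), since $w_1\neq0$), I use $\F_2$-orientability: $g$ has mod $2$ degree $1$, so $g^*$ is injective on $\F_2$-cohomology. For the Pontryagin classes in item (1) I need integral injectivity even though $F$ is non-orientable. I would handle this with twisted/local-coefficient Poincaré duality, since $g$ pulls back the orientation character. Alternatively, if $p_i(F)$ is detected rationally or by its mod $2$ reduction $w_{2i}^2$, injectivity in $\F_2$ suffices. I would try the mod $2$ route first: $p_i \bmod 2 = w_{2i}^2$, so it is enough to have $w_{2i}(F)^2\neq0$.

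The problem then reduces to finding flat manifolds $F$ with diagonal holonomy $(\Z/2)^m$ and the required nonvanishing characteristic classes. For these, $TF$ splits as a sum of flat line bundles and $w(F)=\prod(1+x_j)$, which is computable. I would invoke the examples recalled in Section \ref{sec:characteristicclassesLS}: real Bott-type or generalized Hantzsche--Wendt manifolds with all $w_i\neq0$ for $i<n$; orientable examples with $w_2,w_3,w_5\neq0$ in the low dimensions listed; and examples with $p_1,p_2\neq0$ in dimensions $8k$, and with $p_i\neq0$, $i\le k$, in dimension $6k+4$. To pass to all larger dimensions $n$, I take products with circles, which preserve diagonal holonomy, foldability and orientability, and keep the classes nonzero by the Künneth formula. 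Foldable cubulations of these flat manifolds come from Section \ref{sec:foldableflatmflds}. There one takes a fine enough $\Z^n$-invariant cubulation, e.g. $\tfrac12\Z^n$, which is compatible with the reflections in the holonomy and admits a fold to $\square^n$.

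The remaining step is commensurability, and I expect it to be the main obstacle. The commensurability class of $M$ is governed by the Charney--Davis manifold used to build $X$: $\pi_1 M$ contains $\pi_1 X$, and finite covers of $M$ contain copies of the double of $X$ or of the manifold $M_0$ it came from. So varying the Charney--Davis manifold over infinitely many arithmetic lattices, with distinct invariant trace or quaternion fields, yields infinitely many arithmetic commensurability classes. For the non-arithmetic case I would use Gromov--Piatetski-Shapiro interbreeding of pieces, assuming the main construction supplies stably parallelizable Charney--Davis manifolds in each such class. The check I would carry out is that the totally geodesic walls of $X$ sit in $M$ so that $\pi_1(M)$ and the input lattice are commensurable, i.e.\ that the tiling of $\Hyp^n$ by copies of $X$ comes from a reflection-type extension of the original lattice. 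This should follow because the hyperbolized flat complex is locally isometric to the Charney--Davis manifold, with mirror images across the walls.
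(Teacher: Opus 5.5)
Your outline matches the paper's proof of Theorem \ref{thm:Pontryagin}: a stably parallelizable Charney--Davis piece, foldable cubulations of diagonal-type flat manifolds, Lemma \ref{lem:normal coordinates} and Corollary \ref{cor:tangential-hyp} for stable tangentiality, injectivity from ($\F_2$-)degree one plus the fact that $p_i$ reduces mod $2$ to $w_{2i}^2$, and Lee--Szczarba manifolds and their orientable double covers as inputs. However, the step you single out as the main obstacle is not actually proved. Commensurability of $\calH_X(\msf{C})$ with the Charney--Davis manifold $M_0$ does not follow from $\pi_1X$ sitting inside both groups, since a common infinite-index subgroup says nothing about commensurability. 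Nor does it follow from local isometry: any two closed hyperbolic $n$-manifolds are locally isometric. Also, copies of $X$ do not tile $\Hyp^n$; the chambers upstairs are copies of $\wt X$.

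The paper proves this step as Proposition \ref{thm:flat->hyperbolic}, by a covering argument in three parts:
\begin{itemize}
\item a cover of foldable cube complexes induces a cover of hyperbolizations of the same degree (Lemma \ref{lem:regularcover}, Corollary \ref{cor:nonregularcover});
\item $\calH_X(\what\bbT^n)$ is a $2^n$-sheeted cover of $M_0$ (Lemma \ref{lem:T_ncommM}), because dividing by the translations by $\{0,1\}^n$ leaves one copy of $X$ with opposite faces glued by the reflections $r_i$, which is exactly how $M_0$ is recovered from $X$;
\item every compact flat foldable complex shares a finite cover with $\what\bbT^n$, by Bieberbach and the cubical flat torus theorem (Lemma \ref{lem:commT_n}).
\end{itemize}
Your reflection-tiling idea can also be completed, but it needs an actual argument. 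In both universal covers the reflection in every wall preserves the tiling. For $M_0$ these are the conjugates $g\tilde r_ig^{-1}$, $g\in\pi_1M_0$. For $\calH_X(\msf{C})$ they are lifts of reflections of $\msf{R}_n$ in integral hyperplanes, which preserve the folding map. So each tiling is the orbit of one chamber $\cong\wt X$ under the group generated by reflections in its walls, and the two tilings are isometric. Both deck groups are then cocompact, hence finite-index, subgroups of the discrete symmetry group of this locally finite tiling.

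The inputs also have gaps.
\begin{itemize}
\item By Corollary \ref{cor:char-classes-oriented}, $\what{\msf{LS}}_n$ gives $w_2\neq0$ only for $n\geq5$ and $w_3\neq0$ only for $n\geq 6$. So item (3) at $n=4$ and item (4) at $n=5$ need separate diagonal-type examples, which you do not name. The paper uses an orientable non-spin flat $4$-manifold of diagonal type (Putrycz--Szczepa\'nski) and a generalized Hantzsche--Wendt $5$-manifold with $w_3\neq0$.
\item For item (1) no product trick is available: if $F$ is flat of dimension $n-1$, then $w_{n-1}(F\times S^1)=\pi^*w_{n-1}(F)=0$. You need a family in every dimension, namely $\msf{LS}_n$ via Theorem \ref{thm:LS}, which also supplies the $p_i$ for $n=6k+4$.
\item With the normalization \eqref{eq:LSequations} the translation parts lie in $\frac12\Z$, so the grid $\frac12\Z^n$ is not fold-invariant. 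You need cubes of side $\frac14$, which is the rescaling by $4$ in Proposition \ref{thmA:diagonalfoldable}.
\item Stably parallelizable Charney--Davis manifolds in infinitely many arithmetic and non-arithmetic classes do not come from interbreeding plus a Sullivan cover. Finite covers of Charney--Davis manifolds need not be Charney--Davis (Remark \ref{rem:congruence}). They come from Theorem \ref{thmalpha:flexibleCD}(2), which rests on Theorem \ref{thm.maingrouptheory}.
\end{itemize}
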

The hyperbolic manifolds constructed here are strict hyperbolizations of cubulations of flat manifolds. Therefore, they are all
null-bordant. This is because the strict hyperbolization procedure preserves Pontryagin and Stiefel–Whitney numbers, and every flat manifold bounds \cite{hamrick-royster}.

The manifolds in the family (1) above provide the first examples of closed $n$-dimensional hyperbolic manifolds with non-vanishing Stiefel--Whitney classes in all degrees $1\leq i\leq n-1$.  They also provide the first examples of closed hyperbolic manifolds with non-zero Pontryagin classes in arbitrarily high degree. The manifolds in (2) provide the first examples of closed \emph{orientable} hyperbolic manifolds with non-vanishing Pontryagin classes (the existence of non-orientable examples in dimension $4$ is implicit in \cite[Proposition 5.2]{chen-bdry}). Note that these Pontryagin classes are necessarily \textit{torsion} cohomology classes. Indeed, since every hyperbolic manifold is locally conformally flat, its real (and hence rational) Pontryagin classes vanish by the Chern--Weil theory. This also follows from the fact that every closed hyperbolic manifold has a stably parallelizable finite-sheeted cover \cite{sullivan-covers, okun}. The family in (3) recovers the result of Martelli, Slavich and the third-named author \cite{MRS.nonspin}, in which they exhibited, for each $n\geq 4$, a single (arithmetic) commensurability class of closed, orientable, non-spin, hyperbolic $n$-manifolds. The manifolds in (4) recover the main result from \cite{chen.spinc} and answer Questions 1.4 and 1.5 from that paper by producing infinitely many non-arithmetic commensurability classes of non-spin$^c$ examples. The family in (5) answers Question 1.7 from \cite{chen.spinc} by producing closed hyperbolic manifolds without higher spin structures; see also \cite{albanese-milivojevic}.
\begin{remark}
In Corollary \ref{lem:teichnernegcurved}, we use Corollary \ref{cor:Riemannian-hyp} and Ontaneda's work to produce closed orientable negatively curved Riemannian manifolds that are non-spin$^c$ but satisfy $w_3(M)=0$. 
\end{remark}

One extra phenomenon we exhibit in this paper is that closed hyperbolic manifolds with complicated (in terms of their characteristic classes) finite sheeted covers are not sparse. 
Our next result suggests that non-stably parallelizable hyperbolic manifolds are more abundant than one might think. For the next statement, we say that two closed smooth manifolds $M,M'$ are \emph{tangentially related} if there exists a closed smooth manifold $N$ and degree-$1$ stably tangential smooth maps $M\ra N\leftarrow M'$.
\begin{theoremalpha}\label{thmalpha:manymanywithnontrivial}
Let $M$ be any of the manifolds obtained from Theorem \ref{thm:Pontryagin}. Then $M$ has an infinite tower $\cdots \ra M_{i+1}\ra M_i \ra \cdots \ra M_1\ra M_0=M$  of finite covers, such that each cover $M_{i+1}\ra M_i$ is non-trivial, and each $M_i$ is tangentially related to $M$.
\end{theoremalpha}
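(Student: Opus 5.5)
The manifolds $M$ of Theorem~\ref{thm:Pontryagin} are of the form $M=\calH_X(\msf{C})$. Here $\msf{C}$ is a flat cube complex, namely a foldable cubulation of a flat manifold $F$, and $X$ is a stably parallelizable hyperbolizing piece as in Theorem~\ref{thm:main-stably-tan}. The map $g_{\msf{C}}\colon M\to \msf{C}=F$ is then a degree-one stably tangential map. My plan is to build the tower by hyperbolizing finite covers of $\msf{C}$, or by changing the piece $X$ to covers of it, so that each level comes with a degree-one stably tangential map to the same base $N=F$.

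The cleanest version uses finite covers of the flat complex, and I expect it to handle most cases. Since $\pi_1(F)$ is a Bieberbach group containing a lattice $\Z^n$ of translations, pick the tower of characteristic covers $F_i\to F$ associated to $2^i\Z^n$ (or any chain of characteristic finite-index subgroups). Each $F_i$ inherits a cubulation $\msf{C}_i$ pulled back from $\msf{C}$. This cubulation is still flat, and it is still foldable, because we can compose the covering map with the folding $\msf{C}\to\square^n$. Strict hyperbolization is natural with respect to combinatorial covers of cube complexes: a covering $\msf{C}_{i+1}\to\msf{C}_i$ induces a covering $\calH_X(\msf{C}_{i+1})\to\calH_X(\msf{C}_i)$ of the same degree, because $\calH_X(\cdot)$ is obtained by gluing copies of faces of $X$ cube by cube and is functorial in cube complexes with foldings. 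So set $M_i=\calH_X(\msf{C}_i)$; the covers $M_{i+1}\to M_i$ are then non-trivial. It remains to show that $M_i$ is tangentially related to $M$. Take $N=F$ and the composite
\[
M_i\xrightarrow{g_{\msf{C}_i}}F_i\to F.
\]
This composite is not of degree one, since $F_i\to F$ has degree $>1$. So I would instead use $N=F$ with a different map: $M\to F$ is degree one, and for $M_i$ I want a degree-one stably tangential map to some $N$ that also receives one from $M$.

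The fix is to allow $N$ to be a common target such as $F_i$ itself, with $M$ mapping to it. That is not available either. The better alternative is therefore to keep $\msf{C}$ fixed and vary the piece. By Theorem~\ref{thm:main-stably-tan} and the construction of many hyperbolizing pieces (Theorem~\ref{thmalpha:flexibleCD}), the Charney--Davis manifold $M_X$ yielding $X$ has a tower of finite covers $M_{X,j}$, built from separability of the cubulable hyperbolic group, that are again Charney--Davis manifolds. Their pieces $X_j\to X$ are finite covers compatible with the cube symmetries and with the face structure, and they remain stably parallelizable because they cover $X$. Then $\calH_{X_{j}}(\msf{C})\to\calH_{X_{j-1}}(\msf{C})$ is a finite cover, since it is glued from compatible covers of pieces. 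Each $M_j=\calH_{X_j}(\msf{C})$ has a degree-one stably tangential map $g_{\msf{C}}$ to $N=\msf{C}=F$, given by Theorem~\ref{thm:main-stably-tan} and Lemma~\ref{cor:tangential}. This is exactly tangential relatedness, and $M_j$ is still hyperbolic since $\msf{C}$ is flat.

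The main obstacle is producing the nested sequence of Charney--Davis covers $X_{j}\to X_{j-1}$ whose gluings are consistent. This needs two things. First, the cube-symmetry group must lift to each cover. Second, the covering restricted to each face must be compatible with the identifications in $\calH$, so that faces glued in $\calH_X(\msf{C})$ have matching covers. I would first try choosing $M_{X,j}$ to correspond to characteristic subgroups of $\pi_1(M_X)$ that are invariant under the symmetry group. I would then use separability, as in the earlier construction, to make sure the walls in $\calY$ lift to embedded walls and that the cut-open piece $X_j$ is connected with connected faces. Connectedness of faces is what makes the face-restricted covers determined canonically by the symmetry, and hence compatible under gluing. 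Non-triviality of each step follows because the piece covers have degree $>1$.
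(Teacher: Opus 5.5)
\textbf{There is a genuine gap: the route you settle on cannot produce covers.} You fix $\msf{C}$ and replace $X$ by pieces $X_j$ coming from a tower of Charney--Davis covers, but hyperbolizing pieces never non-trivially cover one another.

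Suppose $M'\to M_X$ has degree $d>1$ and each $Y_i'$ covers $Y_i$. Then $y$ has $d$ preimages, but $Y_1'\cap\cdots\cap Y_n'$ is a single point. So some $Y_i$ has lifts other than $Y_i'$, and $M'$ is not cut along these lifts. Hence $X'$ is not the preimage of $X$, and there is no face-preserving covering $X'\to X$: such a covering of degree $d$ would force $X'$ to have $d\cdot 2^n$ corner points, but it has $2^n$.

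The same count rules out your glued map. By Lemma~\ref{lem:CDpieceX}(4), for \emph{any} Charney--Davis piece, $\calH_X(\msf{C})$ has exactly one point over each vertex of $\msf{C}$. So no map $\calH_{X_j}(\msf{C})\to\calH_{X_{j-1}}(\msf{C})$ compatible with the projections to $\msf{C}$ can be a covering of degree $>1$. Changing the piece only yields manifolds commensurable with $M$ (Proposition~\ref{thm:flat->hyperbolic}). The paper uses this for the sequence whose injectivity radius tends to infinity, not for the tower.

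\textbf{The missing idea lies in the route you abandoned:} take the covers of the flat manifold to be \emph{self}-coverings. By Epstein--Shub, every closed flat manifold $F$ admits finite self-coverings $F\to F$ of degree $>1$, induced by affine expanding maps of $\R^n$. Iterating gives a tower of covers $\msf{C}_j\to\msf{C}$ carrying the lifted cube structures. Each $\msf{C}_j$ is flat, is foldable (compose with the folding of $\msf{C}$), and is a smooth cubulation of $F$ itself.

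Then $M_j:=\calH_X(\msf{C}_j)$ is a non-trivial finite cover of $M_{j-1}$ by Corollary~\ref{cor:nonregularcover}. By Corollary~\ref{cor:tangential-hyp}, $g_{\msf{C}_j}\colon M_j\to \msf{C}_j\cong F$ and $g_{\msf{C}}\colon M\to F$ are degree-one stably tangential maps to the common target $N=F$. Your objection that $M_i\to F_i\to F$ has degree $>1$ therefore disappears: $F_i$ is itself diffeomorphic to $F$ and serves directly as the target.
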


This result follows from the more general Theorem \ref{thm:manymanywithnontrivial}. In light of this theorem, it is natural to ask:
\begin{question}
    Does there exist an orientable closed hyperbolic manifold $M$ and a tower of finite covers $(M_i)_{i\geq 1}$ of $M$ such that:
    \begin{enumerate}
        \item the injectivity radius of $M_i$ tends to infinity; and
        \item no $M_i$ is stably parallelizable?
    \end{enumerate}
\end{question}
\subsection{Charney--Davis manifolds}
The strict hyperbolization $\calH_X(\msf{C})$ of a smooth manifold equipped with a foldable smooth cubulation $\msf{C}$ embeds in the product $\msf{C}\times X$ with trivial normal bundle (see Section \ref{sec:propertieshyperbolization}). 
Thus, to prove Theorem \ref{thm:main-stably-tan} and Theorem \ref{thm:Pontryagin}, it suffices to find stably parallelizable hyperbolizing pieces $X$ out of (arithmetic and non-arithmetic) Charney--Davis manifolds in distinct commensurability classes. This is achieved by the following theorem.
\begin{theoremalpha}\label{thmalpha:flexibleCD}
    Let $n>0$ and $\K\neq \Q$ be a totally real number field. Then there exist infinitely many (arithmetic and non-arithmetic) commensurability classes of Charney--Davis $n$-manifolds $M$ such that:
    \begin{enumerate}
        \item the adjoint trace field of each $M$ contains $\K$ (and equals $\K$ if $M$ is arithmetic); and
        \item for any finite cover $M'$ of $M$, there exists a further finite cover of $M'$ that is also Charney--Davis.
    \end{enumerate}
\end{theoremalpha}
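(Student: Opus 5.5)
The plan is to start from a single arithmetic hyperbolic $n$-manifold that already carries the symmetry of the $n$-cube, and then use subgroup separability of cubulable hyperbolic groups to pass to finite covers. Concretely, take the quadratic form $f=-\sqrt{d}\,x_0^2+x_1^2+\dots+x_n^2$ over $\K$, where $d\in\K$ is totally positive and not a square, chosen so that $f$ is anisotropic and has signature $(n,1)$ at exactly one real place. The group $\Gamma=\On(f,\calO_\K)$ is then a cocompact arithmetic lattice with adjoint trace field $\K$. It contains the reflections in the coordinate hyperplanes $x_i=0$, $1\le i\le n$, and the coordinate permutations, so the hyperoctahedral group $(\Z/2)^n\rtimes S_n$, the symmetry group of the cube, sits inside $\Gamma$.

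By Bergeron--Haglund--Wise, $\Gamma$ is virtually compact special, and geometrically finite subgroups are separable. The hyperplane stabilizers $\Gamma_{H_i}$ and the finite group $G$ are in particular separable. Pick a torsion-free normal finite-index $\Gamma'\lhd\Gamma$ that is deep enough in three respects:
\begin{enumerate}
\item the images of $H_1,\dots,H_n$ in $M=\Gamma'\bs\Hyp^n$ are embedded and intersect transversely and orthogonally;
\item they satisfy the Charney--Davis combinatorial conditions of Definition \ref{def:CDmanifold}, namely that every $k$-fold intersection is connected and disjoint from the opposite walls;
\item the quotient by the cube-group action has the correct face structure.
\end{enumerate}
Separability is used to make each $\Gamma'$ avoid the finitely many bad double cosets $\Gamma_{H_i}\gamma\Gamma_{H_j}$ responsible for self-intersections or extra components. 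Normality of $\Gamma'$ makes $G$ act on $M$ by isometries, with the family $\calY$ of images of the coordinate hyperplanes and their $G$-translates $G$-invariant.

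Cutting $M$ along $\calY$ gives a piece $X$ with the symmetries of the cube. Property (2) follows because the whole construction only uses separability of finitely many subgroups. Any finite cover $M'$ corresponds to some $\Gamma''<\Gamma'$, and we rerun the argument with a deeper normal subgroup of $\Gamma$ contained in $\Gamma''$. The Charney--Davis conditions are preserved when passing to deeper normal subgroups avoiding the bad double cosets, so this gives the required cover. Varying $d$, or choosing different admissible forms, yields infinitely many arithmetic commensurability classes, which are distinguished by the invariants of the quadratic form.

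For the non-arithmetic classes I would use Gromov--Piatetski-Shapiro hybrids. Take two non-commensurable arithmetic Charney--Davis-type manifolds over $\K$ that share a totally geodesic hypersurface transverse to the cube structure, cut both along it, and glue. The cube symmetries must survive the gluing, which is the main obstacle. To handle it, I would choose the cutting hypersurface $x_0$-related, so that it is invariant under $G$ and its stabilizer is separable. I would then build $G$-equivariant covers of both pieces in which that hypersurface is embedded, two-sided and disconnecting. The hybrid's adjoint trace field contains $\K$, and varying the pairs gives infinitely many classes. Property (2) for hybrids again comes from virtual specialness, since hybrids are cubulable by Bergeron--Haglund--Wise, so the same separability argument applies.
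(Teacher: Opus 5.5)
There is a genuine gap at the step you treat as routine. That step is forcing the walls to meet in a single point (item (4) of Theorem \ref{thm:CD-piece}) and keeping this true in deeper covers.

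Double coset separability only lets you discard the finitely many double cosets $H_I\gamma H_J$ with $\gamma\notin H_IH_J$ for which $\gamma\wt Y_J$ meets $\wt Y_I$; this is Lemma \ref{lem.G_0}. After that, every $g\in\Gamma'$ with $\wt Y_I\cap g\wt Y_J\neq\emptyset$ lies in the trivial double coset $H_IH_J$, which cannot be avoided. Extra components of $Y'_I\cap Y'_J$, and in particular extra points of $Y'_1\cap\dots\cap Y'_n$, come from elements of $\Gamma'\cap H_IH_J$ outside $(\Gamma'\cap H_I)(\Gamma'\cap H_J)$. Equivalently, they come from the failure of $\overline{H_I}\cap\overline{H_J}=\overline{H_I\cap H_J}$ in the finite quotient $\Gamma/\Gamma'$.

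This condition is not inherited by deeper normal subgroups. Already for the two coordinate circles of $\R^2/\Z^2$, their lifts through the origin meet once in $\R^2/(2\Z)^2$ but twice in the deeper cover $\R^2/\langle (4,0),(2,2)\rangle$. So neither ``deep enough'' nor ``rerun the argument with a deeper subgroup'' produces a Charney--Davis cover. This is precisely the gap in Davis's argument discussed in Remark \ref{rem:congruence}. The paper's initial arithmetic examples instead come from the principal congruence construction of Charney--Davis (Example \ref{ex:arithmeticCD}), and even that is insufficient for item (2).

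What is missing is Theorem \ref{thm.maingrouptheory}. It provides a $\Phi$-invariant finite index normal subgroup $\Gamma'$ inside any prescribed $\Gamma_0$ with $\Gamma'\cap H_IH_J\subset(\Gamma'\cap H_I)(\Gamma'\cap H_J)$ for all $I,J$. Combined with Lemmas \ref{lem.G_0} and \ref{lem.Rintersectiondoublecoset}, it gives $\bigcap_{i\in I}Y'_i=q(\wt Y_I)$, hence Proposition \ref{prop:CDcoversgen} and item (2). Its proof is not a separability argument but an induction on height:
\begin{itemize}
\item one performs iterated $\Phi$-equivariant Dehn fillings along the peripheral structure induced by the $H_I$;
\item the quotients stay hyperbolic and cubulable by the malnormal special quotient theorem;
\item a Greendlinger-type argument (Proposition \ref{prop.intersectionfillingmulticoset}) shows each filling kernel satisfies the double coset condition;
\item the condition then passes to compositions by Lemma \ref{lem.compositionhomos}.
\end{itemize}

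Your hybrid step also needs repair. For $n\geq 2$ no totally geodesic hyperplane of $\Hyp^n$ is $B_n$-invariant, since the only $B_n$-invariant line in $\R^{n+1}$ is spanned by the timelike vector $e_0$. Gluing along a single $G$-invariant hypersurface of the quotient would also require a compatible isometric $G$-action on the piece coming from the other manifold, which you have no way to produce. The paper instead cuts along a free $B_n$-orbit of disjoint embedded hypersurfaces commensurable with $Y_1$ and disjoint from the walls (Lemma \ref{lem:firstlift}). It then glues in $|B_n|$ copies of the piece $Q$ from $N'$, so no $B_n$-action on $N$ is needed, and applies Corollary \ref{coro:CDcovershyp} again.

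Finally, your quadratic form is not admissible as written. Since $\sqrt d\notin\K$, the form lives over $\K(\sqrt d)$, where it has signature $(n,1)$ at $[\K:\Q]$ real places rather than one. Use $-\varepsilon x_0^2+x_1^2+\dots+x_n^2$ with $\varepsilon\in\calO_\K$, as in Example \ref{ex:arithmeticCD}.
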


Since any closed hyperbolic manifold has a stably parallelizable cover \cite{sullivan-covers,okun}, Theorem \ref{thmalpha:flexibleCD} implies the existence of stably parallelizable hyperbolizing pieces $X$. It is worth mentioning that establishing the existence of an (arithmetic) Charney--Davis manifold is one of the main results in \cite{charney-davis}.

The novelty in our Theorem \ref{thmalpha:flexibleCD}
is twofold. First, it gives an ``abundance'' of Charney--Davis manifolds, including non-arithmetic ones. Second, item (2) of the theorem is new even for the original arithmetic Charney--Davis manifolds constructed in \cite{charney-davis}. In fact, Charney and Davis construct manifolds of the form $M=\Hyp^n/\Gamma$ with $\Gamma$ a convenient principal congruence subgroup of a uniform standard arithmetic lattice in $\SO_0(n,1)$. 
One could try to pass to finite index subgroups of $\Gamma$ to obtain more examples, even stably parallelizable ones. Indeed, an argument like this is sketched in \cite[Lemma 6.32]{davis-book}. However, since not every closed hyperbolic manifold has the congruence subgroup property \cite{lubotzky,LLR}, this argument cannot work in general. 

Nevertheless, we can fix the gap in \cite[Lemma 6.32]{davis-book}, prove Theorem \ref{thm:main-stably-tan}, and obtain the arithmetic examples of Theorem \ref{thm:Pontryagin} using the congruence subgroup strategy (see Section \ref{subsec:congruenceapproach}). However, the proof of Theorem \ref{thmalpha:flexibleCD} and hence of Theorem \ref{thm:Pontryagin} in full, requires completely different ideas based on the theory of hyperbolic cubulable groups.

\begin{remark}\label{rmk:smallsystole1}
    By applying a variant of the \emph{inbreeding} construction of Agol \cite{agol.systoles} for $n=4$ and further developed by Belolipetsky--Thompson for arbitrary $n\geq 2$ \cite{belolipetsky-thomson}, we can ensure that the non-arithmetic manifolds $M$ in Theorem \ref{thmalpha:flexibleCD} have arbitrarily small systoles, while still being stably parallelizable. In particular, this allows us to ensure that the non-arithmetic manifolds in Theorem \ref{thm:Pontryagin} have arbitrarily small systoles. See also Remarks \ref{rmk:smallsystole2}, \ref{rmk:smallsystole3}, and \ref{rmk:equalitysystole}. Since the non-arithmetic manifolds that we build are hybrids, they are not quasi-arithmetic, as in Douba's work \cite[Theorem~1]{douba}.
\end{remark}

\subsection{Hyperbolic cubulable groups}\label{intro:proofs}
To prove Theorem \ref{thmalpha:flexibleCD}, we need a robust way to find finite index subgroups satisfying some desired properties. We find such subgroups using the theory of virtually special cubulable hyperbolic groups \cite{haglund-wise.special,AGM.QCERF,agol.haken,wise.QCH,groves-manning.improper}.

There is an intimate connection between strict hyperbolization and cubulable groups. The recent work of Lafont--Ruffoni \cite{lafont-ruffoni} implies the existence of a strict hyperbolization procedure $\calH$, so that $\calH(\msf{C})$ has cubulable (hence virtually compact special \cite{haglund-wise.special}) fundamental group for any compact foldable NPC cube complex $\msf{C}$. (There is a similar result for \emph{relative} strict hyperbolization \cite{LRGM.relative}.) An important ingredient in \cite{lafont-ruffoni} is the fact that the fundamental groups of closed arithmetic manifolds of simplest type are virtually special \cite{BHW}. Another key tool is the work of Groves--Manning \cite{groves-manning.improper}, which relies on \emph{group theoretic Dehn filling} \cite{osin.peripheral,groves-manning.dehnfilling}. Using the machinery of Dehn filling and virtual specialness, we prove the following purely group-theoretic statement, which might be of independent interest, and is used to prove Theorem \ref{thmalpha:flexibleCD}.

\begin{theoremalpha}\label{thm.maingrouptheory}
    Let $\G$ be a hyperbolic cubulable group and consider a finite group $\Phi$ acting on $\G$ by automorphisms. Let $\calQ$ be a finite, $\Phi$-invariant collection of quasiconvex subgroups of $\G$, and let $ \G_0<\G$ be a finite index subgroup. Then there exists a $\Phi$-invariant, finite index normal subgroup $\G'<\G$ such that:
    \begin{enumerate}
        \item $\G'<\G_0$; and
        \item for all $Q_1,Q_2\in \calQ$ we have
        \[\G' \cap Q_1Q_2 \subset (\G'\cap Q_1)(\G'\cap Q_2). \]
    \end{enumerate}
\end{theoremalpha}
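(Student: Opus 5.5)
We want a $\Phi$-invariant finite index normal $\G'\le\G_0$ such that whenever $g\in\G'$ can be written $g=q_1q_2$ with $q_i\in Q_i$, it can also be written with both factors in $\G'$. The plan is to prove this for one pair $(Q_1,Q_2)$ at a time and then intersect. The intersection step is legitimate because the property passes to smaller $\Phi$-invariant normal subgroups whose intersections with the $Q_i$ we still control. More precisely, we aim for the stronger pair-wise property that $\G'\cap Q_1Q_2\subset (\G'\cap Q_1)(\G'\cap Q_2)$ holds for every finite index normal $\G''\le\G'$ of a suitable form. This forces the construction to be ``cofinal''. Replacing $\G_0$ by the intersection of its finitely many $\Phi$-translates and their conjugates, we may assume $\G_0$ is normal and $\Phi$-invariant.

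The key reformulation is the following. Suppose $g=q_1q_2\in\G'$ with $q_i\in Q_i$. Then in the finite quotient $G=\G/\G'$ the images satisfy $\bar q_1=\bar q_2^{-1}\in \bar Q_1\cap\bar Q_2$. What we need is $\bar q_1\in\overline{Q_1\cap Q_2}$: in that case we can write $q_1=c\,k_1$ with $c\in Q_1\cap Q_2$ and $k_1\in\G'\cap Q_1$, and then $g=(q_1c^{-1})\cdot(cq_2)$. Here $q_1c^{-1}\in \G'\cap Q_1$ up to conjugation, which is fine since $\G'$ is normal, and $cq_2\in Q_2\cap\G'$. So it suffices to find $\G'$ with
\[ \pi(Q_1)\cap\pi(Q_2)=\pi(Q_1\cap Q_2),\qquad \pi\colon\G\to\G/\G'. \]
This is exactly the statement that the double coset $Q_1Q_2$ is separable "uniformly", i.e.\ the quotient respects intersections of quasiconvex subgroups. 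For virtually compact special hyperbolic groups (Agol, Haglund--Wise), products of quasiconvex subgroups are separable (Minasyan's double coset separability). Since $Q_1\cap Q_2$ is quasiconvex, the standard argument (cf.\ Wilton, Groves--Manning) gives finite index subgroups $\G'$, inside any given one, with $\pi(Q_1)\cap\pi(Q_2)=\pi(Q_1\cap Q_2)$. The route I would take is Groves--Manning style Dehn filling (``virtual separation of intersections''). Using malnormal-core arguments, one passes to finite index subgroups $Q_i'\le Q_i$ with $Q_1'\cap Q_2'=Q_1\cap Q_2\cap\G_0$ and no unexpected intersections. One then applies quasiconvex Dehn filling relative to $\{Q_1',Q_2'\}$ together with residual finiteness of the hyperbolic virtually special filled quotient, which yields a finite quotient in which the images of $Q_1,Q_2$ intersect only in the image of $Q_1\cap Q_2$.

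Finally, $\Phi$-invariance and normality are enforced by intersecting over the $\Phi$-orbit and over conjugates. For the reformulated condition, intersecting is the delicate point: $\pi(Q_1)\cap\pi(Q_2)=\pi(Q_1\cap Q_2)$ need not survive passing to a smaller subgroup. To handle this, I would apply the above simultaneously to all pairs in the $\Phi$-invariant family $\calQ\cup\{\text{conjugates by coset representatives}\}$ and choose $\G'$ inside the core of a single filling kernel. Alternatively, one can note that the filling construction can be made $\Phi$-equivariant: fill along $\Phi$-invariant characteristic finite index subgroups of the peripherals, so that the kernel is $\Phi$-invariant automatically, and then take a $\Phi$-invariant characteristic finite index subgroup of the residually finite quotient. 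The main obstacle is this step. It requires producing the intersection-respecting finite quotient in a $\Phi$-equivariant way for all pairs at once, which amounts to controlling the filling kernel's intersection with $Q_1Q_2$ via the Groves--Manning height/induction argument on the family of intersections of conjugates.
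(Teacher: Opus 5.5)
Your reduction is correct. For normal $\Gamma'$ with quotient map $\pi$, the condition is in fact \emph{equivalent} to $\pi(Q_1)\cap\pi(Q_2)=\pi(Q_1\cap Q_2)$. But this only restates the theorem. You never establish the existence of such a finite quotient for all pairs at once, inside $\Gamma_0$, and $\Phi$-invariant.

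It does not follow from Minasyan's double coset separability. Separability lets you exclude finitely many elements lying \emph{outside} $Q_1Q_2$; this is how it is used in Lemma~\ref{lem.G_0}. Here you must instead control the infinitely many elements of $\Gamma'\cap Q_1Q_2$, all of which lie \emph{inside} $Q_1Q_2$.

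The filling you sketch is not well posed either. If $Q_1\cap Q_2$ is infinite, or some $Q_i$ is not almost malnormal, then $\{Q_1',Q_2'\}$ is not almost malnormal, so it is not a peripheral structure one can fill along. Also, a finite-index $\Gamma'$ cannot be taken inside a filling kernel, since that kernel has infinite index.

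As you note, the property is not inherited by smaller subgroups. So neither pair-by-pair intersection nor passing afterwards to $\Phi$-invariant or characteristic subgroups is available. You end by calling the control of the filling kernel's intersection with $Q_1Q_2$ via a height induction ``the main obstacle''. That obstacle is the proof.

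The missing mechanism is as follows.
\begin{enumerate}
\item Fill along the peripheral structure $\calP_\calH$ induced by the whole family (commensurators of the minimal infinite intersections of conjugates), using finite-index kernels and sufficiently long $\calH$-fillings.
\item The crucial input is that the filling kernel $K$ \emph{itself} satisfies $K\cap H_1H_2\subset (K\cap H_1)(K\cap H_2)$ for all $H_1,H_2\in\calH$. This is Proposition~\ref{prop.intersectionfillingmulticoset}, a minimal-counterexample argument using the Greendlinger lemma (Theorem~\ref{thm.greenlindger}) and fullness in the cusped space. By your own equivalence, it says exactly that $\pi(H_1)\cap\pi(H_2)=\pi(H_1\cap H_2)$ in $\Gamma/K$.
\item The quotient is hyperbolic and cubulable (Theorem~\ref{thm.MSQT}), the images stay quasiconvex, and the height strictly drops (Theorem~\ref{thm.heightdecreases}). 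So one inducts on height.
\item At height $0$ the subgroups are finite and the condition becomes $K\cap H_1H_2=\{1\}$. This \emph{is} inherited by subgroups, and residual finiteness finishes.
\item Lemma~\ref{lem.compositionhomos} shows the property passes to the kernel of the composite $\Gamma\to\overline{\Gamma}\to\overline{\Gamma}/\overline{K}_2$.
\end{enumerate}
The intersection problem you flagged is avoided because ``sufficiently long'' means missing a fixed finite set, and this survives shrinking the peripheral kernels. So one may intersect them with the MSQT subgroups, with the normal core of $\Gamma_0$, and over $\Phi$-translates at no cost.
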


This theorem is applied to $\G=\pi_1(M)$ for $M$ a Charney--Davis $n$-manifold, {where} $\Phi=B_n$ is the isometry group of the Euclidean $n$-cube acting on $M$, the collection $\calQ$ is related to the fundamental groups of the submanifolds in the invariant system of submanifolds of $M$ as in Theorem \ref{thm:CD-piece}, and $\Gamma_0$ corresponds to a convenient finite cover of $M$. Condition (2) above (which resembles \cite[Lemma~6.5]{charney-davis}) guarantees that the cover $M'$ associated to $\G'$ is again a Charney--Davis manifold. Condition (1) allows us to choose $M'$ covering any prescribed manifold commensurable with $M$.

The proof of Theorem \ref{thm.maingrouptheory} is based on ideas from the proof of Agol--Groves--Manning of the malnormal special quotient \cite{AGM.MSQT}, and a sketch of the proof is as follows. We iteratively apply Dehn filling to the group $\G$ (for an appropriate choice of peripheral subgroups) to get a finite sequence of hyperbolic cubulable quotients $\G \ra \ov {\G}_1\ra \cdots \ra \ov{\G}_k$. All these quotients carry a natural $\Phi$-action so that consecutive quotient maps are $\Phi$-equivariant. We perform these fillings so that the \emph{height} of the image of collection $\calQ$ (Definition \ref{def:height}) decreases after each iteration, and so that in $\ov{\G}_k$, each group in $\calQ$ gets mapped to a finite group. By residual finiteness, in this case it is easy to find a finite index subgroup $\ov\G'<\ov\G_k$ satisfying condition (2) in Theorem \ref{thm.maingrouptheory} (for the images of $\calQ$). By carefully performing the fillings (see Proposition  \ref{prop.intersectionfillingmulticoset}), we can ensure that the preimage $\G'$ of $\ov\G'$ in $\G$ satisfies conditions (1) and (2).

As an extra consequence of the above result, we answer a question of Belegradek \cite[Section~3]{belegradek} (see also the first Remark on page 343 in \cite{charney-davis}). A key feature of the hyperbolizing piece $X$ induced by a Charney--Davis $n$-manifold is that its poset of faces is isomorphic to the poset of faces of the $n$-cube. These faces are preimages of faces of this cube under a map from $X$ and they are connected in dimensions $0$ and $n$. These faces are not necessarily connected for $0<k<n$, but using Theorem \ref{thm.maingrouptheory} we can find plenty of hyperbolizing pieces, all of whose faces are connected. 

\begin{corollary}\label{cor:connectedfaces}
For any of the Charney--Davis covers obtained from Theorem \ref{thmalpha:flexibleCD}~(2), we can further assume that the induced hyperbolizing piece has connected faces.
\end{corollary}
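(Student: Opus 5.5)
We want the faces of the hyperbolizing piece $X'$ coming from the Charney--Davis cover $M'$ to be connected. We do this by strengthening the choice of $\G'$ in Theorem \ref{thm.maingrouptheory}, enlarging the collection $\calQ$ so that the same double-coset condition that keeps $M'$ Charney--Davis also forces connectedness.

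First we describe faces in terms of subgroups. Write $\G=\pi_1(M)$. Let $\calY=\{Y_1,\dots,Y_m\}$ be the $B_n$-invariant system of totally geodesic hypersurfaces. A codimension-$k$ face of the piece $X$ is a component of $X\cap(Y_{i_1}\cap\dots\cap Y_{i_k})$ lying in a given cube-face preimage. Each component $Z$ of such an intersection is a closed totally geodesic submanifold with fundamental group $\G_Z=\pi_1(Y_{i_1})^{g_1}\cap\dots\cap\pi_1(Y_{i_k})^{g_k}$. This is an intersection of quasiconvex subgroups, hence quasiconvex. The associated face of $X$ is $Z$ cut along its intersections with the remaining hypersurfaces.

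Now enlarge $\calQ$. We take $\calQ$ to be the finite, $B_n$-invariant collection of all these subgroups $\G_Z$, together with the groups already used in Theorem \ref{thmalpha:flexibleCD}~(2). Intersections and conjugates by a fixed finite set of elements are included; the set is finite because there are only finitely many components. Given a finite cover $M_1$ from Theorem \ref{thmalpha:flexibleCD}~(2), we apply Theorem \ref{thm.maingrouptheory} with $\G_0=\pi_1(M_1)$ to get $\G'$, and let $M'$ be the corresponding cover.

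The connectedness argument goes as follows. A face $F'$ of $X'$ maps onto a face $F$ of $X$. The preimage of $Z$ in $M'$ has components indexed by the double cosets $\G'\backslash\G/\G_Z$. The face $F'$ is disconnected exactly when two lifts of $F$ meet the same cube-face preimage in $X'$ without being joined. One has to trace how lifts are glued across the hypersurfaces bounding $F$. The expected reformulation is this: the pieces of the face are joined through codimension-one walls $Y_j$, and disconnectedness is produced by an element of the form $q_1q_2\in\G'$ with $q_1\in\G_Z$, $q_2\in\pi_1(Y_j)$ (or a product of two stabilizers from $\calQ$) that does not factor inside $\G'$. Condition (2) of Theorem \ref{thm.maingrouptheory}, $\G'\cap Q_1Q_2\subset(\G'\cap Q_1)(\G'\cap Q_2)$, exactly rules this out. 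By induction on codimension, starting from $X'$ itself, which is connected since $M'$ is Charney--Davis, every face is then connected.

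The main obstacle is this last translation: showing that a disconnection of a face of $X'$ really yields a violating element in a single product $Q_1Q_2$, and not a longer word. I would try to handle this in two ways. One is to choose $\calQ$ closed under the relevant intersections. The other is to use that $X$ is the closure of a fundamental domain for the reflection-like group generated by the $Y_j$. Together these should let a path between two lifts be shortened to one wall crossing, mirroring the argument for \cite[Lemma~6.5]{charney-davis}.
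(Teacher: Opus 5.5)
Your proposal has a genuine gap, and it sits in the step you yourself flag as the main obstacle. You say a disconnected face of $X'$ is ``produced by'' an element of $\G'\cap Q_1Q_2$ that does not factor in $\G'$, and you offer strategies that ``should'' make this work. But condition (2) of Theorem \ref{thm.maingrouptheory} is purely algebraic. On its own it says nothing about when translates $\wt Y_I$ and $g\wt Y_J$ meet in $\Hy^n$.

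The missing translation is Lemma \ref{lem.G_0}. Using cocompactness of $H_I$ on $\wt Y_I$ and separability of the double cosets $\wh H_I\wh H_J$, one first finds a finite index subgroup $\G_0$ with the following property: if $g\in\G_0$ and $\wt Y_I\cap g\wt Y_J\neq\emptyset$, then $g\in\wh H_I\wh H_J$. One must then take $\G'<\G_0$; you instead take $\G_0=\pi_1(M_1)$. With this in place, your ``longer word'' worry is handled by Lemma \ref{lem.Rintersectiondoublecoset}. That lemma reduces $k$-fold intersections of translates to the pairwise case by induction, using only $\calQ=\{\wh H_I\}$.

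Your setup also contains an error. $X'$ is $M'$ cut only along the single lifts $Y'_i$, not along the full preimage of $\calY$. So the interior of $X'$ generally crosses other lifts of the $Y_i$, and there is no face-preserving map $X'\to X$; a face $F'$ of $X'$ need not map onto a face $F$ of $X$.

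In fact you do not need to enlarge $\calQ$ at all. The covers in Theorem \ref{thmalpha:flexibleCD}~(2) come from Corollary \ref{coro:CDcovershyp}, whose item (3) already says that each $Y'_I=\bigcap_{i\in I}Y'_i$ is connected. The reason is Proposition \ref{prop:CDcoversgen}~(2): $Y'_I=q(\wt Y_I)$, the image of a convex subset of $\Hy^n$. Each face of $X'$ is then the hyperbolizing piece obtained by cutting the Charney--Davis manifold $Y'_I$ along its hyperplane system $\{Y'_{I\cup\{i\}}: i\notin I\}$. Such pieces are connected, which gives the corollary. This is the paper's argument, in the proof of Theorem \ref{thm:flexibleCDmanifoldsmiddle}~(3).
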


Combining this corollary with the functoriality of strict hyperbolization with respect to cubical embeddings, we can obtain \emph{connected} totally geodesic subspaces in the hyperbolized complexes out of connected, locally convex subcomplexes in the input cube complex. In particular, hyperbolizing (appropriate cubulations of) flat manifolds of diagonal type with codimension-$1$ connected hypersurfaces yields closed hyperbolic manifolds with connected totally geodesic codimension-1 hypersurfaces. Moreover, if the hypersurface in the flat manifold is non-separating, then the same is true for the hypersurface in the strict hyperbolization.

\subsection{Related work}\label{subsec:previouswork}
Now we describe how our methods and results differ from previous work on characteristic classes of hyperbolic manifolds.

In the closed case, the constructions of hyperbolic manifolds with non-trivial characteristic classes from \cite{MRS.nonspin,chen.spinc} can be described in two steps. In the first step, a certain closed hyperbolic $n$-manifold $M_0$ is constructed, so that some non-trivial characteristic classes are prescribed. In the second step, an infinite sequence of hyperbolic manifolds $M_0\subset M_1\subset M_2\subset \dots$ is constructed, so that each $M_i$ is an embedded totally geodesic codimension-1 submanifold of $M_{i+1}$. This construction is performed so that non-triviality of characteristic classes on $M_i$ implies the same for $M_{i+1}$. The constructions of these totally geodesic embeddings are based on the work of Kolpakov--Reid--Slavich \cite{KReS}, which requires the manifolds to be arithmetic of simplest type. (See also \cite{KRiS} for similar embedding results in the non-arithmetic case, and \cite{chen.spinc} for a useful refinement of \cite{KReS} in this context).

In \cite{MRS.nonspin}, the manifold $M_0$ is $4$-dimensional, orientable, and non-spin, and is built by assembling some right-angled 120-cells. An extension of this construction is given in \cite{MRS.plumbings}, but in all these cases, the resulting manifolds $M_0$ are all commensurable and arithmetic. This also restricts the manifolds $M_1,M_2,\dots$ since the field of definition of each  $M_i$ must contain the field of definition of $M_0$. On the other hand, in \cite{chen.spinc}, $M_0$ is 1-dimensional (i.e. a circle) and arithmetic of simplest type for an arbitrary totally real number field $\K$. Starting from $i\geq 5$, the manifold $M_i$ becomes orientable and non-spin$^c$, but it is still arithmetic.

It is not excluded that some variants of the totally geodesic embedding results from \cite{KRiS} may also give non-spin$^c$ examples that are non-arithmetic. Moreover, one can get from \cite{MRS.nonspin} infinitely many commensurability classes of arithmetic,
non-spin examples in dimensions $n \geq 5$. With some extra work, it may be possible to produce from \cite{MRS.nonspin} infinitely many commensurability classes of non-arithmetic non-spin examples for $n \geq 4$.
However, the methods of \cite{MRS.nonspin,MRS.plumbings} do not yield non-commensurable arithmetic examples in dimension 4. 

On the other hand, our methods allow us to prove stronger versions of these results in a unified and neat way. By applying Theorem \ref{thm:main-stably-tan} and Propositions \ref{thmA:diagonalfoldable} and \ref{thm:flat->hyperbolic}, we translate the problem of obtaining hyperbolic manifolds with non-trivial characteristic classes to the analogous problem for flat manifolds of diagonal type, for which there is a vast literature; see e.g.~\cite{lee-szczarba,im-kim,putrycz-szczepanski,gasior,LPS,gasior-lutowski}. In addition, we have exact control on the commensurability classes of the resulting hyperbolic manifolds, as they coincide with those of the initial Charney--Davis manifold. Furthermore, by Theorem \ref{thmalpha:flexibleCD}~(2) we also have freedom in choosing a Charney--Davis manifold in some fixed commensurability classes. 

We note that flat manifolds have been used to construct cusped, finite volume, hyperbolic manifolds with non-trivial characteristic classes \cite{long-reid.spinning,reid-sell}, using that any $n$-dimensional flat manifold appears as the cusp cross-section of a cusped arithmetic $(n+1)$-hyperbolic manifold \cite{long-reid.flat,mcreynolds}. Using this method and the flat manifolds considered in this paper (see Section \ref{sec:characteristicclassesLS}), one sees that there are infinitely many commensurability classes of \textit{cusped} arithmetic $(n+1)$-manifolds satisfying the conclusions of Theorem \ref{thm:Pontryagin} (but with a shift in the degree of the non-trivial characteristic classes). In this context, see also \cite{riolo-rizzi.cusped}.

\subsection*{Organization of the paper}
In Section \ref{sec:CC+CD}, we discuss some known facts about foldable cube complexes and Charney--Davis's strict hyperbolization. In Section \ref{sec:CDcovers}, assuming Theorem \ref{thm.maingrouptheory} (whose proof is deferred to the end), we prove Theorem \ref{thmalpha:flexibleCD} and Corollary \ref{cor:connectedfaces}, along with Theorem \ref{thm:main-stably-tan}.
We then turn to the study of foldable flat cubulations of flat manifolds. In Section \ref{sec:foldableflatmflds}, we prove Proposition \ref{thmA:diagonalfoldable}, asserting that any flat manifold of diagonal type admits such a cubulation. Next, in Section \ref{sec:hyperbolizingflat}, we prove Proposition \ref{thm:flat->hyperbolic}, which states that hyperbolizing flat cube complexes yields hyperbolic manifolds commensurable with the input Charney--Davis manifold used to construct the hyperbolizing piece.
Section \ref{sec:characteristicclassesLS} is devoted to the study of the Lee--Szczarba manifolds, a particular class of flat manifolds with many non-trivial characteristic classes. In Section \ref{sec:applications}, we apply the machinery developed in the previous sections to construct exotic pairs of negatively curved manifolds, and several hyperbolic manifolds with non-trivial characteristic classes. There we prove Corollary \ref{cor:exotic-smoothings}, Theorem \ref{thm:Pontryagin}, and
Theorem \ref{thmalpha:manymanywithnontrivial}.
Finally, in Section \ref{sec.dehnfilling}, we prove Theorem \ref{thm.maingrouptheory}, a separability theorem about hyperbolic cubulable groups. This is our main technical tool for constructing many Charney--Davis manifolds.
%%%%%%%%%%%%%%%%%%%%%%%%%%%%%%%%%%%%%%%%%%%%%%%%%%%%%%%%%%%%%%%%%%%%%%%%%%%%%%%%%%%%%%%%%%%%%%%%%%%%%%%%%%%%%%%%%%%%%%
\subsection*{Acknowledgments}
We thank Igor Belegradek, Federico Castillo, Jacopo Chen, Sami Douba, Bruno Martelli, Ricardo Menares, Alan Reid, and Leone Slavich for useful comments and conversations. M. Bustamante is supported by ANID Fondecyt Regular grant 1250727. 
E. Reyes is supported by ANID Fondecyt Iniciaci\'on grant 11260637. He also 
acknowledges support from the Simons Laufer Mathematical Sciences Institute in Berkeley, California, during the Spring 2026 semester (National Science Foundation Grant No. DMS-2424139). 
S. Riolo was funded by the European Union – NextGenerationEU under the National Recovery and Resilience Plan (PNRR) – Mission 4 Education and research – Component 2 From research to business – Investment 1.1 Notice Prin 2022 – DD N. 104 del 2/2/2022, with title ``Geometry and topology of manifolds'', proposal code 2022NMPLT8 – CUP J53D23003820001. He acknowledges the hospitality and support of the Pontificia Universidad Cat\'olica de Chile, where this project began.
%%%%%%%%%%%%%%%%%%%%%%%%%%%%%%%%%%%%%%%%%%%%%%%%%%%%%%%%%%%%%%%%%%%%%%%%%
\section{Cube complexes and strict hyperbolization}\label{sec:CC+CD}
In this section we recall some known facts about cube complexes and Charney--Davis's strict hyperbolization procedure. For references, see for instance \cite{charney-davis,belegradek,lafont-ruffoni,davis-book}.

\subsection{Cube complexes and folding}\label{sec:cubecomplexes}

The $n$\emph{-cube} is the space $\square^n=[0,1]^n$ equipped with the Euclidean metric. A \emph{face} $F$ of $\square^n$ is given by considering a (possibly empty) subset $I\subset \{1,\dots,n\}$, a function $\ep: I \ra \{0,1\}$, and defining
\[F=F_{I,\ep}=\{(x_1,\dots,x_n)\in \square^n: x_i=\ep(i)\text{ for }i\in I\}.\]
The \emph{dimension} of the face $F=F_{I,\ep}$ is $n-(\# I)$. Equivalently, we say that the  \emph{codimension} of $F$ is $\# I$. The set of faces of $\square^n$ and the inclusion relation form a poset. This poset is invariant under the group $B_n$ of isometries of $\square^n$. 

A \emph{cube complex} is a  metric polyhedral complex in which all polyhedra are copies of cubes, not necessarily of the same dimension, so that different cubes are glued isometrically along faces (see for instance \cite[Definition~I.7.37]{bridson-haefliger}). A \emph{cell} of the cube complex is a face of a cube used in its construction, and a $k$-\emph{cell} is a cell of dimension $k$. The \emph{dimension} of a cube complex is the supremum of the dimensions of its cells.

The Euclidean metric on each cell induces a length metric on the cube complex. A cube complex is \emph{non-positively curved} NPC if its length metric is locally $\CAT(0)$. By Gromov's criterion \cite[Theorem~II.5.2]{bridson-haefliger}, a cube complex is NPC if and only if the link at each vertex is a flag complex.

A map between cube complexes is \emph{combinatorial} if it is continuous and maps cells to cells. An $n$-dimensional cube complex $\msf{C}$ is \textit{foldable} if it admits a combinatorial map $f:\msf{C}\ra \square^n$ such that its restriction to each cell of $\msf{C}$ injective. Such a map $f$ is called a \emph{folding map}. Note that all the cells of a foldable cube complex are embedded. Any cube complex that covers a foldable cube complex is foldable. Any two folding maps on an $n$-dimensional cube complex differ (up to an isotopy setwise fixing each cell) by composition with an element of $B_n$. 

\begin{example}\label{ex.flats}
Let $\msf{R}_n$ be $\R^n$ equipped with its standard cubical structure induced by the translation action of $\Z^n$. That is, the cube complex structure is $\Z^n$-invariant and $\square^n\subset \R^n$ is a cell. 
The \emph{standard} $n$-torus is the quotient $\bbT^n=\msf{R}_n/\Z^n$. This complex is NPC but is not foldable since cells of dimension greater than 0 are not embedded. On the other hand, the finite cover $\wh\bbT^n=\msf{R}_n/(2\Z)^n$ of $\bbT^n$ is foldable (see Section \ref{subsec:foldingflat}).
\end{example}

\subsection{Strict hyperbolization}\label{sec:strict}
Now we describe in more detail the strict hyperbolization procedure by Charney and Davis \cite{charney-davis}. This procedure takes as input an $n$-dimensional NPC cube complex and returns a locally $\CAT(-1)$ space as output. For simplicity, we restrict to strict hyperbolization of  foldable cube complexes. 
 
The starting point for the hyperbolization is a topological space $X$ equipped with a continuous map $g:X \ra \square^n$. We call the pair $(X,g)$ a \emph{hyperbolizing piece}, although very often we will omit the map $g$ from the notation. A \emph{face} of the hyperbolizing piece is the preimage under $g$ of a face of $\square^n$. Its \emph{dimension} is simply the dimension of the corresponding face of $\square^n$. Note that faces of $X$ are not necessarily connected, see e.g.~\cite[Remark~2.6]{lafont-ruffoni}.

The hyperbolization procedure then takes as input an $n$-dimensional foldable cube complex $\msf{C}$ (with folding map $f:\msf{C}\ra \square^n$) and returns the fibered product
\[\calH_X(\msf{C})=\{(c,x)\in \msf{C}\times X : f(c)=g(x) \}.\]
The space $\calH_X(\msf{C})$ is the \emph{strict hyperbolization} of $\msf{C}$ (also refered to as the \emph{hyperbolized complex}), which is equipped with projection maps $f_X:\calH_X(\msf{C})\ra X$ and $g_\msf{C}:\calH_X(\msf{C})\ra \msf{C}$ that fit into the commutative diagram
\begin{equation}\label{eq.diagramhyp}
    \begin{tikzcd}
\calH_{X}(\msf{C}) \arrow[r,"f_{X}"] \arrow[d,"g_{\msf{C}}"]  & X \arrow[d,"g"]  \\
\msf{C} \arrow[r,"f"] & \square^n.  
\end{tikzcd}
\end{equation}

It is customary to refer to the assignment $\msf{C}\ra \calH_X(\msf{C})$ as the \emph{strict hyperbolization procedure} (with hyperbolizing piece $X=(X,g)$). The key point in Charney--Davis's work \cite{charney-davis} is the construction of a correct hyperbolizing piece $X$, so that if $\msf{C}$ is NPC then $\calH_X(\msf{C})$ is actually negatively curved. 
%%%%%%%%%%%%%%%%%%%%%%%%%%%%%%%
\subsection{Charney--Davis manifolds and the hyperbolizing piece}\label{subsec:hyperbolizingpiece}
In this subsection we describe the hyperbolizing pieces used by Charney--Davis for strict hyperbolization. This piece is a certain $n$-dimensional hyperbolic manifold with corners given by cutting a hyperbolic manifold along totally geodesic submanifolds satisfying some conditions. 

Recall that $B_n$ is the isometry group of the $n$-cube $\square^n$, and it is generated by the coordinate permutations and the reflections $r_i:\square^n \ra \square^n$ so that $r_i(x_1,\dots,x_n)=(\dots,x_{i-1},1-x_{i},x_{i+1},\dots)$ for each $i$. The \emph{standard} orthogonal representation of $B_n$ on $\R^n$ is so that permutations act acoordingly on coordinates of $\R^n$ and the reflections $r_i$ act by sign changes: $r_i(x_1,\dots,x_n)=(\dots,x_{i-1},-x_i,x_{i+1},\dots)$. 

The next theorem summarizes the properties of the manifold used to obtain a hyperbolizing piece. 
\begin{theorem}[{\cite[Theorem~6.1]{charney-davis}}]
\label{thm:CD-piece}
For each $n\ge 1$ there is a closed, connected hyperbolic $n$-manifold $M$, a system $\mathcal Y=\{Y_1,\dots,Y_n\}$ of closed, connected codimension-1 embedded submanifolds of $M$, and an isometric action of $B_n$ on $M$ stabilizing $\mathcal Y$, such that the following hold:
\begin{enumerate}
\item each $Y_i$ is a component of the fixed point set of $r_i$ on $M$;
\item each $Y_i$ is totally geodesic in $M$;
\item the $Y_i$'s intersect orthogonally;
\item $Y_1\cap\cdots\cap Y_n$ is a single point $y$;
\item $B_n$ fixes $y$ and the representation of $B_n$ on $T_yM$ is equivalent to the standard representation; 
\item $M$, and each $Y_i$, is orientable.
\end{enumerate}
\end{theorem}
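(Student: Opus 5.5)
We follow the arithmetic construction of Charney--Davis. Let $\K\neq\Q$ be a totally real number field with ring of integers $\calO$, and let $\sigma$ be a nontrivial embedding of $\K$. Consider the quadratic form
\[
q(x_0,\dots,x_n)=-\alpha x_0^2+x_1^2+\cdots+x_n^2 ,
\]
with $\alpha\in\calO$ positive but $\sigma(\alpha)<0$ for every nontrivial $\sigma$. Then $q$ is anisotropic over $\K$, so $\Lambda=\On(q,\calO)$ is a cocompact arithmetic lattice acting on $\Hyp^n$.

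The point $y=(1/\sqrt{\alpha},0,\dots,0)$ is fixed by the finite group of signed permutation matrices on $x_1,\dots,x_n$. This group is a copy of $B_n$ inside $\Lambda$, and it acts on $T_y\Hyp^n\cong\R^n$ by the standard representation. Inside this copy, the sign change $r_i$ acts as the reflection in the hyperplane $H_i=\{x_i=0\}$. These hyperplanes pass through $y$ and are mutually orthogonal.

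Next, choose an ideal $\mathfrak p\subset\calO$ with large enough norm (odd, coprime to $2\alpha$), and let $\Gamma=\Lambda(\mathfrak p)\cap\SO_0(n,1)$ be the principal congruence subgroup. Then:
\begin{itemize}
\item $\Gamma$ is torsion-free and normal in $\Lambda$, so $B_n$ normalizes $\Gamma$.
\item $B_n$ therefore acts isometrically on the closed orientable manifold $M=\Hyp^n/\Gamma$, fixing the image of $y$. Property (5) follows.
\item Let $Y_i$ be the image of $H_i$. Then $Y_i$ lies in the fixed set of $r_i$ and is totally geodesic, giving (2). The $Y_i$ meet orthogonally at $y$, giving (3).
\end{itemize}

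The real work is to show that, for $\mathfrak p$ suitably chosen, the following hold:
\begin{itemize}
\item each $Y_i$ is embedded, and is the connected component through $y$ of $\Fix(r_i)$, which is (1);
\item $Y_1\cap\cdots\cap Y_n=\{y\}$, which is (4);
\item each $Y_i$ is orientable, which is the rest of (6).
\end{itemize}

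For embeddedness, suppose $\gamma H_i\cap H_i\neq\emptyset$ with $\gamma H_i\neq H_i$. Then the reflections $r_i$ and $\gamma r_i\gamma^{-1}$ generate a dihedral group. Their product $r_i\gamma r_i\gamma^{-1}$ lies in $\Gamma$, since $\Gamma$ is normal and $r_i\in\Lambda$. This product is a nontrivial element with a fixed point, contradicting torsion-freeness. Hence the translates of $H_i$ are disjoint or equal, and $Y_i$ is embedded. The same argument shows that the fixed-point components of $r_i$ are exactly images of hyperplanes.

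For (4), suppose a point $\gamma y$ lies on all the $H_i$ modulo $\Gamma$. We intend to use a congruence argument as in \cite[Lemma~6.5]{charney-davis}. Each $\gamma_i\in\Gamma$ with $\gamma_i y\in H_i$ must preserve the corresponding coordinate structure modulo $\mathfrak p$. This forces $\gamma_1^{-1}\gamma_j$ to lie in the stabilizer of an intersection of hyperplanes, and we get the intersection condition
\[
\Gamma\cap Q_1Q_2\subset(\Gamma\cap Q_1)(\Gamma\cap Q_2)
\]
for stabilizers $Q_j$ of intersections of the $H_i$. Iterating this yields $Y_1\cap\dots\cap Y_n=\{y\}$.

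Orientability of $Y_i$ follows because $\Gamma\subset\SO$ together with the congruence condition forces elements stabilizing $H_i$ to preserve its coorientation. Indeed, an element of $\Gamma$ that swaps the sides of $H_i$ would be congruent to $r_i$ modulo $\mathfrak p$, not to $1$, which is impossible since $\mathfrak p\nmid 2$.

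We expect this double-coset intersection step, which gives (4) and ensures each $Y_i$ is a single connected component, to be the main obstacle. It is the step that really uses the congruence structure rather than just normality and torsion-freeness.
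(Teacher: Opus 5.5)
\textbf{There is a genuine gap at (4).} Your framework is the Charney--Davis one: the lattice $\Lambda=\calO(q)$, $B_n$ as signed permutations, and a torsion-free congruence subgroup of odd level. Your arguments for embeddedness of each $Y_i$, for (1)--(3), for (5), and for orientability are essentially fine. For orientability the right reason is slightly different from yours: an element of $\Gamma$ preserving $H_i$ and exchanging its sides sends $e_i$ to $-e_i$. That contradicts being $\equiv 1 \bmod \mathfrak p$ because $\mathfrak p\nmid 2$; such an element need not be congruent to $r_i$.

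The problem is (4), which is the real content of the theorem, and what you write there is not an argument. Every element of $\Gamma$ is $\equiv 1 \bmod \mathfrak p$, so ``preserving the coordinate structure modulo $\mathfrak p$'' carries no information. The double-coset property is simply asserted.

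Worse, for your $\Gamma=\Lambda(\mathfrak p)\cap\SO_0(n,1)$, with $\mathfrak p$ only required to have large odd norm prime to $\alpha$, (4) can be false. Suppose $\Lambda(\mathfrak p)$ contains an element $\eta$ fixing $e_1$ and exchanging the two sheets of $\{q=-1\}$. For $n\ge 3$ this happens whenever $-\alpha$ is a square mod $\mathfrak p$, which holds for infinitely many $\mathfrak p$. Take $\eta=\gamma_1\,\mathrm{diag}(-1,1,-1,1,\dots,1)$, where $\gamma_1$ is the image of an integral point of $\mathrm{Spin}(q|_{e_1^\perp})$ chosen by strong approximation to cancel the reduction of that diagonal matrix. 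This is possible because its spinor norm is $-\alpha$.

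\begin{itemize}
\item Conjugating by the permutation exchanging $e_1$ and $e_j$ gives a sheet-exchanging $\eta_j\in\Lambda(\mathfrak p)$ fixing $e_j$, and $\eta\eta_j^{-1}\in\Gamma$.
\item The isometry of $\Hyp^n$ induced by $\eta$ is $-\eta$, and $-\eta_j$ preserves $H_j$. Hence $\tilde z:=(-\eta)\tilde y$ lies on $\eta\eta_j^{-1}H_j$ for every $j$, so its image lies in $Y_1\cap\cdots\cap Y_n$.
\item If $\gamma\tilde y=\tilde z$ for some $\gamma\in\Gamma$, then $\gamma^{-1}(-\eta)$ fixes $e_0$ while being $\equiv -1 \bmod \mathfrak p$, which is impossible.
\end{itemize}
So $Y_1\cap\cdots\cap Y_n$ has at least two points.

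\textbf{The missing idea.} Choose the level $I$ so that the \emph{whole} principal congruence subgroup $\Gamma(I)\cap\calO(q)$ is torsion-free and already contained in $\SO_0(n,1)$, instead of intersecting with $\SO_0(n,1)$ afterwards. This is what Charney--Davis obtain from Millson--Raghunathan \cite[Lemma~6.7]{charney-davis}, with $I=I_1\cdots I_s$ a product of suitably chosen pairwise coprime ideals. With such an $I$ (prime to $2\alpha$), (4) is short and needs no separate double-coset lemma:
\begin{itemize}
\item Let $\tilde z\in\bigcap_i\gamma_iH_i$ with $\gamma_i\in\Gamma$. The reflections $s_i=\gamma_ir_i\gamma_i^{-1}$ fix $\tilde z$ and satisfy $s_i\equiv r_i \bmod I$.
\item So $(s_is_j)^2\in\Gamma$ has a fixed point and is trivial. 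Hence $v_i=\gamma_ie_i$ ($1\le i\le n$) is an orthonormal frame in $L=\calO_\K^{n+1}$.
\item Its orthogonal complement in $L$ is $\calO_\K w$ with $q(w)=-\alpha u^2$ for a unit $u$. So $e_i\mapsto v_i$, $e_0\mapsto \pm u^{-1}w$ defines $g\in\calO(q)$ whose reduction mod $I$ is $\mathrm{diag}(\det g,1,\dots,1)$.
\item For the right sign, $g\in\Gamma(I)\cap\calO(q)\subset\SO_0(n,1)$. Then $g\tilde y=\tilde z$, since both are the unique common fixed point of the $s_i$.
\end{itemize}
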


\begin{definition}\label{def:CDmanifold}
A closed hyperbolic $n$-manifold $M$ satisfying the properties of the previous theorem will be called a \emph{Charney--Davis manifold}, and the collection $\calY=\{Y_1,\dots,Y_n\}$ will be called a \emph{hyperplane system} of $M$. This manifold is implicitely equipped with an isometric action of $B_n$ satisfying the conclusions of the above theorem.
\end{definition}
\begin{example}\label{ex:arithmeticCD}
We proceed to describe the construction of Charney--Davis, which relies on arithmetic data. In \cite[Section~6]{charney-davis}, they consider a totally real quadratic extension of $\Q$, but the same results hold for an arbitrary totally real number field $\K\neq \Q$. Let $\calO_\K$ be the ring of algebraic integers of $\K$, and let $\varepsilon\in \calO_\K$ be a positive number with $\al(\varepsilon)<0$ for any non-trivial Galois conjugation of $\K$. Given $n\geq 1$, consider the quadratic form $q$ on $\calO_\K^{n+1}$ defined by
   \[q(x_0,\dots,x_n)=-\varepsilon x_0^2+x_1^2+\cdots+x_n^2.\]

   We let $\calO(q)$ denote the group of matrices in $\GL_{n+1}(\calO_\K)$ that preserve the form $q$. That is, an invertible matrix $A$ belongs to $\calO(q)$ if and only if $q \circ A=q$. Then $\calO(q)$ is naturally a uniform arithmetic lattice in $\mathrm{O}(n,1)$ \cite[Sections~2.2~and~2.3]{GPS}, hence $\calO(q)/\{\pm \mathrm{I}\}$ acts geometrically on the real hyperbolic space $\Hy^n=\{(x_0,\dots,x_n)\in \R^{n+1}:q(x_0,\dots,x_{n+1})=-1 \text{ and }x_0>0\}$, considered in the hyperboloid model. 
   We embed $B_n$ in $\calO(q)$ as the group generated by the permutations of the last $n$ coordinates and the reflections, so that each $r_i$ maps $(x_0,\dots,x_n)$ to $(\dots,x_{i-1},-x_i,x_{i+1},\dots)$. Note that $B_n$ fixes the vector $\wt y=(1/\sqrt{\varepsilon},0,\dots,0)\in \Hy^n$. 

   For each $i=1,\dots,n$, let $\wt Y_i=\{(x_0,\dots,x_n)\in \Hy^n: x_i=0\}\subset \Hy^n$. Then each $\wt Y_i$ is totally geodesic in $\Hy^n$ and $r_i\in B_n$ acts as a reflection along $\wt Y_i$. Note that $\wt Y_1\cap \cdots \cap \wt Y_n=\{\wt y\}$. The key point in Charney--Davis's construction is to find an appropriate torsion-free finite index normal subgroup $\G<\calO(q)\cap \mathrm{SO}_o(n,1)$, so that $M=\Hy^n/\G$ is a Charney--Davis manifold with hyperplane system given by the projections of the $\wt Y_i$'s in $M$ and the action of $B_n$ on $M$ induced by the embedding of $B_n$ in $\calO(q)$ (see also Section \ref{subsec:congruenceapproach}). 
\end{example}   
Since by varying $\varepsilon$ we obtain infinitely many commensurability classes of lattices $\calO(q)$ \cite{GPS}, this example actually yields the following.
\begin{corollary}\label{cor:infiniteCDmanifolds}
For each $\K$ and quadratic form $q$ as in Example \ref{ex:arithmeticCD}, there exists a Charney--Davis manifold with fundamental group commensurable with $\calO(q)$. In particular, for each $n$ and $\K$ there are infinitely many commensurability classes of arithmetic Charney--Davis $n$-manifolds with field of definition $\K$.
\end{corollary}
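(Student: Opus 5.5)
The first assertion, existence of a Charney--Davis manifold commensurable with $\calO(q)$, comes straight out of Example~\ref{ex:arithmeticCD}, i.e.\ the construction in \cite[Section~6]{charney-davis}. Take $\G$ to be a principal congruence subgroup $\G(\mathfrak p)=\ker\bigl(\calO(q)\to \GL_{n+1}(\calO_\K/\mathfrak p)\bigr)$, intersected with $\SO_0(n,1)$, for a suitable prime ideal $\mathfrak p$ of large norm. Such a $\G$ has finite index, is normal in $\calO(q)$, and is torsion-free once $\mathfrak p$ is large. Because $\G$ is normal in $\calO(q)\supset B_n$, the group $B_n$ acts isometrically on $M=\Hyp^n/\G$, fixes the image $y$ of $\wt y$, and permutes the images $Y_i$ of the $\wt Y_i$.

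Next I would check properties (1)--(6) of Theorem~\ref{thm:CD-piece}.
\begin{itemize}
\item Total geodesicity (2) and orthogonality (3) are inherited from $\Hyp^n$.
\item Property (5) holds because $B_n\cap\G=1$: reduction modulo $\mathfrak p$ is injective on the finite group of signed permutation matrices when $\mathfrak p\nmid 2$. So the isotropy representation at $y$ is the standard one.
\item Orientability of $M$ (6) holds because $\G\subset\SO_0(n,1)$.
\end{itemize}

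The substantive points are the remaining ones: that each $Y_i$ is embedded, two-sided and orientable; that $Y_i$ is a component of $\Fix(r_i)$; and that $Y_1\cap\dots\cap Y_n=\{y\}$. Embeddedness of $Y_i$ amounts to: whenever $\gamma\wt Y_i\cap\wt Y_i\neq\emptyset$ with $\gamma\in\G$, we must have $\gamma\wt Y_i=\wt Y_i$. This is the congruence argument of \cite[Lemma~6.5]{charney-davis}. The reflection $\gamma r_i\gamma^{-1}$ commutes with $r_i$ modulo $\G$, because both reduce to the same element of $\calO(q)/\G$. Since $\gamma r_i\gamma^{-1}$ and $r_i$ are reflections in hyperplanes that meet, one analyses the product $\gamma r_i \gamma^{-1} r_i\in\G$: it is a rotation around the codimension-2 intersection. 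A torsion-free $\G$ contains no nontrivial elliptic element, so this rotation is trivial and the hyperplanes coincide.

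The same style of argument handles the single intersection point, via double-coset control. Suppose $\gamma\wt y\in\wt Y_1\cap\dots\cap\wt Y_n$ for some $\gamma\in\G$. Then $\wt y$ and $\gamma\wt y$ both lie in $\bigcap_i\wt Y_i=\{\wt y\}$, and this forces $\gamma\wt y=\wt y$, hence $\gamma=1$. More generally, one must show that the intersections of the projected hyperplanes $Y_I=\bigcap_{i\in I}Y_i$ lift only to $\bigcap_{i\in I}\wt Y_i$. This is the condition $\G\cap Q_1Q_2\subset(\G\cap Q_1)(\G\cap Q_2)$ for the stabilizers $Q_j$ of the relevant totally geodesic subspaces, and for principal congruence subgroups it follows from strong approximation and reduction mod $\mathfrak p$. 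I expect this double-coset condition to be the main obstacle, and I would cite it directly from \cite[Section~6]{charney-davis}. Two-sidedness and orientability of $Y_i$ follow because the stabilizer of $\wt Y_i$ in $\G$ lies in $\SO$ and commutes with $r_i$ modulo $\G$, so it preserves the sides of $\wt Y_i$.

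For the second assertion, fix $\K$ and let $\varepsilon$ range over the positive elements of $\calO_\K$ whose nontrivial Galois conjugates are negative. Such $\varepsilon$ exist for $\K\neq\Q$ by weak approximation. The commensurability class of $\calO(q)$ is determined by the similarity class of $q$ over $\K$; for odd $n$ one also uses the Hasse--Witt and Clifford invariants. Taking $\varepsilon$ in distinct square classes, for instance with distinct prime factors, gives infinitely many non-similar forms, hence infinitely many commensurability classes \cite{GPS}. Each resulting Charney--Davis manifold is arithmetic with field of definition $\K$.
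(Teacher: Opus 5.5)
Your route is the paper's: existence comes from the Charney--Davis congruence construction in Example~\ref{ex:arithmeticCD}, and the infinitely many classes come from varying $\varepsilon$ and citing \cite{GPS}. In the existence half there is one correction. \cite[Section~6]{charney-davis} do not prove the single-intersection-point (double coset) property for $\G(\mathfrak p)$ with an arbitrary prime $\mathfrak p$ of large norm. Their level is a specific composite ideal $I=I_1\cdots I_s$ obtained from Millson--Raghunathan, and deeper levels $J\subset I$ are covered by their Lemma~6.6 (see Section~\ref{subsec:congruenceapproach}). So take $\G(I)\cap\calO(q)$ instead of $\G(\mathfrak p)$. With that change your remaining checks are fine: Millson's reflection argument for embeddedness, $B_n\cap\G=1$, orientability and two-sidedness.

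The genuine gap is in the second half: distinct square classes of $\varepsilon$ do not in general give non-similar forms. Write $q_\varepsilon=-\varepsilon x_0^2+x_1^2+\cdots+x_n^2$.

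For odd $n$ your argument is correct, since the square class of the discriminant $-\varepsilon$ is a similarity invariant of an even-dimensional form.

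For $n\equiv 0\pmod 4$ it fails completely. Let $t\in\K$ be totally positive; by Siegel's theorem $t$ is a sum of four squares in $\K$. Let $L$ act on each block of four of the coordinates $x_1,\dots,x_n$ by left multiplication by a Hamilton quaternion of norm $t$. Then $\sum_i(Lx)_i^2=t\sum_i x_i^2$, hence
\[
q_{t\varepsilon}(x_0,Lx)=t\,q_\varepsilon(x_0,x).
\]
For any two admissible $\varepsilon,\varepsilon'$ the quotient $\varepsilon'/\varepsilon$ is totally positive. So all the $q_\varepsilon$ over a fixed $\K$ are similar, and all the $\calO(q_\varepsilon)$ are commensurable up to conjugation.

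For $n\equiv 2\pmod 4$, the same trick on blocks of two (multiplication by $a+bi$ with $a^2+b^2=t$) shows that $\varepsilon$ and $t\varepsilon$ give the same class whenever $t$ is a sum of two squares. Any rational prime $p\equiv 1\pmod 4$ is such a $t$, and these lie in infinitely many square classes. In this case the classes are detected by the local symbols $(-\varepsilon,-1)_v$, which you would have to vary deliberately.

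The paper's own one-line justification (``by varying $\varepsilon$'', citing \cite{GPS}) has the same defect. For $n\equiv 0\pmod 4$ it cannot be repaired over a fixed $\K$:
\begin{enumerate}
\item for even $n$ every arithmetic lattice is of simplest type;
\item the lift of $B_n$ fixing $\wt y$ normalizes $\pi_1(M)$, so it lies in the commensurator $\mathrm{O}(q)(\K)$;
\item hence $\K^{n+1}\cong\mathrm{triv}\oplus\mathrm{std}$ as a $B_n$-module, so every admissible form coming from an arithmetic Charney--Davis $n$-manifold over $\K$ is similar to some $q_\varepsilon$.
\end{enumerate}
Thus for $n\equiv 0\pmod 4$ all arithmetic Charney--Davis $n$-manifolds with field of definition $\K$ are commensurable. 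In those dimensions the second sentence of the statement can only hold if $\K$ is allowed to vary.
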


If $M$ is a Charney--Davis $n$-manifold with hyperplane system $\calY=\{Y_1,\dots,Y_n\}$, then the hyperbolizing piece $X$ is defined as the metric completion of the space $M \bs (Y_1\cup \cdots \cup Y_n)$, with respect to the length metric on $M \bs (Y_1\cup \cdots \cup Y_n)$ induced by $M$. Equivalently, $X$ is obtained from $M$ after cutting along the submanifolds $Y_1,\dots,Y_n$. Then $X$ is a connected manifold with corners \cite[Section~5]{charney-davis}. In addition, the Pontryagin--Thom construction applied to $M$ with respect to each of the codimension-1 submanifolds $Y_1,\dots,Y_n$ gives a continuous map $\ov{g} : M \ra \bbT^n=\bbT^1\times \cdots \times \bbT^1$. This map induces a continuous map $g:X \ra \square^n$. Recall that a face of $X$ is the (possibly disconnected) preimage under $g$ of a face of $\square^n$. The next lemma summarizes the main properties of the hyperbolizing piece $(X,g)$. 

\begin{lemma}[{\cite[Corollary~6.2]{charney-davis}}]\label{lem:CDpieceX}
Let $X$ be the hyperbolizing piece induced by cutting along the Charney--Davis $n$-manifold $M$. Then $X$ is a connected hyperbolic $n$-manifold with corners, and is equipped with an isometric action of $B_n$ and a continuous, face-preserving and $B_n$-equivariant map $g: X \ra \square^n$ satisfying the following:
\begin{enumerate}
    \item The poset of faces of $X$ is $B_n$–equivariantly isomorphic to that of $\square^n$.
\item Each face of $X$ is totally geodesic.
\item The faces of $X$ intersect orthogonally.
\item Each 0–dimensional face is a single point.
\item The map $g : X\ra \square^n$ and its restriction to each face has degree 1.
\end{enumerate}
\end{lemma}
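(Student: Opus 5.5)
We prove the lemma by working on $M$ and then passing to the cut-open manifold. Set $\Phi=\overline g\colon M\ra \bbT^n$, where $\bbT^1=[0,1]/\{0\sim 1\}$. The $i$-th coordinate $\overline g_i$ is the Pontryagin--Thom collapse for $Y_i$: it sends a tubular neighbourhood $Y_i\times[-\delta,\delta]$ to $\bbT^1$ by $t\mapsto t/2\delta$ mod~$1$, and sends the complement to the point $1/2$. For this to work we first need $Y_i$ to be two-sided. This holds because $M$ and $Y_i$ are orientable by Theorem~\ref{thm:CD-piece}(6), so the normal bundle of $Y_i$ is trivial; it also follows from $r_i$ acting on it by $-1$. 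We choose the tubular neighbourhoods to be orthogonal exponential tubes of $B_n$-invariant width. Using (2) and (3), near any point of $Y_{i_1}\cap\dots\cap Y_{i_k}$ they intersect as products in normal coordinates. Then $\overline g$ is $B_n$-equivariant: $r_i$ acts on the $i$-th circle by $t\mapsto -t$, and permutations permute the coordinates. Cutting $M$ along the $Y_i$ separates $0$ into its two sides, so $\overline g$ lifts to $g\colon X\ra\square^n$. The face $F_{I,\ep}$ of $X$ is exactly the part of $\bigcap_{i\in I}Y_i$ lying on the side prescribed by $\ep$.

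Properties (2) and (3) come directly from Theorem~\ref{thm:CD-piece}(2),(3). Transversality and orthogonality make $X$ a manifold with corners, and it is locally modelled on a quadrant of $\Hy^n$ cut out by orthogonal totally geodesic hyperplanes. The $B_n$-action preserves $\calY$, so it lifts to $X$. The map $g$ is equivariant and sends faces to faces, so the face poset map $F\mapsto g(F)$ is $B_n$-equivariant. Property (4) comes from Theorem~\ref{thm:CD-piece}(4): the intersection $Y_1\cap\dots\cap Y_n$ is the single point $y$. In $X$ this point splits into $2^n$ corners, one for each orthant, and by (5) these correspond bijectively and equivariantly to the vertices of $\square^n$. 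So each vertex face is a single point.

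Property (5) is a degree computation. The map $g$ collapses everything away from the tubes onto the centre, so the degree of $g$ on the top face can be read off near a regular value close to a vertex. Near the preimage of a vertex $v$, the map $g$ is a local orientation-preserving diffeomorphism onto a corner neighbourhood of $v$, using the orthant structure at $y$. The preimage of such a regular value is the single point of $X$ near the corner $g^{-1}(v)$, so the degree is $\pm1$; fixing orientations compatibly makes it $1$. The same argument applied to a face $F$ works as follows. The face $F$ is a corner stratum of $\bigcap_{i\in I}Y_i$, which is totally geodesic and orientable (by (5) its tangent space at $y$ is a coordinate subspace) and contains $y$. Near the corner point at $y$, the restriction $g|_F$ is again a local diffeomorphism onto a neighbourhood of the corresponding vertex of $g(F)$, so it has degree~1.

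Two steps need the most care. The first is connectedness of $X$: the cut-open space is connected because $M$ is connected and each $Y_i$ is embedded and two-sided. More precisely, every component of $M\setminus\bigcup Y_i$ has closure reaching some $Y_i$, and one must check that the local orthant picture at $y$ connects the pieces; this is really part of the cited Charney--Davis construction. The second and main obstacle is showing that $g^{-1}$ of each face is nonempty. This is where the single point $y$ with the standard representation is used: every face of $\square^n$ contains a vertex, and every vertex has a preimage near $y$. Faces of $X$ may be disconnected, which is why the degree is computed at one corner point, where local regularity is clear.
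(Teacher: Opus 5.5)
The paper gives no proof of this lemma; it quotes \cite[Corollary~6.2]{charney-davis}. Your outline follows the standard route: the Pontryagin--Thom map $\overline g$, its lift to the cut-open manifold, the $2^n$ corners over $y$, and the degree read off at a regular value near a vertex. It handles items (1)--(5) adequately.

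There is, however, a genuine gap: the \emph{connectedness} of $X$, which is part of the statement.
\begin{itemize}
\item The reason you give, that $M$ is connected and each $Y_i$ is embedded and two-sided, is false in general. Cutting along a two-sided separating hypersurface disconnects.
\item The single point $Y_1\cap\dots\cap Y_n=\{y\}$ does not help on its own. Nothing in your argument rules out a complementary region bounded by pieces of the $Y_i$ far from $y$. An example would be a subsurface of $Y_1$ and one of $Y_2$ glued along a component of $Y_1\cap Y_2$ that misses $y$.
\item Such a region would be a component of $X$ with no corner over $y$. Your degree computation cannot detect it, because it misses every regular value near a vertex and so has degree $0$.
\end{itemize}
Deferring this to ``the cited construction'' leaves the claim unproved. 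It is also where the hypotheses are really needed, more than nonemptiness of face preimages, which you already get for free from the corners at $y$.

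The missing idea is to use hypothesis (1) of Theorem~\ref{thm:CD-piece} about the reflections, not just two-sidedness.
\begin{enumerate}
\item Let $U$ be a component of $M\setminus\bigcup_i Y_i$. Since $Y_i$ is a codimension-one component of $\Fix(r_i)$ and $B_n$ preserves $\calY$, the region across any codimension-one wall of $\overline U$ lying in $Y_i$ is $r_i(U)$.
\item Using the local orthant picture at deeper strata, $\bigcup_{\gamma\in\langle r_1,\dots,r_n\rangle}\gamma(\overline U)$ is open and closed, hence equal to $M$.
\item Since every $r_i$ fixes $y$, this gives $y\in\overline U$. So every component of $X$ contains at least one of the $2^n$ corners over $y$.
\item Now apply your degree computation to each component $X_j$ separately. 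Its degree does not depend on the regular value. Near a vertex $v$ it equals the signed count of corners of $X_j$ over $v$, which is $0$ or $\pm1$, since there is exactly one corner over each vertex in total.
\item Hence the component containing any one corner has degree $\pm1$, so it contains the corners over all vertices. Any other component would contain no corner, contradicting step 3. Therefore $X$ is connected.
\end{enumerate}

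A minor slip: orientability of $Y_I$ does not follow from $T_yY_I$ being a coordinate subspace. It holds because $Y_I$ has trivial normal bundle in the orientable manifold $M$, namely the sum of the trivial normal line bundles of the two-sided $Y_i$.
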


%%%%%%%%%%%%%%%%%%%%%%%%%%%%%%%%%%%%%%%%%%%%%%%%

\subsection{Properties of strict hyperbolization}\label{sec:propertieshyperbolization}

In this subsection we describe the main properties of the strict hyperbolization procedure. To that end, we fix a hyperbolizing piece $(X,g)$ constructed from the Charney--Davis hyperbolic $n$-manifold $M$. The manifold $M$ and the piece $X$ satisfy the conclusions of Theorem \ref{thm:CD-piece} and Lemma \ref{lem:CDpieceX}. 

If $\msf{C}$ is a foldable cube complex with hyperbolization $\calH_{X}(\msf{C})$ and $\msf{K}\subset \msf{C}$ is a subcomplex, we let $\calH(\msf{K})$ denote $g_{\msf{C}}^{-1}(\msf{K})\subset \calH_{X}(\msf{C})$. Moreover, there is a natural length metric on $\calH_X(\msf{C})$ such that if $\sigma\subset \msf{C}$ is an $n$-dimensional cell, then the map $f_{X}$ from \eqref{eq.diagramhyp} restricts to an isometry $\calH(\sigma) \ra X$, see e.g. \cite[Lemma~2.5~(1)]{lafont-ruffoni}.

The success of Charney--Davis strict hyperbolization in providing interesting negatively curved spaces relies on the following axioms \cite[Section~2]{charney-davis}.

\begin{enumerate}
    \item (Hyperbolicity): if $\msf{C}$ is NPC then $\calH_X(\msf{C})$ is locally $\CAT(-1)$.
    \item (Functoriality): if $\msf{K}\subset \msf{C}$ is a locally convex subcomplex, then $\calH_X(\msf{K})\subset \calH_X(\msf{C})$ is a locally convex subspace. 
\item (Local structure): if $\sigma \subset \msf{C}$ is an $n$-cell, then $\calH_X(\sigma)$ is an $n$-manifold with  corners and the link of $ \calH_X(\sigma)$ in $\calH_X(\msf{C})$ is $\PL$-homeomorphic to the link of $\sigma$ in $\msf{C}$ (for the definition of link, see for instance \cite[Section~2.1.3]{lafont-ruffoni}). 
\item (Homological surjectivity): the map $g_{\msf{C}}:\calH_X(\msf{C})\ra \msf{C}$ induces a surjection on homology.
\end{enumerate}

From properties (3) and (4), we deduce that if $\msf{C}$ homeomorphic to a closed orientable manifold, then $\calH_X(\msf{C})$ is also a closed orientable manifold and $g_\msf{C}:\calH_X(\msf{C})\ra \msf{C}$ is a degree 1 map, which by Poincaré duality induces a monomorphism in cohomology with coefficients in any commutative ring with unit.

To talk about smooth maps we must consider smooth structures in both cube complexes and their hyperbolizations. A \emph{smooth cubulation} of a smooth manifold $N$ is a pair $(\msf{K},\kappa)$ where $\kappa:\msf{C}\to N$ is a homeomorphism from a cube complex $\msf{C}$ that restricts to a smooth embedding on each cube of $\msf{C}$. In this case we will abuse the language and say that $\msf{C}$ is a \textit{smooth cube complex}. Similarly, a folding $f:\msf{C}\ra \square^n$ is \emph{smooth} if it restricts to a smooth embedding on each cell of $\msf{C}$. For a foldable smooth cube complex $\msf{C}$, its hyperbolization $\calH_X(\msf{C})$ has a \emph{normal smooth structure} \cite{ontaneda.cube,ontaneda.smoothing} such that the inclusion $\calH_X(\msf{C})\subset \msf{C}\times X$ is a smooth embedding with trivial normal bundle $\nu(\calH_X(\msf{C}))$. With respect to these smooth structures, the map $g_{\msf{C}}:\calH_X(\msf{C})\ra \msf{C}$ preserves the rational Pontryagin classes.
 
The following lemma gives a sufficient condition for the hyperbolization map $g_{\msf{C}}$ to preserve all stable characteristic classes.
Recall that a smooth manifold $N$ is stably parallelizable if $TN\oplus\varepsilon_N^k$ is trivializable for some $k\geq 0$, where $\varepsilon^k_N$ is the trivial rank-$k$ vector bundle over $N$. 

\begin{lemma}\label{cor:tangential}
Let $M$ be a stably parallelizable Charney--Davis $n$-manifold with corresponding hyperbolizing piece $(X,g)$. 
Then for any foldable, NPC smooth cube complex $\msf{C}$ of dimension $n$, the normal smooth structure on $\calH_X(\msf{C})$ satisfies that the map
\beq
g_{\msf{C}}:\calH_X(\msf{C})\to \msf{C}
\eeq
is stably tangential, that is, it is covered by a map of stable tangent bundles.
\end{lemma}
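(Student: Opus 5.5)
The plan is to use the embedding $\calH_X(\msf{C})\subset \msf{C}\times X$ from Section \ref{sec:propertieshyperbolization}. With Ontaneda's normal smooth structure this is a smooth embedding with trivial normal bundle $\nu$. Since $\dim\calH_X(\msf{C})=n$ and $\dim(\msf{C}\times X)=2n$, we have $\nu\cong\varepsilon^n$. Restricting the tangent bundle of the product along the inclusion $\iota$ then gives
\[ T\calH_X(\msf{C})\oplus\varepsilon^n\;\cong\;\iota^*T(\msf{C}\times X)\;\cong\; g_{\msf{C}}^*T\msf{C}\oplus f_X^*TX, \]
where I use that the two projections of $\msf{C}\times X$ restrict to $g_{\msf{C}}$ and $f_X$.

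It then remains to show that $f_X^*TX$ is stably trivial. For this I regard $X$ as obtained from $M$ by cutting along $Y_1,\dots,Y_n$. The metric completion comes with a natural map $\pi\colon X\to M$, which is a local diffeomorphism in the sense of manifolds with corners: near a corner point it is the inclusion of a sector $[0,\infty)^k\times\R^{n-k}$ into $\R^n$. Hence $TX\cong\pi^*TM$. Since $M$ is stably parallelizable, $TM\oplus\varepsilon^k\cong\varepsilon^{n+k}$ for some $k$. Pulling back gives $TX\oplus\varepsilon^k\cong\varepsilon^{n+k}$, and pulling back further along $f_X$ gives $f_X^*TX\oplus\varepsilon^k\cong\varepsilon^{n+k}$.

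Adding $\varepsilon^k$ to the displayed isomorphism yields
\[ T\calH_X(\msf{C})\oplus\varepsilon^{n+k}\cong g_{\msf{C}}^*T\msf{C}\oplus\varepsilon^{n+k}, \]
so $g_{\msf{C}}$ is covered by a map of stable tangent bundles. This is exactly the form asked for in Theorem \ref{thm:main-stably-tan}.

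The main obstacle is to make sense of $T(\msf{C}\times X)$ and of the splitting along $\calH_X(\msf{C})$, since $\msf{C}$ is only smooth cube by cube and $X$ has corners. I would handle this through Ontaneda's normal smooth structure. By construction, the normal smooth structure on $\calH_X(\msf{C})$ is compatible with the product structure on each piece $\sigma\times X$, where $\sigma$ is a top cell, and the identifications across faces are made via normal coordinates. The tangent bundle of the ambient product should therefore be taken as the bundle $T\msf{C}\boxplus TX$, glued from these local pieces. This is compatible with the smooth structure on $\msf{C}$ as a smooth manifold, because the cubulation is a homeomorphism that is smooth on cubes. If needed, I would replace $TX$ by $\pi^*TM$ from the start, which is a genuine vector bundle on $X$, so that no corner issues arise in the stable triviality step.
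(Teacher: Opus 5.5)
Your proposal is correct and follows the paper's proof: you use Ontaneda's smooth embedding $\calH_X(\msf{C})\subset\msf{C}\times X$ with trivial rank-$n$ normal bundle to get $T\calH_X(\msf{C})\oplus\varepsilon^n\cong g_{\msf{C}}^*T\msf{C}\oplus f_X^*TX$, and then dispose of $f_X^*TX$ using that $X$ inherits stable parallelizability from $M$. The only differences are cosmetic: the paper upgrades stable parallelizability of $X$ to genuine parallelizability (a stably parallelizable manifold with nonempty boundary is parallelizable), whereas you carry the extra $\varepsilon^k$, which works equally well; and you spell out the argument $TX\cong\pi^*TM$ via the local diffeomorphism $\pi\colon X\to M$, which the paper leaves implicit.
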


\begin{proof}
Recall that with the normal smooth structure on $\calH_X(\msf{C})$, the inclusion
$\calH_X(\msf{C})\subset \msf{C}\times X$ is a smooth embedding with trivial normal bundle. 

Considering the commutative diagram \eqref{eq.diagramhyp} and an isomorphism $\nu(\calH_X(\msf{C}))\cong \ep^n_{\calH_X(\msf{C})}$, we obtain
\beq
T\calH_X(\msf{C})\oplus\varepsilon^n_{\calH_X(\msf{C})}\cong
T\calH_X(\msf{C})\oplus\nu(\calH_X(\msf{C}))\cong g_{\msf{C}}^*(T\msf{C})\oplus{f_X^*}(TX)
\cong
g_{\msf{C}}^*(T\msf{C}\oplus\varepsilon^n_{\msf{C}}),
\eeq
where in the last isomorphism we used that $X$ is stably parallelizable because $M$ is, and that for $X$ this implies actual parallelizability (if the classifying map $X\to B\On(n)\to B\On(n+1)$ of the stable tangent bundle is nullhomotopic, then the tangent classifier $X\to B\On(n)$ factors through $S^n$, so it is nullhomotopic).
\end{proof}
%%%%%%%%%%%%%%%%%%%%%%%%%%%%%%%%%%%%%%%%%%%%%%%%%%%%%%%%%%%%%%%%%%%%%%%%%%%%%%%%%%%%%%%%
%%%%%%%%%%%%%%%%%%%%%%%%%%%%%%%%%%%%%%%%%%%%%%%%%%%%%%%%

\section{Flexibility of Charney--Davis manifolds}\label{sec:CDcovers}
In this section we apply Theorem \ref{thm.maingrouptheory} (whose proof is deferred to Section \ref{sec.dehnfilling}) to construct Charney--Davis manifolds, thereby proving Theorem \ref{thmalpha:flexibleCD} and Corollary \ref{cor:connectedfaces}. Theorem \ref{thm:main-stably-tan} will follow directly from Theorem \ref{thmalpha:flexibleCD}.

First we prove Proposition \ref{prop:CDcoversgen}, a general statement about a Charney--Davis-like configuration for a closed manifold with cubulable hyperbolic group. In our hyperbolic setting, this specializes to Corollary \ref{coro:CDcovershyp}. Combining this result with the existence of many arithmetic and non-arithmetic Charney--Davis manifolds (Corollary \ref{cor:infiniteCDmanifolds} and Proposition \ref{prop:nonarithmeticCD} respectively), we deduce Theorem \ref{thmalpha:flexibleCD}. 
For this section we assume familiarity with hyperbolic groups \cite{CDP,ghys-delaharpe,bridson-haefliger}, as well as separability properties of cubulable hyperbolic groups \cite{haglund-wise.special,AGM.QCERF,agol.haken,wise.QCH}.

%%%%%%%%%%%%%%%%%%%

\subsection{Obtaining Charney--Davis covers}\label{subsec:CDcovers} 

Our first step is to prove the following proposition, which is the topological counterpart of Theorem \ref{thm.maingrouptheory}.

\begin{proposition}\label{prop:CDcoversgen}
    Let $M$ be a compact manifold and let $\Phi$ be a finite group acting continuously on $M$. Let $\calY=\{Y_1,\dots, Y_n\}$ be a finite $\Phi$-invariant collection of compact, connected, $\pi_1$-injective submanifolds of $M$. 
    Suppose that:
    \begin{enumerate}
        \item[(a)] The fundamental group $\G=\pi_1(M,y)$ is hyperbolic and cubulable.
        \item[(b)] The intersection $Y_1\cap \cdots \cap Y_n$ contains a point $y$ that is fixed by $\Phi$.
        \item[(c)] The subgroups $H_i=\pi_1(Y_i,y)$ are quasiconvex in $\G$.
        \item[(d)] For the universal cover $\wt M$ of $M$, let $\wt Y_1,\dots, \wt Y_n\subset \wt M$ be lifts of $Y_1,\dots, Y_n$ associated to $H_1,\dots,H_n$ respectively. 
        Then for all non-empty subsets $I\subset \{1,\dots,n\}$, the action of $H_I:=\bigcap_{i\in I}{H_i}$ on $\wt Y_I := \bigcap_{i\in I}{\wt Y_i}$ is cocompact.
    \end{enumerate}
Then for any finite index subgroup $\wh\G<\G$ there exists a finite cover $M' \ra M$, a lifted continuous action of $\Phi$ on $M'$, and a $\Phi$-invariant collection $\calY'=\{Y_1',\dots,Y_n'\}$ with each $Y_i'$ a connected lift of $Y_i$ to $M'$, such that if $q:\wt M \ra M'$ is the associated covering map, then: 
\begin{enumerate}
    \item $\pi_1(M')<\wh \G$; and, 
    \item $q(\wt Y_I)=\bigcap_{i\in I} Y'_i$ for any non-empty subset $I\subset \{1,\dots,n\}$.  
\end{enumerate}
\end{proposition}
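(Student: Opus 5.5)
The plan is to reduce the statement to Theorem \ref{thm.maingrouptheory}, together with double coset separability in virtually special hyperbolic groups, and then to read off the geometric conclusion from the group-theoretic one.

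\textbf{Setup.} Since $\Phi$ fixes the basepoint $y$, it acts on $\G=\pi_1(M,y)$ by automorphisms. Fix a lift $\wt y\in\wt M$ of $y$. Each $\phi\in\Phi$ lifts uniquely to a homeomorphism of $\wt M$ fixing $\wt y$, and this lift is $\phi_*$-equivariant. Since $\Phi$ permutes the $Y_i$, and $\wt Y_i$ is the lift of $Y_i$ through $\wt y$, these lifts permute the $\wt Y_i$, and $\phi_*(H_i)=H_{\phi(i)}$. Let $\calQ=\{H_I : \emptyset\ne I\subset\{1,\dots,n\}\}$. This family is finite and $\Phi$-invariant, and it consists of quasiconvex subgroups, because finite intersections of quasiconvex subgroups of a hyperbolic group are quasiconvex.

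\textbf{Choice of $\G_0$.} Fix a nonempty $J\subset\{1,\dots,n\}$ and an index $k$. Consider the set of $\ga\in\G$ with $\ga\wt Y_J\cap\wt Y_k\ne\emptyset$. By hypothesis (d), $H_J$ acts cocompactly on $\wt Y_J$ and $H_k$ acts cocompactly on $\wt Y_k$; combined with proper discontinuity of $\G$ on $\wt M$, this set is a finite union of double cosets $H_k g_1 H_J\cup\dots\cup H_k g_m H_J$.

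For each $g_j\notin H_kH_J$, the double coset $H_kg_jH_J$ does not contain $1$. In a hyperbolic cubulable group, which is virtually compact special by Agol, products of quasiconvex subgroups are separable (Minasyan, together with Haglund--Wise). Hence there is a finite index subgroup avoiding each such $H_kg_jH_J$.

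Let $\G_0$ be the intersection of $\wh\G$ with these finitely many subgroups, over all pairs $(J,k)$. Apply Theorem \ref{thm.maingrouptheory} to $(\G,\Phi,\calQ,\G_0)$. This yields a $\Phi$-invariant finite index normal subgroup $\G'<\G_0$ such that $\G'\cap H_kH_J\subset(\G'\cap H_k)(\G'\cap H_J)$. Since $\G'<\G_0$, whenever $\ga\in\G'$ satisfies $\ga\wt Y_J\cap\wt Y_k\neq\emptyset$, we get $\ga\in H_kH_J$.

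\textbf{Construction of $M'$.} Set $M'=\wt M/\G'$, with covering map $q$, and $Y_i'=q(\wt Y_i)$. Each $Y_i'$ is a connected lift of $Y_i$. Because $\G'$ is $\phi_*$-invariant for every $\phi$, the lifted $\Phi$-action on $\wt M$ descends to $M'$ and permutes the $Y_i'$. Conclusion (1) holds because $\G'<\wh\G$.

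\textbf{Conclusion (2).} The inclusion $q(\wt Y_I)\subset\bigcap_{i\in I}Y_i'$ is clear. For the reverse inclusion we induct on $|I|$, the case $|I|=1$ being the definition of $Y_i'$. Write $I=J\cup\{k\}$ and let $p\in\bigcap_{i\in I}Y_i'$. By induction, $p=q(z)$ for some $z\in\wt Y_J$, and $p=q(w)$ for some $w\in\wt Y_k$. Hence $w=\ga z$ for some $\ga\in\G'$.

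Then $\ga\wt Y_J$ meets $\wt Y_k$ at $w$, so $\ga\in\G'\cap H_kH_J$ by the choice of $\G_0$. By Theorem \ref{thm.maingrouptheory}, we can write $\ga=ab$ with $a\in\G'\cap H_k$ and $b\in\G'\cap H_J$. The point $bz$ lies in $\wt Y_J$, because $b\in H_J$ preserves $\wt Y_J$. The point $a^{-1}w=bz$ lies in $\wt Y_k$, because $a\in H_k$ preserves $\wt Y_k$. So $bz\in\wt Y_I$, and $q(bz)=q(z)=p$ since $b\in\G'$. This proves the reverse inclusion.

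\textbf{Main obstacle.} The delicate points are the finiteness of the set of double cosets, which is where the cocompactness in (d) is used, and the separability of the bad double cosets $H_kg_jH_J$, for which I would cite double coset separability of quasiconvex subgroups in virtually special hyperbolic groups. The remaining steps are routine covering space arguments.
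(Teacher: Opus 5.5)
Your proposal is correct and follows essentially the paper's route. You choose $\G_0$ via cocompactness from (d) plus double coset separability (the paper's Lemma \ref{lem.G_0}), apply Theorem \ref{thm.maingrouptheory}, and lift intersection points by splitting the relevant deck transformation through the double coset property (the paper's Lemma \ref{lem.Rintersectiondoublecoset}). Your variations are valid small simplifications: you apply the theorem directly in $\G$ with $\calQ=\{H_I\}$ and $\G_0<\wh\G$, rather than in a $\Phi$-invariant $\wh\G$ with the subgroups $\wh H_I$, and you induct one index at a time using only the pairs $(\{k\},J)$.
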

\begin{remark}
    A similar statement holds in the setting where $M$ is a compact $CW$, simplicial, or cube complex, $\Phi$ preserves the CW/simplicial/cubical structure, and $Y_1,\dots,Y_n$ are subcomplexes of $M$.
\end{remark}

In the setting of Charney--Davis manifolds, the proposition above implies the following.

\begin{corollary}\label{coro:CDcovershyp}
Let $M$ be a closed connected hyperbolic $n$-manifold with cubulable fundamental group and a hyperplane system $\calY=\{Y_1,\dots,Y_n\}$ satisfying all the conclusions of Theorem \ref{thm:CD-piece}, except that $M$ is not necessarily orientable. If $M_0$ is any finite cover of $M$, then there exists a Charney--Davis manifold $M'$ with hyperplane system $\calY'=\{Y_1',\dots,Y_n'\}$ and such that:
\begin{enumerate}
    \item $M'$ covers $M_0$ and the induced covering $M' \ra M$ is $B_n$-equivariant;
    \item the covering $M' \ra M$ induces a covering $Y'_i \ra Y_i$ for each $i$; and,
    \item for each $I\subset \{1,\dots,n\}$ non-empty, the intersection $Y_I':=\bigcap_{i\in I}{Y_i'}$ is connected.
\end{enumerate} 
\end{corollary}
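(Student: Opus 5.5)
We will derive Corollary \ref{coro:CDcovershyp} from Proposition \ref{prop:CDcoversgen} with $\Phi=B_n$, and then check the remaining conditions of Theorem \ref{thm:CD-piece} for the resulting cover. We first verify hypotheses (a)--(d). For (a), $\G=\pi_1(M,y)$ is hyperbolic because $M$ is closed hyperbolic, and it is cubulable by assumption. For (b), $y=Y_1\cap\cdots\cap Y_n$ is fixed by $B_n$ by item (5). For (c), each $Y_i$ is closed and totally geodesic, so $H_i=\pi_1(Y_i,y)$ acts cocompactly on the convex subspace $\wt Y_i\cong\Hy^{n-1}$, and is therefore quasiconvex and $\pi_1$-injective. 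For (d), the $\wt Y_i$ through $\wt y$ are totally geodesic and pairwise orthogonal, so $\wt Y_I$ is a totally geodesic $\Hy^{n-|I|}$. The image of $\wt Y_I$ in $M$ is contained in a component of the compact set $\bigcap_{i\in I}Y_i$, which is a closed totally geodesic submanifold by orthogonality. The stabilizer of $\wt Y_I$ in $\G$ acts cocompactly on it. We need this stabilizer to be $H_I$. An element $\ga$ stabilizing $\wt Y_I$ carries the local hyperplanes $\wt Y_i$ through points of $\wt Y_I$ to lifts of $Y_i$. Since the $\wt Y_i$ are the unique lifts containing $\wt Y_I$ in the $i$-th direction, and since the $r_i$ distinguish the $Y_i$ (each $Y_i$ is a component of $\Fix(r_i)$), $\ga$ must stabilize each $\wt Y_i$, hence $\ga\in H_I$. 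Thus (d) holds.

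Given $M_0$, apply the proposition with $\wh\G=\pi_1(M_0)$, after conjugating so that the basepoint is $y$; choosing a finite-index subgroup if necessary handles this. This produces $M'$ with a lifted $B_n$-action, a $B_n$-invariant system $\calY'$, and the property $q(\wt Y_I)=\bigcap_{i\in I}Y_i'$. Items (1) and (2) follow directly, since $\pi_1(M')<\wh\G$ and each $Y_i'$ is a connected lift of $Y_i$, hence covers it. Item (3) follows because $q(\wt Y_I)$ is connected. To arrange orientability, we apply the proposition to $\wh\G\cap\G^{+}$ instead, where $\G^+$ is the intersection of the orientation subgroups of $M$ and of all the $Y_i$; this is finite index and preserved by the argument. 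Concretely, we orient $M'$ by pulling back from the orientation double cover. For the $Y_i'$, we pass from $\G$ to a characteristic finite-index subgroup contained in the kernels of the orientation characters of $M$ and each $H_i$, extended appropriately. The cleanest route is to use that $\pi_1(Y_i')=\pi_1(M')\cap H_i$ (from the proof of the proposition) and to require $\pi_1(M')$ to lie in a finite-index subgroup whose intersection with each $H_i$ lies in the orientation kernel of $Y_i$; such a subgroup exists by separability of the quasiconvex $H_i$ in the virtually special group $\G$.

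It remains to check items (1)--(5) of Theorem \ref{thm:CD-piece} for $M'$. The hyperplanes $Y_i'$ are totally geodesic and orthogonal, since these are local properties. The intersection $Y_1'\cap\cdots\cap Y_n'=q(\wt Y_{\{1,\dots,n\}})=q(\wt y)=:y'$ is a single point. The lifted action fixes $y'$, because the lift is chosen to fix the basepoint, and the $B_n$-representation on $T_{y'}M'\cong T_yM$ is standard. The action is isometric since it lifts an isometric action. For item (1), $Y_i'$ is a component of $\Fix(r_i)$ on $M'$: $r_i$ fixes $Y_i'$ pointwise near $y'$, and hence everywhere by connectedness and the fact that the lifted isometry agrees with a reflection; $Y_i'$ is a full component because it has codimension one. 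The main obstacle we expect is the lifting step: we must make sure the $B_n$-action lifts to $M'$ fixing $y'$ (this is the $\Phi$-invariance of $\G'$ in Theorem \ref{thm.maingrouptheory}) while simultaneously imposing the orientability constraints. To handle this, we would fold the orientation subgroups into $\G_0$ before invoking the $\Phi$-invariance; this works because these subgroups are themselves $B_n$-invariant.
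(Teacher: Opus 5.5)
Your proposal is correct and follows the paper's proof. Both apply Proposition \ref{prop:CDcoversgen} with $\Phi=B_n$ to a finite-index subgroup of $\pi_1(M_0)$ consisting of orientation-preserving elements, then get the single intersection point from $q(\wt Y_{\{1,\dots,n\}})=q(\wt y)$ and connectedness of $Y_I'$ from convexity of $\wt Y_I$.

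The separability detour for orientability of the $Y_i'$ is unnecessary. The hypotheses relax item (6) only for $M$, so each $Y_i$ is orientable, and therefore so is its cover $Y_i'$.
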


\begin{proof}
   Let $y$ be intersection point of all the $Y_i$'s and $\G=\pi_1(M,y)$ and $H_i=\pi_1(Y_i,y)$ for $i=1,\dots,n$. Then $\Phi=B_n$ acts on $M$ preserving preserving the totally geodesic codimension-1 system $\calY$. We first verify the assumptions in the statement of Proposition \ref{prop:CDcoversgen}. Indeed, (a) and (b) follow by assumption, and (c) and (d) follow since each $Y_i$ is totally geodesic, so that each $\wt Y_I$ is also totally geodesic in $\wt M=\Hy^n$ for $I\subset \{1,\dots,n\}$ non-empty. 

   Given a finite cover $M_0$ of $M$ with fundamental group $\G_0<\G$, let $\wh \G<\G_0$ be a finite index subgroup consisting only of orientation-preserving elements. We then apply Proposition \ref{prop:CDcoversgen} to find a (connected) finite cover $M'\ra M$, a lifted isometric action of $B_n$ on $M'$, and a $B_n$-invariant system of orientable, totally geodesic codimension-1 submanifolds $\calY'=\{Y_1',\dots,Y_n'\}$ such that
   \begin{enumerate}
       \item[($i$)] $\pi_1(M')<\wh \G<\pi_1(M)$; and,
       \item[($ii$)] if $q:\wt M \ra M'$ is the associated covering map, then
    \[\bigcap_{i\in I} Y'_i=q(\wt Y_I) \quad  \text{ for any non-empty subset }I\subset \{1,\dots,n\}.\] 
   \end{enumerate}
Condition ($i$) implies that $M'$ is an orientable cover of $M_0$, and condition ($ii$) implies that the intersection $\bigcap_{i=1}^n{Y'_i}=q(\bigcap_{i=1}^n \wt Y_i)$ is a single point since $\bigcap_{i=1}^n \wt Y_i$ is a single point (here we use that the submanifolds $\wt Y_i$ intersect transversely). Hence, $M'$ is a Charney--Davis manifold with hyperplane system $\calY'$, and the conclusions (1) and (2) follow from the construction of $M'$. Finally, conclusion (3) follows since each $\wt Y_I$ is connected (indeed convex in $\wt M=\Hy^n$), so that $Y'_I=q(\wt Y_I)$ is also connected.
\end{proof}

We move on to the proof of Proposition \ref{prop:CDcoversgen}, our main tool being Theorem \ref{thm.maingrouptheory}. In the setting of this proposition, the group $\Phi$ naturally acts by automorphisms on $\G=\pi_1(M,y)$. We also fix a finite index subgroup $\wh \G<\G$. Up to intersecting $\wh \G$ with its translates by the action of $\Phi$, we can assume that $\wh \G$ is $\Phi$-invariant. For this convention, we set $\wh H_I:=H_I \cap \wh \G$ for all $I\subset \{1,\dots,n\}$ non-empty. Then the collections $\calH=\{\wh H_1,\dots,\wh H_n\}$ and $\calQ=\{\wh H_I:I \subset \{1,\dots,n\} \text{ non-empty}\}$ are both $\Phi$-invariant and consist of quasiconvex subgroups by (c) and \cite[Theorem~1.2]{hruska}.

We begin with some preliminary lemmas.

\begin{lemma}\label{lem.G_0}
    There exists a normal, $\Phi$-invariant finite index subgroup $\G_0<\wh \G$ satisfying the following. For all $g\in \G_0$ and non-empty subsets $I,J\subset \{1,\dots,n\}$, if $\wt Y_I \cap g \wt Y_J\neq \emptyset$ then $g\in \wh H_I \wh H_J$. 
\end{lemma}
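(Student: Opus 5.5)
We will use a finiteness argument coming from properness and cocompactness, and then separate finitely many bad elements with a finite index normal subgroup. Fix non-empty $I,J$. Consider the set
\[ S_{I,J}=\{g\in\G : \wt Y_I\cap g\wt Y_J\neq\emptyset\}. \]
This set is invariant under left multiplication by $H_I$ and right multiplication by $H_J$. Indeed, if $h\in H_I$ then $h\wt Y_I=\wt Y_I$; if $h'\in H_J$ then $gh'\wt Y_J=g\wt Y_J$.

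By hypothesis (d), $H_J$ acts cocompactly on $\wt Y_J$. Choose a compact $K_J\subset\wt Y_J$ with $H_J K_J=\wt Y_J$, and similarly $K_I\subset \wt Y_I$. Then every $g\in S_{I,J}$ can be written as $g=h g_0 h'$ with $h\in H_I$, $h'\in H_J$ and $g_0K_J\cap K_I\neq\emptyset$. Since $\G$ acts properly on $\wt M$, there are only finitely many such $g_0$. Hence $S_{I,J}$ is a finite union of double cosets $H_I g_k H_J$, with $k=1,\dots,m_{I,J}$.

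It then suffices to find $\G_0$ such that, for each representative $g_k$, either the double coset $H_Ig_kH_J$ misses $\G_0$, or any $g\in \G_0\cap H_Ig_kH_J$ lies in $\wh H_I\wh H_J$. We treat two kinds of double cosets.

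\emph{Cosets not meeting $\wh H_I\wh H_J$.} Here we want $\G_0\cap H_Ig_kH_J=\emptyset$. Since $\wh\G$ has finite index, the double coset $H_Ig_kH_J$ is a finite union of sets $a\wh H_I g_k \wh H_J b$ with $a\in H_I$, $b\in H_J$. Because $\G_0$ will be normal, $\G_0$ meets $a\wh H_Ig_k\wh H_Jb$ iff it meets $\wh H_I (a^{-1}\cdot)\ldots$; more simply, we reduce to requiring $\G_0\cap \wh H_I x\wh H_J=\emptyset$ for finitely many explicit elements $x\notin \wh H_I\wh H_J$. Products of two quasiconvex subgroups are separable in hyperbolic cubulable (virtually special) groups; this is Minasyan's double coset separability, via Haglund--Wise and Agol. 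So for each such $x$ there is a finite index subgroup $\Delta_x\supset \wh H_I\wh H_J$ avoiding $x$. If $\G_0\subset \bigcap\Delta_x$ is normal and $g\in \G_0\cap \wh H_Ix\wh H_J$, then $x\in \wh H_I g\wh H_J$. Since $\G_0$ is normal and $\wh H_I,\wh H_J\subset\wh\G$, it is not directly true that $\wh H_I g \wh H_J\subset\Delta_x$. For this reason I would instead separate the whole double coset from the identity in a normal way. If $x=h_1gh_2$ with $g\in\G_0$, then $x\in h_1h_2\,(h_2^{-1}gh_2)\in \wh H_I\wh H_J\G_0$. So it suffices that $x\notin \wh H_I\wh H_J\G_0$. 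By double coset separability, $\wh H_I\wh H_J$ is closed in the profinite topology, so it equals the intersection of the sets $\wh H_I\wh H_J N$ over finite index normal $N$. Hence a single finite index normal $N_x$ with $x\notin \wh H_I\wh H_JN_x$ exists.

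\emph{Cosets meeting $\wh H_I\wh H_J$.} Here we may take $g_k\in \wh H_I\wh H_J$. The same argument applies: $H_Ig_kH_J$ is a finite union of $\wh H_I$-$\wh H_J$ double cosets, and those different from $\wh H_I\wh H_J$ itself are handled as above.

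Finally, let $\G_0$ be the intersection of all the $N_x$ (finitely many, over all $I,J,k$ and coset pieces) with $\wh\G$. Then intersect further with the $\Phi$-translates to make $\G_0$ $\Phi$-invariant; it stays finite index and normal since $\Phi$ is finite. Then for $g\in\G_0\cap S_{I,J}$ we have $g\in \wh H_Ix\wh H_J$ for some listed $x$, so $x\in \wh H_I\wh H_J\G_0$, which forces $x\in\wh H_I\wh H_J$ and hence $g\in\wh H_I\wh H_J$.

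The main obstacle is the separability input: closedness of $\wh H_I\wh H_J$ in the profinite topology for quasiconvex subgroups of a virtually compact special hyperbolic group. I would cite Minasyan's result on products of quasiconvex subgroups in GFERF groups together with QCERF from Haglund--Wise/Agol, with care that the $\wh H$ are quasiconvex by hypothesis (c) and Hruska.
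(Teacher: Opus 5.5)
Your proof is correct and follows the same route as the paper. Properness plus cocompactness leaves finitely many bad $\wh H_I$--$\wh H_J$ double cosets; double coset separability (Minasyan, via Agol and Haglund--Wise) gives a finite-index normal $N$ with $x\notin \wh H_I\wh H_J N$ for each bad representative $x$; and one then intersects over $I,J$ and the $\Phi$-translates. The paper avoids your detour through $H_I$--$H_J$ double cosets by applying cocompactness directly to the finite-index subgroups $\wh H_I,\wh H_J$ inside $\wh\G$. If you keep the detour, write $H_I=\bigcup_i \wh H_I a_i$ and $H_J=\bigcup_j b_j\wh H_J$, so that $H_Ig_kH_J=\bigcup_{i,j}\wh H_I(a_ig_kb_j)\wh H_J$; the sets $a\wh H_I g_k\wh H_J b$ you wrote are not in general $\wh H_I$--$\wh H_J$ double cosets.
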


\begin{proof}
    Fix non-empty subsets $I,J \subset \{1,\dots,n\}$ and compact sets $C_I \subset \wt Y_I$ and $C_J \subset \wt Y_J$ such that $\wh H_IC_I=\wt Y_I$ and $\wh H_JC_J=\wt Y_J$. These sets exist by item (d). Let $\calB_{I,J}$ be the set of all $g\in \wh \G \bs \wh H_I\wh H_J$ satisfying $\wt Y_I \cap g \wt Y_J \neq \emptyset$. 

    Clearly $h_Igh_J \in \calB_{I,J}$ whenever $h_I\in \wh H_I,h_J\in \wh H_J$ and $g\in \calB_{I,J}$, so $\calB_{I,J}=\wh H_IF_{I,J}\wh H_J$ for some $F_{I,J}\subset \wh \G \bs \wh H_i\wh H_J$. If $g\in \calB_{I,J}$ and $\wt z \in \wt Y_I \cap g\wt Y_J$ for some $\wt z$, then we can find $h_I\in \wh H_I$ and $h_J\in \wh H_J$ such that $h_I\wt z \in C_I$ and $h_J^{-1}g^{-1}\wt{z}\in C_J$. This implies that 
$h_I\wt z \in C_I \cap (h_Igh_J)C_J$, so $g'=h_Igh_J$ satisfies \begin{equation}\label{eq.CICJ}
    C_I \cap g'C_J \neq \emptyset.
\end{equation} Since there are only finitely many such elements $g'$ satisfying \eqref{eq.CICJ}, we can choose the set $F_{I,J}$ to be finite. 

By item (a), the group $\wh \G$ is hyperbolic and cubulable, hence quasiconvex subgroups are separable by Agol's theorem \cite{agol.haken} and \cite[Theorem~1.3]{haglund-wise.special}. Therefore, $\wh H_I\wh H_J$ is separable in $\wh\G$ by \cite[Theorem~1.1]{minasyan} and there exists a normal finite index subgroup $K_{I,J}<\wh \G$ such that $F_{I,J}\cap \wh H_IK_{I,J}\wh H_{I,J}=\emptyset$. That is, $K_{I,J}$ is disjoint from $\calB_{I,J}$. Therefore, the subgroup
\[\G_0:=\bigcap_{\phi\in \Phi}\bigcap_{I,J}{\phi(K_{I,J})}\]
satisfies all the conclusions of the lemma, where the second intersection runs over all the pairs of non-empty subsets $I,J\subset \{1,\dots,n\}$.     
\end{proof}

\begin{lemma}\label{lem.Rintersectiondoublecoset}
    Let $\G_0<\wh \G$ be given by Lemma \ref{lem.G_0} and suppose $R<\G_0$ satisfies
    \begin{equation}\label{eq.R}
        R\cap \wh H_I\wh H_J \subset (R\cap \wh H_I)(R \cap \wh H_J)
    \end{equation}
    for all non-empty $I,J\subset \{1,\dots,n\}$. Then for all $k\geq 2$ and all non-empty subsets $I_1,\dots, I_k \subset \{1,\dots,n\}$, if $g_1\wt Y_{I_1}\cap \dots \cap g_k \wt Y_{I_k}\neq \emptyset$ for some $g_1,\dots,g_k\in R$ then we can find $a\in R$ and $h_1\in R\cap \wh H_{I_1},\dots, h_k\in R\cap \wh H_{I_k}$ such that $(g_1,\dots,g_k)=(ah_1,\dots,ah_k)$.
\end{lemma}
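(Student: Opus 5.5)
The plan is to argue by induction on $k$. The case $k=2$ is handled directly by Lemma \ref{lem.G_0} together with \eqref{eq.R}. For the inductive step, the first $k$ translates are collapsed into a single translate $a\wt Y_J$ with $J=I_1\cup\dots\cup I_k$, and then the case $k=2$ is applied once more.

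\emph{Case $k=2$.} Suppose $g_1\wt Y_{I_1}\cap g_2\wt Y_{I_2}\neq\emptyset$ with $g_1,g_2\in R$.
\begin{enumerate}
\item Translating by $g_1^{-1}$ gives $\wt Y_{I_1}\cap g\wt Y_{I_2}\neq\emptyset$ for $g=g_1^{-1}g_2\in R<\G_0$.
\item Lemma \ref{lem.G_0} then gives $g\in \wh H_{I_1}\wh H_{I_2}$. Since also $g\in R$, hypothesis \eqref{eq.R} lets us write $g=h_1h_2^{-1}$ with $h_1\in R\cap\wh H_{I_1}$ and $h_2\in R\cap \wh H_{I_2}$.
\item Set $a=g_1h_1\in R$. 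Then $g_1=ah_1^{-1}$ and $g_2=g_1g=g_1h_1h_2^{-1}=ah_2^{-1}$.
\end{enumerate}
Since $h_1^{-1}\in R\cap\wh H_{I_1}$ and $h_2^{-1}\in R\cap\wh H_{I_2}$, this is the required form.

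\emph{Inductive step.} Assume the statement for some $k\ge 2$, and suppose $g_1\wt Y_{I_1}\cap\dots\cap g_{k+1}\wt Y_{I_{k+1}}\neq\emptyset$.
\begin{enumerate}
\item The sub-intersection of the first $k$ translates is also non-empty. By induction, $g_j=ah_j$ for $j\le k$, with $a\in R$ and $h_j\in R\cap \wh H_{I_j}$.
\item Each $H_i=\pi_1(Y_i,y)$ stabilizes the lift $\wt Y_i$, so $H_I$ stabilizes $\wt Y_I=\bigcap_{i\in I}\wt Y_i$. Hence $g_j\wt Y_{I_j}=ah_j\wt Y_{I_j}=a\wt Y_{I_j}$.
\item By the definition of $\wt Y_I$, we get $\bigcap_{j\le k}a\wt Y_{I_j}=a\wt Y_J$ with $J=I_1\cup\dots\cup I_k$, which is non-empty.
\item The hypothesis therefore says $a\wt Y_J\cap g_{k+1}\wt Y_{I_{k+1}}\neq\emptyset$. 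The case $k=2$ yields $b\in R$, $h\in R\cap\wh H_J$ and $h'\in R\cap\wh H_{I_{k+1}}$ with $a=bh$ and $g_{k+1}=bh'$.
\item For $j\le k$ we have $g_j=b(hh_j)$. Since $J\supseteq I_j$, we have $\wh H_J=\bigcap_{i\in J}H_i\cap\wh\G\subseteq \wh H_{I_j}$, so $hh_j\in R\cap\wh H_{I_j}$.
\end{enumerate}
Taking $b$ as the new common element completes the induction.

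There is no serious obstacle here. The only points needing care are the stabilizer fact $H_I\wt Y_I=\wt Y_I$ and the inclusion $\wh H_J\subseteq \wh H_{I_j}$. Both follow immediately from the definitions $H_I=\bigcap_{i\in I}H_i$ and $\wt Y_I=\bigcap_{i\in I}\wt Y_i$, together with the choice of $\wt Y_i$ as the lift associated to $H_i$.
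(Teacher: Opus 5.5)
Your proof is correct and follows essentially the same route as the paper: induction on $k$, with the base case $k=2$ coming from Lemma \ref{lem.G_0} combined with \eqref{eq.R}, and the inductive step collapsing the earlier translates into a single $a\wt Y_J$ (using that $\wh H_I$ stabilizes $\wt Y_I$) and then reapplying the base case. The only differences are cosmetic: you index the step as $k\to k+1$ and write $g=h_1h_2^{-1}$ rather than $h_1^{-1}h_2$.
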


\begin{proof}
We will prove the assertion by induction on $k$. Suppose that $k=2$ and let $g_1,g_2\in R$ be such that $\wt Y_{I_1}\cap g_1^{-1}g_2 \wt Y_{I_2}\neq \emptyset$. Since $R<\G_0$ by assumption, \eqref{eq.R} and Lemma \ref{lem.G_0} imply that $g_1^{-1}g_2\in (R\cap \wh H_{I_1})(R\cap \wh H_{I_2})$, hence $g_1^{-1}g_2=h_1^{-1}h_2$ for some $h_1\in R\cap \wh H_{I_1}$ and $h_2\in \wh R\cap H_{I_2}$. This gives us $g_1=ah_1$ and $g_2=ah_2$ for $a=g_1h_1^{-1}\in R$, and the assertion follows in this case.

Now suppose that $k\geq 3$ and $g_1\wt Y_{I_1}\cap \dots \cap g_k \wt Y_{I_k}\neq \emptyset$ for some $g_1,\dots,g_k\in R$. Then $g_1\wt Y_{I_1}\cap \dots \cap g_{k-1} \wt Y_{I_{k-1}}\neq \emptyset$ and by induction we have $(g_1,\dots,g_{k-1})=(a'h'_1,\dots,a'h_{k-1}')$ for some $a'\in R,h_1'\in R\cap \wh H_{I_1},\dots,h'_{k-1}\in R\cap \wh H_{I_{k-1}}$. Then $g_1\wt Y_{I_1}\cap \dots \cap g_{k-1} \wt Y_{I_{k-1}}=a'\wt Y_{I'}$ for $I'=I_1\cup \cdots \cup I_{k-1}$, and we have $a'\wt Y_{I'}\cap g_k\wt Y_{I_k}\neq \emptyset$. Since the assertion holds for $k=2$ we have $(a',g_k)=(ah_{I'},ah_k)$ for some $a\in R, h_{I'}\in R\cap \wh H_{I'}=(R\cap \wh H_{I_1})\cap \cdots \cap (R\cap \wh H_{I_{k-1}})$ and $h_k\in R\cap \wh H_{I_k}$. Then we have $(g_1,\dots,g_{k-1},g_k)=(ah_1,\dots,ah_{k-1},ah_k)$ for $h_i=h_{I'}h_i'$ if $1\leq i\leq k-1$, completing the proof by induction.
\end{proof}

\begin{proof}[Proof of Proposition \ref{prop:CDcoversgen}]
We apply Theorem \ref{thm.maingrouptheory} to $\wh \G$, the collection $$\calQ=\{\wh H_I: I\subset \{1,\dots,n\} \text{ non-empty}\}$$ and the finite index subgroup $\G_0<\wh \G$ given by Lemma \ref{lem.G_0}, to find a $\Phi$-invariant, finite index normal subgroup $\G'<\wh \G$ such that $\G'<\G_0$ and for all non-empty subsets $I,J\subset \{1,\dots,n\}$ we have
\begin{equation}\label{eq:G'}\G'\cap \wh H_I \wh H_J \subset (\G' \cap \wh H_I)(\G' \cap \wh H_J).\end{equation}
Let $M'$ be the (connected) finite cover of $M$ associated to $\G'<\G$, which by construction satisfies (1). Let $Y_1',\dots,Y_n'\subset M'$ be the images of $\wt Y_1,\dots,\wt Y_n$ under the universal covering map $q: \wt M \ra M'$. Since each $Y_i$ is $\pi_1$-injective, $\wt Y_i$ is simply connected, so that $Y_1',\dots, Y_n'$ are connected lifts of $Y_1,\dots,Y_n$ respectively.

Given a non-empty subset $I\subset \{1,\dots,n\}$, we claim that $\bigcap_{i\in I} Y'_i=q(\wt Y_I)$. To show this, suppose that $z'\in \bigcap_{i\in I}{Y_i'}$ lifts to $\wt z$ in $\wt M$. Then there exist $g_i\in \G'$ for $i\in I$ such that $\wt z\in \bigcap {g_i\wt Y_i}$. Lemma \ref{lem.Rintersectiondoublecoset}, the fact that $\G'<\G_0$, and \eqref{eq:G'} imply that $(g_i)_{i\in I}=(ah_i)_{i\in I}$ for some $a\in \G'$ and $h_i\in \G'\cap \wh H_i$ for $i\in I$, which gives us 
$\wt z\in  \bigcap {g_i\wt Y_i}=a\left(\bigcap_{i\in I}\wt Y_i\right)$. 
Since $\wt z\in a\wt Y_I$ and $a\in \G'$, we have $z'=q(\wt z)\in q(a\wt Y_I)=q(\wt Y_I)$, as desired. This confirms (2).

To finish the proof, fix a lift $\wt y\in \wt Y_{\{1,\dots,n\}}$ of $y$, and let $y'=q(\wt y)\in M'$. Since $y$ is fixed by $\Phi$, the action of $\Phi$ on $M$ admits a unique lift to an action on $\wt M$ that fixes $\wt y$. By $\Phi$-invariance of $\G'$, this action on $\wt M$ induces an action on $M'$ that fixes $y'$. This action on $M'$ is the desired lift of the action of $\Phi$ on $M$.
\end{proof}
%%%%%%%%%%%%%%%%%%%%%
\subsection{Non-arithmetic Charney--Davis manifolds}\label{subsec:nonarithmetic}
Our next step is to prove the existence of plenty of non-arithmetic Charney--Davis manifolds.
\begin{proposition}\label{prop:nonarithmeticCD}
Let $\K\neq \Q$ be a totally real number field and $n\geq 2$. Then there exist infinitely many commensurability classes of non-arithmetic Charney--Davis $n$-manifolds with adjoint trace field containing $\K$. All these manifolds have cubulable fundamental groups.
\end{proposition}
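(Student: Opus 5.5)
The plan is to use Gromov--Piatetski-Shapiro hybrids (or the Belolipetsky--Thomson variant) of two non-commensurable arithmetic manifolds of simplest type over $\K$, and to arrange that the gluing is compatible with a $B_n$-symmetric hyperplane system. Then Corollary \ref{coro:CDcovershyp} will upgrade the result to a genuine Charney--Davis manifold.

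Concretely, fix $\varepsilon_1,\varepsilon_2\in\calO_\K$ as in Example \ref{ex:arithmeticCD} with $q_1,q_2$ non-similar forms; these give non-commensurable lattices $\calO(q_1),\calO(q_2)$. Both contain the common sublattice $\calO(q')$ where $q'=x_1^2+\dots+x_{n-1}^2-\varepsilon x_0^2$ for a suitable choice: the hyperplane $\{x_n=0\}$ has the same restricted form in both. The cleanest way to arrange this is to take $q_j=-\varepsilon x_0^2+x_1^2+\dots+x_{n-1}^2+\delta_j x_n^2$, with $\delta_j$ positive and totally positive and the two forms non-similar. However, this breaks the coordinate permutation symmetry in $B_n$.

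So instead I will glue along a hypersurface disjoint from the hyperplane system. Take $M_1$ a Charney--Davis manifold for $q$ (Corollary \ref{cor:infiniteCDmanifolds}), and find a $B_n$-invariant family of totally geodesic hypersurfaces $Z\subset M_1$ avoiding $Y_1\cup\dots\cup Y_n$. For example, take the $B_n$-orbit of a hyperplane $\{x_0\sqrt{\varepsilon}\cos\theta = \ldots\}$ far from $\wt y$, passing to covers via subgroup separability so that the orbit embeds disjointly from the $Y_i$ and from itself. Cutting $M_1$ along this orbit gives a $B_n$-invariant manifold with totally geodesic boundary containing the whole hyperplane system. I then glue copies of a piece from a non-commensurable arithmetic manifold $M_2$ (built from a form $q_2$ whose restriction to the hypersurface agrees with that of $q$), chosen so that the pieces are permuted by $B_n$. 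This yields a closed hyperbolic manifold $M$ with an isometric $B_n$-action and hyperplane system satisfying Theorem \ref{thm:CD-piece} except possibly orientability.

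Non-arithmeticity and the infinitely many commensurability classes follow from the Gromov--Piatetski-Shapiro argument: a hybrid of non-commensurable pieces is non-arithmetic, and varying $q_2$ (or $\varepsilon$) yields infinitely many classes, distinguished for instance by the commensurability classes of their arithmetic pieces (Vinberg's adjoint trace field contains $\K$ since traces of the $M_1$ subgroup lie in it). Cubulability will come from Bergeron--Haglund--Wise type results: hybrids glued along totally geodesic hypersurfaces have cubulable fundamental groups, via Wise's quasiconvex hierarchy or via many quasiconvex codimension-1 subgroups (Bergeron--Wise). Finally I will apply Corollary \ref{coro:CDcovershyp} to obtain orientable Charney--Davis covers, which remain in the same commensurability class.

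The main obstacle is the equivariant gluing. I need a $B_n$-invariant, embedded, totally geodesic separating family in $M_1$ disjoint from the $Y_i$, matched isometrically with a $B_n$-compatible counterpart in pieces of $M_2$. I would try first to work $B_n$-equivariantly in $\Hy^n$ with hyperplanes defined over $\K$ in both forms, and use Theorem \ref{thm.maingrouptheory} to find $B_n$-invariant covers where the orbits become embedded and disjoint.
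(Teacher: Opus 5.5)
This is essentially the paper's proof: it too takes a free $B_n$-orbit of commensurator translates of $\wt Y_1$ avoiding the hyperplane system, and makes this orbit embedded and disjoint in a cover by double-coset separability. It then passes to a Charney--Davis cover via Corollary \ref{coro:CDcovershyp}, glues in $B_n$-permuted copies of a piece cut from a cover of a non-commensurable arithmetic manifold (Millson's theorem supplies a lift of the hypersurface isometric to $Z_1'$), and gets cubulability from Wise's quasiconvex hierarchy and orientability from Corollary \ref{coro:CDcovershyp} again. Two adjustments: you do not need the family to be separating, since the paper handles the non-separating case directly and the separating case via a second copy $\what P$; but you do need $B_n$ to act freely on the family, so that the glued pieces can simply be permuted.
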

For the proof of this proposition, we let $M$ be an arithmetic Charney--Davis $n$-manifold of simplest type with hyperplane system $\calY=\{Y_1,\dots,Y_n\}$ with intersection $y$. We let $\G=\pi_1(M,y)$, which acts on $\Hy^n$ so that $B_n$ acts isometrically on $\Hy^n$ normalizing $\G$ and preserving a system $\wt Y_1,\dots, \wt Y_n$ of lifts of $\calY$ intersecting at a single point $\wt y$ that projects to $y$. We let $H_i=\pi_1(Y_i,y)$ for each $i$ and let $\K$ be the field of definition of $\G$.

We also fix a closed hyperbolic $n$-manifold $N$ so that:
\begin{itemize}
    \item $N$ is arithmetic of simplest type with field of definition $\K$, and non-commensurable with $M$.
    \item $N$ contains an immersed totally geodesic hypersurface $V$ that is commensurable to $Y_1$.
\end{itemize}

There are infinitely many commensurability classes of manifolds $N$ satisfying the two conditions above \cite{GPS}.

The idea is to cut an appropriate finite cover of $M$ along certain totally geodesic codimension-1 submanifolds in a $B_n$-equivariant way. Then we glue some hyperbolic manifolds obtained from $N$ along these boundary components, so that the resulting manifold is non-arithmetic. By performing this equivariantly, we can ensure that the constructed manifold is Charney--Davis.

First, we find the appropriate cover of $M$ that we can cut along.

\begin{lemma}\label{lem:firstlift}
    There is a finite cover $M'$ of $M$, so that:
    \begin{itemize}
        \item $M'$ is a Charney--Davis manifold with hyperplane system $\calY'={Y'_1,\dots,Y_n'}$.
        \item The covering $M'\ra M$ is $B_n$-equivariant and restricts to covering maps $Y_i'\ra Y_i$.
    \end{itemize}
    In addition, there exists a $B_n$-equivariant system $\calZ'=\{Z_1',\dots,Z_m'\}$ of totally geodesic codimension-1 embedded submanifolds of $M'$ such that:
    \begin{itemize}
      \item $Z_i'\cap (Y_1'\cup \cdots \cup Y_n')=Z_i'\cap Z_j'=\emptyset$ for all $i\neq j$.
      \item $\calZ'=\{bZ_1': b\in B_n\}$ and the action of $B_n$ on $\calZ'$ is free.
      \item $Z_1'$ is an orientable finite cover of $V$.
      \end{itemize}
\end{lemma}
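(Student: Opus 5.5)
We find a totally geodesic hypersurface $\wt Z\subset\Hy^n$ whose stabilizer in $\G$ is commensurable with $\pi_1(V)$ and whose $B_n$-orbit stays far from the $\wt Y_i$'s. We then pass to a deep $B_n$-invariant finite index subgroup using separability, and finish with Corollary \ref{coro:CDcovershyp}.

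\textbf{Choice of $\wt Z$.} Since $V$ is commensurable with $Y_1$, the hypersurface $\wt Y_1$ and its $\G$-translates are lifts of hypersurfaces commensurable with $V$. Pick $g\in\G$ such that $\wt Z:=g\wt Y_1$ is disjoint from $\wt Y_1\cup\dots\cup\wt Y_n$. We also want the $B_n$-translates $b\wt Z$, $b\in B_n$, to be pairwise disjoint and distinct. This is possible because $\G$ is a lattice and $B_n$ fixes $\wt y$. Taking $g$ with $g\wt y$ far from $\wt y$ and in generic position, the hyperplanes $b g\wt Y_1$ lie near the distinct points $bg\wt y$. Hyperplanes far from a base point subtend small visual angles, so these translates are mutually disjoint and avoid all $\wt Y_i$ (which pass through $\wt y$). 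If needed, $g$ can be chosen so that $bg\wt y\neq g\wt y$ for $b\neq 1$, using density of $\G$-orbits on the boundary. Let $K=\operatorname{Stab}_\G(\wt Z)$. It is quasiconvex and acts cocompactly on $\wt Z$, and $\Phi=B_n$ conjugates it to the stabilizers of the $b\wt Z$.

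\textbf{Separating the translates.} Consider the finite collection of quasiconvex subgroups $H_I$ together with $\operatorname{Stab}_\G(b\wt Z)$. The group $\G$ is cubulable: it is arithmetic of simplest type, hence virtually special by \cite{BHW}. Arguing as in Lemma \ref{lem.G_0}, with double coset separability \cite{minasyan}, we get a $B_n$-invariant finite index normal $\G_1$ with the following property. For $g'\in\G_1$, if $g'(b\wt Z)$ meets $b'\wt Z$ or some $\wt Y_I$, then $g'$ lies in the appropriate double coset, which forces $g'b\wt Z=b'\wt Z$. By disjointness, the last case occurs only when $b=b'$ and $g'\in\operatorname{Stab}(b\wt Z)$. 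Here we use that $g'\wt Z\cap\wt Y_I\ne\emptyset$ forces $g'\in\operatorname{Stab}(\wt Z)\,F\,H_I$ for a finite set $F$, and every $f\in F$ can be excluded because $\wt Z\cap\wt Y_I=\emptyset$. We additionally pass to orientation-preserving elements so that the projection of $\wt Z$ is orientable. We then apply Corollary \ref{coro:CDcovershyp} with $M_0=\Hy^n/\G_1$. This gives a Charney--Davis cover $M'$ with $\G'=\pi_1(M')<\G_1$, $B_n$-equivariant and restricting to covers $Y'_i\to Y_i$.

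\textbf{Conclusion.} Define $Z'_1$ as the image of $\wt Z$ in $M'$, and $\calZ'=\{bZ'_1\}$. Because $\G'<\G_1$, each $Z'_b$ is embedded, the distinct $Z'_b$ are disjoint, and each is disjoint from every $Y'_i$. The $B_n$-action on $\calZ'$ is free provided the $b\wt Z$ are in pairwise distinct $\G'$-orbits. This is again guaranteed by the separation step, since $g'b\wt Z=b'\wt Z$ with $g'\in\G_1$ was excluded for $b\ne b'$. Finally, $Z'_1$ is covered by $\wt Z$ with group $K\cap\G'$, which is commensurable with $\pi_1(V)$. After possibly shrinking $\G'$ further so that $K\cap\G'$ lies in $\pi_1(V)$ (for a suitable conjugate), and again using Corollary \ref{coro:CDcovershyp}, $Z'_1$ becomes an orientable finite cover of $V$.

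The main obstacle is the geometric choice of $g$: ensuring that the $B_n$-orbit of $g\wt Y_1$ consists of pairwise disjoint hyperplanes avoiding all $\wt Y_i$. It is also delicate to arrange that $Z'_1$ actually covers $V$ rather than merely being commensurable with it. For the latter I would first replace $V$ by a finite cover, or conjugate so that $\pi_1(V)\cap K$ has finite index in both, and then intersect the relevant subgroups.
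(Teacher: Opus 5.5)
Your overall strategy matches the paper's, but the step you flagged as the main obstacle does not work as written. The shared skeleton is this: choose a translate $\wt Z$ of $\wt Y_1$ whose $B_n$-orbit consists of pairwise disjoint, distinct hyperplanes missing every $\wt Y_i$. Then use separability of the double cosets $H_I L_b$ and $L_{b'}L_b$, where $L_b=\operatorname{Stab}_\G(b\wt Z)$, as in Lemma \ref{lem.G_0}. This gives a $B_n$-invariant normal finite-index subgroup in which these hyperplanes project to disjoint embedded hypersurfaces. Finally apply Corollary \ref{coro:CDcovershyp}; all the properties persist in further covers.

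\textbf{The choice of $\wt Z$.} Knowing that $g\wt y$ is far from $\wt y$, and that the points $bg\wt y$ are distinct, controls one point of $g\wt Y_1$, not the whole hyperplane. The hyperplane $g\wt Y_1$ passes through $g\wt y$ in an uncontrolled direction. It can come arbitrarily close to $\wt y$, meet some $\wt Y_i$, or meet $bg\wt Y_1$, so the claim that the hyperplanes ``lie near the distinct points $bg\wt y$'' is false. What you need is that the closure $\overline{g\wt Y_1}\subset \Hy^n\cup\partial\Hy^n$ lies in a small neighbourhood of a boundary point with trivial $B_n$-stabilizer.

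The paper gets this by translating $\wt Y_1$ by an element of the commensurator of $\G$. This commensurator is dense in $\Isom^+(\Hy^n)$ by arithmeticity, so one can approximate a fixed hyperplane having the required (open) disjointness properties.

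Staying inside $\G$, as you do, is also fine, but you need dynamics:
\begin{enumerate}
\item Pick a loxodromic $h\in\G$ whose attracting point $h^+$ has trivial $B_n$-stabilizer. Then $h^+\notin\partial\wt Y_i$ for all $i$, since $r_i$ fixes $\partial\wt Y_i$.
\item Also require the repelling point $h^-\notin\partial\wt Y_1$. Both choices are possible because fixed-point pairs of loxodromics of a lattice are dense in $\partial\Hy^n\times\partial\Hy^n$.
\item North--south dynamics give $h^k\overline{\wt Y_1}\to h^+$, so $g=h^k$ works for $k$ large.
\item The sets $bh^k\overline{\wt Y_1}$ then lie in disjoint small neighbourhoods of the distinct points $bh^+$, away from $\bigcup_i\overline{\wt Y_i}$.
\end{enumerate}

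\textbf{Orientability of $Z_1'$.} Restricting to orientation-preserving elements of $\G$ does not make the image of $\wt Z$ orientable. An element of $K$ can swap the two sides of $\wt Z$ and reverse its orientation while preserving the orientation of $\Hy^n$. Instead, let $K_V<K$ be the finite-index subgroup of elements that correspond to orientation-preserving elements of a suitable conjugate of $\pi_1(V)$. It is quasiconvex, hence separable, so you can build $\G_1\cap K<K_V$ into the choice of $\G_1$ before invoking Corollary \ref{coro:CDcovershyp}. Then $Z_1'=\wt Z/(\G'\cap K)$ is automatically an orientable finite cover of $V$. This also settles your ordering worry about shrinking $\G'$ afterwards, and it is how the paper obtains that $\what Z_1$ is an orientable finite cover of $V$.
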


\begin{proof}
By the arithmeticity of $M$, $\G$ has dense commensurator $G$ in $\Isom^+(\Hy^n)$, so we can find a $G$-translate $\wt Z_1$ of $\wt Y_1$, disjoint from $\bigcup \wt\calY$, and so that $b\wt Z_1\cap \wt Z_1=\emptyset$ for any non-trivial element $b\in B_n$. Let $\wt \calZ=\{\wt Z_1,\dots, \wt Z_m\}$ be the set of $B_n$-translates of $\wt Z_1$ and $L_j$ be the stabilizer of $\wt Z_j$ in $\G$. Note that $L_j$ is quasiconvex and acts cocompactly on $\wt Z_j$ by assumption.

As in Lemma \ref{lem.G_0}, we can use the virtual specialness of $\G$ \cite{BHW} and the separability of the double cosets $L_iL_j$ and $H_iL_j$ to find a finite cover $\what M\ra M$, so that if $\what Y_i$ and $\what Z_j$ are the projections of $\wt Y_i$ and $\wt Z_j$ in $\what M$ respectively, then:
\begin{itemize}
    \item each $\what Z_j$ is embedded and disjoint from each $\what Y_i$;
    \item $\what Z_i$ and $\what Z_j$ are disjoint for $i\neq j$; and,
    \item $\what Z_1$ is an orientable finite cover of $V$.
\end{itemize}

Note that these properties remain valid in any finite cover of $\what M$. Then we obtain the desired Charney--Davis cover $M'$ by applying Corollary \ref{coro:CDcovershyp} so that there is a finite cover $q:M'\to\what M$. The required family $\mathcal{Z}'$ of totally geodesic codimension-1 embedded submanifolds of $M'$ is obtained by choosing
a connected component $Z_1'\subset q^{-1}(\widehat Z_1)$ and defining $Z_j':=b_j Z_1'$, where $b_j\in B_n$ is the unique element such that $b_j\widehat Z_1=\widehat Z_j$.
\end{proof}

Similarly, we promote $N$ to a manifold we can cut along. The next lemma follows immediately from a theorem of Millson \cite{millson} (see also \cite[Theorem~1.2]{BHW}).

\begin{lemma}\label{lem:promoteN}
    There exists a finite cover $N'$ of $N$ so that $V$ lifts to an embedded, non-separating submanifold $V'$ isometric to $Z_1'$. \qed
\end{lemma}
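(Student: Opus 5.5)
We first realize $V$ as an embedded hypersurface in a finite cover of $N$, and then make it non-separating and match its isometry type with $Z_1'$. Write $N=\Hy^n/\Lambda$ and let $\wt V\subset\Hy^n$ be a lift of $V$, with stabilizer $\Lambda_V<\Lambda$. Since $\wt V$ is totally geodesic and $V$ is closed, $\Lambda_V$ is a quasiconvex subgroup acting cocompactly on $\wt V$.

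\textbf{Embedding.} By \cite{BHW}, $\Lambda$ is virtually compact special, so by Agol's theorem and \cite{haglund-wise.special} the subgroup $\Lambda_V$ is separable in $\Lambda$. The finitely many $g\in\Lambda\setminus\Lambda_V$ with $g\wt V\cap \wt V\neq\emptyset$ modulo $\Lambda_V$ on both sides lie in finitely many double cosets $\Lambda_V g\Lambda_V$; the finiteness is the same compactness argument as in Lemma \ref{lem.G_0}. Separability of double cosets \cite{minasyan} then gives a finite index subgroup $\Lambda_1\supset\Lambda_V$ avoiding these double cosets. In $\Hy^n/\Lambda_1$ the image of $\wt V$ is an embedded copy of $V$ itself, since $\Lambda_V\subset\Lambda_1$.

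\textbf{Matching with $Z_1'$ and making it non-separating.} By Lemma \ref{lem:firstlift}, $Z_1'$ is an orientable finite cover of $V$, so $\pi_1(Z_1')$ is a finite index subgroup $\Lambda_Z<\Lambda_V$, after conjugating $\Lambda$ appropriately. Since $\Lambda_Z$ is also quasiconvex and separable, I would rerun the previous step with $\Lambda_Z$ in place of $\Lambda_V$. This yields a finite index $\Lambda_2<\Lambda$ with $\Lambda_2\cap\Lambda_V=\Lambda_Z$ in which $\wt V$ projects to an embedded copy of $\Hy^n\cap\wt V/\Lambda_Z\cong Z_1'$; the isometry holds because both are $\wt V/\Lambda_Z$. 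Orientability of this copy is automatic, and its two-sidedness follows by passing to an orientable cover of $N$ first, chosen to contain $\Lambda_Z$ (possible since $Z_1'$ is orientable and separability lets us arrange this).

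To get a non-separating lift, I would apply Millson's double-cover trick \cite{millson}. If the embedded two-sided hypersurface $V_2$ separates $N_2=\Hy^n/\Lambda_2$, it is easiest to arrange non-separation by a cyclic cover argument. Alternatively, as in \cite[Theorem~1.2]{BHW}, one can use that virtually special groups allow a further finite cover in which some lift of $V_2$ has nonzero dual class in $H^1$, i.e. it is non-separating, while the lift is still isometric to $V_2$ because the new group still contains $\Lambda_Z$.

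The main obstacle is this last step: passing to a further cover to make the hypersurface non-separating without changing its isometry type. I would handle it with Millson's argument. Take two copies of $N_2$ cut along $V_2$ and glue them crosswise. This is a double cover in which each lift of $V_2$ maps isomorphically onto $V_2$, and each lift is non-separating because a loop crossing it once exists.
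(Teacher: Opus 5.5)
Your embedding and matching steps are fine. Separability of $\Lambda_V$, and then of $\Lambda_Z$, gives $\Lambda_2$ with $\Lambda_2\cap\Lambda_V=\Lambda_Z$ and an embedded copy $\wt V/\Lambda_Z\cong Z_1'$. This makes explicit part of what the paper simply cites from Millson and \cite[Theorem~1.2]{BHW}.

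The gap is the step you yourself flag as the main obstacle. The ``crosswise gluing'' of two copies of $N_2$ cut along $V_2$ is the double cover given by mod-$2$ intersection number with $V_2$. It is connected, and the loop crossing a lift once exists, only when $V_2$ is \emph{already} non-separating. If $V_2$ separates $N_2=A\cup_{V_2}B$, cutting two copies gives $A_1,B_1,A_2,B_2$. Gluing crosswise then yields $(A_1\cup B_2)\sqcup(A_2\cup B_1)\cong N_2\sqcup N_2$, in which each copy of $V_2$ still separates. Your ``cyclic cover argument'' is circular for the same reason, and the BHW-style alternative is only asserted. The whole content of this step is to find a finite-index subgroup containing $\Lambda_Z$ in which this particular elevation is non-separating.

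With the tools you already use, this can be done as follows.
\begin{enumerate}
\item If $V_2$ separates, then $\Lambda_2=\pi_1(A)*_{\Lambda_Z}\pi_1(B)$, and $\Lambda_Z$ is proper in both factors (otherwise $\wt A$ would be a half-space and $A$ non-compact).
\item Choose $a\in\pi_1(A)\setminus\Lambda_Z$ and $b\in\pi_1(B)\setminus\Lambda_Z$. By separability, pick a finite-index $L\supset\Lambda_Z$ missing $a,b$, and let $K$ be its normal core in $\Lambda_2$. Set $G=\Lambda_2/K$, with $\bar A,\bar B,\bar Z$ the images of $\pi_1(A),\pi_1(B),\Lambda_Z$.
\item Since $a,b\notin\Lambda_ZK$, the dual graph of the preimage of $V_2$ in $\Hy^n/K$ is connected, has $[G:\bar A]+[G:\bar B]$ vertices and $[G:\bar Z]\ge 2\max\{[G:\bar A],[G:\bar B]\}$ edges. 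So it is not a tree. As $G$ acts transitively on its edges, no edge is a bridge.
\item In $\Hy^n/\Lambda_ZK$, the elevation $\wt V/\Lambda_Z\cong Z_1'$ has connected preimage $K\wt V/K$ in $\Hy^n/K$. A loop crossing that preimage once projects to a loop crossing the elevation once, so the elevation is non-separating.
\end{enumerate}

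Separately, your two-sidedness claim does not follow. Orientability of $Z_1'$ only says that $\Lambda_Z$ preserves the orientation of $\wt V$. An element may still swap the sides of $\wt V$; then no orientable cover contains $\Lambda_Z$, and every embedded copy of $\wt V/\Lambda_Z$ is one-sided. This has to be arranged when $Z_1'$ is chosen, not by separability in $N$.
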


\begin{proof}[Proof of Proposition \ref{prop:nonarithmeticCD}]
Consider the manifolds $M$ and $N$ as above, and let $M'$ and $N'$ be given by Lemmas \ref{lem:firstlift} and  \ref{lem:promoteN} respectively. Let $P$ be the connected component containing $K:=Y'_1\cup \cdots \cup Y'_n$ of the manifold obtained by cutting $M'$ along $Z'_1,\dots,Z_m'$. Note that the action of $B_n$ on $M'$ induces an isometric action of $B_n$ on $P$.

We also let $Q$ be the manifold obtained by cutting $N'$ along $V'$. Then $Q$ has totally geodesic boundary with two components $U$ and $W$, so that there are isometries $a: Z'_1 \ra U$ and $b: Z_1'\ra W$. 

We will glue several copies of $P$ and $Q$ in a $B_n$-equivariant way to obtain a closed hyperbolic manifold with the desired properties.

There are two cases to consider:

\textbf{Case 1:} $Z_1'$ is separating in $M'$. 

Since the action of $B_n$ on $M'$ is isometric and free on $\{Z_1',\dots,Z_m'\}$, in this case we have that the component containing $K$ of $M'$ cut along $Z_1'$ also contains $Z_j'$ for $j\neq 1$. This implies that $P$ has $m$ boundary components, all isometric to $Z_1'$, and so that the action of $B_n$ is simply transitive on this set of boundary components. By abuse of notation, we also denote these boundary components by $Z_1',\dots,Z_m'$. Let $\what P$ be another copy of $P$, with boundary components $\what{ Z}_1',\dots, \what{Z}_m'$, $B_n$-equivariantly  identified with $Z_1',\dots,Z_m'$. 

Let $Q_1,\dots,Q_m$ be isometric copies of $Q$, each with corresponding boundary components $U_j,W_j$ so that there are isometries $a_j:Z_j'\ra U_j$ and $b_j:\what Z_j'\ra W_j$, all consistent with $a$ and $b$ and the action of $B_n$ on $P$ and $\what P$. We glue $P,\what P$, and $Q_1,\dots,Q_m$ using these isometries, obtaining a closed hyperbolic manifold $R$.

By construction, there is a natural isometric action of $B_n$ on $R$, and there is a $B_n$-equivariant isometric embedding of $P$ into $R$. In particular, $R$ contains isometric copies of $Y_1',\dots,Y_n'$, so that this set is preserved by the $B_n$-action. Note that $R$ is non-arithmetic, since $\pi_1(R)$ contains subgroups isomorphic to $\pi_1(P)<\pi_1(M')$ and $\pi_1(Q)<\pi_1(N')$ and $M',N'$ are not commensurable \cite[Lemma~6.5.17]{wittemorris}. 

To show that $\pi_1(R)$ is cubulable, note that it splits as the fundamental group of a graph of groups in which each vertex group is isomorphic to either $\pi_1(P)$ or $\pi_1(Q)$. All these groups are cubulable (hence virtually special) by \cite[Proposition~7.2]{haglund-wise.special} and quasiconvex in $\pi_1(R)$. Then $\pi_1(R)$ is virtually special, hence cubulable by Wise's quasiconvex hierarchy theorem \cite[Theorem~B]{wise.QCH}. 

At this point, we do not know if $R$ is orientable, so we appeal to Corollary \ref{coro:CDcovershyp} to find  to a finite cover of $R$ that is a Charney--Davis manifold. By construction, the adjoint trace field of $R$ contains $\K$, which gives us all the desired properties for $R$ in this case.

\begin{figure}[!h]
    \centering
\tikzset{every picture/.style={line width=0.75pt}} %set default line width to 0.75pt        

\begin{tikzpicture}[x=0.75pt,y=0.75pt,yscale=-0.75,xscale=0.75]
%uncomment if require: \path (0,300); %set diagram left start at 0, and has height of 300

%Shape: Ellipse [id:dp16723521394573648] 
\draw   (40.67,202.73) .. controls (40.67,198.34) and (50.65,194.78) .. (62.96,194.78) .. controls (75.27,194.78) and (85.25,198.34) .. (85.25,202.73) .. controls (85.25,207.12) and (75.27,210.68) .. (62.96,210.68) .. controls (50.65,210.68) and (40.67,207.12) .. (40.67,202.73) -- cycle ;
%Shape: Ellipse [id:dp993802811359551] 
\draw   (154.92,202.73) .. controls (154.92,198.34) and (164.9,194.78) .. (177.22,194.78) .. controls (189.53,194.78) and (199.51,198.34) .. (199.51,202.73) .. controls (199.51,207.12) and (189.53,210.68) .. (177.22,210.68) .. controls (164.9,210.68) and (154.92,207.12) .. (154.92,202.73) -- cycle ;
%Curve Lines [id:da16996832986260046] 
\draw    (40.67,202.73) .. controls (74.42,326.02) and (166.98,326.39) .. (199.51,202.73) ;
%Curve Lines [id:da4371641144244176] 
\draw    (85.25,202.73) .. controls (85.48,235.62) and (108.41,237.01) .. (108.03,202.41) ;
%Curve Lines [id:da2227197522676948] 
\draw    (132.15,203.05) .. controls (132.37,235.94) and (155.31,237.34) .. (154.92,202.73) ;
%Flowchart: Connector [id:dp9800272098514179] 
\draw  [color={rgb, 255:red, 0; green, 0; blue, 0 }  ,draw opacity=1 ][fill={rgb, 255:red, 0; green, 0; blue, 0 }  ,fill opacity=1 ] (112.13,202.99) .. controls (112.13,203.23) and (112.42,203.42) .. (112.79,203.43) .. controls (113.15,203.43) and (113.45,203.24) .. (113.45,203.01) .. controls (113.45,202.77) and (113.16,202.57) .. (112.79,202.57) .. controls (112.43,202.57) and (112.13,202.76) .. (112.13,202.99) -- cycle ;
%Flowchart: Connector [id:dp4986791374982652] 
\draw  [color={rgb, 255:red, 0; green, 0; blue, 0 }  ,draw opacity=1 ][fill={rgb, 255:red, 0; green, 0; blue, 0 }  ,fill opacity=1 ] (124.74,202.99) .. controls (124.74,203.23) and (125.03,203.42) .. (125.39,203.43) .. controls (125.76,203.43) and (126.06,203.24) .. (126.06,203.01) .. controls (126.06,202.77) and (125.77,202.57) .. (125.4,202.57) .. controls (125.04,202.57) and (124.74,202.76) .. (124.74,202.99) -- cycle ;
%Flowchart: Connector [id:dp3721684693602464] 
\draw  [color={rgb, 255:red, 0; green, 0; blue, 0 }  ,draw opacity=1 ][fill={rgb, 255:red, 0; green, 0; blue, 0 }  ,fill opacity=1 ] (119.1,202.99) .. controls (119.1,203.23) and (119.39,203.42) .. (119.75,203.43) .. controls (120.12,203.43) and (120.42,203.24) .. (120.42,203.01) .. controls (120.42,202.77) and (120.13,202.57) .. (119.76,202.57) .. controls (119.4,202.57) and (119.1,202.76) .. (119.1,202.99) -- cycle ;
%Shape: Ellipse [id:dp44650128399322775] 
\draw   (198.83,91.41) .. controls (198.89,95.8) and (188.97,99.51) .. (176.65,99.69) .. controls (164.34,99.88) and (154.31,96.46) .. (154.24,92.07) .. controls (154.18,87.68) and (164.11,83.97) .. (176.42,83.79) .. controls (188.73,83.61) and (198.76,87.02) .. (198.83,91.41) -- cycle ;
%Shape: Ellipse [id:dp027956260020031154] 
\draw   (84.58,93.11) .. controls (84.65,97.5) and (74.72,101.21) .. (62.41,101.39) .. controls (50.1,101.57) and (40.07,98.16) .. (40,93.77) .. controls (39.94,89.38) and (49.86,85.67) .. (62.17,85.49) .. controls (74.49,85.3) and (84.52,88.71) .. (84.58,93.11) -- cycle ;
%Curve Lines [id:da3395193019482784] 
\draw    (198.83,91.41) .. controls (163.25,-31.36) and (70.7,-30.36) .. (40,93.77) ;
%Curve Lines [id:da4233845865820205] 
\draw    (154.24,92.07) .. controls (153.54,59.19) and (130.58,58.14) .. (131.48,92.73) ;
%Curve Lines [id:da8591572147442541] 
\draw    (107.35,92.45) .. controls (106.64,59.57) and (83.68,58.51) .. (84.58,93.11) ;
%Flowchart: Connector [id:dp5862513360502731] 
\draw  [color={rgb, 255:red, 0; green, 0; blue, 0 }  ,draw opacity=1 ][fill={rgb, 255:red, 0; green, 0; blue, 0 }  ,fill opacity=1 ] (127.37,92.21) .. controls (127.36,91.97) and (127.07,91.78) .. (126.7,91.78) .. controls (126.34,91.79) and (126.05,91.98) .. (126.05,92.22) .. controls (126.05,92.45) and (126.35,92.64) .. (126.71,92.64) .. controls (127.07,92.64) and (127.37,92.45) .. (127.37,92.21) -- cycle ;
%Flowchart: Connector [id:dp6863997160188826] 
\draw  [color={rgb, 255:red, 0; green, 0; blue, 0 }  ,draw opacity=1 ][fill={rgb, 255:red, 0; green, 0; blue, 0 }  ,fill opacity=1 ] (114.76,92.4) .. controls (114.76,92.16) and (114.46,91.97) .. (114.1,91.97) .. controls (113.73,91.97) and (113.44,92.17) .. (113.44,92.4) .. controls (113.44,92.64) and (113.74,92.83) .. (114.1,92.83) .. controls (114.47,92.83) and (114.76,92.63) .. (114.76,92.4) -- cycle ;
%Flowchart: Connector [id:dp9444093133920379] 
\draw  [color={rgb, 255:red, 0; green, 0; blue, 0 }  ,draw opacity=1 ][fill={rgb, 255:red, 0; green, 0; blue, 0 }  ,fill opacity=1 ] (120.4,92.31) .. controls (120.4,92.08) and (120.1,91.89) .. (119.74,91.89) .. controls (119.37,91.89) and (119.08,92.08) .. (119.08,92.32) .. controls (119.08,92.56) and (119.38,92.75) .. (119.74,92.74) .. controls (120.11,92.74) and (120.4,92.55) .. (120.4,92.31) -- cycle ;
%Shape: Ellipse [id:dp3293317842804544] 
\draw   (40,133.88) .. controls (40,131.42) and (49.98,129.43) .. (62.29,129.43) .. controls (74.61,129.43) and (84.59,131.42) .. (84.59,133.88) .. controls (84.59,136.34) and (74.61,138.33) .. (62.29,138.33) .. controls (49.98,138.33) and (40,136.34) .. (40,133.88) -- cycle ;
%Straight Lines [id:da09320709543239336] 
\draw    (40,134.43) -- (40,164.43) ;
%Straight Lines [id:da9127248749133238] 
\draw    (84.59,134.43) -- (84.59,164.43) ;
%Shape: Ellipse [id:dp4861047871404046] 
\draw   (40,165.26) .. controls (40,162.64) and (49.98,160.52) .. (62.29,160.52) .. controls (74.61,160.52) and (84.59,162.64) .. (84.59,165.26) .. controls (84.59,167.88) and (74.61,170) .. (62.29,170) .. controls (49.98,170) and (40,167.88) .. (40,165.26) -- cycle ;
%Shape: Ellipse [id:dp33455589317697343] 
\draw   (156.95,133.6) .. controls (156.95,131.14) and (166.93,129.14) .. (179.25,129.14) .. controls (191.56,129.14) and (201.54,131.14) .. (201.54,133.6) .. controls (201.54,136.05) and (191.56,138.05) .. (179.25,138.05) .. controls (166.93,138.05) and (156.95,136.05) .. (156.95,133.6) -- cycle ;
%Straight Lines [id:da08401800885488231] 
\draw    (156.95,134.14) -- (156.95,164.14) ;
%Straight Lines [id:da49519925733509096] 
\draw    (201.54,134.14) -- (201.54,164.14) ;
%Shape: Ellipse [id:dp24418161654958803] 
\draw   (156.95,164.98) .. controls (156.95,162.36) and (166.93,160.24) .. (179.25,160.24) .. controls (191.56,160.24) and (201.54,162.36) .. (201.54,164.98) .. controls (201.54,167.59) and (191.56,169.71) .. (179.25,169.71) .. controls (166.93,169.71) and (156.95,167.59) .. (156.95,164.98) -- cycle ;
%Shape: Ellipse [id:dp39777231182255157] 
\draw   (412,137.4) .. controls (412,133.01) and (421.98,129.44) .. (434.29,129.44) .. controls (446.61,129.44) and (456.59,133.01) .. (456.59,137.4) .. controls (456.59,141.79) and (446.61,145.35) .. (434.29,145.35) .. controls (421.98,145.35) and (412,141.79) .. (412,137.4) -- cycle ;
%Shape: Ellipse [id:dp4605952758599109] 
\draw   (526.26,137.4) .. controls (526.26,133.01) and (536.24,129.44) .. (548.55,129.44) .. controls (560.86,129.44) and (570.84,133.01) .. (570.84,137.4) .. controls (570.84,141.79) and (560.86,145.35) .. (548.55,145.35) .. controls (536.24,145.35) and (526.26,141.79) .. (526.26,137.4) -- cycle ;
%Curve Lines [id:da9965688952477736] 
\draw    (347.41,137.95) .. controls (381.16,261.24) and (604.05,261.61) .. (636.59,137.95) ;
%Curve Lines [id:da36980511195623267] 
\draw    (456.59,137.4) .. controls (456.81,170.28) and (479.75,171.68) .. (479.36,137.07) ;
%Curve Lines [id:da7982288311846758] 
\draw    (503.49,137.72) .. controls (503.71,170.61) and (526.64,172) .. (526.26,137.4) ;
%Flowchart: Connector [id:dp3139417025590566] 
\draw  [color={rgb, 255:red, 0; green, 0; blue, 0 }  ,draw opacity=1 ][fill={rgb, 255:red, 0; green, 0; blue, 0 }  ,fill opacity=1 ] (483.47,137.66) .. controls (483.46,137.9) and (483.76,138.09) .. (484.12,138.1) .. controls (484.49,138.1) and (484.78,137.91) .. (484.78,137.67) .. controls (484.79,137.44) and (484.49,137.24) .. (484.13,137.24) .. controls (483.76,137.23) and (483.47,137.42) .. (483.47,137.66) -- cycle ;
%Flowchart: Connector [id:dp2579485630758446] 
\draw  [color={rgb, 255:red, 0; green, 0; blue, 0 }  ,draw opacity=1 ][fill={rgb, 255:red, 0; green, 0; blue, 0 }  ,fill opacity=1 ] (496.07,137.66) .. controls (496.07,137.9) and (496.36,138.09) .. (496.73,138.1) .. controls (497.09,138.1) and (497.39,137.91) .. (497.39,137.67) .. controls (497.39,137.44) and (497.1,137.24) .. (496.73,137.24) .. controls (496.37,137.23) and (496.07,137.42) .. (496.07,137.66) -- cycle ;
%Flowchart: Connector [id:dp05801738989208238] 
\draw  [color={rgb, 255:red, 0; green, 0; blue, 0 }  ,draw opacity=1 ][fill={rgb, 255:red, 0; green, 0; blue, 0 }  ,fill opacity=1 ] (490.43,137.66) .. controls (490.43,137.9) and (490.72,138.09) .. (491.09,138.1) .. controls (491.45,138.1) and (491.75,137.91) .. (491.75,137.67) .. controls (491.75,137.44) and (491.46,137.24) .. (491.09,137.24) .. controls (490.73,137.23) and (490.43,137.42) .. (490.43,137.66) -- cycle ;
%Shape: Ellipse [id:dp23622931249753898] 
\draw   (347.41,137.95) .. controls (347.41,133.56) and (357.39,130) .. (369.71,130) .. controls (382.02,130) and (392,133.56) .. (392,137.95) .. controls (392,142.35) and (382.02,145.91) .. (369.71,145.91) .. controls (357.39,145.91) and (347.41,142.35) .. (347.41,137.95) -- cycle ;
%Shape: Ellipse [id:dp7674504464533771] 
\draw   (592,137.95) .. controls (592,133.56) and (601.98,130) .. (614.29,130) .. controls (626.61,130) and (636.59,133.56) .. (636.59,137.95) .. controls (636.59,142.35) and (626.61,145.91) .. (614.29,145.91) .. controls (601.98,145.91) and (592,142.35) .. (592,137.95) -- cycle ;
%Curve Lines [id:da9037097854882638] 
\draw    (392,137.95) .. controls (392.29,160.19) and (412,160.76) .. (412,137.4) ;
%Curve Lines [id:da671150495288077] 
\draw    (570.84,137.4) .. controls (571.13,159.63) and (592,161.32) .. (592,137.95) ;
%Shape: Ellipse [id:dp2875576120755481] 
\draw   (350,88.64) .. controls (350,84.05) and (359.98,80.33) .. (372.29,80.33) .. controls (384.61,80.33) and (394.59,84.05) .. (394.59,88.64) .. controls (394.59,93.23) and (384.61,96.95) .. (372.29,96.95) .. controls (359.98,96.95) and (350,93.23) .. (350,88.64) -- cycle ;
%Shape: Ellipse [id:dp6336082680282631] 
\draw   (414.59,88.64) .. controls (414.59,84.05) and (424.57,80.33) .. (436.88,80.33) .. controls (449.19,80.33) and (459.18,84.05) .. (459.18,88.64) .. controls (459.18,93.23) and (449.19,96.95) .. (436.88,96.95) .. controls (424.57,96.95) and (414.59,93.23) .. (414.59,88.64) -- cycle ;
%Curve Lines [id:da479124516692259] 
\draw    (350,88.64) .. controls (349.76,36.75) and (459.01,37.5) .. (459.18,88.64) ;
%Curve Lines [id:da18777592662520082] 
\draw    (394.59,88.64) .. controls (394.51,77.5) and (414.51,77) .. (414.59,88.64) ;
%Shape: Ellipse [id:dp9191301160353123] 
\draw   (528.25,88.64) .. controls (528.25,84.05) and (538.47,80.33) .. (551.07,80.33) .. controls (563.67,80.33) and (573.89,84.05) .. (573.89,88.64) .. controls (573.89,93.23) and (563.67,96.95) .. (551.07,96.95) .. controls (538.47,96.95) and (528.25,93.23) .. (528.25,88.64) -- cycle ;
%Shape: Ellipse [id:dp6204948640557438] 
\draw   (594.36,88.64) .. controls (594.36,84.05) and (604.58,80.33) .. (617.18,80.33) .. controls (629.78,80.33) and (640,84.05) .. (640,88.64) .. controls (640,93.23) and (629.78,96.95) .. (617.18,96.95) .. controls (604.58,96.95) and (594.36,93.23) .. (594.36,88.64) -- cycle ;
%Curve Lines [id:da45946375140426965] 
\draw    (528.25,88.64) .. controls (528,36.75) and (639.82,37.5) .. (640,88.64) ;
%Curve Lines [id:da038752146808627486] 
\draw    (573.89,88.64) .. controls (573.81,77.5) and (594.28,77) .. (594.36,88.64) ;
%Shape: Ellipse [id:dp3968483150042792] 
\draw   (110.75,264.94) .. controls (110.75,261.18) and (115.26,258.12) .. (120.83,258.12) .. controls (126.4,258.12) and (130.92,261.18) .. (130.92,264.94) .. controls (130.92,268.7) and (126.4,271.75) .. (120.83,271.75) .. controls (115.26,271.75) and (110.75,268.7) .. (110.75,264.94) -- cycle ;
%Curve Lines [id:da6276162721768144] 
\draw [color={rgb, 255:red, 208; green, 2; blue, 27 }  ,draw opacity=1 ]   (117.67,295.12) .. controls (113.92,291.87) and (112.17,275.62) .. (119.17,271.37) ;
%Curve Lines [id:da4036846883387758] 
\draw [color={rgb, 255:red, 208; green, 2; blue, 27 }  ,draw opacity=1 ] [dash pattern={on 0.84pt off 2.51pt}]  (119.33,295.5) .. controls (124.92,291.87) and (124.42,275.37) .. (120.83,271.75) ;
%Shape: Ellipse [id:dp9810402976989715] 
\draw  [color={rgb, 255:red, 208; green, 2; blue, 27 }  ,draw opacity=1 ] (98.67,265.9) .. controls (98.67,255.96) and (108.84,247.9) .. (121.38,247.9) .. controls (133.93,247.9) and (144.1,255.96) .. (144.1,265.9) .. controls (144.1,275.85) and (133.93,283.9) .. (121.38,283.9) .. controls (108.84,283.9) and (98.67,275.85) .. (98.67,265.9) -- cycle ;
%Shape: Ellipse [id:dp5262458347698802] 
\draw   (108.64,26.8) .. controls (108.64,23.04) and (113.16,19.99) .. (118.73,19.99) .. controls (124.3,19.99) and (128.81,23.04) .. (128.81,26.8) .. controls (128.81,30.57) and (124.3,33.62) .. (118.73,33.62) .. controls (113.16,33.62) and (108.64,30.57) .. (108.64,26.8) -- cycle ;
%Shape: Ellipse [id:dp379727461074268] 
\draw   (483.25,200.1) .. controls (483.25,196.34) and (487.76,193.29) .. (493.33,193.29) .. controls (498.9,193.29) and (503.42,196.34) .. (503.42,200.1) .. controls (503.42,203.87) and (498.9,206.92) .. (493.33,206.92) .. controls (487.76,206.92) and (483.25,203.87) .. (483.25,200.1) -- cycle ;
%Curve Lines [id:da6708860162057795] 
\draw [color={rgb, 255:red, 208; green, 2; blue, 27 }  ,draw opacity=1 ]   (490.17,230.29) .. controls (486.42,227.04) and (484.67,210.79) .. (491.67,206.54) ;
%Curve Lines [id:da12311536771224174] 
\draw [color={rgb, 255:red, 208; green, 2; blue, 27 }  ,draw opacity=1 ] [dash pattern={on 0.84pt off 2.51pt}]  (491.83,230.67) .. controls (497.42,227.04) and (496.92,210.54) .. (493.33,206.92) ;
%Shape: Ellipse [id:dp1405477397363033] 
\draw  [color={rgb, 255:red, 208; green, 2; blue, 27 }  ,draw opacity=1 ] (471.17,201.07) .. controls (471.17,191.13) and (481.34,183.07) .. (493.88,183.07) .. controls (506.43,183.07) and (516.6,191.13) .. (516.6,201.07) .. controls (516.6,211.01) and (506.43,219.07) .. (493.88,219.07) .. controls (481.34,219.07) and (471.17,211.01) .. (471.17,201.07) -- cycle ;
%Straight Lines [id:da06042861336212235] 
\draw    (178.7,108.22) -- (178.7,120.67) ;
\draw   (174.88,112.25) -- (178.67,108.14) -- (182.47,112.24) ;
\draw   (182.36,117.48) -- (178.6,121.61) -- (174.77,117.55) ;
%Straight Lines [id:da49972774129716735] 
\draw    (62.08,108.58) -- (62.08,121.02) ;
\draw   (58.26,113.01) -- (62.05,108.89) -- (65.85,113) ;
\draw   (65.73,117.83) -- (61.97,121.97) -- (58.14,117.9) ;
%Straight Lines [id:da24570059863555183] 
\draw    (179.36,174.22) -- (179.36,186.67) ;
\draw   (175.55,178.25) -- (179.34,174.14) -- (183.14,178.24) ;
\draw   (183.02,183.48) -- (179.26,187.61) -- (175.43,183.55) ;
%Straight Lines [id:da18617932014222283] 
\draw    (62.74,174.58) -- (62.74,187.02) ;
\draw   (58.93,179.01) -- (62.72,174.89) -- (66.52,179) ;
\draw   (66.4,183.83) -- (62.64,187.97) -- (58.81,183.9) ;
%Straight Lines [id:da6839341700285582] 
\draw    (369.93,108.61) -- (369.93,121.05) ;
\draw   (366.12,112.64) -- (369.91,108.52) -- (373.71,112.63) ;
\draw   (373.59,117.87) -- (369.83,122) -- (366,117.93) ;
%Straight Lines [id:da2560923457102279] 
\draw    (435.63,108.09) -- (435.63,120.53) ;
\draw   (431.82,112.12) -- (435.61,108) -- (439.41,112.11) ;
\draw   (439.29,117.35) -- (435.53,121.48) -- (431.7,117.41) ;
%Straight Lines [id:da620250815401843] 
\draw    (549.03,107.94) -- (549.03,120.38) ;
\draw   (545.22,111.97) -- (549.01,107.85) -- (552.81,111.96) ;
\draw   (552.69,117.2) -- (548.93,121.33) -- (545.1,117.26) ;
%Straight Lines [id:da621296866930635] 
\draw    (614.83,107.54) -- (614.83,119.98) ;
\draw   (611.02,111.57) -- (614.81,107.45) -- (618.61,111.56) ;
\draw   (618.49,116.8) -- (614.73,120.93) -- (610.9,116.86) ;

% Text Node
\draw (146.08,252.34) node [anchor=north west][inner sep=0.75pt]  [font=\scriptsize,color={rgb, 255:red, 208; green, 2; blue, 27 }  ,opacity=1 ,rotate=-0.57]  {$K$};
% Text Node
\draw (521.12,188.27) node [anchor=north west][inner sep=0.75pt]  [font=\scriptsize,color={rgb, 255:red, 208; green, 2; blue, 27 }  ,opacity=1 ,rotate=-0.57]  {$K$};
% Text Node
\draw (27,232.4) node [anchor=north west][inner sep=0.75pt]    {$P$};
% Text Node
\draw (341,172.4) node [anchor=north west][inner sep=0.75pt]    {$P$};
% Text Node
\draw (27,36.4) node [anchor=north west][inner sep=0.75pt]    {$\widehat{P}$};
% Text Node
\draw (201,192.4) node [anchor=north west][inner sep=0.75pt]  [font=\tiny,color={rgb, 255:red, 208; green, 2; blue, 27 }  ,opacity=1 ]  {$Z_{m} '$};
% Text Node
\draw (86,192.4) node [anchor=north west][inner sep=0.75pt]  [font=\tiny,color={rgb, 255:red, 208; green, 2; blue, 27 }  ,opacity=1 ]  {$Z_{1} '$};
% Text Node
\draw (56,145.23) node [anchor=north west][inner sep=0.75pt]  [font=\tiny]  {$Q_{1}$};
% Text Node
\draw (173.52,144.59) node [anchor=north west][inner sep=0.75pt]  [font=\tiny]  {$Q_{m}$};
% Text Node
\draw (578.57,62.97) node [anchor=north west][inner sep=0.75pt]  [font=\tiny]  {$Q_{m}$};
% Text Node
\draw (400,63.01) node [anchor=north west][inner sep=0.75pt]  [font=\tiny]  {$Q_{1}$};

\end{tikzpicture}
\caption{The separating (left) and non-separtaing cases (right).}
\end{figure}

\textbf{Case 2:} $Z_1'$ is non-separating in $M'$.

The proof is very similar to that of Case 1, so we just provide a sketch. We cut $M'$ along each $Z_j'$, obtaining a connected manifold with $2m$ boundary components. In this case the action of $B_m$ has two orbits on the set of these boundary components. As in the first case, we consider $m$ copies $Q_1,\dots,Q_m$ of $Q$, and glue all these manifolds along their boundaries $B_n$-equivariantly, so that boundary components of each $Q_j$ get glued to boundary components obtained after cutting $Z_j'$. 

As in the first case, the resulting manifold $R$ is non-arithmetic, has cubulable fundamental group, contains $\K$ in its adjoint trace field, and has a Charney--Davis cover.

To end the proof of the proposition we note that by varying the input manifolds $M$ and $N$, we obtain infinitely many commensurability classes.  
\end{proof}

\begin{remark}\label{rmk:smallsystole2}
By appropriately incorporating the inbreeding technique \cite{agol.systoles,belolipetsky-thomson} (see also \cite{douba-huang,douba,huang-zevenbergen}) into the proof of the proposition above, we can ensure that the resulting manifold $R$ has systole as small as we wish. Indeed, by commensurating and using subgroup separability as above, for $\ep>0$ we can additionally find two disjoint, embedded totally geodesic hypersurfaces $A,B\subset P$ such that:
\begin{itemize}
    \item $A,B$ are disjoint from $K$ and from all boundary components of $P$;
    \item $A$ is disjoint from all the $B_n$-translates of $A$ and $B$; and,
    \item there is a geodesic segment orthogonal to $A$ and $B$ in the component of $P$ cut along $A\cup B$ containing $K$, which has length less than $\ep$. 
\end{itemize}
Then all the $B_n$-translates of $A$ and $B$ embed in $R$, and if we cut along these hypersurfaces and take the double of the component containing $K$, we obtain another Charney--Davis manifold $R'$ with systole at most $2\ep$. We leave the details to the reader.
\end{remark}

%%%%%%%%%%%%%%%%%%%%%
\subsection{Proof of Theorem \ref{thmalpha:flexibleCD} and Corollary \ref{cor:connectedfaces}}\label{subsec:proofthmF}

We are ready to prove Theorem \ref{thmalpha:flexibleCD} and Corollary \ref{cor:connectedfaces}, whose proofs follow from the next result.

\begin{theorem}\label{thm:flexibleCDmanifoldsmiddle}
    Let $n\geq 2$ and $\K\neq \Q$ be a totally real number field. Then there exist infinitely many pairwise non-commensurable closed hyperbolic $n$-manifolds $(M_i)_{i\geq 1}$ satisfying the following:
    \begin{enumerate} 
        \item each $M_{2i}$ is arithmetic of simplest type and has $\K$ as field of definition;
        \item each $M_{2i+1}$ is non-arithmetic and $\K$ is contained in adjoint trace field; and,
        \item for any manifold $M'_i$ commensurable with $M_i$, there exists a Charney--Davis manifold $M$ that covers $M_i'$. Moreover, we can assume that each face of the hyperbolizing piece obtained from $M$ is connected.
    \end{enumerate}    
\end{theorem}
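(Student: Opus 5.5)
The plan is to take the $M_i$ to be the Charney--Davis manifolds already constructed in Corollary \ref{cor:infiniteCDmanifolds} and Proposition \ref{prop:nonarithmeticCD}, and then to pass to covers using Corollary \ref{coro:CDcovershyp}.

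\textbf{Arithmetic members.} Corollary \ref{cor:infiniteCDmanifolds} gives infinitely many pairwise non-commensurable arithmetic Charney--Davis manifolds of simplest type with field of definition $\K$. Their fundamental groups are commensurable with the lattices $\calO(q)$ of Example \ref{ex:arithmeticCD}. They are cubulable, since arithmetic lattices of simplest type are virtually special \cite{BHW}, and cubulability passes to finite-index overgroups of virtually special hyperbolic groups.

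\textbf{Non-arithmetic members.} Proposition \ref{prop:nonarithmeticCD} gives infinitely many commensurability classes of non-arithmetic Charney--Davis manifolds with cubulable fundamental group and adjoint trace field containing $\K$. Interleaving the two lists produces the sequence $(M_i)$. The arithmetic and non-arithmetic manifolds are automatically non-commensurable, since arithmeticity is a commensurability invariant. This settles items (1) and (2).

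\textbf{Item (3): covers.} Let $M'_i$ be commensurable with $M_i$. Then $M'_i$ and $M_i$ have a common finite cover $M_0$, which is a finite cover of $M_i$. Apply Corollary \ref{coro:CDcovershyp} to $M=M_i$, whose hypotheses hold: $M_i$ is Charney--Davis and has cubulable fundamental group. This yields a Charney--Davis manifold $M$ covering $M_0$, and hence covering $M'_i$. It comes with a hyperplane system $\calY'$ in which every intersection $Y'_I$ ($I\neq\emptyset$) is connected.

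\textbf{Item (3): connected faces.} This is the one genuinely new point. A face of the hyperbolizing piece $X$ corresponding to $I$ and a sign function $\epsilon\colon I\to\{0,1\}$ is obtained from $Y'_I$ by cutting along the remaining hypersurfaces $Y'_j$, $j\notin I$. Here $Y'_I$ is a closed totally geodesic submanifold of dimension $n-|I|$, and the $Y'_j$ meet it orthogonally and transversely.

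I would argue that $Y'_I$ is itself a Charney--Davis manifold of dimension $n-|I|$. It is connected, carries the restricted action of the stabilizer $B_{n-|I|}$, and has hyperplane system $\{Y'_I\cap Y'_j\}_{j\notin I}$, each member of which is connected. Cutting a Charney--Davis manifold along its hyperplane system gives a connected piece by Lemma \ref{lem:CDpieceX}. Connectedness of the piece obtained from $Y'_I$ is therefore exactly connectedness of the face $g^{-1}(F_{I,\epsilon})$.

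Two caveats need checking for $Y'_I$. First, its points should be exactly the single point $y'$ of Theorem \ref{thm:CD-piece}(4); this follows from conclusion (2) of Proposition \ref{prop:CDcoversgen}. Second, $Y'_j\cap Y'_I$ should be a full component of the fixed set of $r_j$ on $Y'_I$. Orientability, which is only used in Theorem \ref{thm:CD-piece}(6), is not needed for connectedness of the cut.

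\textbf{Expected main obstacle.} The delicate step is that cutting $Y'_I$ along several $Y'_j$ simultaneously may still disconnect it, even when each pairwise intersection is connected. If restricting the structure to $Y'_I$ fails, I would prove connectedness of the faces directly. Each face $g^{-1}(F_{I,\epsilon})$ is a deformation retract onto a neighborhood of its corner point. Any point of the face can be joined to that corner within the face by following geodesics orthogonal to the walls, using induction on the codimension together with the connectedness of every $Y'_J$, $J\supset I$, in the same way Charney--Davis prove that $X$ itself is connected.
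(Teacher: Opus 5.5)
Your proposal is correct and follows essentially the same route as the paper: Corollary \ref{cor:infiniteCDmanifolds} and Proposition \ref{prop:nonarithmeticCD} give items (1)--(2), and Corollary \ref{coro:CDcovershyp} applied to a common finite cover gives item (3). Faces are connected because each connected $Y'_I$ is itself a Charney--Davis manifold with hyperplane system $\{Y'_{I\cup\{j\}}\}_{j\notin I}$, and its cut-open piece is the face. The ``main obstacle'' you flag is already settled by your own appeal to Lemma \ref{lem:CDpieceX}, so the fallback is unnecessary; its claim that a face retracts onto a neighborhood of its corner is also false in general.
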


\begin{proof}
    For $n$ and $\K$ as in the statement, we apply Corollary \ref{cor:infiniteCDmanifolds}, and Proposition \ref{prop:nonarithmeticCD} to find an infinite sequence $(M_i)_{i\geq 1}$ of pairwise non-commensurable Charney--Davis manifolds $n$-manifolds so that
    \begin{itemize}
        \item each $M_{2i}$ is arithmetic of simplest type and with field of definition $\K$; and,
        \item each $M_{2i+1}$ is non-arithmetic, has cubulable fundamental group, and its adjoint trace field contains $\K$.
    \end{itemize}
    This proves items (1) and (2). To prove item (3), let $M_i'$ be a manifold commensurable with $M_i$, and let $M_i''$ be a common finite cover for both $M_i$ and $M_i'$. Then Corollary \ref{coro:CDcovershyp} gives us a Charney--Davis manifold $M$ that covers $M_i''$, and hence $M_i'$. Also, note from Corollary \ref{coro:CDcovershyp}~(3) that for the hyperplane system $\{Y_1,\dots,Y_n\}$ of $M$, we have that $Y_I=\bigcap_{j\in I}{Y_j}$ is connected for all $I\subset \{1,\dots,n\}$ non-empty. Then the conclusion follows since each face of the corresponding hyperbolizing piece is itself a hyperbolizing piece induced by one of the (now Charney--Davis) manifolds $Y_I$ with hyperplane system $\{Y_{I\cup\{i\}}:i\in \{1,\dots,n\}\bs I\}$.
\end{proof}

\begin{remark}\label{rmk:smallsystole3}
  By taking a stably parallelizable Charney--Davis cover of the manifold $R$ as in Remark \ref{rmk:smallsystole2}, after doubling we can ensure that the Charney--Davis manifold $R'$ has arbitrarily small systole and is also stably parallelizable. Hence it follows that the manifolds $M_{2i+1}$ from the theorem above can be chosen with arbitrarily small systoles. Moreover, by construction these systoles are achieved by closed geodesics disjoint from the any submanifolds in the hyperplane system of $R'$.
\end{remark}

\subsection{Proof of Theorem \ref{thm:main-stably-tan}} 
Let $M$ be a Charney--Davis manifold. By \cite{sullivan-covers,okun} there is a finite-sheeted cover of $M$ which is stably parallelizable. By Theorem \ref{thmalpha:flexibleCD} this stably parallelizable manifold is finitely covered by a (necessarily stably parallelizable) Charney--Davis manifold $\widehat{M}$. Cutting $\widehat{M}$ along its hyperplane system yields a stably parallelizable hyperbolizing piece. The result follows from Lemma \ref{cor:tangential}. \qed

%%%%%%%%%%%%%%%%%%%%%%%%%%%%%%%%%%%%%%%%%%%%%%

\subsection{The approach through congruence subgroups}\label{subsec:congruenceapproach}

We end this section with an alternative argument to show the existence of a stably parallelizable Charney--Davis manifold following the original construction of such manifolds from \cite[Section~6]{charney-davis}, which relies on congruence subgroups. Note however that this approach is not enough to deduce Theorem \ref{thmalpha:flexibleCD}, see Remark \ref{rem:congruence} below.

Let $\K\neq \Q$ be a totally real number field and $q$ a quadratic form on $\calO_\K^{n+1}$ as in Section \ref{subsec:hyperbolizingpiece}.
In \cite{charney-davis} it is shown that, to obtain a Charney--Davis manifold $M$ with fundamental group $\G$ commensurable with $\calO(q)$, it suffices for $\G$ to be a torsion-free \emph{principal congruence subgroup} of $\calO(q)$. That is, there exists a non-zero ideal $I$ of $\calO_\K$ such that $\G$ equals the intersection of $\calO(q)$ with
\[\G(I):=\{A\in \GL_{n+1}(\calO_\K): A \equiv \mathrm{Id} \text{ (mod }I)\}\]
and satisfies $\G=\G(I)\cap \calO(q)<\mathrm{SO}_0(n,1)$.

To that end, Charney and Davis invoke a result of Millson–Raghunathan (see \cite[Lemma~6.7]{charney-davis}) to produce an ideal $I\subset\calO_{\K}$ that factors as
\beq
I=I_1\cdots I_s,
\eeq
for $I_1,\dots,I_s$ suitably chosen, pairwise relatively prime ideals. 
In this way, $M:=\Hyp^n/(\Gamma(I)\cap \calO(q))$ is a Charney--Davis manifold.

In order to ensure that $M$ stably parallelizable, we pass to a deeper congruence subgroup.
More precisely, we use that $\mathcal O_\K$ has infinitely many prime ideals to choose two prime ideals $m_1,m_2\subset\mathcal O_\K$ which do not divide $I$ and whose residue fields $\mathcal O_\K/m_1$ and $\mathcal O_\K/m_2$ have distinct characteristics.  Set
\beq
J:=I\cdot m_1\cdot m_2\subset\mathcal O_\K,
\eeq
and consider the principal congruence subgroup $\Gamma(J)\subset\Gamma(I)\subset\GL_{n+1}(\mathcal O_\K)$.

By definition every element in $\Gamma(J)$ reduces to the identity in $\GL_{n+1}(\mathcal O_\K/J)$.  As $I,m_1,m_2$ are pairwise coprime, the Chinese remainder theorem yields an isomorphism
\beq
\GL_{n+1}(\mathcal O_\K/J)\cong \GL_{n+1}(\mathcal O_\K/I)\times\GL_{n+1}(\mathcal O_\K/m_1)\times\GL_{n+1}(\mathcal O_\K/m_2).
\eeq
Therefore, any element in $\Gamma(J)$ projects to the identity in each factor, so in particular their reductions modulo $m_1$ and $m_2$ are trivial. By \cite[Proposition]{deligne-sullivan} and \cite{sullivan-covers}, the finite sheeted cover $\widehat{M}\to M$ corresponding to $\Gamma(J)\cap \calO(q)$ is stably parallelizable. Since $\Gamma(J)\subset\Gamma(I)$, this cover is also a Charney--Davis manifold by \cite[Lemma~6.6]{charney-davis}.

\begin{remark}\label{rem:congruence}
A similar proof of this proposition is sketched in \cite[Lemma~6.32]{davis-book}, but unfortunately there is a gap. To get a stably parallelizable manifold one constructs a finite index subgroup of $\G=\G(I)\cap \calO(q)$ of the form $\G'=\G(J)\cap \calO(q)$ for $J$ an appropriate ideal of $\calO_\K$ contained in $I$. In Davis's argument, the chosen subgroup $\G'$ is only required to be of finite index in $\G$ and such that $\what{M}=\Hy^n/\G'$ is stably parallelizable. But there is no guarantee that such a manifold $\what{M}$ is Charney--Davis (we cannot verify conclusion (4) in Theorem \ref{thm:CD-piece}). This is because $\G'$ is not necessarily a principal congruence subgroup, not even a congruence subgroup (i.e. $\G'$ does not necessarily contain a principal congruence subgroup of $\calO(q)$), so we cannot apply \cite[Lemma~6.6]{charney-davis}. Indeed, it is known that any arithmetic Kleinian group contains a finite index subgroup that is not congruence, see e.g.~\cite{lubotzky} or \cite[Theorem~1.8]{LLR}. This last fact is also the reason why the argument given above is insufficient to prove Theorem \ref{thmalpha:flexibleCD}.
\end{remark}
%%%%%%%%%%%%%%%%%%%%%%%%%%%%%%%%%%%%%%%%%%%%%%%%%%%%%%%%%%%%%%%%%%%
\section{Foldable cubulations of flat manifolds}\label{sec:foldableflatmflds}
In this section we study flat manifolds and flat cube complexes. The main result is Proposition \ref{thmA:diagonalfoldable}, which asserts that flat manifolds of diagonal type admit foldable flat cubulations. This property turns them into a suitable input for strict hyperbolization. 

\subsection{Flat manifolds}
A closed and connected smooth manifold is \emph{flat} if it has zero sectional curvatures. By Bieberbach theorem, an $n$-dimensional flat manifold is the quotient of $\R^n$ by a torsion-free and cocompact discrete group $\G<\mathrm{\Isom}(\R^n)=\mathrm{O}(n)\ltimes \R^n$. Such a group $\G$ fits into the exact sequence
\begin{equation}\label{eq:extension1}
    1\ra \Z^n \ra \G \ra \Phi\ra 1,
\end{equation}
where $\Z^n$ is a maximal abelian normal subgroup of $\G$ and $\Phi$ is a finite group, called the \emph{holonomy} of $\G$. The action of $\Phi$ on $\Z^n$ by conjugation in $\G$ induces a \emph{holonomy representation} $\Phi \ra \GL_n(\Z)$.  A flat manifold is of \emph{diagonal type} if its holonomy representation is diagonalizable. This implies that the group $\Phi$ is isomorphic to $(\Z/2\Z)^k$ for some $k$. If the manifold is $n$-dimensional, then $k\leq n-1$. 
\subsection{Flat cube complexes}\label{subsec:flatcube}
Recall that $\msf{R}_n$ refers to $\R^n$ equipped with its cubical structure induced by the standard translation action of $\Z^n$. 

\begin{definition}\label{def.flat}
 An $n$-dimensional cube complex is \emph{flat} if its universal cover is isomorphic (as a cube complex) to $\msf{R}_n$.    
\end{definition}

By Bieberbach theorem it follows that every finite $n$-dimensional flat cube complex is admits a flat Riemannian manifold. In fact, any such complex is finitely covered by an $n$-torus with cubical structure isomorphic to $\msf{R}_n/\G$ for $\G$ a finite index subgroup of $\Z^n$. Note however that a flat cube complex is not necessarily foldable. The standard $n$-torus $\bbT^n=\msf{R}_n/\Z^n$ is not foldable, but its $2^n$-sheeted cover $\what\bbT^n=\msf{R}_n/(2\Z)^n$ is foldable, as we will explain below.
\subsection{Cubulating flat manifolds of diagonal type}\label{subsec:foldingflat} In this subsection we prove Proposition \ref{thmA:diagonalfoldable}.
First, we give an explicit folding map for the standard cubulation $\msf{R}_n$ of $\R^n$. 

Let $\msf{f}_1:\R \ra [0,1]$ be periodic map with period 2 that restricts to $x \mapsto |x|$ on the interval $[-1,1]$.
This is the unique folding map from $\msf{R}_1=\R$ to $\square^1=[0,1]$ that restricts to the identity on $\square^1$. 

By construction, $\msf{f}_1$ satisfies $\msf{f}_1(-x)=\msf{f}_1(x)$ and $\msf{f}_1(x+2)=\msf{f}_1(x)$ for all $x\in \msf{R}_1$. 

\begin{figure}[h!!]\label{fig:folding}
%\alt{Image of the Earth from outer space}

\tikzset{every picture/.style={line width=0.75pt}} %set default line width to 0.75pt        

\tikzset{every picture/.style={line width=0.75pt}} %set default line width to 0.75pt        

\begin{tikzpicture}
[x=0.75pt,y=0.75pt,yscale=-1,xscale=1]

%uncomment if require: \path (0,300); %set diagram left start at 0, and has height of 300

%Straight Lines [id:da3595728580609324] 
\draw    (220.31,150.25) -- (250.4,180.19) -- (280.15,149.94) -- (310.15,179.94) -- (340.15,150.19) -- (370.15,179.69) -- (400.15,149.94) -- (430.4,180.06) ;
%Straight Lines [id:da24445427377804052] 
\draw    (310.17,120.06) -- (310.17,239.72) ;
%Straight Lines [id:da977004718675722] 
\draw    (180.15,180.31) -- (450.4,180.31) ;
%Straight Lines [id:da7214779536887098] 
\draw    (220.31,150.25) -- (190.15,180.31) ;

% Text Node
\draw (310.9,182.34) node [anchor=north west][inner sep=0.75pt]  [font=\tiny]  {$0$};
% Text Node
\draw (337.73,182.5) node [anchor=north west][inner sep=0.75pt]  [font=\tiny]  {$1$};
% Text Node
\draw (397.56,182.67) node [anchor=north west][inner sep=0.75pt]  [font=\tiny]  {$3$};
% Text Node
\draw (273.65,183) node [anchor=north west][inner sep=0.75pt]  [font=\tiny]  {$-1$};
% Text Node
\draw (243.81,182.34) node [anchor=north west][inner sep=0.75pt]  [font=\tiny]  {$-2$};
% Text Node
\draw (310.65,146.34) node [anchor=north west][inner sep=0.75pt]  [font=\tiny]  {$1$};
% Text Node
\draw (436.67,151.82) node [anchor=north west][inner sep=0.75pt]    {$...$};
% Text Node
\draw (171,151.15) node [anchor=north west][inner sep=0.75pt]    {$...$};
% Text Node
\draw (366.56,182.67) node [anchor=north west][inner sep=0.75pt]  [font=\tiny]  {$2$};
% Text Node
\draw (213.81,182.34) node [anchor=north west][inner sep=0.75pt]  [font=\tiny]  {$-3$};
\end{tikzpicture}

\caption{The folding map $\msf{f}_1:\R\to [0,1]$.}
\end{figure}

In general, the map $\msf{f}_n:\msf{R}_n \ra \square^n$ given by
    $(x_1,\dots,x_n)\mapsto (\msf{f}_1(x_1),\dots,\msf{f}_1(x_n))$
is the unique folding map that restricts to the identity on $\square^n$. From the properties of $\msf{f}_1$ we easily deduce the following.
\begin{lemma}\label{lem:foldinginvariant}
    Let $\G$ be a discrete subgroup of isometries of $\R^n$ such that every element of $\G$ acts according to   \[(x_1,\dots,x_n)\mapsto ((-1)^{\ep_1}x_1+2a_1,\dots,(-1)^{\ep_n}x_n+2a_n)\]
    for some $\ep_1,\dots,\ep_n\in \{0,1\}$ and $a_1,\dots,a_n\in \Z$. Then $\G$ preserves the cubical structure  of $\msf{R}_n$ and the folding map $\msf{f}_n$ is $\G$-invariant. In particular, $\msf{f}_n$ induces a folding map on the quotient cube complex $\msf{R}_n/\G$.
\end{lemma}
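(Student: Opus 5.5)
The plan is a direct verification, treating the statement coordinatewise, since both the elements of $\G$ and the folding map $\msf{f}_n$ split as products of one-variable maps. Fix $\gamma\in\G$ acting by $x_i\mapsto(-1)^{\ep_i}x_i+2a_i$. It suffices to show two things. First, $\gamma$ is a cubical automorphism of $\msf{R}_n$. Second, $\msf{f}_n\circ\gamma=\msf{f}_n$.

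For the cubical structure, recall that the cells of $\msf{R}_n$ are products $\prod_i J_i$, where each $J_i$ is either an integer point $\{m\}$ or an interval $[m,m+1]$ with $m\in\Z$. The one-variable map $x\mapsto \pm x+2a$ sends integers to integers. It therefore sends each such point or interval to another point or interval of the same kind, and it is an isometry on each. Taking products, $\gamma$ maps cells of $\msf{R}_n$ isometrically onto cells, so it preserves the cube complex structure.

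For invariance, I use the two properties of $\msf{f}_1$ recorded above: $\msf{f}_1(-x)=\msf{f}_1(x)$ and $\msf{f}_1(x+2)=\msf{f}_1(x)$. Together they give
\[\msf{f}_1\big((-1)^{\ep}x+2a\big)=\msf{f}_1\big((-1)^{\ep}x\big)=\msf{f}_1(x).\]
Applying this in each coordinate yields $\msf{f}_n(\gamma x)=\msf{f}_n(x)$. Hence $\msf{f}_n$ factors through a map $\bar{\msf{f}}_n:\msf{R}_n/\G\to\square^n$, which is continuous because the quotient carries the quotient topology.

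The step needing the most care is checking that $\bar{\msf{f}}_n$ is a genuine folding map. It is combinatorial, since the cells of $\msf{R}_n/\G$ are images of cells of $\msf{R}_n$, and $\msf{f}_n$ sends cells to faces of $\square^n$ because $\msf{f}_1$ maps integers to $\{0,1\}$ and unit intervals onto $[0,1]$. The harder part is injectivity on cells. On each cell of $\msf{R}_n$, $\msf{f}_n$ is a product of the maps $\msf{f}_1|_{[m,m+1]}$, which are affine bijections onto $[0,1]$ (or constants on points), so $\msf{f}_n$ is injective on every cell upstairs. Downstairs, a cell of the quotient is the image of a cell $\sigma$ of $\msf{R}_n$. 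If two points $x,y\in\sigma$ have the same image under $\bar{\msf{f}}_n$, then $\msf{f}_n(x)=\msf{f}_n(y)$, and injectivity on $\sigma$ forces $x=y$. So they give the same point of the quotient cell, which proves injectivity.

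Note that this argument does not need $\G$ to act freely for the folding statement itself. Also, the quotient's cells need not be embedded a priori, but the factorization through the injective map $\msf{f}_n|_\sigma$ shows that they are.
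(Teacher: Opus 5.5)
Your proposal is correct and follows the paper's argument, which deduces the lemma coordinatewise from the evenness $\mathsf{f}_1(-x)=\mathsf{f}_1(x)$ and the $2$-periodicity $\mathsf{f}_1(x+2)=\mathsf{f}_1(x)$ of $\mathsf{f}_1$. Your extra checks fill in details the paper leaves implicit: that cells map to cells, and that the induced map stays injective on the image of each cell in the quotient, so those cells remain embedded even when $\Gamma$ has torsion.
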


Equivalently, if $D_n<\mathrm{O}(n)$ denotes the group of diagonal matrices with diagonal entries in $\{1,-1\}$, then the folding map $\msf{f}_n:\msf{R}_n\ra \square^n$ is invariant under the group $D_n \ltimes (2\Z)^n< \mathrm{O}(n)\ltimes \R^n=\mathrm{Isom}(\R^n)$.

Recall that an $n$-dimensional flat manifold $F$ is of diagonal type if its holonomy group is isomorphic to $(\Z/2\Z)^k$ for some $k$. If $F$ is such a manifold, then $k\leq n-1$ and $\G=\pi_1(F)$ fits into the short exact sequence
\begin{equation}\label{eq:SESdiagonal}
    1 \ra \Z^n \ra \G \ra (\Z/2\Z)^k\ra 1. 
\end{equation}
The main result of this section is the following.
\begin{proposition}\label{thmA:diagonalfoldable}
Let $F$ be an $n$-dimensional flat manifold of diagonal type. Then there exists a flat foldable cube complex $\msf{C}$ and a homeomorphism $\msf{C}\ra F$ that restricts to a smooth embedding on each cube of $\msf{C}$.
\end{proposition}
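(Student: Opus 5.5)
The plan is to conjugate $\G=\pi_1(F)$, by an affine change of coordinates of $\R^n$, into a group of the form required by Lemma \ref{lem:foldinginvariant}, i.e.\ into $D_n\ltimes(2\Z)^n$. Then $\msf{C}=\msf{R}_n/\G$ is a flat cube complex with the folding map induced by $\msf{f}_n$. The identification with $F$ comes from the affine conjugacy, which is a diffeomorphism $\R^n/\G\to F$; it is affine, hence smooth, on each cube.

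First I would normalize $\G$. Since the holonomy representation $\Phi\cong(\Z/2\Z)^k\to\GL_n(\Z)$ is diagonalizable, I would use the standard fact about flat manifolds of diagonal type (going back to the Lee--Szczarba and related literature) that one can choose a basis of the lattice $\Z^n$ in which every holonomy element acts by a diagonal $\pm1$ matrix. The point is to diagonalize over $\Z$, not just over $\Q$. If this is not available directly, I would pass to a suitable sublattice basis. I would then change coordinates by the linear map sending this basis to the standard one. After that, $\G\subset D_n\ltimes\R^n$, with translation lattice $\Z^n$ and every element of the form $x\mapsto Ax+t$ with $A\in D_n$.

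Next I would control the translation parts. For $\gamma=(A,t)\in\G$, the element $\gamma^2=(I,At+t)$ lies in $\Z^n$. So in every coordinate $i$ with $A_{ii}=1$ we get $2t_i\in\Z$. In coordinates with $A_{ii}=-1$ there is no constraint, but such $t_i$ can be moved by conjugating with a translation. Since $\Phi$ is an elementary abelian $2$-group of diagonal matrices, I would try to choose a single translation $x\mapsto x+s$ that simultaneously moves all such $t_i$ into $\frac12\Z$. A cocycle check, coordinate by coordinate using commutation relations in $\G$, should show that for each coordinate $i$ the values $t_i \bmod \frac12\Z$ over the elements with $A_{ii}=-1$ all agree, so one shift $s_i$ works. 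After this step $\G\subset D_n\ltimes(\tfrac12\Z)^n$. Finally I would rescale by $x\mapsto 4x$, which makes $\G\subset D_n\ltimes(2\Z)^n$; the lattice becomes $(4\Z)^n\supset$ the image of $\G$'s translations. Lemma \ref{lem:foldinginvariant} then gives that $\G$ preserves $\msf{R}_n$ and that $\msf{f}_n$ descends to a folding map on $\msf{C}=\msf{R}_n/\G$. Because $\G$ acts freely and cellularly, $\msf{C}$ is a genuine cube complex with universal cover $\msf{R}_n$, so it is flat.

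The main obstacle is the first step: ensuring that the holonomy is diagonal in a basis of the lattice itself, or else handling the translations by a finite-index sublattice while keeping the group $\G$ unchanged. My fallback is to replace $\Z^n$ by the sublattice $L$ spanned by the eigenvectors. This sublattice is $\G$-invariant and $\Z^n/L$ is a $2$-group. One then expresses $\G$ in coordinates adapted to $L$, where translations lie in $\frac{1}{2^m}L$, and again uses the conjugation and scaling argument above. Scaling by a large enough power of $2$ absorbs all the denominators.
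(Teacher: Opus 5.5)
Your proposal is correct and is essentially the paper's proof. The paper also places $\G$ inside $D_n\ltimes(\tfrac12\Z)^n$, citing \cite[Lemma~1.4]{miatello-rossetti} for the normalization you carry out by hand with a translation shift; it then conjugates by the homothety $x\mapsto 4x$ to land in $D_n\ltimes(2\Z)^n$ and applies Lemma~\ref{lem:foldinginvariant}. Your hedged ``cocycle check'' does hold: if $\gamma=(A,t)$ and $\gamma'=(A',t')$ satisfy $A_{ii}=A'_{ii}=-1$, then the $i$-th coordinate of the lattice translation $(\gamma\gamma')^2$ is $2(t_i-t'_i)\in\Z$. Your eigen-sublattice fallback also works after rescaling by a larger power of $2$, so it even covers holonomy that is diagonalizable only over $\Q$.
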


\begin{proof}
    Let $F$ be an $n$-dimensional flat manifold of diagonal type, so that $\G=\pi_1(F)$ splits as in \eqref{eq:SESdiagonal}. Then the induced holonomy representation $(\Z/2\Z)^k \ra \GL_n(\Z)$ maps $(\Z/2\Z)^k$ into  $D_n<\mathrm{O}(n)$. Moreover, $\G$ injects into $D_n \ltimes (\frac{1}{2}\Z)^n$ \cite[Lemma~1.4]{miatello-rossetti}. 
    
    If $A:\R^n\ra \R^n$ is the homothety $(x_1,\dots,x_n)=(4x_1,\dots,4x_n)$, then by conjugating by $A$ in $\GL_n(\R)\ltimes \R^n$ we obtain
    \[A\left(D_n \ltimes \left(\frac{1}{2}\Z\right)^n\right)A^{-1}\subset D_n\ltimes (2\Z)^n. \]
    This gives us an injection of $\G$ into $D_n\ltimes (2\Z)^n$. If $\G_0$ denotes the image of $\G$ under this injection, then Lemma \ref{lem:foldinginvariant} tells us that $F$ is homeomorphic to the flat and foldable cube complex $\msf{R}_n/\G_0$. It is immediate that this description provides a smooth cubulation of $F$.
\end{proof}

%%%%%%%%%%%%%%%%%%%%%%%%%%%%%%%%%%%%%%%%%%%%%%%%%%%%%%%%%%%%%%%%%%%%%%%%%%%%%%%%%%%%%%%%%%%%%%%%%%%%%%%%

\section{Hyperbolizing flat cube complexes}\label{sec:hyperbolizingflat}
In this section we prove the following proposition, which states that hyperbolizing a flat foldable cube complex yields a hyperbolic manifold commensurable with the original Charney--Davis manifold used to define the hyperbolizing piece. For a closed Riemannian manifold $M$ with non-positive sectional curvatures, recall that its injectivity radius $\InjRad(M)$ can be computed as half of the length of a shortest closed geodesic \cite[p.~258]{petersen}.

\begin{proposition}\label{thm:flat->hyperbolic}
    Let $\calH_X$ be a strict hyperbolization procedure with hyperbolizing piece $X$ obtained from the $n$-dimensional Charney--Davis manifold $M$. If $\msf{C}$ is a compact flat foldable $n$-dimensional cube complex, then $\calH_X(\msf{C})$ is a closed hyperbolic manifold commensurable with $M$. Moreover, there exists $B>0$ depending on $\msf{C}$, but not on $X$, such that 
    \[\InjRad(\calH_X(\msf{C}))\geq B\cdot \InjRad(M).\]
\end{proposition}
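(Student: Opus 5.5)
The plan is to compare $\calH_X(\msf{C})$ with a hyperbolization of a standard model flat complex that is itself a finite cover of $M$. Since $\msf{C}$ is compact and flat, its universal cover is $\msf{R}_n$. Fix a folding $f:\msf{C}\to\square^n$; lifting to $\msf{R}_n$ gives a folding of $\msf{R}_n$, which after an element of $B_n$ may be taken to be $\msf{f}_n$. Then $\pi_1(\msf{C})$ acts on $\msf{R}_n$ preserving $\msf{f}_n$, i.e.\ it lies in $D_n\ltimes(2\Z)^n$ up to permutations of coordinates; in any case it is a subgroup of the group of cubical automorphisms preserving $\msf{f}_n$. By Bieberbach, $\pi_1(\msf{C})$ contains a finite-index lattice $L<(2\Z)^n$ of translations, and we may take $L=(2m\Z)^n$ for some $m$. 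Thus $\msf{T}:=\msf{R}_n/L$ is a foldable flat torus finitely covering $\msf{C}$ compatibly with foldings. Since strict hyperbolization is a fibered product over $\square^n$, a folding-compatible covering $\msf{T}\to\msf{C}$ of degree $d$ induces a covering $\calH_X(\msf{T})\to\calH_X(\msf{C})$ of degree $d$. So it suffices to treat $\msf{T}$, or even $\what\bbT^n=\msf{R}_n/(2\Z)^n$, since $\msf{R}_n/(2m\Z)^n\to\what\bbT^n$ is again a folding-compatible cover.

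The key identification is $\calH_X(\what\bbT^n)\cong$ a finite cover of $M$. The complex $\what\bbT^n$ is $2^n$ copies of $\square^n$, indexed by $(\Z/2)^n$, glued by reflections across each face. Correspondingly, $\calH_X(\what\bbT^n)$ is $2^n$ copies of $X$, glued along faces by the identity. This is the standard reflection (``$(\Z/2)^n$-orbifold'') construction. Because $X$ is $M$ cut along $Y_1,\dots,Y_n$, which meet orthogonally, the result is locally isometric to $\Hy^n$ everywhere: at a codimension-$k$ stratum, $2^k$ right-angled corners fit together. Hence it is a closed hyperbolic manifold. Moreover, it is the cover of $M$ associated to the homomorphism $\pi_1(M)\to(\Z/2)^n$ given by mod-$2$ intersection numbers with the $Y_i$, i.e.\ the pullback of $\what\bbT^n\to\bbT^n$ under the Pontryagin--Thom map $\ov g:M\to\bbT^n$. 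Indeed, the fibered product $\{(t,x)\in\what\bbT^n\times M:\ \pi(t)=\ov g(x)\}$ is exactly the gluing of $2^n$ copies of $X$ just described. I would verify that $\calH_X(\what\bbT^n)$ equals this pullback directly from the definitions via the diagram \eqref{eq.diagramhyp}. This is the main bookkeeping step: matching face identifications in $\calH_X$ with cutting and regluing $M$, and checking that the pullback is connected or at least each component is a cover. For commensurability any component suffices, and $\calH_X(\msf{C})$ is connected whenever $\msf{C}$ is, because $X$ and its faces meet in a connected poset.

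Consequently, $\calH_X(\msf{C})$ is a closed hyperbolic manifold finitely covered by a cover of $M$, hence commensurable with $M$. Alternatively, hyperbolicity follows locally, since each point has a neighbourhood that is a union of right-angled corners matching the flat link structure of $\msf{R}_n$.

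For the injectivity radius bound, I use $\InjRad=\tfrac12\,\mathrm{sys}$. Let $p:\wh H\to\calH_X(\msf{C})$ be the degree-$d$ cover with $\wh H\to M$ a cover, where $d=[\pi_1(\msf{C}):(2m\Z)^n]\cdot 2^n m^n$ depends only on $\msf{C}$. A closed geodesic $\gamma$ in $\calH_X(\msf{C})$ has $\gamma^k$ lifting to a closed geodesic in $\wh H$ for some $k\le d$, namely the order of $[\gamma]$ acting on the $d$ sheets. That lift projects to a closed geodesic in $M$ of length $k\,\ell(\gamma)\ge \mathrm{sys}(M)$. Hence $\ell(\gamma)\ge \mathrm{sys}(M)/d$, and we may take $B=1/d$, which is independent of $X$.
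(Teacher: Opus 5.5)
Your skeleton matches the paper's: a common finite cover of $\msf{C}$ and $\what\bbT^n$; the fact that folding-compatible coverings induce coverings of hyperbolizations (your fibered-product argument, restricting the covering $p\times\mathrm{id}$, is a cleaner proof of Lemma~\ref{lem:regularcover} and Corollary~\ref{cor:nonregularcover}); the identification of $\calH_X(\what\bbT^n)$ with a $2^n$-sheeted cover of $M$; and a covering-degree bound on the systole.

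Two of your local choices are genuinely nicer than the paper's.
\begin{enumerate}
\item Rigidity of foldings of $\msf{R}_n$ puts $\pi_1(\msf{C})$ inside $D_n\ltimes(2\Z)^n$. This inclusion is exact: no permutations survive. It hands you the normal lattice $(2m\Z)^n$ without the cubical flat torus theorem used in Lemma~\ref{lem:commT_n}.
\item Describing $\calH_X(\what\bbT^n)$ as the pullback of $\what\bbT^n\to\bbT^n$ along $\ov g$ exhibits the covering of $M$ and its classifying map to $(\Z/2\Z)^n$ at once. The paper instead lifts the translation action via Lemma~\ref{lem:liftaction} and identifies the quotient in Lemma~\ref{lem:T_ncommM}.
\end{enumerate}

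\textbf{The gap.} The step you call bookkeeping is false as you state it. Repairing it needs an input you never use: the reflections $r_i\in B_n$. Write $\rho\colon X\to M$ for the regluing map, and $\sigma_v=v+\square^n$, $v\in\{0,1\}^n$, for the cubes of $\what\bbT^n$. Let $P=\{(t,m):\pi(t)=\ov g(m)\}$ be the pullback.
\begin{itemize}
\item In $P$ the closure of the sheet over $\sigma_v$ is a copy $X_v$ of $X$. However, $X_v$ and $X_{v+e_i}$ are glued along the $i$-faces by the map matching the two sides of the cut along $Y_i$, not by the identity as in $\calH_X(\what\bbT^n)$.
\item The naive map $(c,x)\mapsto(c,\rho(x))$ does not even land in $P$: on $\sigma_v$ with $v_j=1$ one has $c_j\equiv -g_j(x)$ mod $1$, not $g_j(x)$.
\end{itemize}
The correct isomorphism is
\[ \calH_X(\what\bbT^n)\to P,\qquad (c,x)\mapsto \bigl(c,\ r^{v}\rho(x)\bigr)\quad\text{for } c\in\sigma_v,\ r^v=\textstyle\prod_i r_i^{v_i}. \]
It lands in $P$ by the $B_n$-equivariance of $g$. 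It is well defined on shared faces precisely because $r_i$ fixes $Y_i$ pointwise (Theorem~\ref{thm:CD-piece}~(1)) and the $r_i$ commute.

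Without this symmetry, the identity-glued reflection construction is only a $(\Z/2\Z)^n$-cover of the reflection orbifold $X$, and there is no reason for it to cover $M$. Under the isomorphism above, the deck group of $P$ becomes the paper's lifted action $(c,x)\mapsto(c+w,r^wx)$. So once this point is supplied, your route and the paper's coincide; this is exactly where the paper invokes ``the reflections in $B_n$''.

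\textbf{A small slip.} The covering $p\colon\wh H=\calH_X(\msf{T})\to\calH_X(\msf{C})$ has degree $[\pi_1(\msf{C}):(2m\Z)^n]$, not the product you call $d$. Since $k\le\deg p\le d$, your bound $B=1/d$ is still valid, and $B=1/[\pi_1(\msf{C}):(2m\Z)^n]$ works as well. Either constant is independent of $X$, as required.
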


Hyperbolicity of these hyperbolized complexes was proved in \cite[Lemma~3.2]{belegradek}, so what is left to show is the commensurability with the Charney--Davis manifold, and the bound on the injectivity radius.

Throughout this section we fix a Charney--Davis $n$-manifold $M$ with corresponding hyperbolizing piece $(X,g)$. For a foldable cube complex $\msf{C}$ of dimension $n$, we let $\calH(\msf{C})=\calH_X(\msf{C})$ be its hyperbolization, and let $f_X$ and $g_\msf{C}$ be the projections from \eqref{eq.diagramhyp}. Given a cell $\sigma$ of $\msf{C}$, recall the notation $\calH(\sigma)=g_{\msf{C}}^{-1}(\sigma)\subset \calH(\msf{C})$. 

We first note that cubical actions on certain foldable cube complexes induce actions on their hyperbolizations.
\begin{lemma}\label{lem:liftaction}
    Let $\Phi$ be a group acting cubically on a foldable cube complex $\msf{C}$ of dimension $n$. If all the maximal cells of $\msf{C}$ are $n$-dimensional, then there exists a continuous action of $\Phi$ on $\calH(\msf{C})$ so that $g_\msf{C}: \calH(\msf{C})\ra \msf{C}$ is $\Phi$-equivariant.
\end{lemma}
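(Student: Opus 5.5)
The plan is to build the action cell by cell, using the fibered product description
\[\calH(\msf{C})=\{(c,x)\in \msf{C}\times X: f(c)=g(x)\}\]
and the $B_n$-action on $(X,g)$ from Lemma \ref{lem:CDpieceX}. The basic observation is this. Take $\phi\in\Phi$ and an $n$-cell $\sigma$ of $\msf{C}$. The restrictions $f|_\sigma:\sigma\to\square^n$ and $f|_{\phi\sigma}:\phi\sigma\to\square^n$ are cubical isomorphisms, since folding maps are injective on cells and $\sigma$ is $n$-dimensional. Hence
\[b_{\phi,\sigma}:=f|_{\phi\sigma}\circ\phi|_\sigma\circ (f|_\sigma)^{-1}\]
is a cubical automorphism of $\square^n$, that is, an element of $B_n$. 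Every point of $\calH(\sigma)$ has the form $(c,x)$ with $c\in\sigma$, and on such points we define
\[\phi\cdot(c,x):=(\phi c,\; b_{\phi,\sigma}\,x).\]
This lands in $\calH(\msf{C})$, because
\[f(\phi c)=b_{\phi,\sigma}f(c)=b_{\phi,\sigma}g(x)=g(b_{\phi,\sigma}x)\]
by $B_n$-equivariance of $g$. The formula also visibly makes $g_\msf{C}$ $\Phi$-equivariant.

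The hypothesis that every maximal cell is $n$-dimensional guarantees that the pieces $\calH(\sigma)$, with $\sigma$ ranging over $n$-cells, cover $\calH(\msf{C})$. Each is closed, and there are finitely many locally, so continuity will follow by pasting once the definitions agree on overlaps.

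The main step, and the expected obstacle, is this compatibility. Suppose $(c,x)\in\calH(\sigma)\cap\calH(\tau)$ with $\sigma,\tau$ both $n$-cells. Then $c\in\sigma\cap\tau$, which is a common face $\rho$. The elements $b_{\phi,\sigma}$ and $b_{\phi,\tau}$ agree on $f(\rho)$, since both equal $f\circ\phi\circ f^{-1}$ there, but they can differ as elements of $B_n$.

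To resolve this I will show that two elements of $B_n$ agreeing on a face $F=f(\rho)$ differ by an element of the pointwise stabilizer of $F$. That stabilizer is generated by the reflections $r_i$ with $i\in I$ (where $F=F_{I,\ep}$) together with permutations of the coordinates in $I$. All of these fix every point of $g^{-1}(F)$, for the following reasons:
\begin{itemize}
\item by Theorem \ref{thm:CD-piece}(1), each $r_i$ fixes $Y_i$ pointwise, and $g^{-1}(F)$ lies in the corresponding boundary faces of $X$;
\item for the permutations, I would argue the same way: the pointwise stabilizer of $F$ acts on the totally geodesic face $g^{-1}(F)$ by isometries that are trivial on a neighbourhood of the vertex point $y$ (Lemma \ref{lem:CDpieceX}(4) and Theorem \ref{thm:CD-piece}(5), since at $y$ this subgroup acts trivially on the tangent directions of the face); being isometries of a connected piece, they are then the identity.
\end{itemize}
If faces can be disconnected, I would work with each component meeting a vertex. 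Alternatively, I would note that only the $r_i$ are needed: two cube isometries agreeing on $F$ are related by an element fixing $F$ pointwise, and such elements act on $\square^n$ via the normal coordinates $I$. Since $x\in g^{-1}(F)$, we conclude $b_{\phi,\sigma}x=b_{\phi,\tau}x$.

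Finally, I will check the action axioms. The identity $b_{\phi\psi,\sigma}=b_{\phi,\psi\sigma}\,b_{\psi,\sigma}$ follows directly from the definition, which gives $\phi\cdot(\psi\cdot p)=(\phi\psi)\cdot p$. Also $b_{e,\sigma}=e$. Continuity of each $\phi$ comes from the pasting argument above, so $\Phi$ acts continuously on $\calH(\msf{C})$ with $g_\msf{C}$ equivariant.
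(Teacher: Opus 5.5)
Your construction is the same as the paper's: your $b_{\phi,\sigma}$ is the paper's $\gamma_\sigma$, with the same formula on each $\calH(\sigma)$ and the same pasting over $n$-cells. The compatibility check reduces, as in the paper, to one claim: an element $b\in B_n$ fixing a face $F$ of $\square^n$ pointwise also fixes $g^{-1}(F)$ pointwise. The paper asserts this in one line, ``by the construction of $X$''. Your justification of it is wrong as written.

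The reflections $r_i$, $i\in I$, are not in the pointwise stabilizer of $F=F_{I,\ep}$, since $r_i$ carries $F_{I,\ep}$ to the parallel face where $x_i=1-\ep(i)$. The pointwise stabilizer consists of the elements that fix every coordinate outside $I$ and act on the $I$-coordinates by a signed permutation fixing the vertex $\ep\in\{0,1\}^I$. It is isomorphic to the symmetric group on $I$; for $\ep\equiv 0$ it is just the coordinate permutations of $I$. Moreover, although $r_i$ fixes $Y_i\subset M$ pointwise, on $X$ it exchanges the two faces $g^{-1}\{x_i=0\}$ and $g^{-1}\{x_i=1\}$, which are the two sides of the cut along $Y_i$. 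This is forced by the $B_n$-equivariance of $g$. So your first bullet and the ``only the $r_i$ are needed'' alternative are false and should be dropped.

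Your second bullet is the right mechanism. Such a $b$ fixes each vertex point $g^{-1}(v)$ for $v$ a vertex of $F$; this is a single point by Lemma~\ref{lem:CDpieceX}(4). By Theorem~\ref{thm:CD-piece}(5), the differential of $b$ at $y$ is trivial on $T_yY_I$, where $Y_I=\bigcap_{i\in I}Y_i$. Hence $b$ is the identity on the component of $Y_I$ through $y$, and on the part of $g^{-1}(F)$ lying over that component.

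However, faces of $X$ can be disconnected. Components lying over other components of $Y_I$ contain no vertex at all, so ``work with each component meeting a vertex'' does not reach them. On those components nothing in Theorem~\ref{thm:CD-piece} prevents $b$ from acting non-trivially; for example, a coordinate swap in $I$ could permute such components of $Y_I$. There the two local definitions $b_{\phi,\sigma}x$ and $b_{\phi,\tau}x$ could disagree.

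To close the argument you need an extra input, and either of the following suffices.
\begin{enumerate}
\item Assume every face of $X$ is connected. Then each face contains a vertex point and your second bullet applies; Corollary~\ref{cor:connectedfaces} lets you arrange this.
\item Restrict to actions for which $b_{\phi,\sigma}$ does not depend on $\sigma$, so nothing needs checking on overlaps. This is the situation in the paper's actual use of the lemma (Lemma~\ref{lem:T_ncommM}): there $\msf{f}_1(x+1)=1-\msf{f}_1(x)$, so each translation acts by the same element of $B_n$ on every cube.
\end{enumerate}
As far as I can see, the paper's one-line appeal to the construction of $X$ implicitly needs the same input.
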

\begin{proof}
    Let $f:\msf{C}\ra \square^n$ be a folding map. Given $\gam \in \Phi$ and an $n$-cell $\sigma$ of $\msf{C}$, there exists a unique isometry $\gam_\sigma$ of $\square^n$ that fits into the commutative diagram
\begin{equation}\label{eq.diagram1}
    \begin{tikzcd}
\sigma \arrow[r,"\gam"] \arrow[d,"f"]  & \gam(\sigma) \arrow[d,"f"]  \\
\square^n \arrow[r,"\gam_\sigma"] & \square^n.  
\end{tikzcd}
\end{equation}
 We define $\what{\gam}_\sigma: \calH(\sigma)\ra \calH(\gam(\sigma))$ according to \begin{equation*}
     \what \gam_\sigma(c,x)=(\gam(c),\gam_\sigma(x))
 \end{equation*}
for $(c,x)\in \calH(\sigma)$. This map is well-defined by \eqref{eq.diagram1} and by the $B_n$-equivariance of $g:X \ra \square^n$, see Lemma \ref{lem:CDpieceX}. 

We want to glue the maps $\what \gam_\sigma$ to obtain a global map on $\calH(\msf{C})$. Indeed, if $\sigma_1,\sigma_2$ are $n$-cells of $\msf{C}$ with non-empty intersection $\sigma_1\cap \sigma_2=D$, then by \eqref{eq.diagram1} we have that
\[\gam_{\sigma_1}(f(c))=f(\gam(c))=\gam_{\sigma_2}(f(c))\]
for any $c\in D$. Hence $\gam_{\sigma_2}^{-1}\circ \gam_{\sigma_1}$ pointwise fixes the face $f(D)\subset \square^n$. By the construction of $X$ from Section \ref{subsec:hyperbolizingpiece}, this forces $\gam_{\sigma_2}^{-1}\circ \gam_{\sigma_1}$ to also (pointwise) fix the face $g^{-1}(f(D))$ of $X$. In particular, if $(c,x)\in \calH(D)$, then $c\in D, x\in g^{-1}(f(D))$, and 
\[\what \gam_{\sigma_1}(c,x)=(\gam(c),\gam_{\sigma_1}(x))=(\gam(c),\gam_{\sigma_2}(x))=\what \gam_{\sigma_2}(c,x).\]

In conclusion, for $\gam\in \Phi$ we have a well-defined continuous map $\what \gam: \calH(\msf{C})\ra \calH(\msf{C})$ given by the restriction of $\what \gam_\sigma$ on each $\calH(\sigma)$ with $\sigma$ an $n$-cell of $\msf{C}$ (here we use that each maximal cell of $\msf{C}$ is $n$-dimensional). From the uniqueness of $\gam_\sigma$ in \eqref{eq.diagram1}, it follows that $\gam \mapsto \what \gam$ induces a continuous action of $\Phi$ on $\calH(\msf{C})$. Moreover, the map $g_{\msf{C}}:\calH(\msf{C})\ra \msf{C}$ is $\Phi$-equivariant by construction. 
\end{proof}
We continue by showing that covering maps between foldable cube complexes induce covering maps between the corresponding hyperbolizations.

\begin{lemma}\label{lem:regularcover}
Let $\msf{C}$ be an $n$-dimensional foldable cube complex with fundamental group $\Phi$ and let $q:\wt{\msf{C}}\ra \msf{C}$ be the universal covering map. Then the action of $\Phi$ on $\wt{\msf{C}}$ induces a natural free and properly discontinuous action of $\Phi$ on $\calH(\wt{\msf{C}})$, so that the quotient is naturally identified with $\calH(\msf{C})$.
\end{lemma}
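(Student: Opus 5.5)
The plan is to avoid the cell-by-cell gluing of Lemma \ref{lem:liftaction} and use instead that the folding map of $\wt{\msf{C}}$ can be chosen deck-invariant. Let $f:\msf{C}\ra\square^n$ be a folding map. Then $\wt f:=f\circ q:\wt{\msf{C}}\ra\square^n$ is a folding map for the lifted cube structure on $\wt{\msf{C}}$. It is combinatorial, and it is injective on each cell because $q$ maps each cell of $\wt{\msf{C}}$ isomorphically onto a cell of $\msf{C}$. We build $\calH(\wt{\msf{C}})$ with respect to $\wt f$, so that
\[\calH(\wt{\msf{C}})=\{(\wt c,x)\in \wt{\msf{C}}\times X: f(q(\wt c))=g(x)\}.\]
Since $q\circ\gam=q$ for every deck transformation $\gam\in\Phi$, we have $\wt f\circ\gam=\wt f$. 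This lets us define the action of $\Phi$ on $\calH(\wt{\msf{C}})$ simply by
\[\gam\cdot(\wt c,x)=(\gam\wt c,x).\]
This formula preserves the fibered product, is continuous, and is clearly a group action. Moreover, the projection $g_{\wt{\msf{C}}}$ is $\Phi$-equivariant.

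The key observation is that the covering map $q\times \mathrm{id}_X:\wt{\msf{C}}\times X\ra \msf{C}\times X$ satisfies
\[(q\times\mathrm{id}_X)^{-1}(\calH(\msf{C}))=\calH(\wt{\msf{C}}).\]
Indeed, $(\wt c,x)$ lies in the left-hand side if and only if $f(q(\wt c))=g(x)$, which is exactly the defining condition of the right-hand side. The restriction of a covering map to the full preimage of a subspace is again a covering map. Hence $p:=(q\times \mathrm{id}_X)|_{\calH(\wt{\msf{C}})}:\calH(\wt{\msf{C}})\ra\calH(\msf{C})$, $(\wt c,x)\mapsto(q(\wt c),x)$, is a covering map.

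Its deck group contains $\Phi$ acting as above. The fibre of $p$ over $(c,x)$ is $q^{-1}(c)\times\{x\}$, and $\Phi$ acts simply transitively on $q^{-1}(c)$. Therefore $\Phi$ acts freely and transitively on every fibre of $p$. Freeness and proper discontinuity of the $\Phi$-action are inherited from those of the action on $\wt{\msf{C}}$, because the action on $\calH(\wt{\msf{C}})$ is the restriction of the product action $\gam\times\mathrm{id}_X$ on $\wt{\msf{C}}\times X$. Consequently $p$ descends to a continuous bijection $\calH(\wt{\msf{C}})/\Phi\ra \calH(\msf{C})$. This bijection is a homeomorphism because $p$ is a covering map, hence open. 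It is natural: it is compatible with the projections $g_{\wt{\msf{C}}}$, $g_{\msf{C}}$ and with the maps $f_X$ to $X$.

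The only step needing care is checking that restricting a covering map to the preimage of a subspace is still a covering map. This is immediate, since an evenly covered open set $U\subset\msf{C}$ gives the evenly covered set $(U\times X)\cap\calH(\msf{C})$, whose preimage splits into the sheets $(\wt U_j\times X)\cap\calH(\wt{\msf{C}})$. The argument does not use connectedness of $\calH(\wt{\msf{C}})$ or any assumption on the maximal cells of $\msf{C}$. If one wants the action to agree with the one from Lemma \ref{lem:liftaction}, note that with the folding $\wt f$ every $\gam_\sigma$ in \eqref{eq.diagram1} is the identity, so the two constructions coincide.
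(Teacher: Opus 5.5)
Your proof is correct and follows the paper's argument: the same folding $\wt f=f\circ q$, the same action $(\wt c,x)\mapsto(\gam\wt c,x)$, and the same map $(\wt c,x)\mapsto(q(\wt c),x)$ whose fibres are exactly the $\Phi$-orbits. Your observation that this map is the restriction of the covering $q\times\mathrm{id}_X$ to the full preimage of $\calH(\msf{C})$ adds something useful: it gives the openness needed to upgrade the continuous bijection $\calH(\wt{\msf{C}})/\Phi\to\calH(\msf{C})$ to a homeomorphism, a point the paper leaves implicit.
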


\begin{proof}
    Let $f:\msf{C}\ra \square^n$ be a folding map and use $\wt f=f \circ q$ as folding map to describe $\calH(\wt{\msf{C}})$. 

    Consider the action of $\Phi$ on $\calH(\wt{\msf{C}})$ given by $\gam(c,x)=(\gam(c),x)$ for $(c,x)\in \calH(\wt{\msf{C}})$ and $\gam\in \Phi$. This action is well-defined since $\wt f(\gam(c))=\wt f(c)$ for $c\in \wt{\msf{C}}$ and $\gam\in \Phi$. This action is also free and properly discontinuous, since the action of $\Phi$ on $\wt{\msf{C}}$ is free and properly discontinuous and the map $g_{\wt{\msf{C}}}:\calH(\wt {\msf{C}})\ra \wt{\msf{C}}$ is $\Phi$-equivariant. Hence $\calH(\wt{\msf{C}})\ra \calH(\wt{\msf{C}})/\Phi$ is a regular covering map.
    
    We are left to produce a $\Phi$-equivariant map $\wt{q}: \calH(\wt{\msf{C}})\ra \calH(\msf{C})$ that induces a homeomorphism $\calH(\wt{\msf{C}})/\Phi\ra \calH(\msf{C})$. To this end, we define $\wt q:  \calH(\wt{\msf{C}})\ra \calH(\msf{C})$ according to $\wt q(c,x)=(q(c),x)$. This map is well-defined, continuous and surjective by the definition of $\wt f$. This map is clearly $\Phi$-invariant, and we have $\wt q(c,x)=\wt q(c',x')$ if and only if $(c',x')=\gam(c,x)$ for some $\gam\in \Phi$, as desired.
\end{proof}

\begin{corollary}\label{cor:nonregularcover}
    Let $\msf{C}$ be an $n$-dimensional foldable cube complex and let $\what{\msf{C}}\ra \msf{C}$ be a (non-necessarily regular) cover of $\msf{C}$. Then $\calH(\what{\msf{C}})$ is a cover of $\calH(\msf{C})$. Moreover, if the cover $\what{\msf{C}}\ra \msf{C}$ is regular (resp. finite of degree $d$), then the cover $\calH(\what{\msf{C}})\ra\calH(\msf{C})$ is also regular (resp. finite of degree $d$).
\end{corollary}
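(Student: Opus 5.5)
The plan is to reduce to Lemma \ref{lem:regularcover} via the universal cover. Let $q\colon \wt{\msf{C}}\ra \msf{C}$ be the universal covering and $\Phi=\pi_1(\msf{C})$. The cover $\what{\msf{C}}\ra\msf{C}$ corresponds to a subgroup $\Phi_1<\Phi$, with $\what{\msf{C}}=\wt{\msf{C}}/\Phi_1$. I will use the same folding map $\wt f=f\circ q$ on $\wt{\msf{C}}$ for both quotients, and the induced folding map $\what f=f\circ p$ on $\what{\msf{C}}$, where $p\colon\what{\msf{C}}\ra\msf{C}$. Note that $\wt{\msf{C}}$ is the universal cover of $\what{\msf{C}}$ as well, and the pulled-back folding map agrees with $\wt f$. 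Hence Lemma \ref{lem:regularcover}, applied to $\what{\msf{C}}$, identifies $\calH(\what{\msf{C}})$ with $\calH(\wt{\msf{C}})/\Phi_1$. Applied to $\msf{C}$, it identifies $\calH(\msf{C})$ with $\calH(\wt{\msf{C}})/\Phi$. In both cases the action is $\gamma(c,x)=(\gamma(c),x)$.

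Since $\Phi$ acts freely and properly discontinuously on $\calH(\wt{\msf{C}})$, the subgroup $\Phi_1$ does too. The natural map
\[
\calH(\wt{\msf{C}})/\Phi_1\ra \calH(\wt{\msf{C}})/\Phi
\]
is then a covering map, by the standard fact about quotients by a subgroup. Concretely, this map is $(\hat c,x)\mapsto (p(\hat c),x)$, which is well defined because $\what f=f\circ p$. The only care needed is to check that this explicit map matches the composite of the identifications in Lemma \ref{lem:regularcover}. This is immediate from the formula $\wt q(c,x)=(q(c),x)$ used there.

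For the additional statements: if $\what{\msf{C}}\ra\msf{C}$ is regular, then $\Phi_1\lhd\Phi$. In that case $\Phi/\Phi_1$ acts freely on $\calH(\wt{\msf{C}})/\Phi_1$ with quotient $\calH(\wt{\msf{C}})/\Phi$, so the induced cover is regular with the same deck group. If the cover has degree $d=[\Phi:\Phi_1]$, then the fibre of $\calH(\wt{\msf{C}})/\Phi_1\ra\calH(\wt{\msf{C}})/\Phi$ over a point is in bijection with $\Phi_1\backslash\Phi$, because the $\Phi$-action upstairs is free. So the degree is again $d$. Alternatively, the fibre over $(c,x)$ is $\{(\hat c,x):\hat c\in p^{-1}(c)\}$, which has $d$ points.

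The main obstacle is minor: one must make sure Lemma \ref{lem:regularcover} applies with the pulled-back folding maps, and that $\calH(\msf{C})$ is connected, or treat components otherwise. Connectedness plays no role in the covering property, so I would not expect difficulties. If the universal-cover route were awkward (for example, for non-connected $\msf{C}$), I would instead check the covering property directly. Evenly covered neighbourhoods in $\msf{C}$ pull back under $g_{\msf{C}}$ to evenly covered neighbourhoods of the fibred product, since $\calH(\what{\msf{C}})=\calH(\msf{C})\times_{\msf{C}}\what{\msf{C}}$.
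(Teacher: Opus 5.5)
Your proof is correct and is essentially the paper's argument: the paper also applies Lemma~\ref{lem:regularcover} to identify $\calH(\msf{C})\cong\calH(\wt{\msf{C}})/\Phi$ and $\calH(\what{\msf{C}})\cong\calH(\wt{\msf{C}})/\what\Phi$, and reads off the covering, the regularity and the degree from the inclusion $\what\Phi<\Phi$. Your extra checks (that the pulled-back folding maps agree, and that freeness makes each fibre correspond to $\what\Phi\backslash\Phi$) spell out what the paper calls immediate. Your fibre-product remark $\calH(\what{\msf{C}})\cong\calH(\msf{C})\times_{\msf{C}}\what{\msf{C}}$ is a valid shortcut that avoids any connectivity issues.
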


\begin{proof}
It follows immediately from the previous lemma since for the universal cover $\wt{\msf{C}}\ra \what{\msf{C}}$, we have $\calH(\msf{C}) \cong\calH(\wt{\msf{C}})/\Phi$ and $\calH(\what{\msf{C}})\cong\calH(\wt{\msf{C}})/\what\Phi$ for $\pi_1(\what{\msf{C}})=\what\Phi<\Phi=\pi_1(\msf{C})$.
\end{proof}
The next step in the proof of Proposition \ref{thm:flat->hyperbolic} is to show that hyperbolizing $\what\bbT^n=\msf{R}_n/(2\Z)^n$ yields a finite cover of $M$, which is the content of the next lemma.
\begin{lemma}\label{lem:T_ncommM}
There exists a covering map $q:\calH(\what\bbT^n)\ra  M$ of degree $2^n$. 
\end{lemma}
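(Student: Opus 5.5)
Since $\what\bbT^n=\msf{R}_n/(2\Z)^n$ has exactly $2^n$ top-dimensional cells, and $\calH(\sigma)\cong X$ for each $n$-cell $\sigma$ (via $f_X$), the space $\calH(\what\bbT^n)$ is obtained by gluing $2^n$ copies of $X$ along faces. Meanwhile $M$ is obtained from a single copy of $X$ by regluing along the cut faces. The natural strategy is therefore to build a map $q\colon \calH(\what\bbT^n)\to M$ by composing $f_X\colon \calH(\what\bbT^n)\to X$ with the quotient map $\pi\colon X\to M$ that reverses the cutting along $Y_1\cup\dots\cup Y_n$. We then check that $q$ is a local isometry. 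Since both spaces are compact, this makes $q$ a covering, and counting preimages of a generic point gives degree $2^n$.

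\textbf{Step 1: identify how $\calH(\what\bbT^n)$ is glued.} Label the $n$-cells of $\what\bbT^n$ by $\ep\in\{0,1\}^n$, namely $\sigma_\ep=\prod_i[\ep_i,\ep_i+1]$ mod $2$. The folding map $\msf{f}_n$ restricted to $\sigma_\ep$ is the composition of the reflections $r_i$ for $\ep_i=1$ with a translation, so in particular it is injective. Two cells $\sigma_\ep,\sigma_{\ep'}$ with $\ep,\ep'$ differing exactly in coordinate $i$ share two codimension-$1$ faces, and these fold to the faces $\{x_i=0\}$ and $\{x_i=1\}$ of $\square^n$. Consequently $\calH(\what\bbT^n)$ is the union of $2^n$ copies $X_\ep$ of $X$. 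Each face $g^{-1}(\{x_i=0\})$ (resp. $g^{-1}(\{x_i=1\})$) of $X_\ep$ is glued \emph{by the identity of $X$} to the same face of $X_{\ep'}$, because the fibered product uses the identity in the $X$ coordinate.

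\textbf{Step 2: define $q$ and show it is well defined and a local isometry.} Set $q(c,x)=\pi(x)$. This is well defined because points identified in $\calH(\what\bbT^n)$ have the same $X$ coordinate. Away from the faces, $q$ is an isometric embedding on the interior of each $X_\ep$.

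At a point in the interior of a codimension-$1$ face $g^{-1}(\{x_i=0\})$, the neighbourhood is two half-balls from $X_\ep$ and $X_{\ep'}$ glued along the face. In $M$, the corresponding point lies on $Y_i$ (or on an $r_i$-translate sheet), and its neighbourhood is also two half-balls, namely the two sides of $Y_i$. The subtle point is that the two sides of $Y_i$ in $M$ correspond to two \emph{different} boundary sheets of $X$, which are exchanged by the reflection $r_i$. Meanwhile $\calH$ glues a sheet to itself in another copy.

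So I would instead define $q$ on $X_\ep$ as $\pi\circ\rho_\ep$, where $\rho_\ep=\prod_{\ep_i=1} r_i$ acts on $X$ via the $B_n$-action. This is natural, since $\msf{f}_n|_{\sigma_\ep}$ involves exactly these reflections. The well-definedness check is then: along the face where $X_\ep$ and $X_{\ep'}$ meet (identity gluing in $X$), the two definitions differ by $r_i$. Since $r_i$ fixes $Y_i$ pointwise in $M$, the images agree. The local isometry property follows because, near $Y_i$, $r_i$ swaps the two sides. Hence the half-ball from $X_\ep$ and the half-ball from $X_{\ep'}$ map to opposite sides of $Y_i$.

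Higher-codimension strata are handled the same way using orthogonality of the $Y_i$ (Theorem \ref{thm:CD-piece}(3)). Near a codimension-$k$ stratum, $2^k$ copies meet in $\calH$, and these map to the $2^k$ orthants cut out by $k$ orthogonal totally geodesic hypersurfaces in $M$.

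\textbf{Step 3: conclude.} A local isometry between compact connected hyperbolic manifolds is a covering map. Its degree is computed at a generic point of $M\sm\bigcup Y_i$: this point has one preimage in each $X_\ep$, giving $2^n$ in total.

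I expect the main obstacle to be Step 2. The difficulty is getting the correct twist $\rho_\ep$ so that the maps agree on faces, and verifying that the faces of $X$ really correspond to the two sides of $Y_i$ interchanged by $r_i$. This requires $Y_i$ to be a component of $\Fix(r_i)$ with $r_i$ acting as a reflection across it, together with the $B_n$-equivariance of $g$ (Lemma \ref{lem:CDpieceX}).
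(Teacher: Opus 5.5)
Your proposal is correct, and it takes a somewhat different route from the paper.

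\textbf{The paper's route.} It lets $(\Z/2\Z)^n$ act on $\what\bbT^n$ by the translations by $\{0,1\}^n$ and lifts this free cubical action to $\calH(\what\bbT^n)$ via Lemma \ref{lem:liftaction}. The quotient map is then automatically a $2^n$-sheeted covering. All the remaining work goes into identifying the quotient with $M$: the surviving face identifications on the single remaining copy of $X$ are the opposite-face pairings given by the reflections $r_i$, i.e.\ exactly those of $\pi\colon X\to M$.

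\textbf{How your map relates to it.} Your map $q|_{X_\ep}=\pi\circ\rho_\ep\circ f_X$ is precisely that quotient map followed by this identification. A short computation shows the lifted action of translation by $\delta$ is $(c,x)\mapsto(c+\delta,\,r_\delta x)$, where $r_\delta=\prod_{\delta_i=1}r_i$. Since $\rho_{\ep+\delta}\,r_\delta=\rho_\ep$ (indices mod $2$), your $q$ is invariant under the lifted action. Equivalently, $\rho_\ep\circ\msf{f}_n|_{\sigma_\ep}$ is just the translation of $\sigma_\ep$ onto the single cube of $\bbT^n$.

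\textbf{Trade-off.} The paper gets the covering property for free from a free finite group action, and puts all the weight on the somewhat briefly justified identification of the quotient. You instead check well-definedness and local isometry stratum by stratum. In exchange, you make explicit the geometric fact both arguments rest on.

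\textbf{The step you flagged is easy.} Take $y\in Y_i$ not on the other $Y_j$. Then $\pi^{-1}(y)$ has exactly two points. If $x$ is one of them, it lies in $g^{-1}(\{x_i=0\})$ or $g^{-1}(\{x_i=1\})$. By $B_n$-equivariance of $g$, the point $r_ix$ lies in the other, disjoint face, so $r_ix\neq x$. Also $\pi(r_ix)=r_iy=y$. Hence the two preimages are $x$ and $r_ix$: the two sides of $Y_i$ correspond to opposite faces and are swapped by $r_i$.

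\textbf{Minor point in Step 3.} You do not need to know in advance that $\calH(\what\bbT^n)$ is a hyperbolic manifold. A local homeomorphism from a compact space to a connected manifold is already a finite covering, and your local-isometry check then supplies the hyperbolic structure.
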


\begin{proof}
Let $\Phi=(\bbZ/2)^n$ act on
$\what{\bbT}^n=\msf{R}_n/(2\bbZ)^n$
by translations by elements of $\{0,1\}^n$. This is a free cubical action, and the quotient is $\what{\bbT}^n/\Phi\cong \bbT^n$.
By Lemma \ref{lem:liftaction}, this action lifts to a continuous free action of $\Phi$ on $\calH(\what{\bbT}^n)$.
Therefore the quotient map
\beq
\calH(\what{\bbT}^n)\ra \calH(\what{\bbT}^n)/\Phi
\eeq
is a covering map of degree $|\Phi|=2^n$.

It remains to identify the quotient. Passing from $\what{\bbT}^n$ to $\what{\bbT}^n/\Phi=\bbT^n$ identifies the $2^n$ top-dimensional cubes of the cubulation $\what{\bbT}^n$ to a single cube. Accordingly, after passing to the quotient by $\Phi$, the $2^n$ copies of the hyperbolizing piece $X$ occurring in $\calH(\what{\bbT}^n)$ are identified to a single copy of $X$. The additional face identifications of $X$ induced by this quotient map are precisely the opposite-face identifications coming from the (one-top cube) cubical structure on $\bbT^n$, which are given by the reflections in $B_n$. Thus the quotient $\calH(\what{\bbT}^n)/\Phi$ is obtained from a single copy of $X$ by the same face identifications induced by the quotient $\pi:X\ra M$, concluding that
$\calH(\what{\bbT}^n)/\Phi\cong M$.
\end{proof}
Our last step is to show that any flat and foldable cube complex is commensurable with $\what\bbT^n$.

\begin{lemma}\label{lem:commT_n}
    Let $\msf{C}$ be an $n$-dimensional, flat and foldable cube complex. If $\msf{C}$ is finite, then $\msf{C}$ and $\what\bbT^n$ have isomorphic regular finite covers.
\end{lemma}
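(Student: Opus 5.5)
Since $\msf{C}$ is finite and flat, its universal cover is isomorphic to $\msf{R}_n$, and $\G=\pi_1(\msf{C})$ acts on $\msf{R}_n$ freely, cocompactly and by cubical automorphisms. The plan is to locate, inside $\G$, a finite index subgroup that acts on $\msf{R}_n$ exactly as $(2k\Z)^n$ does for some $k\geq 1$, after composing with a cubical automorphism of $\msf{R}_n$. Then $\msf{R}_n/(2k\Z)^n$ is a common regular finite cover of $\msf{C}$ and of $\what\bbT^n$.

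\textbf{Step 1: $\G$ lies in the cubical automorphism group.} The cubical automorphism group of $\msf{R}_n$ is $B_n^{\mathrm{lin}}\ltimes \Z^n$, where $B_n^{\mathrm{lin}}<\mathrm{O}(n)$ is the group of signed permutation matrices. It acts by isometries of the Euclidean metric, so $\G$ is a discrete, torsion-free, cocompact subgroup of $\Isom(\R^n)$, that is, a Bieberbach group with $\G<B_n^{\mathrm{lin}}\ltimes\Z^n$. Let $T=\G\cap \Z^n$ be its translation subgroup. By Bieberbach's theorem (equivalently, because $\G\cap(\{1\}\ltimes\Z^n)$ is the kernel of the homomorphism $\G\to B_n^{\mathrm{lin}}$ to a finite group), $T$ has finite index in $\G$ and is a lattice of rank $n$ in $\Z^n$. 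Moreover $T$ is normal in $\G$.

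\textbf{Step 2: pass to a standard sublattice.} Since $T$ has finite index in $\Z^n$, there is an integer $N$ with $N\Z^n\subset T$; one can take $N=[\Z^n:T]$. Set $k=N$ and $L=(2k\Z)^n$. Then $L\subset T\subset\G$. The subgroup $L$ is characteristic in $\Z^n$ and invariant under $B_n^{\mathrm{lin}}$, so it is normal in $B_n^{\mathrm{lin}}\ltimes\Z^n$, and in particular in $\G$. Also $[\G:L]=[\G:T][T:L]<\infty$. Hence $\msf{R}_n/L\to\msf{R}_n/\G=\msf{C}$ is a finite regular covering of cube complexes.

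\textbf{Step 3: compare with $\what\bbT^n$.} We have $L\subset(2\Z)^n$, and $L$ is normal there because the group is abelian. So $\msf{R}_n/L\to\msf{R}_n/(2\Z)^n=\what\bbT^n$ is a regular covering of degree $k^n$. Both coverings are cubical, since they are induced by cubical actions on $\msf{R}_n$. Therefore $\msf{R}_n/L$ is a regular finite cover of both complexes, which proves the lemma.

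The main obstacle is Step 1: one has to make sure that an abstract cube complex isomorphism between the universal cover and $\msf{R}_n$ turns deck transformations into Euclidean isometries of the form used above. This holds because cubical automorphisms of $\msf{R}_n$ are determined by their action on vertices and edge directions, and so lie in $B_n^{\mathrm{lin}}\ltimes\Z^n$. Once that is in place, Steps 2 and 3 are elementary lattice arguments.
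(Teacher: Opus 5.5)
Your proof is correct, and its overall shape matches the paper's: find a finite-index subgroup of $\G$ that is normal in $\G$ and acts on $\msf{R}_n$ by translations lying in $(2\Z)^n$, then take the quotient of $\msf{R}_n$ by it. The difference is in how you obtain the translation subgroup.

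The paper uses two external results:
\begin{itemize}
\item Bieberbach's theorem, to get an abstract $\Z^n<\G$;
\item the cubical flat torus theorem of Wise--Woodhouse, to arrange that this subgroup acts by translations, of the form $(m_1\Z)\times\cdots\times(m_n\Z)$.
\end{itemize}
It then passes to a further subgroup to make the $m_i$ even and the subgroup normal in $\G$, without writing out that step. Strictly speaking, an arbitrary finite-index translation lattice also needs such a passage before it has product form.

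You instead note that deck transformations are cubical automorphisms of $\msf{R}_n$, hence lie in $B_n\ltimes\Z^n$. The translation subgroup is then the kernel of the linear-part map to the finite group $B_n$, and you take $L=(2N\Z)^n$. Since $L$ is normal in all of $B_n\ltimes\Z^n$, normality in $\G$ and containment in $(2\Z)^n$ come for free, and you need neither external theorem.

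Your Step 1 justification can be tightened to one line. A deck transformation is an isometry of the Euclidean metric on $\R^n$, hence affine. It maps the vertex set $\Z^n$ to itself and edges to edges, so its linear part sends each $e_i$ to some $\pm e_j$ and its translation part is integral.

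The paper's route relies on heavier machinery that would carry over to more general cubical settings. For this lemma, yours is the more elementary and more fully justified argument.
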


\begin{proof}
    Let $\G$ be the fundamental group of $\msf{C}$, and let it act on $\msf{R}_n$ by Deck transformations, so that this action is combinatorial. Since $\msf{C}$ is homeomorphic a flat $n$-manifold, by Bieberbach theorem $\G$ has a finite index subgroup $\what \G$ isomorphic to $\Z^n$. Then $\what\G$ acts combinatorially, properly and cocompactly on $\msf{R}_n$, and hence by the cubical flat torus theorem \cite[Theorem~3.6]{wise-woodhouse} we can assume that $\what\G$ acts on $\msf{R}_n$ by translations. That is, $\what\G$ acts as the subgroup $(m_1\Z)\times \cdots \times (m_n\Z)<\Z^n$ for some $m_1,\dots,m_n\in \Z^+$. By possibly considering a further finite index subgroup, we can assume that each $m_i$ is even and that $\what\G$ is normal in $\G$. Then $\what\G<(2\Z)^n$ and therefore $\msf{R}_n/\G'$ is a finite regular cover for both $\msf{C}$ and $\what\bbT^n$.
 \end{proof}

\begin{proof}[Proof of Proposition \ref{thm:flat->hyperbolic}]
    Let $\msf{C}$ be a finite, $n$-dimensional, flat and foldable cube complex. By Lemma \ref{lem:commT_n}, $\msf{C}$ and $\what\bbT^n$ have a common isomorphic finite regular cover $\what{\msf{C}}$. Let $d_1,d_2$ be the degrees of the covers $\what{\msf{C}}\ra \msf{C}$ and $\what{\msf{C}}\ra \what\bbT^n$ respectively. Then Corollary \ref{cor:nonregularcover} implies that $\what{M}=\calH(\what{\msf{C}})$ is a regular finite cover of both $\calH(\msf{C})$ and $\calH(\what\bbT^n)$, with degrees $d_1$ and $d_2$ respectively. Since $\calH(\what\bbT^n)$ is a degree $2^n$ cover of $M$ by Lemma \ref{lem:T_ncommM}, $\calH(\msf{C})$ is commensurable with $M$ and the degrees of the covers $M'\ra \calH(\msf{C})$ and $M'\ra M$ are $d_1$ and $d_2\cdot 2^n$ respectively. 
    In particular, this implies that $\calH(\msf{C})$ admits a hyperbolic metric and
    \[\InjRad(\calH(\msf{C}))\geq B\cdot  \InjRad(M)\]
    for $B=d_2^{-1}\cdot 2^{-n}$.
\end{proof}

\begin{remark}\label{rmk:equalitysystole}
    It is immediate from the proof above that if the systole of the Charney--Davis manifold $M$ is achieved by a closed geodesic disjoint from any submanifold in its hyperplane system, then $\InjRad(\calH(\msf{C}))=B\cdot \InjRad(M)$.
\end{remark}

%%%%%%%%%%%%%%%%%%%%%%%%%%%%%%%%%%
\subsection{Smooth structures on strict hyperbolization of flat cube complexes}\label{sec:smoothstructures}
We end this section with an observation about the smooth structure on the strict hyperbolization of an $n$-dimensional flat foldable (smooth) cube complex $\msf{C}$. 

By Proposition \ref{thm:flat->hyperbolic} (indeed, by \cite[Lemma~3.2]{belegradek}), $\calH(\msf{C})$ is equipped with a hyperbolic metric $g_{hyp}$. Recall that this metric is so that the hyperbolization $\calH(\sig)$ of a top-dimensional cell $\sig \subset \msf{C}$ is naturally isometric to the hyperbolizing piece $X$. Let $\calS_{hyp}$ denote the smooth structure on $\calH(\msf{C})$ induced by this metric (equivalently, by the atlas of local isometries to $\mathbb H^n$). On the other hand, recall from Section \ref{sec:propertieshyperbolization} that $\calH(\msf{C})$ is also equipped with a normal smooth structure, which we denote by $\calS_\nu$. It turns out that these smooth structures coincide.

\begin{lemma}\label{lem:normal coordinates}
The smooth structures $\calS_{hyp}$ and $\calS_{\nu}$ on $\calH(\msf{C})$ are diffeomorphic. 
\end{lemma}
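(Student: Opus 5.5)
We will compare both smooth structures through the embedding $\calH(\msf{C})\subset \msf{C}\times X$. The goal is to show that the identity map $(\calH(\msf{C}),\calS_{hyp})\to(\calH(\msf{C}),\calS_\nu)$ is smooth away from a lower-dimensional set, and then fix the singular strata. An alternative, which we keep in reserve, is to prove only ``diffeomorphic'', for instance via uniqueness of smoothings on a common cover.

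First we pass to the universal cover. By Lemma \ref{lem:regularcover}, $\calH(\msf{C})=\calH(\msf{R}_n)/\pi_1(\msf{C})$. The deck action is smooth for both structures: it is by isometries of $g_{hyp}$, and it acts by $(c,x)\mapsto(\gamma c,x)$, which is smooth in the normal structure since $\gamma$ acts smoothly on the smooth cubulation. So it suffices to produce a $\pi_1(\msf{C})$-equivariant diffeomorphism upstairs, or simply to work locally, since the identity map is canonical.

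Next we check the interiors of top cells. For an $n$-cell $\sigma$, the projection $f_X$ identifies $\calH(\sigma)$ isometrically with $X$, and $g_{hyp}$ restricted to $\calH(\sigma)$ is the hyperbolic metric of $X$. In Ontaneda's normal structure, the interior of $\calH(\sigma)$ is the graph of $x\mapsto (f|_\sigma)^{-1}(g(x))$ over $X^\circ$. Since $f|_\sigma$ is a smooth embedding, this graph is smooth precisely when $g$ is smooth on $X^\circ$. In Ontaneda's construction $g$ is the normal-coordinate (collar) map built from the normal exponential maps of the faces of $X$. With that choice the interiors agree.

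Then we deal with the faces, which we expect to be the main obstacle. Near a codimension-$k$ face $D$ of $\msf{C}$, the flat condition says that the link of $D$ is the standard one: $2^k$ top cubes meet as the orthants of $\R^k$. On the hyperbolic side, a neighborhood of $\calH(D)$ is a union of $2^k$ copies of a corner neighborhood of $X$. Because the faces of $X$ are totally geodesic and meet orthogonally (Lemma \ref{lem:CDpieceX}), this union is the normal exponential chart $\nu(\calH(D))\cong \calH(D)\times\R^k\to \calH(\msf{C})$. That chart is a hyperbolic-smooth chart, as in $M$ itself, where reflection across $Y_i$ is an isometry. Ontaneda's normal structure is defined precisely by gluing along such normal-coordinate charts, with the $\R^k$-coordinates given by the cube coordinates of $\msf{C}$ after a reparametrization of the collar. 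The key verification is that the transition between the hyperbolic normal exponential coordinates and Ontaneda's coordinates $(\text{point of }\calH(D),\ \text{normal cube coordinate})$ is smooth and compatible across strata. We would first try to show that the two charts differ by a radial reparametrization $t\mapsto \rho(t)$ in each normal direction, which is smooth and odd. This is where the choice of smoothing function in \cite{ontaneda.smoothing} matters. If the identity fails to be smooth, we instead invoke the fact that both structures are normal structures for the same stratification, differing only by the choice of collars. Uniqueness of collars up to isotopy then gives a diffeomorphism, built inductively over strata in decreasing codimension.
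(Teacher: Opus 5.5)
Your overall plan (compare the two structures near each stratum, using that normal exponential charts along the totally geodesic, orthogonally meeting strata are smooth for $\calS_{hyp}$) points in the right direction. However, there is a genuine gap: the step you yourself flag as the main obstacle is never carried out, and your proposed attack on it looks in the wrong place.

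The missing idea is the explicit form of Ontaneda's normal charts. For an open stratum $X_{\square^k}$ the chart is $H_{\square^k}(tv,p)=\exp_p(2rt\,h_{\square^k}(v))$. So the radial coordinate is already linear, and the only freedom is the link smoothing $h_{\square^k}\colon S^{n-k-1}\to \mathrm{Link}(X_{\square^k},\calH(\msf{C}))$. Since $\msf{C}$ is flat, its canonical smooth structure is normal with the round link smoothings, and strict hyperbolization preserves links. Hence one may take $h_{\square^k}$ to be the standard parametrization of the round unit normal sphere. Then $H_{\square^k}$ is literally a Fermi chart $\exp_p(2rt\,v)$ of $g_{hyp}$, so every chart of the normal atlas is an $\calS_{hyp}$-diffeomorphism onto its image. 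Consequently the identity $(\calH(\msf{C}),\calS_\nu)\to(\calH(\msf{C}),\calS_{hyp})$ is a diffeomorphism; this is essentially the paper's proof. Your top-cell analysis via the graph of $g$ is unnecessary: on the top stratum the normal chart is just the inclusion of $\mathrm{int}(X)$.

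The issue is angular, not radial. Write $\psi$ for $h_{\square^k}$ read in the round unit normal sphere. The transition from Ontaneda's chart to the Fermi chart is $(x,p)\mapsto(2|x|\,\psi(x/|x|),p)$. This is positively homogeneous of degree one in $x$, so it is differentiable along the stratum only if $\psi$ is linear, hence orthogonal. No radial reparametrization $t\mapsto\rho(t)$ or choice of smoothing function can repair a non-round link smoothing; the round choice of $h_{\square^k}$ is what makes the argument work.

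Your fallback does not close the gap either. It presupposes that $\calS_{hyp}$ is itself a normal structure for this stratification, which is precisely what the Fermi-chart identification proves. Moreover, uniqueness of collars up to isotopy says nothing about the link smoothings, which are not collar data. Nor is there a general ``uniqueness of smoothings'' to invoke on a common cover, since closed hyperbolic manifolds can carry exotic smooth structures (Farrell--Jones). The argument has to use the specific form of the normal charts.
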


Before proving this lemma, we give a few more details about the definition of the normal smooth structure \cite{ontaneda.cube,ontaneda.smoothing}.
It relies on Ontaneda's notion of a \emph{normal smoothing}, which is built from link data. More precisely, for each open $i$-cube $\square\subset \msf{C}$ one chooses a link smoothing
\beq
h_\square:S^{n-i-1}\to \mathrm{Link}(\square,\msf{C}),
\eeq
and these choices determine a normal atlas; if all transition maps are smooth, this atlas defines a smooth structure on $\msf{C}$, called a \emph{normal smooth structure} with respect to the cubulation. In the case relevant here, $\msf{C}$ is \emph{flat}, and the canonical smooth structure on $\msf{C}$ is normal with respect to its cubulation. In fact, after lifting to the universal cover $\R^n$ with its standard cubulation $\msf{R}_n$, one may take the standard round link smoothings and then descend them to $\msf{C}$ via the deck group,
obtaining a normal smooth atlas whose transition maps are affine.

The strict hyperbolization $\calH(\msf{C})$ carries a natural hyperbolized cell structure whose top-dimensional cells are isometric copies of the Charney--Davis piece $X$, glued along totally geodesic faces according to the face identifications in $\msf{C}$. For each open $k$-face $\square^k$ in the cube $\square^n$, let $X_{\square^k}\subset \calH(\msf{C})$ denote the corresponding open stratum. Ontaneda defines a geometric link $\mathrm{Link}(X_{\square^k},\calH(\msf{C}))$ and chooses a link smoothing
\beq
h_{\square^k}:S^{n-k-1}\to \mathrm{Link}(X_{\square^k},\calH(\msf{C})).
\eeq
Fixing $r>0$ smaller than the normal injectivity radius of all strata, one obtains a normal chart
\beq
H_{\square^k}:D^{n-k}\times \mathrm{int}(X_{\square^k})\to \calH(\msf{C}),
\eeq
given by $H_{\square^k}(t v,p)=\exp_p(2r t\, h_{\square^k}(v))$. Ontaneda proves that, when the input smooth structure on $\msf{C}$ is normal, the link smoothings can be chosen so that the resulting normal atlas has smooth transition maps; the maximal smooth atlas generated by these normal charts is $S_{\nu}$.

\begin{proof}[Proof of Lemma \ref{lem:normal coordinates}]
Fix a $k$-face $\square^k\subset \square^n$ and write $X_{\square^k}\subset \calH(\msf{C})$ for the corresponding hyperbolized stratum. Since each hyperbolizing piece has totally geodesic faces meeting orthogonally, and the gluing is performed by face isometries, $X_{\square^k}$ is a totally geodesic submanifold of $(\calH(\msf{C}),g_{hyp})$.

Choose $r>0$ smaller than the normal injectivity radius of every stratum. The exponential map gives a tubular neighborhood of $\mathrm{int}(X_{\square^k})$, and after choosing an identification of the unit normal sphere with $S^{n-k-1}$ we obtain a chart of the form
\begin{equation}\label{eq:Fermi}
\Theta_{\square^k}\colon D^{n-k}\times \mathrm{int}(X_{\square^k})\to \calH(\msf{C}),
\qquad
\Theta_{\square^k}(t v,p)=\exp_p(r t\, v),
\end{equation}
where $v\in S^{n-k-1}$ and $t\in [0,1)$. By construction, $\Theta_{\square^k}$ is smooth in the smooth structure $\calS_{hyp}$.

Since $\msf{C}$ is flat and strict hyperbolization preserves links, we may choose the link smoothings $h_{\square^k}$ so that, under this identification, $h_{\square^k}$ is the standard parametrization of the round unit sphere in the normal space. With this choice, $h_{\square^k}(v)$ is exactly the unit normal vector $v$, and hence the normal chart $H_{\square^k}$ coincides (up to the choice of scale) with the ``Fermi'' chart $\Theta_{\square^k}$ in \eqref{eq:Fermi}. In particular, every Ontaneda normal chart is a smooth chart for $\calS_{hyp}$.

Conversely, let $\phi\colon U\to \mathbb H^n$ be a hyperbolic chart for $\calS_{hyp}$, i.e. a local isometry.
Cover $U$ by images of Ontaneda charts $H_{\square^k}$. On an overlap $U\cap \mathrm{Im}(H_{\square^k})$
we can write
\beq
\phi = (\phi\circ H_{\square^k})\circ H_{\square^k}^{-1}.
\eeq
Here $H_{\square^k}^{-1}$ is smooth by definition of $\calS_{\nu}$, and $\phi\circ H_{\square^k}$ is smooth. Therefore $\phi$ is smooth with respect to $\calS_{\nu}$. This shows that the identity map is a diffeomorphism between the two smooth structures.
\end{proof}
\begin{corollary}\label{cor:tangential-hyp}
If $\msf{C}$ is a flat foldable cube complex, then the strict hyperbolization  $\calH_X(\msf{C})$ with the smooth structure $\calS_{hyp}$ embeds smoothly in $\msf{C}\times X$ with trivial normal bundle. Consequently, if $X$ is parallelizable, the map $g_{\msf{C}}:\calH_X(\msf{C})\to\msf{C}$ is stably tangential.
\end{corollary}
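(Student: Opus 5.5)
The plan is to combine Lemma \ref{lem:normal coordinates} with the embedding property of Ontaneda's normal smooth structure, and then to argue exactly as in the proof of Lemma \ref{cor:tangential}. The key fact recalled in Section \ref{sec:propertieshyperbolization} is this: for a foldable smooth cube complex $\msf{C}$, the normal smooth structure $\calS_\nu$ on $\calH_X(\msf{C})$ makes the inclusion $\calH_X(\msf{C})\subset \msf{C}\times X$ a smooth embedding with trivial normal bundle. Here $\msf{C}$ carries its canonical flat smooth structure, which is normal with respect to its cubulation, and $X$ carries its hyperbolic smooth structure with corners.

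By Lemma \ref{lem:normal coordinates}, the identity map $(\calH_X(\msf{C}),\calS_{hyp})\to(\calH_X(\msf{C}),\calS_\nu)$ is a diffeomorphism; the proof there shows that it is the identity, not merely some diffeomorphism. Hence the inclusion is also a smooth embedding with respect to $\calS_{hyp}$, and its normal bundle is the same trivial bundle $\nu\cong\varepsilon^n$. Two points need checking. First, the smooth structure on $X$ used in Ontaneda's construction is the one from its hyperbolic metric; this holds because the normal charts are built via the exponential map of the hyperbolic piece. Second, the smooth structure on $\msf{C}$ must be the flat one; this was arranged when the round link smoothings were chosen.

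For the consequence, I would follow the displayed chain in Lemma \ref{cor:tangential}. Since $T(\msf{C}\times X)$ restricted to $\calH_X(\msf{C})$ is $g_{\msf{C}}^*T\msf{C}\oplus f_X^*TX$, we get
\[
T\calH_X(\msf{C})\oplus\varepsilon^n\cong g_{\msf{C}}^*T\msf{C}\oplus f_X^*TX\cong g_{\msf{C}}^*(T\msf{C}\oplus\varepsilon^n),
\]
where the last step uses that $X$ is parallelizable, so $f_X^*TX$ is trivial. This is stable tangentiality of $g_{\msf{C}}$.

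The only delicate point I expect is the corners and strata. The embedding must be smooth near points lying over lower-dimensional cells, where $X$ has corners. Lemma \ref{lem:normal coordinates} reduces this to the corresponding statement for $\calS_\nu$, which is taken from Ontaneda's work, so no new argument should be needed beyond identifying the two structures.
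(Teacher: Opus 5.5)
Your proposal is correct and follows essentially the same route as the paper. The paper's terse proof likewise takes the trivial-normal-bundle embedding for $\calS_\nu$ from Ontaneda's Addendum, transfers it to $\calS_{hyp}$ via Lemma \ref{lem:normal coordinates}, and gets stable tangentiality from the bundle computation in Lemma \ref{cor:tangential}; your observation that the identity map itself must be a diffeomorphism (which that lemma's proof does supply) is the right point to make explicit.
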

\begin{proof}
The result then follows from the Addendum to Main Theorem in \cite{ontaneda.smoothing}. The second part is Lemma \ref{lem:normal coordinates}.
\end{proof}
%%%%%%%%%%%%%%%%%%%%%%%%%%%%%%%%%%%%%%%%%%%%%%%%%%%%%%%%%%%%%%%%%%%%%%%%%%%%%%%%%%%%%%%%%%%%%%%%%%%%
\section{Lee--Szczarba manifolds and their characteristic classes}\label{sec:characteristicclassesLS}

In this section we study the flat manifolds constructed by Lee--Szczarba \cite{lee-szczarba}, which have non-trivial Stiefel--Whitney and Pontryagin classes.  By Corollary \ref{cor:tangential-hyp} and Propositions \ref{thm:flat->hyperbolic} and \ref{thmA:diagonalfoldable}, the strict hyperbolizations of these manifolds yield closed hyperbolic manifolds with non-trivial characteristic classes, proving parts (1) and (2) of Theorem \ref{thm:Pontryagin}. We also extend Lee--Szczarba's computations in two directions: first, by identifying additional degrees in which the Pontryagin classes are non-zero; and second, by computing these characteristic classes for the orientable double covers.
\subsection{Background on characteristic classes}
A basic reference for the theory of characteristic classes of vector bundles is \cite{milnor-stasheff}.

Recall that the Stiefel--Whitney classes of a smooth manifold $M$ are the Stiefel--Whitney classes of its tangent bundle $TM\to M$. We denote them by $w_i(M)\in H^i(M;\F_2)$. They are primary obstructions to orientation and (higher) spin structures.
A smooth manifold $M$ is orientable if and only if $w_1(M)=0$, and an orientable smooth manifold admits a spin structure if and only if $w_2(M)=0$. 
Likewise, an orientable smooth manifold admits a spin$^c$ structure if and only if $w_2(M)$ admits an integral lift, that is, if it lies in the image of the mod $2$ reduction map $H^2(M;\Z)\to H^2(M;\F_2)$. Observe that if $M$ is a spin$^c$ manifold, then $w_3(M)=0$, but the converse does not hold (see Section \ref{subsec:nontrivialSW}). There are also higher analogues of spin structures whose existence is obstructed by Stiefel--Whitney classes \cite{albanese-milivojevic}. For example, if a manifold admits a spin$^h$ structure, then $w_4(M)$ admits an integral lift. Therefore if $M$ supports a spin$^h$ structure, then $w_5(M)=0$.

Similarly, the Pontryagin classes $p_i(M)\in H^{4i}(M;\Z)$ of a smooth manifold $M$ are the Pontryagin classes of its tangent bundle. They satisfy the relation
\beq
w_{2i}(M)^2 \equiv p_i(M)\,  ( \mathrm{mod}\, 2)
\eeq
in $H^{4i}(M;\F_2)$.

Finally, the defining property of characteristic classes is that they are natural with respect to morphisms of vector bundles. A special case of interest to us is the following: if $f:M\to N$ is a smooth map between smooth manifolds which is covered by a map of stable tangent bundles (e.g. a covering map), then
\beq
f^*w_i(N)=w_i(M)\qquad \text{and}\qquad f^*p_i(N)=p_i(M).
\eeq
\subsection{Lee--Szczarba's construction}\label{subsec:LS}
Given $n\geq 2$, let $\G_n$ be the group of isometries of $\R^n$ generated by the elements $\ga_0,\dots,\ga_{n-1}$ given by
\begin{equation}\label{eq:LSequations}
\gamma_i(x_1,\ldots,x_n)=
\begin{cases}
(x_1+1,x_2,\ldots,x_n) & \text{ if  } i=0\\
(x_1,\ldots,-x_i,x_{i+1}+\frac{1}{2},x_{i+2},\ldots,x_n) & \text{ if  } 1\leq i < n.
\end{cases}
\end{equation}
Then $\Gamma_n$ is discrete and torsion-free, and the quotient $\msf{LS}_n:=\R^n/\Gamma_n$ is a (non-orientable) flat manifold of dimension $n$ whose holonomy group is isomorphic to $(\Z/2\Z)^{n-1}$. In \cite{lee-szczarba}, the following was proven. 

\begin{theorem}[{\cite[Theorem]{lee-szczarba}}]\label{thm:LS}
The Lee--Szczarba manifolds satisfy 
\begin{equation*}
    w_i(\msf{LS}_n)\in H^i(\msf{LS}_n;\F_2)\qquad\text{ for }\qquad 1 \leq i \leq n-1,
\end{equation*} 
\begin{equation*}
    w_{2i}(\msf{LS}_n)^2\neq 0\in H^{4i}(\msf{LS}_n;\F_2)\qquad\text{ whenever }\qquad n=6k+4 \text{ for } k\geq 1 \text{ and }i\leq k.
\end{equation*} 
In particular, the Pontryagin classes of $\msf{LS}_n$ satisfy $p_i(\msf{LS}_n)\neq 0$ whenever $n=6k+4$ for $k\geq 1$ and $i\leq k$. 
\end{theorem}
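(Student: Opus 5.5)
The statement is the Lee--Szczarba theorem, so the natural route is a direct computation from the diagonal-type structure of $\msf{LS}_n$. The first fact to use is that, for a flat manifold of diagonal type, the tangent bundle splits as a Whitney sum of flat real line bundles:
\[T\msf{LS}_n\cong L_1\oplus\cdots\oplus L_n.\]
Here $L_j$ is associated to the character $\rho_j:\G_n\to\{\pm1\}$ recording the sign acquired by the $j$-th coordinate. By \eqref{eq:LSequations}, $\rho_j(\ga_i)=-1$ exactly when $i=j\geq 1$. Setting $x_j=w_1(L_j)\in H^1(\msf{LS}_n;\F_2)=\mathrm{Hom}(\G_n,\F_2)$, the Whitney product formula gives
\[w(\msf{LS}_n)=\prod_{j=1}^{n}(1+x_j),\]
with $x_1,\dots,x_{n-1}$ the dual characters to $\ga_1,\dots,\ga_{n-1}$ and $x_n=0$. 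Hence $w_i(\msf{LS}_n)=\sigma_i(x_1,\dots,x_{n-1})$, the $i$-th elementary symmetric polynomial.

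It then remains to show that $\sigma_i(x_1,\dots,x_{n-1})\neq 0$ in $H^i(\msf{LS}_n;\F_2)$ for $1\leq i\leq n-1$. The plan is to understand the subring generated by the $x_j$. I would compute $H^*(\G_n;\F_2)$ in low degrees, or more efficiently the relations among the $x_j$, via the Lyndon--Hochschild--Serre spectral sequence of the extension $1\to\Z^n\to\G_n\to(\Z/2)^{n-1}\to1$. The key relations come from the squares $\ga_i^2$, which equal translations by $e_{i+1}$, and from the nontrivial extension classes they encode. These should give relations of the shape $x_i^2=x_i x_{i-1}$ or $x_i x_{i+1}=0$-type identities, so that $\F_2[x_1,\dots,x_{n-1}]$ modulo the $d_2$-images describes the image of inflation.

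Alternatively, and probably more cleanly, I would restrict to sub-flat-manifolds, for instance tori or Klein-bottle-type submanifolds $S\subset\msf{LS}_n$ fixed by subsets of the holonomy, or to finite subgroups of the form $(\Z/2)^m$ realized by elements $\ga_{i_1}\cdots$ acting on lower-dimensional pieces. One then checks that the pullback of $\sigma_i$ to such a test class, or its pairing with an $i$-dimensional $\F_2$-cycle realized as a flat submanifold, is $1$. Concretely, for each $i$ I would exhibit an embedded $i$-dimensional flat submanifold $S_i$ (a product of circles and a small Lee--Szczarba factor) on which exactly one monomial of $\sigma_i$ evaluates nontrivially, so that $\langle w_i,[S_i]\rangle=1$.

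For the Pontryagin statement I would use $p_i\equiv w_{2i}^2 \pmod 2$. So it suffices to show $\sigma_{2i}(x)^2=\sigma_{2i}(x_1^2,\dots,x_{n-1}^2)\neq0$ in degree $4i\leq n-1$ when $n=6k+4$ and $i\leq k$, again by evaluating on a $4i$-dimensional test submanifold. The numerology $n=6k+4$ should arise from needing enough independent coordinates to support the squares: each nonzero $x_j^2$ presumably uses up a consecutive pair of generators through the relations from $\ga_j^2$.

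The main obstacle is the nonvanishing step, that is, pinning down enough of the ring structure of $H^*(\G_n;\F_2)$ (in particular which products of the $x_j$ and their squares survive) to certify that the specific polynomials are nonzero. I would attack it first by the explicit test-submanifold evaluation, and fall back on the spectral sequence computation of $d_2$ on $H^1(\Z^n;\F_2)$ if suitable submanifolds are hard to find.
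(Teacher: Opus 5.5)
Your first step is correct and is what the paper uses: the diagonal holonomy splits $T\msf{LS}_n$ into flat line bundles, so $w_j(\msf{LS}_n)=\sigma_j(x_1,\dots,x_{n-1})$ as in \eqref{eq:formulaw_k}. The gap is everything after that, which is the actual content of the theorem.

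\textbf{The spectral sequence route only gives an upper bound.} Computing $d_2$ shows that $x_i^2+x_ix_{i+1}$ ($1\le i\le n-2$) and $x_{n-1}^2$ lie in $\ker(Bh)^*$. That bounds the image of inflation from above, so it can only prove that classes vanish. Non-vanishing needs the reverse inclusion: no higher differential may hit the bottom row, so that $\mathrm{I}(n)$ is the whole kernel and the square-free monomials are linearly independent. This is Proposition~\ref{prop:ker-im}. Lee--Szczarba prove it by induction on $n$ via the Wang sequence of $\msf{LS}_{n-1}\to\msf{LS}_n\to S^1$ (compare the proof of Proposition~\ref{prop:ker-im-or}).

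\textbf{Your guessed relations are wrong in a way that matters.} The relations are $x_i^2=x_ix_{i+1}$ and $x_{n-1}^2=0$, and $x_ix_{i+1}$ is \emph{not} zero. If it were, then for $n\ge 3$ the class $w_{n-1}(\msf{LS}_n)=x_1\cdots x_{n-1}$ would vanish, contradicting the statement.

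\textbf{The Pontryagin part needs a parity count.} Even granting Proposition~\ref{prop:ker-im}, $\sigma_{2i}(x_1^2,\dots,x_{n-1}^2)$ collapses modulo $\mathrm{I}(n)$ through identities such as \eqref{eq:identix_ix_i+1}. Many monomials coincide and can cancel in pairs; the paper controls this with Catalan numbers (Lemma~\ref{lem:catalan}). Your remark about squares using up consecutive pairs of generators does not address this.

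\textbf{The test-class route can work, but no class is produced.} Note that $\G_n$ is torsion-free, so there are no finite subgroups $(\Z/2)^m$ to restrict to; the test objects must be infinite subgroups. Here is one way to do it.

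For the Pontryagin claim: since $\ga_{j+1}\ga_j\ga_{j+1}^{-1}=\ga_j^{-1}$, the subgroup $\langle\ga_j,\ga_{j+1}\rangle$ is a Klein bottle group. On it, $x_j$ and $x_{j+1}$ restrict to the classes $\alpha,\beta$ dual to $\ga_j,\ga_{j+1}$, with $\beta=w_1(K)$, and every other $x_m$ restricts to $0$. There $\beta^2=0$ and $\alpha^2=\alpha\beta\neq0$. The blocks with $j=1,4,\dots,6i-2$ move disjoint triples of coordinates, so for $n\ge 6i$ they generate $\pi_1(K)^{2i}<\G_n$. On this subgroup $w_{2i}(\msf{LS}_n)^2=\sigma_{2i}(x_1^2,\dots,x_{n-1}^2)$ restricts to $\prod_b\alpha_b^2$, which is the top class. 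Since $6k+4\ge 6i$, this proves the Pontryagin claim, and in fact the range of Corollary~\ref{cor:6k-2}.

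For $w_i$ with $i$ close to $n-1$, this block trick does not reach. Instead:
\begin{enumerate}
\item Let $x_0$ be the class with $x_0(\ga_0)=1$ and $x_0(\ga_j)=0$ for $j\ge 1$.
\item Prove that $x_0x_1\cdots x_{n-1}$ is the top class, by induction along the bundle $\msf{LS}_n\to S^1$ given by the coordinate $x_n$. Its fiber is $\msf{LS}_{n-1}$ and its pulled-back class is $x_{n-1}$.
\item Use the relations $x_j^2=x_jx_{j+1}$ and $x_{n-1}^2=0$ (the easy inclusion) to get $\sigma_i\cdot x_0x_{i+1}\cdots x_{n-1}=x_0x_1\cdots x_{n-1}\neq 0$.
\end{enumerate}
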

\begin{remark}
Lee--Szczarba seem to include $p_1(\msf{LS}_4)\neq 0$ in their statement \cite[Theorem]{lee-szczarba}. However, this does not follow from their proof. In fact, with their method, the first dimension in which a non-trivial $p_1$ is detected is $n=6$. See Remark \ref{rmk:k=1 LS} below.
\end{remark}

To refine the computations above, we get into Lee--Szczarba's computations from \cite{lee-szczarba} in more detail. First, we note the short exact sequence
\begin{equation}\label{eq:flat}
1\to\Z^{n}\to\G_n\xto{h}\Phi_n\to 1,
\end{equation}
with $\Phi_n$ isomorphic to $(\Z/2\Z)^{n-1}$ and $\Z^n$ corresponding to $\langle \gam_0,\gam_1^2,\dots,\gam_{n-1}^2 \rangle$. After passing to classifying spaces and identifying $B\Z^n\simeq \mathbb{T}^n$ and $B\G_n\simeq \msf{LS}_n=\R^n/\G_n$, we get a fiber sequence
\beq
\mathbb{T}^n\to \msf{LS}_n\to B\Phi_n.
\eeq

To compute the characteristic classes we use that the classifying map of the tangent bundle  $\msf{LS}_n\to B\mathrm{O}(n)$ factors as 
\beq
\msf{LS}_n\xto{Bh} B\Phi_n\to B\mathrm{O}(n),
\eeq
where the last map is induced by the inclusion $\Phi_n\subset \mathrm{O}(n-1)\times \mathrm{O}(1)\subset \mathrm{O}(n)$ as the subgroup of diagonal orthogonal matrices of size $n-1$.

As $B\Phi_n$ is homotopy equivalent to an $(n-1)$-fold product of infinite-dimensional real projective spaces, its cohomology ring with $\F_2$ coefficients is isomorphic to the polynomial ring
$\F_2[x_1,\ldots, x_{n-1}]$, in $n-1$ variables $x_i\in H^1$ of degree $1$. By a theorem of Borel--Hirzebruch \cite{borel-hirzebruch}, the $j$-th Stiefel--Whitney class of $\msf{LS}_n$ can be computed in terms of the $x_i's$ by the formula
\begin{equation}\label{eq:formulaw_k}
w_j(\msf{LS}_n)=(Bh)^*\sigma_j(x_1,\ldots,x_{n-1})
\end{equation}
where $\sigma_j(x_1,\ldots,x_{n-1})$ is the $j$-th elementary symmetric polynomial in $x_1,\ldots, x_{n-1}$.

Consider the ideal
\begin{equation}\label{eq:LSideal}
    \mathrm{I}(n)=(x_1^2+x_1x_2,\dots,x_{n-2}^2+x_{n-2}x_{n-1},x_{n-1}^2)
\end{equation}
in $\F_2[x_1,\dots,x_{n-1}]$. The following is one of the main technical results in Lee--Szczarba's paper.
\begin{proposition}[{\cite[Proposition~2.1]{lee-szczarba}}]\label{prop:ker-im}
The elements $x_{i_1}x_{i_2}\cdots x_{i_r}$,
where $1\leq i_1<\cdots<i_r<n$, form a basis
for the image $\mathrm{Im} (Bh)^*$. Moreover there is an isomorphism
\beq
\F_2[x_1,\ldots, x_{n-1}]/\mathrm{I}(n)\cong
\mathrm{Im} (Bh)^*\subset H^*(\msf{LS}_n;\F_2).
\eeq
\end{proposition}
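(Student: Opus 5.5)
We prove Proposition~\ref{prop:ker-im} in two steps. First we show that every $x_{i_1}\cdots x_{i_r}$ survives in $H^*(\msf{LS}_n;\F_2)$. Then we show that $\mathrm{I}(n)$ lies in the kernel of $(Bh)^*$ and that the quotient is spanned by the squarefree monomials; a dimension count then gives the isomorphism.

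\textbf{Step 1: $\mathrm{I}(n)\subset\ker (Bh)^*$.} The degree-one classes are simplest. The class $x_i\in H^1(B\Phi_n;\F_2)$ is the homomorphism $\Phi_n\to\Z/2$ detecting the sign change in the $i$-th coordinate, and $(Bh)^*x_i$ is its pullback $\chi_i\colon\G_n\to\Z/2$, with $\chi_i(\ga_i)=1$ and $\chi_i(\ga_j)=0$ for $j\neq i$. For a degree-one class $u$ with a lift $\tilde u\colon\G_n\to\Z/4$, the Bockstein gives $u^2=\beta u$, and $\beta u=0$ exactly when $u$ lifts to an honest homomorphism to $\Z/4$. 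So the strategy is to use the relations of $\G_n$ to produce such lifts, or cochains bounding the relevant products. The relation $\ga_{n-1}^2\in\Z^n$ together with the commutation of $\ga_{n-1}$ with the translation lattice should let $\chi_{n-1}$ lift to $\Z/4$, which gives $x_{n-1}^2=0$. For $x_i^2+x_ix_{i+1}$ I would use the relation $\ga_i\ga_{i+1}\ga_i^{-1}=\ga_{i+1}\cdot(\text{translation})$. Here $\ga_i$ negates $x_{i+1}$, so conjugating $\ga_{i+1}$ by $\ga_i$ shifts by $x_{i+1}\mapsto -x_{i+1}$. This twisted relation should exhibit $\beta\chi_i=\chi_i\chi_{i+1}$, via an explicit $1$-cochain $c$ with $\delta c=\chi_i^2+\chi_i\chi_{i+1}$ built from the $\tfrac12$-translations. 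Equivalently, one can argue through the Lyndon--Hochschild--Serre spectral sequence of \eqref{eq:flat}: $\ker(Bh)^*$ in low degree is generated by transgressions $d_2$ of the generators $t_j\in H^1(\bbT^n)$, and $d_2(t_{i+1})$ is the extension class of the $(i+1)$-st coordinate circle. That class is $x_i^2+x_ix_{i+1}$: the term $x_ix_{i+1}$ comes from $\ga_{i}$ translating by $\tfrac12$ in direction $i+1$, and the term $x_i^2$ comes from $\ga_i^2$ translating by $1$ in that direction. Likewise $d_2(t_1)$ involves $\ga_0$, and $x_{n-1}^2$ arises from the last coordinate.

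\textbf{Step 2: lower bound.} The images of the squarefree monomials $x_{i_1}\cdots x_{i_r}$ must be shown linearly independent. I would detect them on subgroups. For each $I=\{i_1<\dots<i_r\}$, take the subgroup generated by $\ga_{i_1},\dots,\ga_{i_r}$ and suitable translations; it is a smaller Lee--Szczarba-type (or Klein-bottle product) group. Alternatively, pair the monomials against explicit homology classes, namely embedded flat submanifolds $\bbT$ or Klein-bottle products carried by the generators. In $\F_2[x]/\mathrm{I}(n)$, every monomial reduces to a squarefree one by the rewriting $x_i^2\mapsto x_ix_{i+1}$ and $x_{n-1}^2\mapsto 0$, so the quotient has dimension at most $2^{n-1}$. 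Independence of the $2^{n-1}$ images therefore forces the kernel to equal $\mathrm{I}(n)$ and gives the stated basis.

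\textbf{Main obstacle.} The independence claim in Step 2 is the main obstacle. The cleanest route is a dual-basis argument: for each squarefree $I$, find an $r$-dimensional closed flat submanifold, or a $\Z/2$-cycle, on which $x_I$ evaluates to $1$ and every other squarefree monomial of degree $r$ evaluates to $0$, after using the relations to order the monomials triangularly. I would try first the subtori and Klein-bottle products spanned by the coordinate directions $i+1$ for $i\in I$, together with the twisting generators, and check triangularity with respect to lexicographic order.
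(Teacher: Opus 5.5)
Your Step~1 is in substance the Lee--Szczarba computation the paper relies on (the same argument appears in the proof of Proposition~\ref{prop:ker-im-or}). Transgression gives $d_2(t_{i+1})=x_i^2+x_ix_{i+1}$ for $1\le i\le n-2$, $d_2(t_n)=x_{n-1}^2$ and $d_2(t_1)=0$, so $\mathrm{I}(n)\subset\ker(Bh)^*$. Some details of your sketch are off, though:
\begin{itemize}
\item $\ga_i$ negates $x_i$, not $x_{i+1}$.
\item The term $x_ix_{i+1}$ comes from $\ga_i\ga_{i+1}\ga_i^{-1}\ga_{i+1}^{-1}$ being translation by $e_{i+1}$.
\item $d_2(t_1)=0$.
\item $x_{n-1}^2=0$ holds because the $x_n$-translation part is a homomorphism $\G_n\to\tfrac12\Z$ reducing to $x_{n-1}$ mod $2$. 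It is not because $\ga_{n-1}$ commutes with the lattice; it inverts $e_{n-1}$.
\end{itemize}
Your rewriting bound $\dim\F_2[x_1,\dots,x_{n-1}]/\mathrm{I}(n)\le 2^{n-1}$ and the deduction ``independent images $\Rightarrow$ isomorphism'' are correct. This counting strategy is a genuinely different route from Lee--Szczarba, who prove $\ker(Bh)^*\subset\mathrm{I}(n)$ directly by induction on $n$ via the Wang sequences of $\msf{LS}_{n-1}\to\msf{LS}_n\to S^1$ and $B\Phi_{n-1}\to B\Phi_{n-1}\times S^1\to S^1$.

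The gap is Step~2. It carries the whole reverse inclusion and is left as a plan: you never show that some cycle detects a given $x_I$. Moreover, the test cycles you propose (subtori and Klein-bottle products) cannot suffice. When $I$ contains $m\ge 3$ consecutive indices, the natural cycle has a factor that is itself $\msf{LS}_m$, and on it you would need exactly the kind of nonvanishing you are trying to prove.

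It can be closed as follows. For $I\subset\{1,\dots,n-1\}$ with $|I|=r$, let $V_I=\Span\{e_{i+1}:i\in I\}$ and $G_I=\langle\ga_i:i\in I\rangle$, with no extra translations.
\begin{itemize}
\item Each $\ga_i$, $i\in I$, preserves $V_I$: it negates $x_i$, which is either identically $0$ on $V_I$ or a coordinate of $V_I$, and it shifts $x_{i+1}$.
\item $G_I$ acts freely, being a subgroup of $\G_n$, and it contains the lattice of translations $\ga_i^2=e_{i+1}$. So $Z_I=V_I/G_I$ is a closed flat $r$-manifold mapping to $\msf{LS}_n$ and inducing $G_I\hookrightarrow\G_n$.
\item Since $x_j$ restricts to zero on $G_I$ for $j\notin I$, we get $\langle x_J,[Z_I]\rangle=0$ for every squarefree $J\neq I$ with $|J|=r$. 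The pairing is diagonal; no triangularity is needed.
\end{itemize}
Split $I$ into maximal runs $\{a,\dots,a+m-1\}$. This gives $Z_I\cong\prod\msf{LS}_{m}$, one factor per run of length $m$ (after rescaling a coordinate). Because $a-1\notin I$, $\ga_a$ acts on $V_I$ as a translation, so $x_a$ becomes the class $z_0$ dual to $\ga_0$ and $x_{a+k}$ becomes $x_k$.

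By K\"unneth it remains to show $\langle z_0x_1\cdots x_{m-1},[\msf{LS}_m]\rangle=1$. The coordinate $x_m$ gives a bundle $\msf{LS}_m\to\R/\tfrac12\Z$ with fiber $\msf{LS}_{m-1}$; on $\pi_1$ the kernel is $\langle\ga_0,\dots,\ga_{m-2}\rangle=\G_{m-1}$. The class $x_{m-1}$ is pulled back from the base, hence is mod~$2$ Poincar\'e dual to the fiber. The pairing therefore equals $\langle z_0x_1\cdots x_{m-2},[\msf{LS}_{m-1}]\rangle$, and the induction ends at $\msf{LS}_1=\R/\Z$.

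With this filled in, your argument is complete and more elementary than the Wang-sequence induction, and it yields an explicit dual basis of flat submanifolds. The spectral-sequence route instead identifies the kernel ideal directly, which is the form the paper adapts to the orientable double covers.
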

Based on this proposition, they deduce Theorem \ref{thm:LS}.

%%%%%%%%%%%%%%%%%%%%%%%%%%%%%%%%%%%%%%%%%%%%%%%%%%%%%%%%%%%%%%%

\subsection{Refined characteristic classes computations}\label{subsec:refinedcharclasses}
In this subsection we give a precise description of the pairs $(k,n)$ for which $w_{k}^2(\msf{LS}_n)\neq 0$, improving the range given in Theorem \ref{thm:LS} of pairs $(k,n)$ for which $p_k(\msf{LS}_n)\neq 0$.

Given $k\geq 1$, let $s(k)$ be the least number $s$ such that there exist integers $r_1,\dots,r_s\geq 0$ with 
\begin{equation}\label{eq:def.s(k)}
k=(2^{r_1}-1)+(2^{r_2}-1)+\cdots+(2^{r_s}-1).
\end{equation}

The main result of this subsection is the following.

\begin{proposition}\label{prop:w_k^2(L_n)}
    Given $1\leq k \leq n$, we have \[w_k(\msf{LS}_n)^2\neq 0 \text{ in } H^{2k}(\msf{LS}_n;\bbF_2)\]
    if and only if $n \geq 2k+s(k)$.        
\end{proposition}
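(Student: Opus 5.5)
The plan is to compute $w_k(\msf{LS}_n)^2$ entirely inside the image of $(Bh)^*$. By Proposition \ref{prop:ker-im} this image is $A_n:=\F_2[x_1,\dots,x_{n-1}]/\mathrm{I}(n)$, and it has the squarefree monomials as a basis. By \eqref{eq:formulaw_k} and the Frobenius identity in characteristic $2$,
\[w_k(\msf{LS}_n)^2=(Bh)^*\sigma_k(x_1^2,\dots,x_{n-1}^2)=(Bh)^*\sum_{|S|=k}\prod_{i\in S}x_i^2,\]
where $S$ runs over $k$-subsets of $\{1,\dots,n-1\}$. So the question becomes: when does $\sum_{|S|=k}x_S^2$ have a non-zero expansion in the squarefree basis of $A_n$?

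\textbf{Step 1: normal forms.} First I will establish a normal form for monomials in $A_n$. The relations are $x_i^2=x_ix_{i+1}$ for $i<n-1$ and $x_{n-1}^2=0$. By induction these give $x_i^m=x_ix_{i+1}\cdots x_{i+m-1}$, which is interpreted as $0$ when $i+m-1>n-1$. More generally, I will think of an exponent vector $a$ as a configuration of chips on the sites $1,\dots,n-1$. Reducing left to right, the excess $a_i-1$ at site $i$ is carried to site $i+1$.
\begin{itemize}
\item If some carry passes beyond site $n-1$, the monomial is $0$ in $A_n$.
\item Otherwise the monomial equals the squarefree monomial $x_T$, where $T$ is the set of occupied sites after carrying.
\end{itemize}
Since the squarefree monomials form a basis, this normal form is well defined. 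Applied to $x_S^2$, each site of $S$ starts with two chips, so $T=T(S)$ has $|T|=2k$.

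\textbf{Step 2: counting preimages.} Next I will count, for each $2k$-set $T$, the number of $S$ with $T(S)=T$. Decompose $T$ into maximal intervals (blocks) $B_1,\dots,B_s$.
\begin{itemize}
\item Just after the end of each block the carry must be zero. Hence $S$ splits as $S=\bigsqcup S_j$ with $S_j\subset B_j$, and each block has even length $2m_j$ with $|S_j|=m_j$.
\item Inside a block $[a,a+2m-1]$, the condition for $S_j$ is a ballot condition: for every $t$, at least $t$ chips must be placed on the first $t$ sites, with equality only at the end. Such subsets are counted by the Catalan number $C_m$.
\end{itemize}
Thus the coefficient of $x_T$ in $\sigma_k(x^2)$ is $\prod_j C_{m_j}$ mod $2$, with $\sum m_j=k$. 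This ballot-to-Catalan identification is where I expect most of the care to be needed. In particular I must make sure the strict inequality at interior points is exactly what forces the block to be maximal and connected.

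\textbf{Step 3: parity.} Then I use the classical fact that $C_m$ is odd iff $m=2^r-1$. So $x_T$ survives with coefficient $1$ iff every block half-length $m_j$ has the form $2^{r_j}-1$, that is, iff $k=\sum_{j=1}^{s}(2^{r_j}-1)$ is a decomposition into $s$ such terms.

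\textbf{Step 4: the range of $n$.} Such a $T$ needs $2k$ sites and at least $s-1$ gaps between its blocks. Its largest element is therefore at least $2k+s-1$, with equality when the blocks are packed starting at site $1$ with single gaps. It must be at most $n-1$.
\begin{itemize}
\item If $n\ge 2k+s(k)$: take a decomposition with $s=s(k)$ terms and pack its blocks this way. This $T$ fits, has odd coefficient, and gives $w_k^2\neq 0$.
\item If $n<2k+s(k)$: every admissible $T$ has at least $s(k)$ blocks, so it does not fit and every coefficient vanishes. Hence $w_k^2=0$.
\end{itemize}
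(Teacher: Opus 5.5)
Your plan follows the paper's proof closely. Your normal form $T(S)$ is the paper's equivalence class $[I]$: its representative in $\calI_k^{max}(n-1)$ is the set of left endpoints of the pairs $\{i,i+1\}$ tiling $T$. Your blocks are the pieces of the canonical decomposition, and Steps 3--4 are the paper's parity and packing arguments.

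However, Step 2 is wrong as stated, exactly where you flagged the need for care. Let $P_t=2|S_j\cap[a,a+t-1]|-t$ be the carry leaving site $a+t-1$. Given that the earlier sites of $B_j$ are occupied, site $a+t-1$ is occupied iff $P_{t-1}$, plus the two chips placed there when $a+t-1\in S_j$, is at least $1$. Equivalently, iff $P_t\ge 0$. So the correct condition is $P_t\ge 0$ for all $t$ and $P_{2m}=0$. Equality $P_t=0$ at interior $t$ is allowed: a zero carry does no harm if the next site lies in $S_j$. For instance, $S_j=\{a,a+2\}$ gives carries $1,0,1,0$ and fills $[a,a+3]$, although equality holds at $t=2$.

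The subsets obeying your strict condition correspond to primitive Dyck paths, so there are $C_{m-1}$ of them, not $C_m$. Following that condition, the parity criterion would become ``$m_j$ is a power of $2$''. You would then conclude, e.g., $w_2(\msf{LS}_5)^2\neq 0$. In fact the only admissible $T\subset\{1,2,3,4\}$ is $\{1,2,3,4\}$, which has the two preimages $\{1,2\}$ and $\{1,3\}$, so $w_2(\msf{LS}_5)^2=0$.

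The block structure does not require strictness. Maximality of $B_j$ is encoded by the unoccupied sites flanking it. These force zero carry into and out of $B_j$, which is what gives the splitting $S=\bigsqcup S_j$. Inside $B_j$ the carry may return to $0$ any number of times.

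With the weak condition, $S_j\mapsto (P_t)_t$ is a bijection onto Dyck paths of semilength $m$, with an up-step for each site of $S_j$ and a down-step otherwise. This gives the count $C_m$. Equivalently, after shifting the block to $[1,2m]$, $S_j=\{b_1<\dots<b_m\}$ with $b_i\le 2i-1$, which is the description used in Lemma~\ref{lem:catalan}. With this correction the rest of your argument goes through as you describe.
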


Before proving this proposition, we deduce some immediate corollaries. First, from the fact that $s(2k)\geq 2$ for each $k\geq 1$ we obtain the following bound.
\begin{corollary}\label{cor:4k+1}
   For each $k\geq 1$ we have $w_{2k}(\msf{LS}_{4k+1})^2=0$.
\end{corollary}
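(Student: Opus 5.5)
This follows from Proposition \ref{prop:w_k^2(L_n)} applied with $k$ replaced by $2k$ and $n=4k+1$. The proposition gives $w_{2k}(\msf{LS}_n)^2\neq 0$ if and only if $n\geq 4k+s(2k)$. So it suffices to show that $s(2k)\geq 2$ for every $k\geq 1$. Then $4k+s(2k)\geq 4k+2>4k+1$, the criterion fails for $n=4k+1$, and hence $w_{2k}(\msf{LS}_{4k+1})^2=0$. The hypothesis $1\leq 2k\leq 4k+1$ of the proposition clearly holds.

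It remains to check $s(2k)\geq 2$. By definition \eqref{eq:def.s(k)}, $s(2k)\leq 1$ would mean either $2k=0$, which is impossible since $k\geq 1$, or $2k=2^{r}-1$ for some $r\geq 0$. In the second case $2k$ would be odd, a contradiction. Hence any expression of $2k$ as a sum of terms $2^{r_i}-1$ needs at least two terms. For completeness, $s(2k)$ is finite, for instance via $2k=k\cdot(2^1-1)$, so the minimum exists.

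The only real input is Proposition \ref{prop:w_k^2(L_n)}; this corollary is a one-line specialization and presents no obstacle. If one wanted to avoid the proposition, the plan would instead be a direct computation in $\F_2[x_1,\dots,x_{n-1}]/\mathrm{I}(n)$ via Proposition \ref{prop:ker-im}: show that $\sigma_{2k}^2$ lies in $\mathrm{I}(4k+1)$. That would amount to reproving the relevant direction of the proposition, so I would simply invoke it.
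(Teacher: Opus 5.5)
Your proposal is correct and follows the paper's own argument: the paper obtains this corollary directly from the criterion $w_k(\msf{LS}_n)^2\neq 0 \iff n\geq 2k+s(k)$, together with the observation that $s(2k)\geq 2$. One small slip: $2^r-1$ is odd only for $r\geq 1$, but the case $r=0$ gives $2k=0$, which you had already ruled out.
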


When $k=2^r+2^s-1$ for some non-negative integers $r,s$, we have $s(2k)=2$, and hence the bound above is sharp for infinitely many dimensions. 

\begin{corollary}\label{cor:4k+2}
If $k=2^r+2^s-1$ for some non-negative integers $r,s$, then
$w_{2k}(\msf{LS}_{4k+2})^2\neq 0$. 
\end{corollary}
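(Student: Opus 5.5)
The statement should follow directly from Proposition \ref{prop:w_k^2(L_n)} with $k$ replaced by $2k$ and $n=4k+2$. The plan is therefore to check the numerical criterion $n\geq 2\cdot(2k)+s(2k)$, i.e. $4k+2\geq 4k+s(2k)$, i.e. $s(2k)\leq 2$.

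To verify this, write $k=2^r+2^s-1$. Then
\[2k=2^{r+1}+2^{s+1}-2=(2^{r+1}-1)+(2^{s+1}-1),\]
which expresses $2k$ as a sum of two numbers of the form $2^{r_i}-1$. Hence $s(2k)\leq 2$. Since $4k+2\geq 4k+s(2k)$, Proposition \ref{prop:w_k^2(L_n)} applies and gives $w_{2k}(\msf{LS}_{4k+2})^2\neq 0$.

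For the record, $s(2k)\geq 2$ also holds, because $2k$ is even and nonzero, whereas $2^r-1$ is odd unless $r=0$, in which case it equals $0$. So $s(2k)=2$ exactly, consistent with the sharpness remark after Corollary \ref{cor:4k+1}; this is not needed for the argument.

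The only other thing to check is the hypothesis $1\leq 2k\leq n$ of Proposition \ref{prop:w_k^2(L_n)}, which is clear since $k\geq 1$ (as $r,s\geq 0$) and $2k\leq 4k+2$. There is no real obstacle: all the substance lies in Proposition \ref{prop:w_k^2(L_n)}, whose proof presumably rests on Proposition \ref{prop:ker-im}.
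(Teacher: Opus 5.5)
Your proposal is correct and is essentially the paper's argument: the paper obtains this corollary from Proposition \ref{prop:w_k^2(L_n)}, applied with $2k$ in place of $k$ and $n=4k+2$, by noting that $2k=(2^{r+1}-1)+(2^{s+1}-1)$, so $s(2k)=2$ and $4k+2\geq 2(2k)+s(2k)$. Your side checks ($1\leq 2k\leq 4k+2$, and $s(2k)\geq 2$ by parity) are accurate; the lower bound on $s(2k)$ is not needed for the conclusion.
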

From the bound $s(2q)\leq 2q$ we also deduce the following, which for $n=4$ recovers Theorem \ref{thm:LS}.

\begin{corollary}\label{cor:6k-2}
 For all $1\leq q \leq k$ and $n\geq 0$ we have $w_{2q}(\msf{LS}_{6k+n})^2\neq 0$.    
\end{corollary}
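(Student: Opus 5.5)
The plan is to deduce this directly from Proposition \ref{prop:w_k^2(L_n)}, applied with $2q$ in place of $k$ and $6k+n$ in place of $n$. That proposition says $w_{2q}(\msf{LS}_{6k+n})^2\neq 0$ if and only if $2q\leq 6k+n$ and $6k+n\geq 4q+s(2q)$. So the whole argument reduces to a crude upper bound on $s(2q)$.

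For that bound, I would use the expression $2q=\sum_{j=1}^{2q}(2^{1}-1)$, taking all $r_j=1$ in \eqref{eq:def.s(k)}. This is an admissible representation with $2q$ summands, so minimality gives $s(2q)\leq 2q$. Hence
\[4q+s(2q)\leq 6q\leq 6k\leq 6k+n,\]
using $q\leq k$ and $n\geq 0$.

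The remaining hypothesis $1\leq 2q\leq 6k+n$ is immediate, since $1\leq 2q\leq 2k\leq 6k+n$. Proposition \ref{prop:w_k^2(L_n)} then gives $w_{2q}(\msf{LS}_{6k+n})^2\neq 0$ in $H^{4q}(\msf{LS}_{6k+n};\F_2)$, which is the claim.

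To see that this recovers Theorem \ref{thm:LS}, take $n=4$ and $k\geq 1$. We obtain $w_{2i}(\msf{LS}_{6k+4})^2\neq 0$ for all $i\leq k$. The relation $p_i\equiv w_{2i}^2 \pmod 2$ then forces $p_i(\msf{LS}_{6k+4})\neq 0$.

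There is no real obstacle here: all the substance lies in Proposition \ref{prop:w_k^2(L_n)}, which we are allowed to assume. The only point needing care is that the representation $2q=2q\cdot(2^1-1)$ is a valid one, so that the bound $s(2q)\leq 2q$ holds.
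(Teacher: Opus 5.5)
Your proposal is correct and matches the paper's argument: the paper deduces this corollary from Proposition \ref{prop:w_k^2(L_n)}, with $2q$ in place of $k$, via the bound $s(2q)\leq 2q$. Your added checks, that $1\leq 2q\leq 6k+n$ and that $4q+s(2q)\leq 6q\leq 6k+n$, are exactly what that one-line deduction needs.
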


\begin{remark}\label{rmk:k=1 LS}
    Note that $w_{2}(\msf{LS}_4)^2=0$ since $4<6=2\cdot 2+s(2)$, so that non-triviality of $p_1(\msf{LS}_4)$ cannot be detected from the Stiefel--Whitney classes. 
\end{remark}

Now we proceed with the proof of Proposition \ref{prop:w_k^2(L_n)}.
For a tuple $(k,n)$ of integers with $1\leq k \leq n$, let $\calI_k(n-1)$ be the set of $k$-tuples $(i_1,\dots,i_k)$ of integers with $1\leq i_1<i_2<\dots <i_k\leq n-1$. We call $k$ the \emph{length} of $I$. 
We note that $\calI_k(n-s)\leq \calI_k(n-1)$ for all $k\geq 2$ and $s\geq 1$. On $\calI_k(n-1)$ we consider the lexicographic order $\preceq$: $(i_1,\dots,i_k)\preceq (j_i,\dots,j_k)$ if $i_{r}\leq j_{r}$ for the minimum $r$ such that $i_r \neq j_r$. 

Given a tuple $I=(i_1,\dots,i_k)\in \calI_k(n-1)$, we denote $x_I=x_{i_1}\cdots x_{i_k}\in \F_2[x_1,\ldots, x_{n-1}]/\mathrm{I}(n)$, where $\mathrm{I}(n)$ is the ideal from \eqref{eq:LSideal}. In $\F_2[x_1,\ldots, x_{n-1}]/\mathrm{I}(n)$ we note the identities \begin{equation}\label{eq:identix_ix_i+1}
x_{j}^2x_{j+1}^2=x_{j}x_{j+1}^3=x_jx_{j+1}x_{j+2}^2=x_j^2x_{j+2}^2
\end{equation}
for all $j\geq 1$.

Two tuples $I,J\in \calI_k(n-1)$ are \emph{equivalent} if $x^2_I=x_J^2$ in $\F_2[x_1,\ldots, x_{n-1}]/\mathrm{I}(n)$, and we let $[I]$ denote the equivalence class of $I$ under this relation. We also let $\calI_k^{max}(n-1)$ be the set of tuples such that \begin{itemize}
\item $x_I^2\neq 0$ in $\F_2[x_1,\ldots, x_{n-1}]/\mathrm{I}(n)$; and,
\item for any $J\in [I]$ we have $J \preceq I$.
\end{itemize}
The next lemma describes the tuples in $\calI_k^{max}(n-1)$.

\begin{lemma}\label{lem:descriptionImax}
  For $I=(i_1,\dots,i_k)\in \calI_k(n-1)$, we have that $I\in \calI_k^{max}(n-1)$ if and only if $i_k \leq n-2$ and $i_{r+1}-i_r\geq 2$ for all $r=1,\dots,k-1$. 
\end{lemma}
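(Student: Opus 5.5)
The plan is to make the structure of $A:=\F_2[x_1,\dots,x_{n-1}]/\mathrm{I}(n)$ explicit and compute $x_I^2$ directly. The defining relations say $x_j^2=x_jx_{j+1}$ for $1\le j\le n-2$ and $x_{n-1}^2=0$. By Proposition \ref{prop:ker-im}, the squarefree monomials $x_J$ form an $\F_2$-basis of $A$. Hence, for $I=(i_1,\dots,i_k)$,
\[
x_I^2=\prod_{r=1}^k x_{i_r}^2=\prod_{r=1}^k x_{i_r}x_{i_r+1},
\]
where the factor $x_{i_k}^2$ is $0$ if $i_k=n-1$. The proof then splits into the two directions.

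\emph{Sufficiency.} Suppose $i_k\le n-2$ and $i_{r+1}-i_r\ge 2$ for all $r$. Then the $2k$ indices $i_1,i_1+1,i_2,i_2+1,\dots,i_k,i_k+1$ are pairwise distinct and all at most $n-1$. So $x_I^2=x_{P(I)}$, where $P(I)$ is the squarefree monomial on this set, and this is a nonzero basis element.

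The key observation is that $P$ is injective on such ``gapped'' tuples. A subset of $\{1,\dots,n-1\}$ that is a disjoint union of pairs $\{i,i+1\}$ has a unique such decomposition: its smallest element must be the start of a pair, and we induct on the rest. Thus two gapped tuples with equal $x_I^2$ coincide.

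Next comes a shifting procedure. Let $J=(j_1,\dots,j_k)$ be any tuple with $x_J^2\ne 0$. Then $j_k\le n-2$. If $J$ is not gapped, choose $r$ maximal with $j_{r+1}=j_r+1$. By \eqref{eq:identix_ix_i+1},
\[
x_{j_r}^2x_{j_r+1}^2=x_{j_r}^2x_{j_r+2}^2,
\]
so replacing $j_{r+1}$ by $j_{r+1}+1$ gives an equivalent tuple. The maximality of $r$ guarantees it is still strictly increasing, since $j_{r+2}-j_{r+1}\ge 2$. The new tuple is strictly larger in $\preceq$. Because the entries are bounded and $x_J^2\neq0$ is preserved (an entry reaching $n-1$ in last position would force the square to vanish), the process terminates at a gapped tuple $J'$ with $J\preceq J'$ and $x_{J'}^2=x_J^2$.

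Now take $J\in[I]$ with $I$ gapped. Then $J'$ and $I$ are gapped with equal squares, so $J'=I$ by injectivity, and hence $J\preceq I$. Together with $x_I^2\neq 0$, this shows $I\in\calI_k^{max}(n-1)$.

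\emph{Necessity.} Let $I\in\calI_k^{max}(n-1)$. Since $x_I^2\ne 0$, we get $i_k\le n-2$. If some gap equals $1$, a single shifting step produces an equivalent tuple strictly larger than $I$, contradicting maximality. So all gaps are at least $2$.

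The main point requiring care is the shifting step. One must choose which entry to move so that strict monotonicity is preserved and termination is clear; choosing the last unit gap handles this. One must also verify that \eqref{eq:identix_ix_i+1} is applied only with indices where the relations hold, that is $j_r+2\le n-1$. This holds because otherwise the tuple would contain $n-1$ and its square would be zero.
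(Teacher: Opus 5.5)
Your proof is correct and follows the paper's approach. For necessity, you use the shift at the last unit gap via \eqref{eq:identix_ix_i+1}; for sufficiency, you use the squarefree monomial basis from Proposition \ref{prop:ker-im}. Your extra argument, which shifts any equivalent $J$ up to a gapped representative and then uses the uniqueness of the decomposition into pairs $\{i,i+1\}$, spells out a step the paper only asserts: that no $J\succ I$ is equivalent to a gapped $I$.
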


\begin{proof}
    If $I=(i_1,\dots,i_k)$ is as in the statement of the lemma and $x_I^2\neq 0$ in $\F_2[x_1,\ldots, x_{n-1}]/\mathrm{I}(n)$, the relation $x_{n-1}^2=0$ in $\mathrm{I}(n)$ implies that $i_k\leq n-2$. In this case we have
    \[x_I^2=(x_{i_1}x_{i_1+1})(x_{i_2}x_{i_2+1})\cdots (x_{i_k}x_{i_k+1}). \]
    If $r$ is the largest number such that $i_{r+1}=i_r+1$, from the identity \eqref{eq:identix_ix_i+1} we find $J=(i_1,\dots,i_r,i_{r+1}+1,i_{r+2},\dots,i_k)\in [I]$ and $I \prec J$. Therefore, $I\in \calI_{k}^{max}(n-2)$ forces $i_{r+1}-i_r\geq 2$ for all $j$. 
    
    On the other hand, if $I=(i_1,\dots,i_k)\in \calI_k(n-1)$ satisfies $i_k\leq n-2$ and $i_{r+1}-i_r\geq 2$ for al $r$, then Proposition \ref{prop:ker-im} implies that $x_I^2\neq 0$ and no $J$ with $I\prec J$ is equivalent to $I$. That is, $I\in \calI_k^{max}(n-1)$. 
    \end{proof}

\begin{remark}\label{rmk:2k-1<2n-2}
    From the lemma above, $\calI_{k}^{max}(n-1)\neq \emptyset$ implies $2k-1\leq n-2$.
\end{remark}

Consider now $m\geq 1$ with $2m-1\leq n-2$ and let $\lam_m$ be the cardinality of the equivalence class $[(1,2,\dots,m)]=[(1,3,5,\dots,2m-1)]$ in $\calI_m(n-1)$. Note that $(1,3,\dots,2m-1)$ belongs to $\calI_{m}^{max}(n-1)$.

\begin{lemma}\label{lem:catalan}
    For each $m\geq 1$ with $2m-1\leq n-2$, $\lam_m$ equals the $m$-th Catalan number $$C_m=\binom{2m}{m}-\binom{2m}{m+1}.$$
     In particular, $\lam_m$ is odd if and only if $m+1$ is a power of two.  
\end{lemma}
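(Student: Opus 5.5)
The plan is to make the equivalence relation completely explicit by computing the normal form of $x_I^2$ in the basis of Proposition~\ref{prop:ker-im}. In $\F_2[x_1,\dots,x_{n-1}]/\mathrm{I}(n)$ we have the rewriting rules $x_j^2=x_jx_{j+1}$ for $j\le n-2$ and $x_{n-1}^2=0$. Proposition~\ref{prop:ker-im} says the squarefree monomials form a basis, so every monomial has a unique normal form: either $0$ or a single squarefree monomial $x_T$. Since normal forms are unique, the order in which the rules are applied does not matter.

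\emph{Normal form of $x_I^2$.} For $I=(i_1<\dots<i_m)$, I will show by induction on $m$ that
\[
x_I^2=x_{T(I)},
\]
where $T(I)$ is the $2m$-element set produced by a greedy "parking" procedure. Process $i_1,i_2,\dots$ in turn. When $i_r$ is processed, it occupies the two smallest integers $\ge i_r$ that are still unoccupied; the result is $0$ if one of these would exceed $n-1$. The inductive step has two parts:
\begin{itemize}
\item multiplying by a new factor $x_{i_r}^2=x_{i_r}x_{i_r+1}$;
\item repeatedly resolving collisions using $x_j^2=x_jx_{j+1}$, which pushes a doubled index up to the next unoccupied slot.
\end{itemize}
This is the same computation already underlying the identities \eqref{eq:identix_ix_i+1}.

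\emph{Identifying the class.} For $(1,2,\dots,m)$ this gives $T=\{1,\dots,2m\}$. The hypothesis $2m-1\le n-2$ guarantees $2m\le n-1$, so this is nonzero, with no square landing on $x_{n-1}$. Hence $I\in[(1,\dots,m)]$ if and only if $T(I)=\{1,\dots,2m\}$. I then claim this holds if and only if the ballot condition $i_r\le 2r-1$ holds for all $r$.
\begin{itemize}
\item If the condition holds, induction on $r$ shows that after processing $i_1,\dots,i_r$ the occupied set is exactly $\{1,\dots,2r\}$. Indeed, $i_{r+1}\le 2r+1$, so the new pair parks at $2r+1,2r+2$.
\item If it fails, take the first $r$ with $i_r\ge 2r$. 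The slots $1,\dots,2r-2$ are then filled by $i_1,\dots,i_{r-1}$, and slot $2r-1$ is never filled, because all later indices are $\ge i_r\ge 2r$.
\end{itemize}

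\emph{Counting.} Set $c_r=i_r-r$. Increasing sequences with $i_r\le 2r-1$ correspond bijectively to nondecreasing sequences $0\le c_1\le\dots\le c_m$ with $c_r\le r-1$. These are counted by the Catalan number $C_m=\binom{2m}{m}-\binom{2m}{m+1}$, via the standard bijection with Dyck paths and the reflection principle.

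\emph{Parity.} I will use the standard fact that $C_m$ is odd if and only if $m+1$ is a power of two. To prove it, write $C_m=\binom{2m+1}{m}/(2m+1)$, which has the same parity as $\binom{2m+1}{m}$. By Kummer's theorem, $\binom{2m+1}{m}$ is odd if and only if adding $m$ and $m+1$ in binary produces no carries. That happens if and only if $m$ and $m+1$ have disjoint binary digits, which is equivalent to $m=2^s-1$.

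The main obstacle is the first step: proving rigorously that the greedy parking procedure computes the normal form, including the claim that no cancellation can ever produce $0$ except through $x_{n-1}^2$. I would handle this by inducting on the number of factors and relying on the uniqueness of normal forms from Proposition~\ref{prop:ker-im}.
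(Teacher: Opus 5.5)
Your proposal is correct and follows essentially the paper's route. Both arguments characterize $[(1,\dots,m)]$ by the ballot inequalities $j\le a_j\le 2j-1$, then count these via Dyck paths, where your shift $c_r=i_r-r$ is the paper's encoding by the lattice points $(j,a_j-j)$, and then settle parity by a base-$2$ digit criterion. The main difference is that your greedy ``parking'' normal form, built on the basis from Proposition~\ref{prop:ker-im}, actually proves the characterization that the paper only attributes to \eqref{eq:identix_ix_i+1}; the use of Kummer on $\binom{2m+1}{m}$ rather than Lucas on $\binom{2m}{m+1}$ is cosmetic.
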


\begin{proof}
    A \emph{Dyck path} of length $2m$ is a lattice path in $\Z^2$ from $(0,0)$ to $(m,m)$ with steps $(1,0)$ and $(0,1)$, that never rises above the diagonal $y=x$. It is known that $C_m$ equals the number of Dyck paths of length $2m$ \cite[Section~1.5]{stanley}. 

    To relate each tuple in $[(1,3,\dots,2m-1)]$ with a Dyck path of length $2m$, note that $I=(a_1,\dots,a_m)$ belongs to $[(1,3,\dots,2m-1)]$ if and only if 
    \begin{itemize}
        \item[i)] $a_1<a_2<\cdots <a_m$; and,
        \item[ii)] $j\leq a_j\leq 2j-1$ for each $j=1,\dots,m$. 
    \end{itemize}
    This follows from the identities \eqref{eq:identix_ix_i+1}. From this data we construct the Dyck path $p_I$ of length $2m$ such that:
    \begin{itemize}
        \item it contains the points $(j,a_j-j)$ for $j=1,\dots,m$; and,
        \item it does not contain the points $(j,a_{j}-j-1)$ for $j=1,\dots,m$. 
    \end{itemize}
    Condition i) guarantees that no point in $p_I$ lies above the diagonal $y=x$, and condition ii) guarantees that the steps for $p_I$ are $(0,1)$ and $(1,0)$. 

    Conversely, if $p$ is a Dyck path of length $2m$, we construct a tuple $I\in [(1,3,5,\dots,2m-1)]$ as follows: given $1\leq j\leq m$, let $b_j$ be the minimum number such that $p$ contains the point $(j,b_j)$. Then we set $I=(b_1+1,b_2+2\dots,b_m+m)$. It is easy to see that $I$ belongs to $[(1,3,\dots,2m-1)]$ and that this construction is inverse to the assignment $I \mapsto p_I$. Hence $\# [(1,3,\dots,2m-1)]=C_m$. 

    We are left to show that $C_m=\binom{2m}{m}-\binom{2m}{m+1}$ is odd if and only if $m+1$ is a power of two. Since $\binom{2m}{m}$ is always even, the parity of $C_m$ equals the parity of $\binom{2m}{m+1}$. By Lucas's theorem (see e.g.~\cite{hu-sun}), the parity of this coefficient depends on the behavior of the base-2 expansion of $m$. Namely, suppose 
    $$m+1=a_0 2^0+a_12^1+\cdots +a_r2^r \ \ \text{ with }a_i\in \{0,1\}
    $$
    and
    $$2m=b_0 2^0+b_12^1+\cdots +b_r2^r \ \ \text{ with }b_i\in \{0,1\} \text{ and }b_r=1.
    $$
    Then $\binom{2m}{m+1}$ is even if and only if $a_i>b_i$ for some $i=0,\dots,r$ (note that $a_r$ is allowed to be $0$). 

    If $m+1$ is a power of 2, then $a_0=a_1=\dots=a_{r-2}=0$ and $a_r=1$, whereas $b_0=0$ and $a_1=\dots =a_r=1$. Then $b_i\geq a_i$ for all $i$ and $\binom{2m}{m+1}$ is odd. If $m+1$ is not a power of 2, let $i<j$ be the first indices such that $a_i=a_j=1$. Then we can check that $b_j=0<a_j$, so that $\binom{2m}{m+1}$ is even. This analysis completes the proof of the lemma.
\end{proof}

Given $I=(i_1,\dots,i_k)\in \calI_k^{max}(n-1)$, let $\beta_1<\dots <\beta_{s-1}$ be the set of indices such that $i_{\beta_{j}+1}-i_{\beta_j}\geq 3$ for $j=1,\dots,s-1$. For completeness we set $\beta_0=0$ and $\beta_s=i_k$.
Then for each $j=1,\dots,s$, from Lemma \ref{lem:descriptionImax} we have that $$I_j=(i_{\beta_j+1},i_{\beta_j+2},\dots,i_{\beta_{j+1}})=(i_{\beta_j+1},i_{\beta_j+1}+2,\dots, i_{\beta_j+1}+2(\beta_j-\beta_{j-1}-1)).$$

In this case, we say that $I=I_1\cdots I_s$ is the \emph{canonical decomposition} of $I$. Note that each $I_j$ is a tuple of length $m_j=\beta_j-\beta_{j-1}$. For this description of $I$ we set $\lam_I:=C_{m_1}\cdots C_{m_s}$. 

For the next lemma, we apply Proposition \ref{prop:ker-im} to see $\F_2[x_1,\ldots, x_{n-1}]/\mathrm{I}(n)$ as a subring of $H^\ast(\msf{LS}_n;\bbF_2)$. Under this identification, by \eqref{eq:formulaw_k} we have
\begin{equation*}
 w_k(\msf{LS}_n)=\sum_{I\in \calI_k(n-1)}{x_I}   
\end{equation*}
for all $1\leq k \leq n$.

\begin{lemma}\label{lem:w_k^2}
    For all $1\leq k\leq n$, in $H^{2k}(\msf{LS}_n,\bbF_2)$ we have
    \[w_k(\msf{LS}_n)^2=\sum_{I\in \calI_k^{max}(n-1)}\lam_I x_I^2.\]
    Here the right hand is taken to be zero if $\calI_k^{max}(n-1)$ is empty.
\end{lemma}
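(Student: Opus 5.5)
Since we are in characteristic $2$, squaring is the Frobenius endomorphism of the commutative ring $\F_2[x_1,\ldots,x_{n-1}]/\mathrm{I}(n)$. So the first step is to write
\[
w_k(\msf{LS}_n)^2=\Bigl(\sum_{I\in\calI_k(n-1)}x_I\Bigr)^2=\sum_{I\in\calI_k(n-1)}x_I^2 .
\]
The cross terms appear twice and vanish mod $2$. Here we use Proposition \ref{prop:ker-im} and \eqref{eq:formulaw_k} to work inside the quotient ring. The problem then reduces to understanding the multiset of values $x_I^2$ as $I$ ranges over $\calI_k(n-1)$.

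Next, we group the sum by equivalence classes. Tuples with $x_I^2=0$ contribute nothing. By definition, every class $[I]$ with $x_I^2\neq 0$ contains a $\preceq$-maximal element, namely the lexicographically largest member of the finite set $[I]$. This maximal element lies in $\calI_k^{max}(n-1)$, and distinct classes give distinct maximal representatives. Hence
\[
w_k^2=\sum_{J\in\calI_k^{max}(n-1)}\#[J]\cdot x_J^2 ,
\]
and it remains to show $\#[J]\equiv\lam_J \pmod 2$. I would in fact aim to prove $\#[J]=\lam_J=C_{m_1}\cdots C_{m_s}$.

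This count is the main obstacle. Take the canonical decomposition $J=J_1\cdots J_s$ from Lemma \ref{lem:descriptionImax}, where consecutive blocks are separated by gaps $\ge 3$. Using the relations $x_j^2=x_jx_{j+1}$ and $x_{n-1}^2=0$, each $x_i^2$ for $i\le n-2$ rewrites as $x_ix_{i+1}$. Together with the identities \eqref{eq:identix_ix_i+1}, this shows that $x_I^2$ equals a canonical basis monomial determined by a "reduced" form of $I$. I would then characterize membership in $[J]$ by the same kind of inequalities used in the proof of Lemma \ref{lem:catalan}. Concretely, $I$ splits into consecutive sub-tuples of lengths $m_1,\dots,m_s$. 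The $t$-th sub-tuple $(a_1,\dots,a_{m_t})$ is strictly increasing and satisfies $c_t+j-1\le a_j\le c_t+2j-2$, where $c_t$ is the first entry of $J_t$. The gap of at least $3$ between blocks of $J$ is exactly what keeps these windows from interacting: the largest allowed entry of block $t$ is $c_t+2m_t-2$, which is less than $c_{t+1}$. So the choices in different blocks are independent, and each block contributes a factor $C_{m_t}$ by the Dyck path bijection of Lemma \ref{lem:catalan}.

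To justify this characterization rigorously, I would compute the normal form of $x_I^2$ in the monomial basis of Proposition \ref{prop:ker-im}. Scanning $I$ from the right, repeatedly applying $x_i^2=x_ix_{i+1}$ and pushing repeated indices rightward produces a squarefree monomial, and this monomial is determined by the block structure of the inequalities above. Two tuples then have equal squares exactly when their normal forms agree. The degenerate cases, where a normal form would involve $x_{n-1}^2$ and hence vanish, are already excluded by the condition $i_k\le n-2$ in the definition of $\calI_k^{max}(n-1)$. Finally, if $\calI_k^{max}(n-1)$ is empty, every $x_I^2$ vanishes and the sum is $0$, matching the convention in the statement.
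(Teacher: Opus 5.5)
Your proposal is correct and follows essentially the same route as the paper. Both arguments first write $w_k(\msf{LS}_n)^2=\sum_{J\in\calI_k^{max}(n-1)}\#[J]\,x_J^2$ using the lexicographically maximal representative of each class. Both then prove $\#[J]=\lam_J$ by showing that $[J]$ consists exactly of the concatenations of elements of the classes of the blocks in the canonical decomposition, and counting each block with Lemma \ref{lem:catalan}.

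Your normal-form (carrying) argument supplies the detail the paper leaves as ``not hard to see'': it explains why the gap of at least $3$ between blocks forces this product structure. Your handling of the empty case is also slightly more direct than the paper's. You note that every class with nonzero square has a maximal representative, so none can exist; the paper instead uses the fact that a nonempty $\calI_k^{max}(n-1)$ forces $2k-1\leq n-2$.
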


\begin{proof}
By Remark \ref{rmk:2k-1<2n-2}, if $\calI_k^{max}(n-1)$ is empty then $2k-1>n-2$, and hence every tuple in $I_k(n-1)$ is equivalent to a tuple $(i_1,\dots,i_k)$ with $i_k=n-1$. Then $w_k^2(\msf{LS}_n)=0$. 

From now on we suppose that $2k-1\leq n-2$. Then by the construction of $\calI_k^{max}(n-1)$ we have
\[w_k(\msf{LS}_n)^2=\sum_{I\in \calI_k^{max}(n-1)}\#[I]\cdot x_I^2.\]
Therefore, we are left to show that $\#[I]=\lam_I$ for each  $I\in \calI_k^{max}(n-1)$. In order to do this, for $I=(i_1,\dots,i_k)\in \calI_k^{max}(n-1)$ consider its canonical decomposition $I=I_1\cdots I_s$. Let $0=\beta_0<\beta_1<\cdots <\beta_{s-1}<\beta_s=i_k$ be such that for $j=1,\dots,s$ we have $$I_j=(i_{\beta_j+1},i_{\beta_j+1}+2,\dots, i_{\beta_j+1}+2(m_j-1))$$
with $m_j=\beta_j-\beta_{j-1}$. 
Since $i_{\beta_j}-i_{\beta_{j-1}}\geq 3$ for $j=1,\dots,s-1$, it is not hard to see that $[I]$ is the set of all the tuples $J'$ that can be written as a concatenation $J'=J'_1\cdots J'_s$ with each $J'_j\in [I_j]\subset \calI_{m_j}(n-1)$. Then Lemma \ref{lem:catalan} implies that $\#[I]$ equals $\lam_I$, as desired.
\end{proof}

\begin{proof}[Proof of Proposition \ref{prop:w_k^2(L_n)}]
      By Lemma \ref{lem:w_k^2}, we have that $w_k^2(\msf{LS}_n)\neq 0$ if and only if there exists $I=\calI_k^{max}(n-1)$ with $\lam_I$ odd. If such an $I$ exists and has canonical decomposition $I=I_1\cdots I_s$ with each $I_j$ a tuple of length $m_j$, we require:
    \begin{itemize}
        \item[i)] $m_1+\cdots+ m_s=k;$
        \item[ii)] each $C_{m_1},\dots,C_{m_s}$ to be odd; and,
        \item[iii)] $(2m_1-1)+2+(2m_2-1)+2+\cdots +2+(2m_s-1)=2(m_1+\cdots +m_s)+s-2\leq n-2$.
    \end{itemize}
    Combining $i)$ and $iii)$ yields $n\geq 2k+s$, and $ii)$ together with Lemma \ref{lem:catalan} gives us integers $r_1,\dots,r_s\geq 0$ with $m_j=2^{r_j}-1$ for each $j=1,\dots,s$. Combining this with $i)$ gives us $s\geq s(k)$, and hence $n\geq 2k+s(k)$.

    For $s=s(k)$ and a description of $k$ as in \eqref{eq:def.s(k)}, it is not hard to produce a tuple $I=\calI_k^{max}(2k+s(k)-1)$ with $\lam_I$ odd, and hence $w_k(\msf{LS}_{2k+s(k)})^2\neq 0$ by Lemma \ref{lem:w_k^2}. Such a tuple also belongs to $\calI_k^{max}(2k+s-1)$ for each $s\geq s(k)$, concluding the proof of the proposition. 
\end{proof}
%%%%%%%%%%%%%%%%%%%%%%%%%%%%%%%%%%%%%%%%%%%%%%%%%%%%%%%%%

\subsection{Orientable double covers of the manifolds $\msf{LS}_n$}\label{subsec:LSmanifolds}
In this subsection we study the orientable double covers of the Lee--Szczarba manifolds, denoted by $\what{\msf{LS}}_n$. This amounts to pulling back the extension \eqref{eq:flat} along the kernel $\widehat{\Phi}_n$ of the determinant map $\mathrm{det}:\Phi_n \subset \mathrm{O}(n)\to\Z/2\Z$. This yields a map of extensions
\begin{equation}\label{eq:pullback}
\xymatrix{
1\ar[r]& \Z^n\ar[r]\ar[d]& \widehat{\G}_n\ar[r]^{\widehat{h}}\ar[d] & \widehat{\Phi}_n\ar[r]\ar[d]^{\iota} & 1\\
1\ar[r]& \Z^n\ar[r]& \G_n\ar[r]^{h} & \Phi_n\ar[r] & 1,
}
\end{equation}
where $\what\G_n$ is the fundamental group of $\what{\msf{LS}}_n$.
With the identifications $\widehat{\Phi}_n\cong (\Z/2\Z)^{n-2}$ and $\Phi_n\cong (\Z/2\Z)^{n-1}$, the map $\iota$ is given by
\beq
\iota(x_1,\ldots,x_{n-2})=(x_1,\ldots,x_{n-2}, x_1+\cdots +x_{n-2}).
\eeq
Therefore the induced map in cohomology 
\beq
\iota^*:H^*(\Phi_n;\F_2)\cong\F_2[x_1,\ldots,x_{n-1}]\to H^*(\widehat{\Phi}_n;\F_2)\cong\F_2[u_1,\ldots,u_{n-2}]
\eeq
is given by 
\beq
\iota^*(x_i)=
\begin{cases} 
u_i & \text{if}\ \  1\leq i\leq n-2.\\
u_1+\cdots+ u_{n-2} &\text{if}\ \  i=n-1.
\end{cases}
\eeq

As in the case of $\msf{LS}_n$, we consider the ideal
\begin{equation}\label{eq:LSidealor}
 \widehat{\mathrm{I}}(n)=(\mathrm{I}(n),x_1+\cdots+x_{n-1})   
\end{equation}
in $\F_2[x_1,\dots,x_{n-1}]$, and set $\widehat{x}_j:=(\iota\circ B\widehat{h})^*(x_j)$. The next result is the orientable analog of Proposition \ref{prop:ker-im}.
\begin{proposition}\label{prop:ker-im-or}
The elements $\widehat{x}_{i_1}\widehat{x}_{i_2}\cdots \widehat{x}_{i_r}$,
where $1\leq i_1<\cdots<i_r<n-1$, form a basis
for the image $\mathrm{Im} (B\widehat{h})^*$. Moreover, there is an isomorphism
\beq
\F_2[x_1,\ldots, x_{n-1}]/\widehat{\mathrm{I}}(n) \cong
\mathrm{Im} (B\widehat{h})^*\subset H^*(\widehat{\msf{LS}}_n;\F_2).
\eeq
\end{proposition}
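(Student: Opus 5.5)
The natural strategy is to deduce Proposition \ref{prop:ker-im-or} from Proposition \ref{prop:ker-im} via the map of extensions \eqref{eq:pullback}. The resulting commutative square of classifying spaces gives
\[
(B\widehat h)^*\circ\iota^*=\pi^*\circ(Bh)^*,
\]
where $\pi:\widehat{\msf{LS}}_n\to\msf{LS}_n$ is the orientation double cover. The restriction $\iota^*:\F_2[x_1,\dots,x_{n-1}]\to\F_2[u_1,\dots,u_{n-2}]$ is surjective, so $\mathrm{Im}(B\widehat h)^*=\mathrm{Im}\,\pi^*\circ(Bh)^*$, and this ring is generated by the $\widehat x_j$. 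It therefore remains to compute the kernel of the composite $\F_2[x_1,\dots,x_{n-1}]\to H^*(\widehat{\msf{LS}}_n;\F_2)$.

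The inclusion $\widehat{\mathrm I}(n)\subseteq\ker$ is immediate. The generators of $\mathrm I(n)$ die already in $H^*(\msf{LS}_n)$ by Proposition \ref{prop:ker-im}. The element $x_1+\cdots+x_{n-1}=\sigma_1$ pulls back to $w_1(\msf{LS}_n)$ by \eqref{eq:formulaw_k}, and $\pi^*w_1(\msf{LS}_n)=w_1(\widehat{\msf{LS}}_n)=0$; alternatively, $\iota^*$ of it is $2(u_1+\cdots+u_{n-2})=0$.

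Next I would compute a spanning set of the quotient. Using $x_{n-1}=x_1+\cdots+x_{n-2}$ modulo $\widehat{\mathrm I}(n)$, together with the monomial basis of $\F_2[x]/\mathrm I(n)$ given by squarefree monomials, I expect that squarefree monomials in $x_1,\dots,x_{n-2}$ span $\F_2[x]/\widehat{\mathrm I}(n)$. Any monomial containing $x_{n-1}$ rewrites as a combination of monomials without it, and the squaring relations $x_i^2=x_ix_{i+1}$ reintroduce only higher indices, so an induction on the largest index should terminate. This shows there are at most $2^{n-2}$ spanning elements.

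The main step, and the expected obstacle, is linear independence of the $2^{n-2}$ classes $\widehat x_{i_1}\cdots\widehat x_{i_r}$ in $H^*(\widehat{\msf{LS}}_n;\F_2)$. Three approaches are available.

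\textbf{First approach (Gysin sequence).} Use the Gysin sequence for the double cover,
\[
\cdots\to H^{*-1}(\msf{LS}_n)\xrightarrow{\cdot w_1}H^*(\msf{LS}_n)\xrightarrow{\pi^*}H^*(\widehat{\msf{LS}}_n)\to\cdots,
\]
so that $\ker\pi^*=w_1\cdot H^{*-1}(\msf{LS}_n)$. One then needs $\ker\pi^*\cap\mathrm{Im}(Bh)^*=w_1\cdot\mathrm{Im}(Bh)^*$. This is the delicate part: a product $w_1\cdot y$ with $y\notin\mathrm{Im}(Bh)^*$ might land in the image.

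\textbf{Second approach (repeat Lee--Szczarba).} Run the original Lee--Szczarba argument for the Serre spectral sequence of $\mathbb T^n\to\widehat{\msf{LS}}_n\to B\widehat\Phi_n$. The kernel of $(B\widehat h)^*$ is generated by the transgressions $d_2$ of the fiber classes $t_0,\dots,t_{n-1}\in H^1(\mathbb T^n)$, which are the extension classes. For $\G_n$ these give $x_i^2+x_ix_{i+1}$ and $x_{n-1}^2$ (and $0$ for $t_0$). By naturality, for $\widehat\G_n$ they are the $\iota^*$-images, since $\Z^n$ is unchanged. The quotient is then $\F_2[u]/\iota^*\mathrm I(n)\cong\F_2[x]/\widehat{\mathrm I}(n)$, provided the transgressions generate the whole kernel. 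I would justify that last point by the dimension count: $\F_2[u]/\iota^*\mathrm I(n)$ has dimension $2^{n-2}$, forced by the spanning step above together with a lower bound from a regular-sequence/Koszul argument, since $n-2$ quadrics in $n-2$ variables give total dimension $2^{n-2}$.

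\textbf{Third approach (restriction to a subtorus).} Restrict to an explicit subtorus or sub-flat-manifold on which the monomials evaluate nontrivially.

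I would try the second approach first, mirroring Proposition \ref{prop:ker-im}.
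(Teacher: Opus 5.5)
The step you single out as the main one, linear independence of the $2^{n-2}$ classes $\widehat x_{i_1}\cdots\widehat x_{i_r}$ (equivalently $\dim\F_2[u]/\iota^*\mathrm I(n)=2^{n-2}$), is false for every $n\ge 4$. None of your three approaches can establish it.

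\textbf{The counterexample.} $\iota^*$ sends the generator $x_{n-2}^2+x_{n-2}x_{n-1}$ of $\mathrm I(n)$ to $u_{n-2}^2+u_{n-2}(u_1+\cdots+u_{n-2})=\sum_{j=1}^{n-3}u_ju_{n-2}$. The easy inclusion you already prove therefore gives the nontrivial relation $\sum_{j=1}^{n-3}\widehat x_j\widehat x_{n-2}=0$ in $H^2(\widehat{\msf{LS}}_n;\F_2)$. Equivalently, $\sigma_1x_{n-2}\equiv\sum_{j\le n-3}x_jx_{n-2}$ modulo $\mathrm I(n)$, and $\pi^*w_1(\msf{LS}_n)=0$. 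For $n=4$ one gets $\iota^*\mathrm I(4)=(u_1^2+u_1u_2,\,u_1u_2,\,u_1^2+u_2^2)=(u_1^2,u_1u_2,u_2^2)$. This quotient has dimension $3$, not $4$, and $\widehat x_1\widehat x_2=0$.

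\textbf{Where your count slips.} $\iota^*\mathrm I(n)$ has $n-1$ quadratic generators in $n-2$ variables, not $n-2$, and it is not a complete intersection. So the squarefree monomials in $x_1,\dots,x_{n-2}$ only span. The first sentence of the statement is false as written. The paper's own proof in fact establishes only the second assertion, namely $\ker(B\widehat h)^*=\iota^*\mathrm I(n)$, i.e.\ $\mathrm{Im}(B\widehat h)^*\cong\F_2[x]/\widehat{\mathrm I}(n)$.

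\textbf{The isomorphism.} Your second approach agrees with the paper's first step: $\widehat d_2=\iota^*d_2$ by naturality, giving $\iota^*\mathrm I(n)\subseteq\ker(B\widehat h)^*$. Your argument through the commutative square and $\pi^*w_1=0$ is an equally good route to this inclusion.

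What is missing is the reverse inclusion. A dimension count of the abstract ring $\F_2[u]/\iota^*\mathrm I(n)$ cannot supply it, even with the correct count. It only bounds $\mathrm{Im}(B\widehat h)^*$ from above. It says nothing about higher differentials $d_r$, $r\ge 3$, of the Serre spectral sequence that might kill further classes in the bottom row. You need a lower bound on the image itself.

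The paper obtains this by induction on $n$. Since $\widehat h$ factors through $\widehat\Phi_{n-1}\times\Z$, the claim reduces to $\ker f^*=\widehat{\mathrm J}(n)$ for $f\colon\widehat{\msf{LS}}_n\to B\widehat\Phi_{n-1}\times\mathbb T^1$. That is proved by comparing the split Wang sequences of $\widehat{\msf{LS}}_{n-1}\to\widehat{\msf{LS}}_n\to\mathbb T^1$ and $B\widehat\Phi_{n-1}\to B\widehat\Phi_{n-1}\times\mathbb T^1\to\mathbb T^1$, using the inductive hypothesis on the outer terms. Your proposal has no substitute for this injectivity input.
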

\begin{proof}
Let $\{\widehat{E}_r^{p,q},\widehat{d}_r\}$ be the $\F_2$-cohomology Serre spectral sequence for the top fibration in \eqref{eq:pullback} (see for example \cite[Ch. 9, Sec. 4, Theorem 6]{spanier}).

Then the differential
\beq
\widehat{d}_2:\widehat{E}_2^{0,1}\to\widehat{E}_2^{2,0}
\eeq
can be computed from the map of fibrations \eqref{eq:pullback}, the naturality of the Serre spectral sequence, and \cite[Lemma~3.1]{lee-szczarba}.  This calculation yields $\widehat{d}_2(y_i)=\iota^*d_2(y_i)$, and therefore
\beq
\mathrm{Im}(\widehat{d}_2)=\iota^*\mathrm{Im}(d_2)=\iota^*\mathrm{I(n)}\subset\ker(B\widehat{h}^*).
\eeq
 
The result will follow if we show the reverse inclusion, because then
\beq
\mathrm{Im} (B\widehat{h})^*\cong\F_2[u_1,\ldots, u_{n-2}]/\iota^*\mathrm{I}(n)\cong\F_2[x_1,\ldots, x_{n-1}]/\widehat{\mathrm{I}}(n).
\eeq
To show the reverse inclusion, we follow  \cite[p.6]{lee-szczarba} very closely. First note that by construction the holonomy map $\widehat{h}:\widehat{\G}_n\to\widehat{\Phi}_n$ factors through $\widehat{\Phi}_{n-1}\times\Z$.

This gives a commutative diagram
\beq
\xymatrix{
H^*(B\widehat{\Phi}_{n-1}\times \bbT^1)\ar[r]^-{f^*}&H^*(\widehat{\msf{LS}}_n)\\
&H^*(B\widehat{\Phi}_{n})\ar[ul]^{g^*}\ar[u]_{B\widehat{h}^*}
}
\eeq
in which the diagonal map $g^*$ is a surjection with kernel generated by $\langle u_{n-2}^2\rangle$ and that maps $\iota^*\mathrm{I}(n)$ onto the ideal $\widehat{\mathrm{J}}(n)$ generated by
\beq
u_{n-3}^2+u_{n-3}y,\qquad  u_i^2+u_iu_{i+1},\ 1\leq i<n-3,\qquad u_1^2+\cdots u_{n-3}^2.
\eeq
Therefore, the kernel of $B\widehat{h}^*$ is $\iota^*\mathrm{I}(n)$ if and only if the kernel of $f^*$ is $\widehat{\mathrm{J}}(n)$.
This last assertion follows by induction on $n$, using the following  commutative diagram of exact sequences 
\beq
\xymatrix{
0 \ar[r] & \ker(B\widehat{h}^*) \ar[r] & \ker(f^*) \ar[r] & \ker(B\widehat{h}^*) &  \\
0 \ar[r] & \widehat{\mathrm{I}}(n-1) \ar[r] \ar[u] & \widehat{\mathrm{J}}(n) \ar[r] \ar[u] & \widehat{\mathrm{I}}(n-1) \ar[r] \ar[u] & 0.
}
\eeq
This diagram arises from the map of (split) Wang sequences associated to the fibrations over the circle 
\beq
\xymatrix{
\widehat{\msf{LS}}_{n-1} \ar[r] \ar[d] & \widehat{\msf{LS}}_n \ar[r] \ar[d]_{f} & \mathbb{T}^1 \ar[d] \\
B\widehat{\Phi}_{n-1} \ar[r] & B\widehat{\Phi}_{n-1} \times \mathbb{T}^1 \ar[r] & \mathbb{T}^1.
}
\eeq
The base case of the induction
is the equality $\iota^*\mathrm{I}(2)=\ker(B\widehat{h}^*)$, which holds trivially as both terms are zero.
\end{proof}
Proposition \ref{prop:ker-im-or} enables us to compute some characteristic classes of the orientable flat manifolds $\what{\msf{LS}}_n$. Indeed, using this result, \eqref{eq:formulaw_k}, and the map $\iota^*$, we can describe the $j$-th Stiefel--Whitney of $\widehat{\msf{LS}}_n$ as the element
\beq
\sigma_j(x_1,x_2,\ldots,x_{n-2},x_{n-1}) \in \F_2[x_1,\dots,x_{n-1}]/\wh{\mathrm{I}}(n),
\eeq
where $\si_j$ is the $j$-th symmetric polynomial in $n-1$ variables and $\wh{\mathrm{I}}(n)$ is the ideal from \eqref{eq:LSidealor}.

From this description, we can conclude (for example, using SAGE) the non-triviality of the following characteristic classes: $w_2(\wh{\msf{LS}}_5), w_3(\wh{\msf{LS}}_6), w_2(\wh{\msf{LS}}_8)^2, w_5(\wh{\msf{LS}}_{10})$, and $w_4(\wh{\msf{LS}}_{16})^2$.
This implies the following.
\begin{corollary}\label{cor:char-classes-oriented} For the orientable double covers $\what{\msf{LS}}_n$ the following holds.
\begin{enumerate}
    \item If $n\geq 5$ then $w_2(\wh{\msf{LS}}_n)\neq 0$, and hence $\wh{\msf{LS}}_n$ is not spin.
    \item If $n\geq 6$ then $w_3(\wh{\msf{LS}}_n)\neq 0$, and hence $\wh{\msf{LS}}_n$ is not spin$^c$.
    \item If $n\geq 8$ then $w_{2}(\what{\msf{LS}}_n)^2\neq 0$, and hence $p_1(\what{\msf{LS}}_n)\neq 0$.
    \item If $n\geq 10$ then $w_5(\wh{\msf{LS}}_{n})\neq 0$, and hence $\wh{\msf{LS}}_{n}$ is not spin$^h$.
    \item If $n\geq 16$ then $w_{4}(\what{\msf{LS}}_n)^2\neq 0$, and hence $p_2(\what{\msf{LS}}_n)\neq 0$.
\end{enumerate}
\end{corollary}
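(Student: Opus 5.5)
The plan is to deduce all five items from the specific non-vanishing results in the base dimensions ($w_2(\wh{\msf{LS}}_5)$, $w_3(\wh{\msf{LS}}_6)$, $w_2(\wh{\msf{LS}}_8)^2$, $w_5(\wh{\msf{LS}}_{10})$, $w_4(\wh{\msf{LS}}_{16})^2$). These will be obtained from the presentation of $\mathrm{Im}(B\wh h)^*$ in Proposition \ref{prop:ker-im-or}, and then propagated to all larger $n$ by a stability argument in $n$. The consequences (non-spin, non-spin$^c$, non-spin$^h$, $p_i\neq 0$) follow formally from the background in the previous subsection. If $w_3\neq 0$ there is no integral lift of $w_2$, since $w_3=Sq^1w_2$ is the Bockstein of $w_2$. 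If $w_5=Sq^1 w_4\neq 0$ (Wu formula, as $w_1=0$), then $w_4$ has no integral lift. Finally, $p_i\equiv w_{2i}^2 \pmod 2$ gives $p_i\neq 0$ whenever $w_{2i}^2\neq 0$.

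For the base cases, I would work in $A_n:=\F_2[x_1,\dots,x_{n-1}]/\wh{\mathrm{I}}(n)$, which by Proposition \ref{prop:ker-im-or} embeds in $H^*(\wh{\msf{LS}}_n;\F_2)$ and has the explicit monomial basis $\wh x_{i_1}\cdots\wh x_{i_r}$ with $i_r<n-1$. Eliminate $x_{n-1}=x_1+\cdots+x_{n-2}$ and reduce $\sigma_j(x_1,\dots,x_{n-1})$ (respectively its square) to this basis using the rewriting rules $x_i^2=x_ix_{i+1}$ and $(x_1+\cdots+x_{n-2})^2=0$. This is a finite Gröbner-type normal form computation, which the paper delegates to SAGE. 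I would check at least $w_2(\wh{\msf{LS}}_5)$ by hand: there $w_2=\sigma_2(x_1,x_2,x_3,x_1+x_2+x_3)$, which after using $w_1=0$ becomes $\sum_{i<j\le 3}x_ix_j+(x_1+x_2+x_3)^2$, and this reduces to a nonzero combination of squarefree monomials in $x_1,x_2,x_3$.

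For the propagation from $n_0$ to all $n\ge n_0$, I would use the structure in the proof of Proposition \ref{prop:ker-im-or}: $\wh{\msf{LS}}_{n-1}$ embeds in $\wh{\msf{LS}}_n$ as the fiber of the fibration over the circle. The generators $\gamma_1,\dots,\gamma_{n-2}$ and the translations restrict compatibly, so $\wh{\msf{LS}}_{n-1}\subset \wh{\msf{LS}}_n$ is a codimension-one flat submanifold with trivial normal bundle; this bundle is trivial because it is pulled back from the circle base. Therefore $T\wh{\msf{LS}}_n|_{\wh{\msf{LS}}_{n-1}}\cong T\wh{\msf{LS}}_{n-1}\oplus\varepsilon^1$, and naturality gives that restriction sends $w_j(\wh{\msf{LS}}_n)$ to $w_j(\wh{\msf{LS}}_{n-1})$, and likewise for the squares. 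Hence non-vanishing in dimension $n-1$ implies non-vanishing in dimension $n$, and induction from the base dimensions proves items (1)--(5). The step I expect to be the main obstacle is verifying that the fiber inclusion is really compatible with the identifications $\wh x_j\mapsto \wh x_j$, in particular with the last variable $x_{n-1}$ (which is eliminated via the orientation relation), and that the normal bundle is trivial. I would first try to see this directly from the explicit generators $\gamma_i$ in \eqref{eq:LSequations}, checking that $x_n$ is the circle coordinate. If the indexing does not match, I would fall back on a direct algebraic argument that the normal forms computed for $n_0$ persist under the inclusion $A_{n_0}\to A_n$ induced by the Wang sequence splitting.
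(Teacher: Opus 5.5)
Your proposal is correct and is essentially the paper's argument. The base cases come from computing in $\F_2[x_1,\dots,x_{n-1}]/\wh{\mathrm{I}}(n)$. The propagation is restriction to a codimension-one copy of $\wh{\msf{LS}}_{n-1}$ with trivial normal bundle; this is exactly what the paper obtains by composing the slice $\wh{\msf{LS}}_{n-1}\times\{\mathrm{pt}\}$ with its double cover $\wh{\msf{LS}}_{n-1}\times\bbT^1\to\wh{\msf{LS}}_{n}$. The compatibility with the $\wh x_j$ that you flag as the main obstacle is not needed. Naturality only uses $T\wh{\msf{LS}}_n|_{\wh{\msf{LS}}_{n-1}}\cong T\wh{\msf{LS}}_{n-1}\oplus\varepsilon^1$ together with connectedness of the fiber, and the fiber is connected because $w_1(\msf{LS}_{n-1})\neq 0$.

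One caution about your hand check. The monomials $\wh x_{i_1}\cdots\wh x_{i_r}$ with $i_r<n-1$ listed in Proposition \ref{prop:ker-im-or} are \emph{not} linearly independent modulo $\wh{\mathrm I}(n)$. After substituting $x_{n-1}=x_1+\cdots+x_{n-2}$, the generator $x_{n-2}^2+x_{n-2}x_{n-1}$ becomes $x_{n-2}(x_1+\cdots+x_{n-3})$, which is a relation among those monomials. For example, $x_1x_2\in\wh{\mathrm I}(4)$, and $x_1x_3\equiv x_2x_3$ modulo $\wh{\mathrm I}(5)$. So non-vanishing has to be read off from a genuine Gr\"obner basis, not from that list. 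For $n=5$ your conclusion still holds: $w_2(\wh{\msf{LS}}_5)\equiv x_1x_2+x_1x_3+x_2x_3\equiv x_1x_2$. After eliminating $x_4$, the degree-two part of the ideal is spanned by $x_1^2+x_1x_2$, $x_2^2+x_2x_3$, $x_1x_3+x_2x_3$ and $x_1^2+x_2^2+x_3^2$, and $x_1x_2$ is not in this span.
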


\begin{proof} We only prove item (1) as the others follow by the exact same argument using the non-triviality of the characteristic classes above. We prove that $w_2(\wh{\msf{LS}}_n)\neq 0$ for $n\geq 5$ by induction on $n$, noting that there is a finite covering map $\what{f}:\wh{\msf{LS}}_{n}\times \bbT^1\ra \wh{\msf{LS}}_{n+1}$. Indeed, this map comes by lifting the covering map $f:\msf{LS}_{n}\times \bbT^1 \ra \msf{LS}_{n+1}$ constructed as follows. By \cite[Proposition~1.3]{lee-szczarba}, the manifold $\msf{LS}_{n+1}$ is diffeomorphic to the mapping torus of the diffeomorphism $g:\msf{LS}_{n}\to \msf{LS}_{n}$ which comes from the reflection in $\R^{n-1}$
\beq
(x_1,\ldots,x_{n})\to (x_1,\ldots,x_{n-1},-x_{n}).
\eeq
Since $g$ has order $2$, it corresponds to a $2$-sheeted cover $f:\msf{LS}_{n}\times \bbT^1\to \msf{LS}_{n+1}$.

Note that $\wh{\msf{LS}}_{n}$ sits in $\wh{\msf{LS}}_{n}\times \bbT^1$ as a codimension-$1$ submanifold with trivial normal bundle. Therefore, if $w_2(\wh{\msf{LS}}_n)\neq 0$ then $w_2(\wh{\msf{LS}}_n\times \bbT^1)\neq 0$. Hence
\beq 
\wh{f}^*(w_2(\wh{\msf{LS}}_{n+1}))=w_2(\wh{\msf{LS}}_n\times \bbT^1)\neq 0,
\eeq
implying $w_2(\wh{\msf{LS}}_{n+1})\neq 0$. 
\end{proof}
\begin{conjecture}\label{conj:nonzero-pi-orientable}
For the orientable flat manifolds $\widehat{\msf{LS}}_n$, the mod $2$ reduction of the $i$-th Pontryagin class $p_i(\widehat{\msf{LS}}_n)$ is non-zero, provided $n\geq 8i$. 
\end{conjecture}

\section{Applications}\label{sec:applications}

In this section we give some applications of our results. In particular, we deduce  Corollary \ref{cor:exotic-smoothings}, Theorem \ref{thm:Pontryagin}, and Theorem \ref{thmalpha:manymanywithnontrivial}.
\subsection{Exotic negatively curved manifolds}

\begin{proof}[Proof of Corollary \ref{cor:exotic-smoothings}.]
Fix $n\geq 9$. Let $Q$ and $R$ be two closed orientable smooth $n$-manifolds with the same underlying $\PL$ manifold $\msf{P}$ satisfying:
\begin{enumerate}[(i)]
\item for some $j>0$, their $j$-th Pontryagin classes satisfy  $p_j(Q)=0$ and $p_j(R)\neq 0$; and,
\item some \emph{rational} Pontryagin class of $Q$ (and hence of $R$, by the topological invariance of rational Pontryagin classes) is non-trivial.
\end{enumerate}
The existence of closed $9$-manifolds that satisfy (i) for $j=2$ follows, for instance, from the proof of Theorem 3.1 in
\cite{crowley-p2}
(see also \cite[Section 4.4]{kreck-lueck} for examples in dimensions $\geq 21$). Connecting sum these manifolds with $\C P^2\times S^5$ yields $9$-manifolds $Q_0$ and $R_0$ which now satisfy (i) and (ii). To get examples $Q$ and $R$ in dimensions $\geq 9$ it suffices to take products of $Q_0$ and $R_0$ with spheres.

Now choose smooth triangulations $\msf{Q}$ and $\msf{R}$ of $Q$ and $R$ compatible with the common $\PL$ structure $\msf{P}$. Since $Q$ and $R$ have the same underlying $\PL$ manifold, we may assume (possibly after subdividing $\msf{Q}$ and $\msf{R}$) that there is a simplicial isomorphism $\msf{Q}\to \msf{R}$.

Let
\beq
M:=\calH_X(\calG(\msf{Q}))\qquad\text{and}\qquad N:=\calH_X(\calG(\msf{R})),
\eeq
where the hyperbolizing piece $X$ is chosen as in Theorem \ref{thm:main-stably-tan} and sufficiently large so that Ontaneda's Riemannian hyperbolization applies. Then $M$ and $N$ are closed orientable Riemannian manifolds all of whose sectional curvatures lie in the interval $[-1-\epsilon,-1]$. Moreover, by the functoriality of $\calG$ and $\calH_X$ \cite[p.349]{charney-davis}, the simplicial isomorphism $\msf{Q}\to \msf{R}$ induces a homeomorphism between $M$ and $N$.

By Corollary \ref{cor:Riemannian-hyp}, we have $p_j(M)=0$ and $p_j(N)\neq 0$, so $M$ and $N$ are not diffeomorphic. In consequence, this proves parts (1) and (2).

It remains to prove part (3). Since strict hyperbolization preserves rational Pontryagin classes, $M$ has a non-zero rational Pontryagin class by (ii). Then $M$ is not homeomorphic to a real hyperbolic manifold  or to a Gromov--Thurston branched cover of a real hyperbolic manifold because these have rational vanishing Pontryagin classes: the former by Chern--Weil theory or \cite{sullivan-covers, okun}, the latter by \cite{ardanza}.

Next, $\pi_1(M)$ does not have Kazhdan's property $(T)$ \cite[Theorem 1.2]{lafont-ruffoni}, whereas cocompact lattices in $\Sp(m,1)$ and in $F_{4(-20)}$ do \cite{kostant}. Hence $M$ is not homeomorphic to a quaternionic or octonionic hyperbolic manifold.
Finally, $\pi_1(M)$ is not a K\"ahler group \cite[Theorem 1.8]{belegradek}. Therefore $M$ is not homeomorphic to a complex hyperbolic manifold, nor to any of the Mostow--Siu, Deraux, or Stover--Toledo examples, since all of these are compact K\"ahler manifolds. 
\end{proof}
\subsection{Hyperbolic manifolds with non-trivial characteristic classes}\label{subsec:nontrivialSW}

\begin{proof}[Proof of Theorem \ref{thm:Pontryagin}]
Let $n\geq 2$ and $\K\neq \Q$ be a totally positive number field. Let $M$ be a Charney--Davis $n$-manifold obtained from Theorem \ref{thmalpha:flexibleCD}, and use item (2) of that theorem together with \cite{sullivan-covers} to further assume that $M$ is stably parallelizable. Let $X$ be the hyperbolizing piece obtained from $M$.

We first deal with item (1), which is the template for all the other items. Let $\msf{C}_n$ be a flat foldable cube complex homeomorphic to the Lee--Szczarba manifold $\msf{LS}_n$, which exists by Proposition \ref{thmA:diagonalfoldable}. Let $N=\calH_{X}(\msf{C}_n)$ be the hyperbolized complex. Then $N$ is hyperbolic and commensurable with $M$ by Proposition \ref{thm:flat->hyperbolic}.
By Corollary \ref{cor:tangential-hyp} and Theorem \ref{thm:LS}, we have that $w_i(N)\neq 0$ if $1\leq i\leq n-1$, and that $p_i(N)\neq 0$ if $n=6k+4$ with $k\geq 1$ and $i\leq k$. By ranging over all possible (arithmetic and non-arithmetic) commensurability classes of $M$ given by Theorem \ref{thmalpha:flexibleCD}, we produce infinitely many commensurability classes of manifolds $N$. This proves item (1).

For item (2) we use the same argument. In this case, we hyperbolize flat foldable cube complexes homeomorphic to the orientable Lee--Szczarba manifolds $\wh{\msf{LS}}_n$, which are the appropriate input by Corollary \ref{cor:char-classes-oriented}~(3) \& (5).

For item (3) and $n=4$, we use as input either of the two orientable, non-spin flat $4$-manifolds of diagonal type \cite{putrycz-szczepanski}. Then we hyperbolize a flat foldable cubulation $\msf{C}_4$ of that manifold. For $n\geq 5$, we hyperbolize the (orientable and non-spin) flat foldable cube complexes $\msf{C}_n=\msf{C}_4 \times \bbT^{n-4}$ (or we can also hyperbolize the orientable Lee-Szczarba manifolds $\what{\msf{LS}}_n$ and apply Corollary \ref{cor:char-classes-oriented}~(1)). 

For item (4) and $n=5$ we hyperbolize a \emph{generalized Hantzsche-Wendt} $5$-manifold \cite{rossetti-szczepanski}. These are orientable flat manifolds with holonomy $(\Z/2\Z)^{n-1}$ and with non-vanishing third Stiefel--Whitney class \cite{LPS}.
For $n>5$ we hyperbolize the product of either of these manifolds with a torus, or alternatively
we hyperbolize the orientable Lee--Szczarba manifold $\what{\msf{LS}}_n$, which has $w_3(M)\neq 0$ by Corollary \ref{cor:char-classes-oriented}~(2). 

Finally, for item (5), we hyperbolize $\what{\msf{LS}}_n$, for $n\geq 10$, which has $w_5(\what{\msf{LS}}_n)\neq 0$ by Corollary \ref{cor:char-classes-oriented}~(4).
\end{proof}
It would be interesting to know whether there is a non-spin$^{c}$ foldable flat orientable manifold $F$ with $w_3(F)=0$. Applying stably tangential strict hyperbolization to such manifold would yield hyperbolic manifolds with the same property, answering  \cite[Question~1.6]{chen.spinc}. Amusingly,
applying Corollary \ref{cor:Riemannian-hyp}, we can find such examples with variable negative curvature.
\begin{corollary}\label{lem:teichnernegcurved}
    For any $n\geq 6$ and $\ep>0$ there exists a closed, orientable, smooth Riemannian $n$-manifold $M$ with sectional curvatures on $[-1-\ep,-1]$ and such that $M$ is not spin$^c$ and $w_3(M)=0$.
\end{corollary}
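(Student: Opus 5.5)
The plan is to feed a suitable closed smooth manifold $N$ into Corollary \ref{cor:Riemannian-hyp}. We want $N$ of dimension $n\geq 6$ that is orientable, not spin$^c$, and has $w_3(N)=0$. Corollary \ref{cor:Riemannian-hyp} then gives a closed Riemannian manifold $M$ with sectional curvatures in $[-1-\epsilon,-1]$ and a degree-$1$ stably tangential map $f\colon M\to N$. Since $N$ is orientable, so is $M$. The map $f^*$ is injective on cohomology with any coefficients, and $f^*w_i(N)=w_i(M)$. In particular $w_3(M)=f^*w_3(N)=0$.

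\textbf{Step 1: a spin$^c$ obstruction that survives.} Recall that an orientable manifold is spin$^c$ exactly when $w_2$ lies in the image of reduction $\rho\colon H^2(\,\cdot\,;\Z)\to H^2(\,\cdot\,;\F_2)$. Equivalently, $\beta w_2=W_3=0$, where $\beta$ is the integral Bockstein. Naturality gives $W_3(M)=f^*W_3(N)$, and $f^*$ is injective on $H^3(\,\cdot\,;\Z)$ by Poincaré duality, since $f$ has degree one. So if $W_3(N)\neq 0$, then $W_3(M)\neq0$ and $M$ is not spin$^c$. We also have $w_3=\rho W_3$. Therefore we need $N$ whose integral class $W_3(N)$ is a nonzero class that is divisible by $2$, or at least lies in the kernel of mod-$2$ reduction.

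\textbf{Step 2: constructing $N$.} The classical source is Teichner's examples: closed orientable manifolds with $W_3\neq 0$ but $w_3=0$. These include orientable $6$-manifolds, for instance suitable total spaces or quotients built from $4$-torsion in $H^3(\,\cdot\,;\Z)$. The plan is to cite an explicit such $6$-manifold $N_6$ (from Teichner, or a Dold-type manifold construction) and then set $N=N_6\times T^{n-6}$ for $n\geq 6$. Products with a torus preserve all three properties. Orientability is clear. The relation $w_3(N_6\times T^{k})=\mathrm{pr}^*w_3(N_6)=0$ holds because the torus is parallelizable. Finally $W_3(N)=\mathrm{pr}^*W_3(N_6)$, and this is nonzero because $\mathrm{pr}^*$ is injective via the section $N_6\to N_6\times T^k$.

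\textbf{Main obstacle.} The real difficulty is Step 2: producing and justifying a concrete orientable manifold with $W_3\neq 0$ but $w_3=0$ in dimension $6$. Here $W_3$ must be a $4$-torsion-type class, namely $\beta w_2\neq 0$ while $\rho\beta w_2=\mathrm{Sq}^1 w_2=0$. Dimension $6$ is plausibly the first dimension in which this happens. Indeed, in dimension $\le5$ Wu's formula forces $w_3=\mathrm{Sq}^1w_2$-type relations that make such examples fail. If citing Teichner is not acceptable, I would try to build the example myself. Take a $\mathbb{Z}/4$-lens-space-type classifying situation: choose a $2$-dimensional bundle or a $3$-complex realizing $\beta(x)\neq 0$ with $\rho\beta(x)=0$. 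Embed it into a high-dimensional Euclidean space and take the boundary of a regular neighbourhood. Then use surgery below the middle dimension to reach dimension $6$, checking that the needed classes are preserved in low degrees. All remaining steps are formal consequences of Corollary \ref{cor:Riemannian-hyp} and naturality.
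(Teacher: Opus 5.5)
Your proposal is correct and follows essentially the paper's route: the paper takes the orientable, non-spin$^c$ six-manifolds with $w_3=0$ that Crowley--Grant obtain as sphere bundles of Teichner's vector bundles, applies Corollary~\ref{cor:Riemannian-hyp}, and handles $n>6$ by taking products with a sphere (your torus factor works equally well). Your explicit argument that $W_3=\beta w_2$ pulls back injectively under the degree-one map only spells out what the paper leaves implicit, and the speculative fallback construction in your last paragraph is unnecessary, since citing Crowley--Grant for the six-dimensional example suffices.
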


\begin{proof}
It follows from the work of Crowley--Grant \cite[Theorem~1.3, Proposition~5.9]{crowley-grant} that certain $6$-manifolds $N$ arising as sphere bundles of the vector bundles constructed by Teichner \cite[Lemma 2]{teichner} are orientable, non-spin$^c$ and satisfy $w_3(N)=0$. 
Applying Corollary \ref{cor:Riemannian-hyp} to $N$ we obtain the result in dimension $n=6$. 

To get examples in all dimensions greater than 6, it suffices to hyperbolize (triangulations of) the product of a Teichner manifold $N$ with a sphere.
\end{proof}

%%%%%%%%%%%%%%%%%%%%%%%%%%%%%%%%

\subsection{Charney--Davis manifolds in a given commensurability class}\label{subsec:CDingivencclas}

We proceed with the proof of Theorem \ref{thmalpha:manymanywithnontrivial}, which follows from the next result. It shows that there are plenty of commensurability classes of closed hyperbolic manifolds, each containing infinitely many distinct manifolds with non-trivial characteristic classes. Recall that two closed smooth manifolds $M,M'$ are tangentially related if there exists a closed smooth manifold $N$ and stably tangential, smooth degree 1 maps $M,M'\ra N$. In particular, a certain characteristic class for $M$ is non-trivial if and only if it is non-trivial for $M'$.

\begin{theorem}\label{thm:manymanywithnontrivial}
Let $M$ be any of the manifolds obtained from either Theorem \ref{thm:Pontryagin}. Then there exist sequences $(M_{1,j})_{j\geq 1},(M_{2,j})_{j\geq 1}$ of closed hyperbolic manifolds in the commensurability class of $M$ with $M_{2,1}=M$, and such that:
\begin{enumerate}
    \item the injectivity radius of $M_{1,j}$ tends to infinity as $j$ tends to infinity; 
    \item each $M_{2,j+1}$ is a non-trivial cover of $M_{2,j}$; and,
    \item each $M_{1,j}$ and $M_{2,j}$ is stable tangentially equivalent to $M$.
\end{enumerate}
\end{theorem}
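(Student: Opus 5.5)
The plan is to exploit that every manifold $M$ from Theorem \ref{thm:Pontryagin} has the form $M=\calH_X(\msf{C})$. Here $X$ is a parallelizable hyperbolizing piece coming from a stably parallelizable Charney--Davis manifold $M_0$ (given by Theorem \ref{thmalpha:flexibleCD}), and $\msf{C}$ is a flat foldable cube complex homeomorphic to a flat manifold $F$ of diagonal type. By Corollary \ref{cor:tangential-hyp}, $g_{\msf{C}}\colon M\to \msf{C}\cong F$ is a degree-$1$ stably tangential map. To show that another closed hyperbolic manifold $M'$ is tangentially related to $M$, it therefore suffices to produce a degree-$1$ stably tangential map $M'\to F$. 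We will get such maps in two ways: by varying the hyperbolizing piece while keeping $\msf{C}$ fixed, and by passing to covers that come from covers of $\msf{C}$.

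For the sequence $(M_{1,j})$ we keep $\msf{C}$ fixed and vary $X$. By Theorem \ref{thmalpha:flexibleCD}(2) and residual finiteness of $\pi_1(M_0)$, we can pick a tower of finite covers $M_0=N_0\leftarrow N_1\leftarrow N_2\leftarrow\cdots$ with $\InjRad(N_j)\to\infty$, each $N_j$ itself Charney--Davis. Every $N_j$ is stably parallelizable, since it covers $M_0$, so the corresponding pieces $X_j$ are parallelizable, exactly as in the proof of Lemma \ref{cor:tangential}. Set $M_{1,j}:=\calH_{X_j}(\msf{C})$.
\begin{itemize}
\item Proposition \ref{thm:flat->hyperbolic} shows $M_{1,j}$ is hyperbolic and commensurable with $N_j$, hence with $M_0$, hence with $M$.
\item The same proposition gives $\InjRad(M_{1,j})\geq B\,\InjRad(N_j)$ with $B$ independent of $j$; this is the reason for fixing $\msf{C}$. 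So item (1) holds.
\item Corollary \ref{cor:tangential-hyp} gives the degree-$1$ stably tangential map $M_{1,j}\to F$.
\end{itemize}

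For the sequence $(M_{2,j})$ we keep $X$ fixed and pass to covers of $\msf{C}$. Take a tower of nontrivial finite covers $\msf{C}=\msf{C}_1\leftarrow\msf{C}_2\leftarrow\cdots$. For instance, $\pi_1(\msf{C})$ contains a translation lattice, and we can use the covers $\msf{R}_n/\Lambda_j$ for a strictly decreasing chain of finite-index subgroups $\Lambda_j<\pi_1(\msf{C})$; these are foldable because a cover of a foldable complex is foldable. Set $M_{2,j}:=\calH_X(\msf{C}_j)$, so that $M_{2,1}=M$. Corollary \ref{cor:nonregularcover} shows that each $M_{2,j+1}\to M_{2,j}$ is a covering of degree $[\msf{C}_j:\msf{C}_{j+1}]>1$, which gives item (2). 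Each $M_{2,j}$ is commensurable with $M$ because it covers $M$.

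The remaining step, and the one I expect to need the most care, is item (3) for $M_{2,j}$: a degree-$1$ map from $M_{2,j}$ to the common target $F$ rather than to $\msf{C}_j$. The composite $M_{2,j}\to\msf{C}_j\to\msf{C}$ is stably tangential, but it has degree $d_j>1$. Two ways to handle this:
\begin{enumerate}
\item Relax the definition and allow a common target per pair, taking $N=\msf{C}_j$ for the pair. Then both $M_{2,j}=\calH_X(\msf{C}_j)\to\msf{C}_j$ and the map from $M$ must have degree $1$, which is the same difficulty again.
\item Observe that characteristic classes pull back injectively under these maps: degree-$1$ maps are injective in cohomology, and the covering $\msf{C}_j\to\msf{C}$ is a stably tangential map between flat manifolds. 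Since this is the only consequence used later, it may suffice.
\end{enumerate}
What I will try first is to choose the towers $\msf{C}_j$ so that each $\msf{C}_j$ is itself isometric, as a flat manifold, to $F$. This is possible, for example, by rescaling the translation in a direction on which the holonomy acts trivially: for $\msf{LS}_n$ the map $x_1\mapsto 3x_1$ gives a self-cover of degree $3$. Then $\msf{C}_j\cong F$, and $M_{2,j}\to\msf{C}_j\cong F$ is a degree-$1$ stably tangential map, exactly as required. The commensurability class is unchanged, as above.
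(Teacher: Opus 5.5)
Your proposal is correct and follows the paper's proof: you fix $\msf{C}$ and vary the stably parallelizable Charney--Davis cover to get $(M_{1,j})$, and you fix $X$ and hyperbolize a tower of self-covers $\msf{C}_j$ of $\msf{C}$ to get $(M_{2,j})$; for the latter the paper simply cites Epstein--Shub for the existence of such self-coverings of any flat manifold. Two small corrections, neither of which breaks the argument. First, your heuristic ``a direction on which the holonomy acts trivially'' is not what makes your example work, since $\gamma_1$ flips $x_1$, and such a direction need not exist (e.g.\ for the Hantzsche--Wendt inputs); what works for every diagonal-type input is scaling one coordinate by an odd integer, because translation parts lie in $(\tfrac12\Z)^n$ relative to the lattice $\Z^n$. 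Second, the resulting $\msf{C}_j$ is only affinely diffeomorphic to $F$, not isometric, but diffeomorphic is all you need.
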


\begin{proof}
For $M$ as in the statement, we have $M=\calH_X(\msf{C})$ for $X$ a hyperbolizing piece obtained from a Charney--Davis manifold $N$ and $\msf{C}$ a flat foldable cube complex. To construct the manifolds $M_{1,j}$, we apply Theorem \ref{thmalpha:flexibleCD} to find a sequence $N_{j}$ of stable parallelizable Charney--Davis manifolds commensurable to $N$, and so that $\InjRad(N_{j}) \to \infty$ as $j\to \infty$. If $X_{j}$ is the hyperbolizing piece associated to $N_{j}$, then Theorem \ref{thmalpha:flexibleCD}, Corollary \ref{cor:tangential-hyp} and Proposition \ref{thm:flat->hyperbolic} imply that $M_{1,j}:=\calH_{X_{j}}(\msf{C})$ is tangentially related to $M$ and $\InjRad(M_{1,j})\to \infty$ as $j\to \infty$.

To construct the sequence $(M_{2,j})_{j\geq 1}$, we use \cite[Corollary]{epstein-shub} to find an infinite tower $(\msf{C}_{j})_j$ of non-trivial self-coverings of $\msf{C}$. Then the hyperbolic manifolds $M_{2,j}=\calH_{X}(\msf{C}_{j})$ are non-trivial finite coverings of $M$ by Corollary \ref{cor:nonregularcover} and are stably equivalent to $M$.
\end{proof}

%%%%%%%%%%%%%%%%%%%%%%%%%%%%%%%%%%%%%%%%%%%%%%%%%%
\section{Dehn filling}\label{sec.dehnfilling}
This final section is devoted to the proof of Theorem \ref{thm.maingrouptheory} from the introduction. This is the technical heart of our paper and was used to prove Theorem \ref{thmalpha:flexibleCD}. 
We recall its statement here for the reader's convenience.
\begin{theorem}\label{thm:maingrouptheorymiddle}
  Let $\G$ be a hyperbolic cubulable group and consider a finite group $\Phi$ acting on $\G$ by automorphisms. Let $\calQ$ be a finite, $\Phi$-invariant collection of quasiconvex subgroups of $\G$, and let $ \G_0<\G$ be a finite index subgroup. Then there exists a $\Phi$-invariant, finite index normal subgroup $\G'<\G$ such that:
    \begin{enumerate}
        \item $\G'<\G_0$; and,
        \item for all $Q_1,Q_2\in \calQ$ we have
        \[\G' \cap Q_1Q_2 \subset (\G'\cap Q_1)(\G'\cap Q_2). \]
    \end{enumerate}
\end{theorem}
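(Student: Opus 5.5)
We follow the strategy outlined in the introduction, modelled on the Agol--Groves--Manning proof of the malnormal special quotient theorem: reduce height by iterated $\Phi$-equivariant Dehn fillings until every element of $\calQ$ becomes finite, solve the problem in the final quotient by residual finiteness, and pull back.

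\textbf{Preliminary reductions.} Replace $\G_0$ by $\bigcap_{\phi\in\Phi}\phi(\G_0)$, and then by its normal core. This makes $\G_0$ normal and $\Phi$-invariant. We may also intersect with a torsion-free special finite index subgroup, using virtual specialness from Agol's theorem. After this, every member of $\calQ$ is virtually contained in $\G_0$.

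Close $\calQ$ under finite intersections and under conjugation by a finite transversal of $\G_0$. This does not affect conclusion (2) for the original collection, and it keeps $\calQ$ finite, $\Phi$-invariant, and made of quasiconvex subgroups, by Hruska's theorem already used in Section~\ref{subsec:CDcovers}.

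We then induct on the height of $\calQ$ (Definition~\ref{def:height}), i.e. the maximal number of essentially distinct conjugates of members of $\calQ$ with infinite common intersection.

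\textbf{Height zero.} If height is $0$, every $Q\in\calQ$ is finite. By residual finiteness there is a $\Phi$-invariant finite index normal $\G'<\G_0$ meeting $\bigcup_{Q_1,Q_2}Q_1Q_2$ only in $\{1\}$. Then (2) holds trivially.

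\textbf{Inductive step.} Choose the maximal-height infinite intersections $P_1,\dots,P_r$; up to commensurability these form an almost malnormal, $\Phi$-invariant (up to conjugacy) family of quasiconvex subgroups. By Bowditch, $\G$ is relatively hyperbolic relative to the commensurators of the $P_j$. Using separability of quasiconvex subgroups (Haglund--Wise plus Agol) we choose $\Phi$-equivariant filling kernels $N_j\lhd P_j$ of finite index that are sufficiently deep, and we take them inside $\G_0$.

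By the Groves--Manning and Agol--Groves--Manning results, the quotient $\ov\G=\G/\langle\langle \bigcup N_j\rangle\rangle$ is then hyperbolic, cubulable (virtually special), and carries an induced $\Phi$-action. Each image $\ov Q$ is quasiconvex, and the height of $\ov\calQ$ strictly drops.

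The image $\ov\G_0$ has finite index. Induction then gives a $\Phi$-invariant finite index normal $\ov\G'<\ov\G_0$ with $\ov\G'\cap\ov Q_1\ov Q_2\subset(\ov\G'\cap\ov Q_1)(\ov\G'\cap\ov Q_2)$. We set $\G'$ to be the preimage of $\ov\G'$. Since $K=\ker(\G\to\ov\G)\subset\G_0$, we get $\G'<\G_0$, and $\G'$ is normal, $\Phi$-invariant and of finite index.

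\textbf{Main obstacle: pulling back condition (2).} Given $g=q_1q_2\in\G'$, the inductive hypothesis writes $\ov g=\ov a\,\ov b$ with $\ov a\in\ov\G'\cap\ov Q_1$ and $\ov b\in\ov\G'\cap\ov Q_2$. Lift these to $a\in Q_1\cap\G'$ and $b\in Q_2\cap\G'$; this uses $\ov{Q_i\cap\G'}=\ov Q_i\cap\ov\G'$, which needs $K\cap Q_i\subset\G'$.

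It remains to show that $k=a^{-1}q_1q_2b^{-1}\in K\cap Q_1KQ_2$ can be absorbed. This requires the multi-coset statement
\[
K\cap Q_1Q_2 \subset (K\cap Q_1)(K\cap Q_2),
\]
together with images of double cosets being controlled, $\ov{Q_1Q_2}\cap\ov g' = \ov{Q_1 Q_2 \cap g'K}$. This is exactly the content of Proposition~\ref{prop.intersectionfillingmulticoset}.

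We would prove it with the Dehn filling geometry: in the coned-off space, a shortest representative of an element of $K\cap Q_1Q_2$ decomposes into pieces in the filled horoballs/cones (Greendlinger-type lemma), and sufficiently deep fillings force each such piece to lie in $Q_1\cap N_j^g$ or $Q_2\cap N_j^g$. This is where the heavy use of quasiconvexity, the closure of $\calQ$ under intersections, and the choice of deep $\Phi$-invariant fillings inside $\G_0$ all enter. I expect this step to be the bulk of the work.
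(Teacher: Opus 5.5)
Your proposal is correct and is essentially the paper's proof. Both run an induction on height, using a $\Phi$-equivariant filling along the induced peripheral structure with finite-index kernels deep enough for the MSQT and for the height to drop, and with normal closure inside $\G_0$; both use residual finiteness at height zero. Your pull-back step is exactly Lemma~\ref{lem.compositionhomos}, fed by $K\cap Q_1Q_2\subset(K\cap Q_1)(K\cap Q_2)$ from Proposition~\ref{prop.intersectionfillingmulticoset}, which the paper proves by a minimal-counterexample Greendlinger argument in the cusped space.

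The preliminary closure of $\calQ$ under intersections and transversal conjugates is not needed anywhere and is best dropped. It can increase the height, which interferes with the induction, and it need not preserve $\Phi$-invariance.
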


For further references about relatively hyperbolic groups, see e.g. \cite{Farb1998RH,hruska}. We will use the machinery of relatively hyperbolic groups and (group theoretic) Dehn filling, introduced independently by Groves--Manning \cite{groves-manning.dehnfilling} and Osin \cite{osin.peripheral}.

\subsection{Dehn fillings}
Let $\G$ be a group and $\calP$ a family of subgroups of $\G$. We call $(\G,\calP)$ a \emph{group pair}. A choice $N_P<P$ of normal subgroups for each $P\in \calP$ determines a \emph{(Dehn) filling} $(\ov\G,\ov\calP)$ of $(\G,\calP)$, where $\ov\G=\G/K$ for $K$ the normal closure of $\bigcup_{P\in \calP}{N_P}$ and $\ov\calP$ is the collection of images in $\ov\G$ of elements of $\calP$. The groups $\{N_P:P\in \calP\}$ are called \emph{filling kernels}. Sometimes we write $\G(\{N_P\}_P)$ for $\ov\G$. 
When we omit mention of the particular filling kernels, we simply write the filling as $(\G,\calP)\ra (\ov\G,\ov\calP)$.

If $Q<\G$ and $\G \ra \G(\{N_P\}_P)$ is a filling such that $gN_Pg^{-1}<Q$ whenever $g\in \G$ and $Q\cap gPg^{-1}$ is infinite, we say that the filling is a $Q$-\emph{filling}. If $\calQ$ is a family of subgroups of $\G$, a filling is a $\calQ$-\emph{filling} if it is a $Q$-filling for every $Q\in \calQ$. A property $\mathsf{P}$ holds \emph{for all sufficiently long fillings} (resp $Q$-fillings, $\calQ$-fillings) of $(\G,\calP)$ if there is a finite set $S \subset (\bigcup \calP)\bs \{1\}$ such that $\mathsf{P}$ holds whenever $\G \ra \G(\{N_P\}_P)$ is a filling (resp. $Q$-filling, $\calQ$-filling) such that $S\cap (\bigcup_P{N_P})=\emptyset$. 

If $(\G,\calP)$ is a group pair and $\Phi$ is a group acting on $\G$ by automorphisms, we say that $(\G,\calP)$ is $\Phi$-\emph{invariant} if $\phi(P)$ is conjugate to a member of $\calP$ for any $\phi\in \Phi$ and $P\in \calP$. In addition, the filling kernels $\{N_P\}_P$ are $\Phi$-\emph{invariant} if $\phi(N_P)=gN_Qg^{-1}$ whenever $\phi\in \Phi, g\in \G$, $P,Q\in \calP$ and $\phi(P)=gQg^{-1}$. If $(\G,\calP)$ and the filling kernel $\{N_P\}_P$ are $\Phi$-invariant, then the kernel of $\G\ra \G(\{N_P\}_P)$ is $\Phi$-invariant, and hence the action of $\Phi$ on $\G$ descends to an automorphism action on $\ov\G(\{N_P\}_P)$.

\subsection{Relative hyperbolicity}

For background on the different equivalent characterizations of relatively hyperbolic pairs, we refer the reader to \cite{hruska}. For our purposes, and particularly in Section \ref{subsection.doublecoset}, we will rely on the notion of \emph{cusped space}, for which we mostly follow \cite{groves-manning.dehnfilling} and \cite[Appendix A]{agol.haken}. 

Given a group pair $(\G,\calP=\{P_1,\dots,P_m\})$ with $\G$ and each $P_i$ being finitely generated, and an appropriate finite generating subset the $S\subset \G$, the \emph{cusped space} $X=X(\G,\calP,S)$ is constructed as follows. Starting from the Cayley graph $\mathrm{Cay}(\G,S)$, the cusped space is built by attaching \emph{combinatorial horoballs}. Each combinatorial $A$ is a graph with vertex set $gP \times \Z_{\geq 0}$ for $P\in \calP$ and $g\in \G$, so that each $A$ is hyperbolic. Such horoball $A$ is attached to $\mathrm{Cay}(\G,P)$ via the identification $gP=gP\times \{0\}$. A vertex $(g,k)$ in a horoball is said to have \emph{depth} $k$, and the depth 0 vertices of the cusped space are the vertices of the Cayley graph. Any edge of the cusped space connects vertices whose depths differ by at most one. See \cite[Section~3]{groves-manning.dehnfilling} for more details about the construction of the cusped space. The key property is that the cusped space $X$ is hyperbolic if and only if $(\G,\calP)$ is a relatively hyperbolic pair. In that case we say that $\calP$ is a \emph{peripheral structure} on $\G$.

When $\G$ is hyperbolic, the following characterization of relative hyperbolicity was essentially proven in \cite[Theorem~7.11]{bowditch.RH}. Recall that a finite collection $\calP=\{P_1,\dots,P_m\}$ of subgroups of $\G$ (with no two distinct members of $\calP$ being conjugate in $\G$) is \emph{almost malnormal} in $\G$ if for all $1 \leq i,j\leq m$ and $g\in \G$ such that $P_i\cap gP_jg^{-1}$ is infinite, we have $i=j$ and $g\in P_i$.

\begin{theorem}\label{thm.charRHforH}
Let $\G$ be a hyperbolic group and $\calP$ a finite collection of subgroups of $\G$. Then $(\G, \calP)$ is relatively hyperbolic if and only if $\calP$ is an almost malnormal family of quasiconvex subgroups. 
\end{theorem}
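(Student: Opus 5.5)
The plan is to prove the two implications separately, working throughout with the cusped space $X(\G,\calP,S)$ and the criterion that $(\G,\calP)$ is relatively hyperbolic if and only if this space is hyperbolic. We fix a finite generating set $S$ of $\G$ containing generating sets of every $P\in\calP$; this is possible because, in both directions, the groups in $\calP$ are finitely generated (quasiconvex subgroups of hyperbolic groups are, and so are peripheral subgroups of a finitely generated relatively hyperbolic group).

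\emph{($\Rightarrow$)} Assume $(\G,\calP)$ is relatively hyperbolic.

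\begin{enumerate}
\item \emph{Almost malnormality.} This holds for any relatively hyperbolic pair. Distinct horoballs in a hyperbolic cusped space have uniformly bounded coarse intersection at depth $0$. If $P_i\cap gP_jg^{-1}$ were infinite with $(i,g)\neq (j,1)$ modulo $P_i$, then the cosets $P_i$ and $gP_j$ would be distinct but would have unboundedly large intersection of uniform neighbourhoods. Geodesics between such far-apart points would then dive deep into two different horoballs, which contradicts hyperbolicity of $X$.
\item \emph{Quasiconvexity.} Peripheral subgroups are undistorted in $\G$, by the standard fact of Osin and Druţu--Sapir, which we reprove via the cusped space by projecting geodesics to horoball boundaries. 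An undistorted, i.e.\ quasi-isometrically embedded, finitely generated subgroup of a hyperbolic group is quasiconvex by the Morse lemma.
\end{enumerate}

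\emph{($\Leftarrow$)} Assume $\calP$ is an almost malnormal family of quasiconvex subgroups.

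\begin{enumerate}
\item \emph{Bounded coarse intersection of cosets.} The first step is a quantitative version of almost malnormality: for every $R>0$ there is $D(R)$ such that $\diam\bigl(N_R(gP_i)\cap N_R(hP_j)\bigr)\le D(R)$ whenever $gP_i\neq hP_j$. To prove it, observe that large coarse intersections of quasiconvex cosets produce, via a pigeonhole argument on elements of length at most $R$ in $\G$, infinitely many elements of $P_i\cap kP_jk^{-1}$ for one of finitely many $k$. This is the standard argument of Gitik--Mitra--Rips--Sageev, and it contradicts almost malnormality.
\item \emph{Hyperbolicity of the cusped space.} We then show that $X$ is hyperbolic by checking thinness of triangles (or a linear isoperimetric inequality). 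The cosets $gP$ are uniformly quasiconvex in the hyperbolic Cayley graph, and each combinatorial horoball is uniformly hyperbolic. Together with the bounded coarse intersection from the previous step, this gives the conditions of Bowditch's criterion. Equivalently, the coned-off Cayley graph is hyperbolic and fine: hyperbolicity because coning off a uniformly quasiconvex family preserves hyperbolicity, and fineness because bounded coset intersection rules out infinitely many short circuits through a cone edge.
\end{enumerate}

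The main obstacle is this last step: the uniform control needed to make geodesics in $X$ either stay near the Cayley graph or enter a single horoball and exit near a quasigeodesic in the corresponding coset. I would first try Bowditch's fine-graph route, since fineness reduces directly to the $D(R)$ bound from the first step.
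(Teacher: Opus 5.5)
Your proposal is correct and is essentially the route the paper relies on. The paper gives no argument of its own and cites Bowditch's Theorem~7.11; that argument obtains bounded coarse intersection of cosets from almost malnormality and then shows the coned-off Cayley graph is hyperbolic and fine, while the converse comes from peripheral subgroups being almost malnormal and undistorted. In the fineness step, the reduction to your $D(R)$ bound needs thin polygons in the hyperbolic Cayley graph together with quasiconvexity of the cosets: a short path avoiding a cone point then forces a long coarse overlap between two distinct cosets.
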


\subsection{Relative quasiconvexity}

Let $(\G,\calP)$ be a relatively hyperbolic pair and $H<\G$ a subgroup. The \emph{induced peripheral structure} on $H$ with respect to $\calP$ is a collection $\calD_H$ consisting of representatives of
$H$–conjugacy classes of infinite groups of the form $H\cap gPg^{-1}$ for $g\in \G$ and $P\in \calP$. 

Suppose that $\calD=\calD_H$ is finite and consists of finitely generated groups. Then we can form the cusped space $X(H,\calD,S')$ for $S'$ an appropriate generating subset of $H$. In this setting there is a depth-invariant and $H$-equivariant Lipschitz map $\iota:X(H,\calD,S')\ra X(\G,\calP,S)$ \cite[Lemma~]{AGM.QCERF}. The group $H$ is \emph{relatively quasiconvex} in $(\G,\calP)$ if the image of $X(H,\calD,S')$ under $\iota$ is quasiconvex in $X(\G,\calP,S)$. In particular, $(H,\calD)$ is itself a relatively hyperbolic pair. By convention, all relatively quasiconvex subgroups of a relatively hyperbolic group pair are considered with the induced peripheral structure.

If $(\G,\calP)$ is a relatively hyperbolic pair, a
relatively quasiconvex subgroup $H$ of $(\G,\calP)$ is \emph{full} if whenever $P\in \calP$ and $g\in \G$ are such that $H \cap gPg^{-1}$ is infinite we have $H \cap gPg^{-1}$ has finite index in $gPg^{-1}$.

When $\G$ is a hyperbolic group, quasiconvex subgroups are relatively hyperbolic with respect to any peripheral structure on $\G$ \cite[Theorem~1.5]{hruska}. 

\subsection{Height of collections of quasiconvex subgroups}

If $\G$ is a hyperbolic group, then finite collections of quasiconvex subgroups induce peripheral structures \cite[Definition~6.2]{groves-manning.improper}. More precisely, let $\calH$ be a finite collection of
quasiconvex subgroups of $\G$, with at least one member of $\calH$ being infinite. The \emph{peripheral structure} $\calP_\calH$ on $\G$ \emph{induced by} $\calH$ is obtained
as follows. 

First, we consider the collection of all the minimal infinite subgroups of the form
$H_1 \cap g_2H_2g_2^{-1}\cap \dots \cap g_kH_kg_k^{-1}$, where $H_1,\dots,H_n\in \calH$ and the cosets $H_1, g_2H_2,\dots,g_kH_k$ are all distinct. Then we replace each element in this collection by its commensurator in $\G$, and then we choose one representative for each $\G$-conjucacy class. The resulting collection $\calP_\calH$ is the induced peripheral structure. There is an upper bound on the number $k$ of the cosets $(g_iH_i)_i$ as above. This is encoded in the notion of height. 

\begin{definition}\label{def:height}
    Let $\G$ be a group and $\calH$ a collection of subgroups of $\G$. The \emph{height} of $\calH$ in $\G$ is the minimum number $k$ such that for every tuple of distinct cosets $(g_0H_0,\dots,g_kH_k)$ with $H_1,\dots,H_k\in \calH$ and $g_0,\dots,g_k\in \G$, the intersection
$\bigcap_{i=0}^k{g_iH_ig_i^{-1}}$ is finite.  If no such $k$ exists, we say the height of $\calH$ in $\G$ is infinite.
\end{definition}

Note that the family has height $0$ if and only if every member of $\calH$ is finite, and that malnormal families of subgroups have height at most one. 

If $\G$ is hyperbolic and $\calH$ is a finite collection quasiconvex subgroups, then the height is finite as proven in \cite[Theorem~1.2~(3)]{tran} in the more general setting of strongly quasiconvex subgroups of finitely generated groups (see also \cite[Proposition~3.29]{groves-manning.improper}). The induced peripheral structure $\calP_\calH$ is then a finite almost malnormal collection of quasiconvex subgroups of $\G$ (see \cite[Lemma~6.4]{groves-manning.improper} or \cite[Proposition~3.12]{AGM.QCERF}) and $(\G,\calP_\calH)$ is relatively hyperbolic pair. 
We summarize these results in the next proposition, for which item (4) is immediate.

\begin{proposition}\label{prop.propertiesheight}
Let $\G$ be a hyperbolic group and $\calH$ a finite collection of quasiconvex subgroups of $\G$. Then:
\begin{enumerate}
    \item The height of $\calH$ in $\G$ is finite.
    \item The peripheral structure $\calP_\calH$ on $\G$ induced by $\calH$ is finite and the pair $(\G,\calP_\calH)$ is relatively hyperbolic. 
    \item Any $H\in \calH$ is full relatively quasiconvex in $(\G, \calP_\calH)$.
    \item If $\Phi$ is a group acting on $\G$ by automorphisms and the group pair $(\G,\calH)$ is $\Phi$-invariant, then the peripheral structure $\calP_\calH$ is $\Phi$-invariant.
\end{enumerate}
\end{proposition}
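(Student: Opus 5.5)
The statement to prove is Proposition \ref{prop.propertiesheight}, which collects known facts; the plan is to assemble them from the cited literature and check the one new item. For (1), I will invoke \cite[Theorem~1.2~(3)]{tran}: finite collections of strongly quasiconvex subgroups of a finitely generated group have finite height. In a hyperbolic group, quasiconvex and strongly quasiconvex coincide. Alternatively one can cite \cite[Proposition~3.29]{groves-manning.improper}, or argue directly: the subgroups in $\calH$ are quasiconvex with uniform constants, and $k$ pairwise distinct cosets $g_iH_i$ whose conjugates have infinite common intersection have uniformly bounded distance from a common bi-infinite quasigeodesic. This distance bound limits $k$, in the style of Gitik--Mitra--Rips--Sageev.

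For (2), I will use (1) to see that the construction of $\calP_\calH$ stops. There are finitely many $\G$-conjugacy classes of minimal infinite intersections $H_1\cap g_2H_2g_2^{-1}\cap\cdots\cap g_kH_kg_k^{-1}$ with $k$ bounded by the height. Finiteness of these classes comes from the standard double-coset finiteness for intersections of quasiconvex subgroups: after translating, the cosets pass within bounded distance of the identity. Each such intersection is quasiconvex \cite[Theorem~1.2]{hruska}, and its commensurator contains it with finite index, so the commensurator is quasiconvex. Minimality gives almost malnormality of the resulting family, as in \cite[Lemma~6.4]{groves-manning.improper} or \cite[Proposition~3.12]{AGM.QCERF}. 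Theorem \ref{thm.charRHforH} then shows that $(\G,\calP_\calH)$ is relatively hyperbolic.

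For (3), let $H\in\calH$. Relative quasiconvexity holds because $H$ is quasiconvex in the hyperbolic group $\G$ \cite[Theorem~1.5]{hruska}. For fullness, suppose $H\cap gPg^{-1}$ is infinite, where $P=\mathrm{Comm}(I)$ with $I$ a minimal intersection. By almost malnormality and minimality, $H\cap gIg^{-1}$ is infinite and contained in a minimal intersection, so it must equal $gIg^{-1}$ up to finite index. Since $gIg^{-1}$ has finite index in $gPg^{-1}$, $H\cap gPg^{-1}$ has finite index in $gPg^{-1}$. This is \cite[Lemma~6.4]{groves-manning.improper}.

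For (4), the automorphisms in $\Phi$ permute the conjugacy classes of members of $\calH$. They therefore preserve the set of minimal infinite intersections and of their commensurators up to conjugacy, and $\phi(P)$ is conjugate into $\calP_\calH$. The main obstacle is the fullness in (3): one must check carefully that the minimality in the definition of $\calP_\calH$ forces finite index rather than merely infinite intersection. I would rely on the cited lemma of Groves--Manning for this.
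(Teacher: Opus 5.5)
Your proposal is correct and takes the same route as the paper. The paper likewise assembles \cite{tran} (or \cite[Proposition~3.29]{groves-manning.improper}) for (1), \cite[Lemma~6.4]{groves-manning.improper} or \cite[Proposition~3.12]{AGM.QCERF} together with Theorem~\ref{thm.charRHforH} for (2), and \cite[Theorem~1.5]{hruska} for relative quasiconvexity in (3), and it regards (4) as immediate.

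The step you flag as the main obstacle is already closed by your own argument. $H\cap gIg^{-1}$ is an infinite intersection of conjugates contained in the minimal intersection $gIg^{-1}$, so it equals $gIg^{-1}$ exactly. Hence $gIg^{-1}<H$, which gives fullness without invoking almost malnormality or Groves--Manning for that step.
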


An extra property that we will need is that the height is monotone under Dehn filling. More precisely, we have the following result, which was implicitely used in the proof of \cite[Theorem~6.9~(5)]{groves-manning.improper} and whose proof follows by an entirely analogous argument to that of \cite[Theorem~A.47]{agol.haken}.

\begin{theorem}\label{thm.heightdecreases}
Let $\G$ be a hyperbolic group, $\calH$ a finite collection of quasiconvex subgroups of $\G$. Let $\calP=\calP_\calH=\{P_1,\dots,P_m\}$ be the peripheral structure on $\G$ induced by $\calH$, and let $\pi:\G \ra \ov\G=\G(N_1,\dots,N_m)$ be a sufficiently long $\calH$-filling. If at least one member of $\calH$ is infinite and each filling kernel $N_i$ has finite index in $P_i$, then the height of $\ov\calH=\{\pi(H):H\in \calH\}$ in $\ov\G$ is strictly less than that of $\calH$ in $\G$.
\end{theorem}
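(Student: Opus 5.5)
Following the hint that the proof is "entirely analogous to \cite[Theorem~A.47]{agol.haken}", I would argue in the coned-off/cusped space picture. Let $k\geq 1$ be the height of $\calH$ in $\G$. Suppose, for contradiction, that for fillings arbitrarily long (in the sense of avoiding larger and larger finite sets $S$) there are $k+1$ distinct cosets $\bar g_0\bar H_0,\dots,\bar g_k\bar H_k$ in $\ov\G$ with $\bigcap_i \bar g_i\bar H_i\bar g_i^{-1}$ infinite. The first step is to use the standard consequences of long $\calH$-fillings with finite-index kernels. Since each $H\in\calH$ is full relatively quasiconvex in $(\G,\calP_\calH)$ (Proposition \ref{prop.propertiesheight}(3)), and the kernels have finite index in the $P_i$, the quotient $\ov\G$ is hyperbolic. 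Moreover, each $\pi(P_i)$ is finite, each $\pi(H)$ is quasiconvex, and $\pi|_H$ has kernel the normal closure in $H$ of the induced kernels, so that $H/(H\cap K)=\pi(H)$. These are the Agol--Groves--Manning / Groves--Manning results for fillings of full relatively quasiconvex subgroups, which I would quote. In addition, double cosets and intersections behave well for long fillings: for $g\in\G$, $\pi(H_1)\cap \pi(g)\pi(H_2)\pi(g)^{-1}=\pi\big(H_1\cap g'H_2g'^{-1}\big)$ for a suitable $g'\in gK$ (after modifying $g$ within $H_1 K H_2$), with the analogous statement for multiple intersections. This is exactly the content of \cite[Theorem~A.47]{agol.haken} and its proof via the cusped space $X(\G,\calP,S)$ and its quotient $X/K$, using the Greendlinger-type lemma that geodesics in $X/K$ lift.

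Given this, the second step is to lift the configuration. By the intersection property applied inductively, infinite $\bigcap_i \bar g_i\bar H_i\bar g_i^{-1}$ lifts to $\bigcap_i g_iH_ig_i^{-1}$ with $g_i\in\G$ mapping to $\bar g_i$, whose image is that infinite intersection; the lifted cosets $g_iH_i$ are distinct because their images are. So the lifted intersection $L$ is a nontrivial intersection of $k+1$ distinct conjugates, hence finite by the height assumption, or, if we only know its image is infinite, $L$ must be infinite since a finite group has finite image. This contradicts height $k$, so no such configuration exists, giving height $\leq k$. To get strict decrease, consider a configuration of $k$ distinct cosets in $\G$ with infinite intersection $L$. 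By definition of $\calP_\calH$, such a maximal-length intersection is (up to finite index) contained in a conjugate of some $P_j$, namely the commensurator of a minimal infinite intersection. Then $\pi(L)\subset \pi(gP_jg^{-1})$ is finite. Combining this with the lifting statement for length-$k$ configurations in $\ov\G$ shows that every $k$-fold intersection in $\ov\G$ is finite, so the height is at most $k-1$.

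I expect the main obstacle to be the lifting step for intersections of several conjugates: one must show that, for sufficiently long fillings, an element of $\bigcap\bar g_i\bar H_i\bar g_i^{-1}$ comes from a common element upstairs with coherently chosen lifts $g_i$. I would follow Agol's argument: represent the configuration by quasiconvex subsets $g_i\,\iota(X_{H_i})$ in the cusped space, use that their images in $X/K$ are uniformly quasiconvex and that short loops/thin configurations lift for long fillings, and then induct on the number of cosets as in Lemma \ref{lem.Rintersectiondoublecoset}. The finite-index assumption on the kernels ensures that the horoballs become bounded, so that infinite intersections downstairs cannot be created entirely inside filled cusps.
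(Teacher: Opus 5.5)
Your overall architecture matches the route the paper has in mind; the paper itself gives no argument beyond saying the proof is analogous to that of \cite[Theorem~A.47]{agol.haken}. Your final step is the right mechanism, but you should justify it rather than say ``by definition''. Suppose $k$ is the height and $L=\bigcap_{i=1}^k g_iH_ig_i^{-1}$ is infinite with distinct cosets. Any infinite intersection $L'\subseteq L$ of conjugates of members of $\calH$ satisfies $L'=L\cap L'$, which is an intersection over the union of the two sets of cosets. Since $L'$ is infinite, that union has at most $k$ elements, so $L'\supseteq L$. Hence $L$ is a minimal infinite intersection, so $L\le \mathrm{Comm}_{\G}(L)$, which is conjugate to some $P_j$, and therefore $\pi(L)$ is finite.

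The genuine gap is the lifting step. It carries all the content of the theorem, and you assert it rather than prove it. You attribute it to \cite[Theorem~A.47]{agol.haken}, but the paper invokes that result only as a model for the height statement itself. Moreover, the form you state is stronger than needed and is not set up uniformly, because the finite set $S$ must be fixed before $\bar g$ is chosen.

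\emph{General conjugates.} For $g=1$ your equality does follow from Proposition~\ref{prop.intersectionfillingmulticoset}. For general $g$ the Greendlinger argument used there breaks down. If $w=h_1gh_2g^{-1}\in K$, then in the geodesic pentagon with vertices $1,h_1,h_1g,h_1gh_2,w$, two sides are translates of geodesics for $g^{\pm1}$. These lie in no coset of a member of $\calH$, so a deep horoball met near them is not controlled by Lemma~\ref{lem.deepintersection}. You would have to choose the lift minimizing the distance between $H_1$ and $g'H_2$, and then show that a shortening element of $K$ contradicts this minimality.

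\emph{Applying the pairwise statement ``inductively''.} This is not licensed, because the intermediate subgroups $H_1\cap g_2'H_2(g_2')^{-1}$ range over infinitely many subgroups as $\bar g_2$ varies. It can be repaired: only finitely many double cosets $H_1gH_2$ have $H_1\cap gH_2g^{-1}$ infinite, so you can enlarge $\calH$ to a finite family closed under infinite intersections up to conjugacy. Every $\calH$-filling is also a filling for this enlarged family, and the induced peripheral structure is unchanged.

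\emph{What you actually need is much weaker.} Since $\ov\G$ is hyperbolic, the infinite subgroup $\bigcap_i\bar g_i\bar H_i\bar g_i^{-1}$ contains an infinite-order element $\bar x$. It suffices to find $x\in\pi^{-1}(\bar x)$ and lifts $g_iH_i$ of the cosets with $x\in\bigcap_i g_iH_ig_i^{-1}$. That intersection is then an infinite $k$-fold intersection of distinct cosets. By the first paragraph, $x$ lies in a conjugate of some $P_j$, so $\bar x$ has finite order, a contradiction. Proving this single-element lift with uniform constants is the missing argument. It would go through the cusped space $X$, its quotient $X/K$, and the uniformly quasiconvex translates $g_i\,\iota(X_{H_i})$ and their images.
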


\subsection{Double cosets intersecting filling kernels}\label{subsection.doublecoset}

In this subsection we prove the next result, which asserts that for sufficiently long fillings, there is a control on the intersection of the kernel of the filling with double cosets of full relatively quasiconvex subgroups.  

\begin{proposition}\label{prop.intersectionfillingmulticoset}
Let $(\G,\calP)$ be a relatively hyperbolic group and let $\calH$ be a finite collection of full relatively quasiconvex subgroups. Then for all sufficiently long $\calH$–fillings $\ov \G =\G/K$ and $H_1,H_2\in \calH$ we have
    \begin{equation*}
        K \cap H_1H_2 \subset (K\cap H_1)(K \cap H_2).
    \end{equation*}
\end{proposition}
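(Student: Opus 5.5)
Let $g = h_1h_2 \in K$ with $h_1\in H_1$, $h_2\in H_2$. The plan is to show that $h_1$ can be written as $h_1 = k_1 a$ with $k_1\in K\cap H_1$ and $a\in H_1\cap H_2$ such that $a h_2 \in K\cap H_2$. Then
\[
g = k_1 (a h_2) \in (K\cap H_1)(K\cap H_2).
\]
Equivalently, writing $\pi\colon \G\ra\ov\G$, the hypothesis $g\in K$ says $\pi(h_1)=\pi(h_2)^{-1}\in \pi(H_1)\cap\pi(H_2)$. So the core claim is
\[
\pi(H_1)\cap\pi(H_2)=\pi(H_1\cap H_2)
\]
for sufficiently long $\calH$-fillings. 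Granting this, pick $a\in H_1\cap H_2$ with $\pi(a)=\pi(h_1)$. Then $k_1:=h_1a^{-1}\in K\cap H_1$, and
\[
a h_2=(k_1^{-1}h_1)h_2=k_1^{-1}g\in K\cap H_2 ,
\]
as required.

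The core claim is a double-coset and intersection statement for Dehn fillings of full relatively quasiconvex subgroups. I would prove it using the cusped space $X=X(\G,\calP,S)$ and the geometry of fillings, following Agol--Groves--Manning \cite{AGM.MSQT} and \cite[Appendix~A]{agol.haken}. The steps are as follows.

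\textbf{Step 1.} Since each $H_i$ is full relatively quasiconvex and the filling is an $H_i$-filling, the kernel $K\cap H_i$ is generated by the $H_i$-conjugates of filling kernels lying in $H_i$. Moreover $\pi(H_i)\cong H_i/(K\cap H_i)$ is relatively quasiconvex in $(\ov\G,\ov\calP)$ for sufficiently long fillings. This is the standard result of Groves--Manning / Agol--Groves--Manning on fillings of full quasiconvex subgroups, and I would invoke it.

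\textbf{Step 2.} Suppose $\pi(h_1)=\pi(h_2)^{-1}$. Take a geodesic in the cusped space $\ov X$ of $\ov\G$ from $1$ to $\pi(h_1)$. Using the quasiconvexity of the images of the cusped spaces of $H_1$ and $H_2$, this geodesic stays close to both $\pi(H_1)$ and $\pi(H_2)$ outside horoballs. Lift it to $X$, using that for long fillings the quotient map $X\ra \ov X$ is injective on balls of large radius away from deep horoball regions. This is the ``$K$ acts with large injectivity in the thick part'' input from the Dehn filling theorem.

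\textbf{Step 3.} Handle the horoball excursions. Each time the geodesic enters a horoball deeply, fullness lets us correct by elements of $K\cap H_1$ and $K\cap H_2$: deep horoball pieces $gPg^{-1}$ meeting both $H_1$ and $H_2$ infinitely intersect both in finite index subgroups, and the filling kernels there lie in both $H_1$ and $H_2$. After these corrections we obtain an element $a\in H_1\cap H_2$ with $\pi(a)=\pi(h_1)$. A minimal-counterexample or induction on the number of deep horoball excursions organizes this: pick a representative pair $(h_1,h_2)$ minimizing the length of the corresponding path, and derive a contradiction unless the loop closes in $X$.

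Step 3 is the main obstacle. Making the lifted loop close up requires a ``no shortcuts through filled horoballs'' argument, analogous to the proof of \cite[Theorem~A.47]{agol.haken}. The case analysis is where horoballs are shared between translates of $H_1$ and $H_2$. I would first try to adapt the argument of \cite[Proposition~A.40]{agol.haken} / the $Q$-filling double coset separability lemma from \cite{AGM.MSQT} (``$\pi(H_1)\cap\pi(H_2)=\pi(H_1\cap H_2)$ for long $\calH$-fillings''), checking that the finitely many excluded elements depend only on $H_1,H_2$. Finally, taking the union of the finite excluded sets over all pairs in the finite collection $\calH$ gives a single ``sufficiently long'' condition.
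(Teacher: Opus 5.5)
Your reduction is correct, but it makes no progress: the claim $\pi(H_1)\cap\pi(H_2)=\pi(H_1\cap H_2)$ is \emph{equivalent} to the proposition. Indeed, if $\pi(h_1)=\pi(h_2')$ with $h_1\in H_1$ and $h_2'\in H_2$, then $h_1h_2'^{-1}\in K\cap H_1H_2$. A factorization $h_1h_2'^{-1}=k_1k_2$ with $k_i\in K\cap H_i$ then gives $k_1^{-1}h_1=k_2h_2'\in H_1\cap H_2$ with image $\pi(h_1)$. So the whole content of the statement sits in your Steps 2--3, and these are not a proof. You flag Step 3 as the main obstacle and leave it open. Your fallback is to cite or adapt a lemma asserting precisely the statement to be proved. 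The lifting strategy of Step 2 is also problematic: $X\to\overline{X}$ is only injective on the thick part, while everything relevant happens in the deep horoball excursions, exactly where lifting breaks down. Finally, the mechanism you propose in Step 3 (horoballs meeting both $H_1$ and $H_2$ infinitely, so that filling kernels lie in $H_1\cap H_2$) is not what the argument needs. It suffices that a single shortening element lie in one of $K\cap H_1$ or $K\cap h_1H_2h_1^{-1}$.

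The missing idea is to run a minimal-counterexample argument directly in the cusped space $X$ of $\Gamma$, using the Greendlinger lemma (Theorem \ref{thm.greenlindger}) rather than lifting from $\overline{X}$.
\begin{enumerate}
\item Choose $g=h_1h_2\in(K\cap H_1H_2)\setminus(K\cap H_1)(K\cap H_2)$ minimizing $d_X(1,g)$. For sufficiently long fillings, a geodesic $\gamma$ from $1$ to $g$ reaches depth $C_1=\kappa+\delta+1$ in some horoball $A=uP$. Moreover, some $k\in K$ stabilizing $A$ satisfies $d_X(1,kg)<d_X(1,g)$; for long fillings $k\in K\cap uPu^{-1}=uN_Pu^{-1}$.
\item Consider the geodesic triangle with sides $\gamma$, $\alpha_1=[1,h_1]$ and $\alpha_2=[h_1,h_1h_2]$. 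A deep point of $\gamma\cap A$ is $\delta$-close to a point of $\alpha_1$ or of $\alpha_2$ lying at depth at least $\kappa$ in $A$.
\item Note that $\alpha_1$ joins two points of $H_1$, and $\alpha_2$ joins two points of the coset $h_1H_2$. Lemma \ref{lem.deepintersection}, fullness, and the $\mathcal{H}$-filling condition then give $uN_Pu^{-1}<H_1$ or $uN_Pu^{-1}<h_1H_2h_1^{-1}$. Hence $k\in K\cap H_1$ or $k\in K\cap h_1H_2h_1^{-1}$.
\item In the first case $kg=(kh_1)h_2$. In the second case $kg=g\,(h_2^{-1}h_1^{-1}kh_1h_2)$, and the last factor lies in $K\cap H_2$.
\end{enumerate}
Either way, $kg$ is again in $(K\cap H_1H_2)\setminus(K\cap H_1)(K\cap H_2)$, because that set is invariant under left multiplication by $K\cap H_1$ and right multiplication by $K\cap H_2$. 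Since $kg$ is strictly closer to $1$, this contradicts minimality.
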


The proof this result almost the same as that of \cite[Theorem~6.5]{groves-manning.improper}. However, our result does not follow from \cite[Theorem~6.5]{groves-manning.improper}, so we provide a complete proof. This is the only result of the paper on which we require properties about the geometry of the cusped space. 

First, we require the following ``Greendlinger Lemma'' \cite[Theorem~6.7]{groves-manning.improper}.

\begin{theorem}\label{thm.greenlindger}
    Let $C_1,C_2 > 0$. Let $(\G,\calP)$ be a relatively hyperbolic group with cusped space $X$. Then for all sufficiently long fillings $\G \ra \G/K$, and any geodesic $\gam$ in $X$ joining $1$ to $g \in K \bs \{1\}$, there is a horoball $A$ such that:
\begin{enumerate}
\item $\gam$ contains a depth $C_1$ vertex of $A$; and,
\item there is an element $k\in K$ stabilizing $A$ and two points $a, a' \in A$ and lying on $\gam$  at depth at least $C_1$ such that $d_X(a,kb)< d_X(a,b)-C_2$ (in particular, $d_X(1, kg)< d_X(1,g)-C_2$).
\end{enumerate}
\end{theorem}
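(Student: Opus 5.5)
We would follow the Groves--Manning strategy: compare the geodesic $\gam$ in the cusped space $X$ with its image in the quotient $X/K$, which is a cusped space for the filling $(\ov\G,\ov\calP)$. The first input we would take from the Dehn filling theorem of \cite{groves-manning.dehnfilling}. For all sufficiently long fillings, $X/K$ is $\delta'$-hyperbolic with $\delta'$ independent of the filling. Moreover, for any prescribed $R$, the quotient map $X\ra X/K$ is injective (indeed isometric) on $R$-balls centred at points of depth at most $C_1$ (the ``thick part''). This is where the finite set $S$ avoided by the filling kernels is chosen.

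Step one is to prove conclusion (1). Take $g\in K\bs\{1\}$ and a geodesic $\gam$ from $1$ to $g$. Its image $\ov\gam$ in $X/K$ is a closed loop, because $g$ and $1$ have the same image. Suppose $\gam$ never reached depth $C_1$. Then every subsegment of $\ov\gam$ of length $R$ would lift isometrically, so $\ov\gam$ would be an $R$-local geodesic. Choosing $R\gg\delta'$, the local-to-global principle in hyperbolic spaces makes $\ov\gam$ a global quasi-geodesic, hence not closed. This is a contradiction, so some horoball $A$ contains a vertex of $\gam$ of depth at least $C_1$. (The degenerate case of short loops is handled the same way by taking $R$ larger than the constants involved.)

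Step two is to obtain the shortening element in (2). We would sharpen the first step and apply the local-to-global argument to the part of $\ov\gam$ lying in the thick part together with the excursions into horoballs. Any excursion whose image in the quotient horoball $\ov A=A/(K\cap\mathrm{Stab}(A))$ is still geodesic up to error $C_2$ behaves like a local geodesic. The argument of step one then forces at least one excursion into a horoball $A$ to fail this: there are points $a,b$ of $\gam\cap A$ at depth at least $C_1$ such that their images satisfy $d_{X/K}(\ov a,\ov b)<d_X(a,b)-C_2$. The distance in the quotient horoball is realised by a path staying in $\ov A$, by the explicit geometry of combinatorial horoballs: geodesics go up vertically, cross a short horizontal segment, and come down. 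Lifting this short path starting at $a$ ends at a point $kb$ with $k\in K$. Since the lift stays in the horoball, $k$ stabilises $A$, which gives $d_X(a,kb)<d_X(a,b)-C_2$. Concatenating the segment of $\gam$ from $1$ to $a$, the lifted path, and the $k$-translate of the segment of $\gam$ from $b$ to $g$ yields a path from $1$ to $kg$ shorter than $d_X(1,g)-C_2$.

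The main obstacle is the quantitative version of step two. We must show that the failure of the loop to be a local geodesic is concentrated inside a single horoball, with a definite gain $C_2$, rather than spread over the thick part. This is exactly where the long filling is used: $K\cap\mathrm{Stab}(A)$ is a conjugate of some $N_P$ missing a large finite set, so in the thick part the quotient map is isometric on big balls. We would first try to import this directly from the proof of \cite[Theorem~6.7]{groves-manning.improper}, checking only that the constants $C_1$ and $C_2$ can be fixed before the filling is chosen.
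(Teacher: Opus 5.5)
The paper gives no argument for this statement: it is quoted directly from Groves--Manning \cite[Theorem~6.7]{groves-manning.improper}, which is also your fallback.

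\textbf{What works.} Your reconstruction is sound up to the last step. The ``main obstacle'' you identify is in fact automatic. Suppose $X\to X/K$ is isometric on $R/2$-balls about points of depth $<C_1$. This follows from injectivity of $\Gamma\to\Gamma/K$ on a finite set $B_X(1,R')\cap\Gamma$ with $R'$ slightly larger than $2R+2C_1$, which is finite because $X$ is locally finite. Then every subpath of $\gamma$ of length at most $R/2$ that contains a point of depth $<C_1$ maps to a geodesic of $X/K$. So failures of the local $(1,C_2)$-quasi-geodesic property of $\bar\gamma$ can only occur inside a single excursion of $\gamma$ below depth $C_1$ in one horoball. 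Local-to-global for local quasi-geodesics in the uniformly $\delta'$-hyperbolic space $X/K$, together with the lower bound on $d_X(1,g)$ for $g\in K\setminus\{1\}$, then gives $a,b\in\gamma\cap A$ at depth $\ge C_1$ with $d_{X/K}(\bar a,\bar b)<d_X(a,b)-C_2$.

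\textbf{The gap.} The problem is the passage from this inequality to an element of $K\cap\mathrm{Stab}(A)$. That is the real content of (2), and it is exactly what Proposition~\ref{prop.intersectionfillingmulticoset} uses, since it needs $k$ in a conjugate of a filling kernel lying in $H_i$. The inequality only yields some $k\in K$ with $d_X(a,kb)<d_X(a,b)-C_2$. A geodesic of $X/K$ from $\bar a$ to $\bar b$ may a priori leave $\bar A=A/(K\cap\mathrm{Stab}(A))$ through its base, run through the rest of $X/K$, and re-enter; then $kA\neq A$. Your appeal to the shape of geodesics in combinatorial horoballs does not rule this out. That shape describes only the intrinsic metric $d_{\bar A}$, not $d_{X/K}$ restricted to $\bar A$.

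\textbf{The missing ingredient} is a uniform almost-convexity statement: in a $\delta'$-hyperbolic cusped space, $d(x,y)\ge d_B(x,y)-c(\delta')$ for all points $x,y$ of a horoball $B$.
\begin{itemize}
\item For base points, use a thin quadrilateral with two vertical sides, together with the fact that a path of length $<s$ starting at depth $s$ cannot leave $B$.
\item For general points, cut a geodesic at its first exit from and last re-entry into $B$.
\end{itemize}
Since $\delta'$ does not depend on the filling, you can run your dichotomy with $C_2+c(\delta')$ in place of $C_2$. Then take an intrinsic geodesic of $\bar A$ from $\bar a$ to $\bar b$ and lift it to $A$ starting at $a$. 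It ends at $nb$ with $n\in K\cap\mathrm{Stab}(A)$, because fibres of $A\to\bar A$ at positive depth are $(K\cap\mathrm{Stab}(A))$-orbits. This gives $d_X(a,nb)\le d_{\bar A}(\bar a,\bar b)<d_X(a,b)-C_2$, as required.
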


We also need the following lemma, whose proof follows immediately from \cite[A.6]{manning-martinezpedroza} (cf.~\cite[Lemma~6.8]{groves-manning.improper}).

\begin{lemma}\label{lem.deepintersection}
    Let $(\G, \calP)$ be a relatively hyperbolic group with cusped space $X$ and $H<\G$ a full relatively quasiconvex subgroup. There exists a constant $\ka$ satisfying the following:

Suppose that $g \in \G$ and that $x_1, x_2 \in gH$. Suppose that $\gam$ is a geodesic in $X$ joining $x_1$ and $x_2$. Further, suppose that $uP$ (for $u \in \G$ and $P \in \calP$) is a coset such that $\gam$ intersects the horoball corresponding to $uP$ to depth at least $\ka$. Then $P$ is infinite and $uPu^{-1}\cap gHg^{-1}$ has finite index in $uPu^{-1}$.
\end{lemma}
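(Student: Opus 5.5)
The plan is to use the quasiconvexity built into the definition of relative quasiconvexity, together with the way horoballs of the cusped space of $H$ map into $X$. The argument follows \cite[A.6]{manning-martinezpedroza}.

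\textbf{Step 1 (setup).} Let $\calD=\calD_H$ be the induced peripheral structure on $H$, and let $\iota\colon X(H,\calD,S')\ra X=X(\G,\calP,S)$ be the depth-invariant, $H$-equivariant Lipschitz map. Relative quasiconvexity says that $Z:=\iota(X(H,\calD,S'))$ is $\mu$-quasiconvex in $X$ for some $\mu$. Translating by $g$, the set $gZ$ is $\mu$-quasiconvex, and it contains the depth-$0$ points $gH$. Hence every point of a geodesic $\gam$ joining $x_1,x_2\in gH$ lies within distance $\mu$ of $gZ$.

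\textbf{Step 2 (a deep point of $gZ$ in the horoball).} Suppose $\gam$ contains a point $p$ of depth $\ge\ka$ in the horoball $A_{uP}$. Pick $z\in gZ$ with $d_X(p,z)\le\mu$. Edges change depth by at most one, and horoballs are attached only along depth-$0$ vertices. So if $\ka>\mu+1$, the point $z$ lies in the same horoball $A_{uP}$ at depth $\ge\ka-\mu\ge 1$.

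\textbf{Step 3 (identify the horoball).} The positive-depth points of $Z$ all come from horoballs of $X(H,\calD,S')$. These are attached to cosets $hD$ with $D=H\cap vPv^{-1}\in\calD$ infinite, and $\iota$ maps such a horoball, preserving depth, into the horoball of $X$ attached to the coset $hvP$. Since $z$ has positive depth in $A_{uP}$, we get $ghvP=uP$ for some such $h,v$. Conjugating, $gHg^{-1}\cap uPu^{-1}$ contains $g(hDh^{-1})g^{-1}$, which is infinite.

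\textbf{Step 4 (conclude).} Since $gHg^{-1}\cap uPu^{-1}$ is infinite, $P$ is infinite. Fullness of $H$ gives that $H\cap (g^{-1}u)P(g^{-1}u)^{-1}$ has finite index in $(g^{-1}u)P(g^{-1}u)^{-1}$. Conjugating by $g$, this says $uPu^{-1}\cap gHg^{-1}$ has finite index in $uPu^{-1}$. We take $\ka=\mu+2$, which depends only on $H$ and the choice of cusped spaces.

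\textbf{Main obstacle.} The delicate point is Step 3. One must check from the construction in \cite{AGM.QCERF} that $\iota$ maps each combinatorial horoball of $X(H,\calD,S')$ into a single horoball of $X$. One must also check that the whole positive-depth part of $Z$ arises this way, so that no "accidental" deep points of $Z$ occur. If $\iota$ is only coarsely depth-preserving, I would instead enlarge $\ka$ by the Lipschitz and coarseness constants of $\iota$.
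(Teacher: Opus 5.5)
Your proposal is correct and essentially unpacks the paper's proof, which consists only of a citation to \cite[A.6]{manning-martinezpedroza}: quasiconvexity of $g\iota(X(H,\calD_H,S'))$ forces a point at positive depth in the horoball of $uP$, which makes $gHg^{-1}\cap uPu^{-1}$ infinite, and fullness then gives finite index. The caveat you flag, namely bounded ``accidental'' depth of the images of edges under $\iota$, is indeed handled by enlarging $\kappa$ by a constant depending only on $\iota$.
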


\begin{proof}[Proof of Proposition \ref{prop.intersectionfillingmulticoset}]
Let $X$ be the cusped space for $(\G, \calP)$, which we assume to be $\del$–hyperbolic. Let $C_2>0$ be any number, and let $C_1 = \ka+ \del+1$, where $\ka$ is a constant such that Lemma \ref{lem.deepintersection} holds for any subgroup belonging to $\calH$. Let $K<\G$ be the kernel of an $\calH$-filling which is long enough to satisfy the conclusion of Theorem \ref{thm.greenlindger} with the constants $C_1,C_2$ as defined above.

Suppose for the sake of contradiction that $(K\cap H_1H_2)\bs [(K\cap H_1)(K\cap H_2)]$ is non-empty for some $H_1,H_2\in \calH$, and choose $g$ in this set minimizing $d_X(1, g)$. Note that $g\neq 1$ and let $\gam$ be a geodesic in $X$ joining $1$ and $g$. By Theorem \ref{thm.greenlindger} there exists a horoball $A$ in $X$, an element $k\in K$ stabilizing $A$ and $a,b \in \gam\cap A$ at depth at least $C_1$ such that $d_X(a,kb)<d_X(a,b)-C_2$. Then $d_X(1,kg)<d_X(1,g)$ and the desired contradiction will be obtained after showing that $kg \in (K\cap H_1H_2)\bs [(K\cap H_1)(K\cap H_2)]$.

In order to obtain this contradiction, write $g=h_1h_2$ for $h_1\in H_1$ and $h_2\in H_2$. Let $\al_1$ be a geodesic joining $1$ and $h_1$, and $\al_2$ be a geodesic in $X$ joining $h_1$ and $h_1h_2$. Then $\gam,\al_1,\al_2$ form a geodesic triangle in $X$. By $\del$-hyperbolicity, $b$ is within distance $\del$ of a point $b'\in \al_i$ for some $i=1,2$. Our choice of $C_1$ implies that $b'$ lies at depth at least $\ka$ in $A$. Suppose that $A=uP$ for some $u\in \G$ and $P \in \calP$.

Suppose first that $i=1$. Since the geodesic $\al_1$ joins the points $1$ and $h_1$ in $H_1$, Lemma \ref{lem.deepintersection} implies that $P$ is infinite and that $uPu^{-1} \cap H_1$ has finite index in $uPu^{-1}$. Since the filling is a $\calQ$–filling, we have that $uPu^{-1}<K \cap H_1$ and that $k$ (which stabilizes $A$) belongs to $K\cap H_1$. Then we have that $kg=(kh_1)h_2$ belongs to $(K\cap H_1H_2)\bs [(K\cap H_1)(K\cap H_2)]$ since $g \notin (K\cap H_1)(K\cap H_2)$. 

Similarly, if $i=2$, and after applying Lemma \ref{lem.deepintersection} to the geodesic $\al_2$ as in the case above, we conclude that $k\in K\cap h_1H_2h_1^{-1}$ and that $kg=h_1(h_1^{-1}kh_1)h_2=h_1h_2k'$ for some $k'\in (K\cap H_2)$, since $h_1^{-1}kh_1\in K\cap H_2$ and $K\cap H_2$ is normal in $H_2$. Again, $kg\in (K\cap H_1H_2)\bs [(K\cap H_1)(K\cap H_2)]$ and the conclusion follows.
\end{proof}

\subsection{The malnormal special quotient theorem}

One of the main features of Dehn filling is that it behaves well with respect to virtual specialness. The following is celebrated Wise's malnormal special quotient theorem \cite[Theorem~12.2]{wise.QCH} (see also \cite[Theorem~2.7]{AGM.MSQT} for an alternate proof and \cite[Theorem~2]{einstein} for a relative version). Recall that by Agol's theorem \cite{agol.haken}, a hyperbolic group is virtually special if and only if it is cubulable.

\begin{theorem}[Malnormal special quotient theorem]\label{thm.MSQT}
Let $\G$ be a hyperbolic cubulable group and let $\calP=\{P_1,\dots,P_m\}$ be a finite, almost malnormal collection of quasiconvex subgroups. Then there exist normal finite index subgroups $P_i'<P_i$ such that for any Dehn filling $\G \ra \ov \G =\G(N_1,\dots,N_m)$ with $N_i<P_i'$ of finite index, the group $\ov \G$ is hyperbolic and cubulable. 
\end{theorem}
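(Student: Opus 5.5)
The statement is Wise's theorem, and in the paper it is simply cited. If I had to supply a proof, I would follow the Agol--Groves--Manning route. That route splits the conclusion into hyperbolicity of $\ov\G$ (standard Dehn filling) and cubulability of $\ov\G$ (an induction on height, with Theorem \ref{thm.heightdecreases} as the engine).

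\textbf{Hyperbolicity.} By Theorem \ref{thm.charRHforH}, $(\G,\calP)$ is relatively hyperbolic. By the Dehn filling theorem of Groves--Manning and Osin, there is a finite set $S\subset (\bigcup_i P_i)\bs\{1\}$ such that every filling avoiding $S$ has the following properties:
\begin{enumerate}
\item $(\ov\G,\ov\calP)$ is relatively hyperbolic;
\item each $P_i/N_i$ injects into $\ov\G$;
\item quasiconvex subgroups of $\G$ that are full, or disjoint from deep parts of the cusps, map to relatively quasiconvex subgroups.
\end{enumerate}
Since $\G$ is residually finite (it is cubulable hyperbolic, hence virtually special by Agol), each $P_i$ has a finite index normal subgroup $P_i''$ missing $S\cap P_i$. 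Any $N_i<P_i''$ then gives a sufficiently long filling. If $N_i$ has finite index in $P_i$, the peripheral groups of $\ov\G$ are finite, so $\ov\G$ is hyperbolic. The subgroups $P_i'$ will be chosen inside $P_i''$, so hyperbolicity is automatic.

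\textbf{Cubulability.} This is the main step. Replace $\G$ by a finite index special subgroup $\G_s$, acting on the universal cover $\wt{\msf C}$ of a compact special cube complex $\msf C$. Intersect everything with $\G_s$; this is harmless, because a finite extension of a virtually special hyperbolic group is again virtually special. Let $\calW$ be the finite collection of hyperplane stabilizers. These are quasiconvex and, by specialness, separable with separable double cosets. I would induct on the height of the image of $\calW$.

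\textbf{Height induction.} Form the induced peripheral structure $\calP_\calW$, enlarged by $\calP$; this is the peripheral structure of Proposition \ref{prop.propertiesheight}. Perform a sufficiently long $\calW$-filling with finite index kernels. By Theorem \ref{thm.heightdecreases}, the image of $\calW$ has strictly smaller height. For the induction to stay inside the class of cubulable hyperbolic groups, I need the following inductive statement: a hyperbolic group $G$, carrying a quasiconvex collection $\calW$ of height $0$ whose associated cubulation is proper and cocompact, is virtually special. Height $0$ means all images of hyperplane stabilizers are finite. Sageev's construction then yields a proper cocompact action on a CAT(0) cube complex with finite hyperplane stabilizers. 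Residual finiteness, which has to be carried through the induction, lets me pass to a finite index subgroup acting freely with embedded, two-sided hyperplanes that do not self-osculate. This is where I expect the real difficulty. Controlling the inter-osculation conditions requires the double coset separability of Proposition \ref{prop.intersectionfillingmulticoset}: quotient elements that would create a bad configuration must lift to elements of $H_1H_2$ lying in $(K\cap H_1)(K\cap H_2)$.

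\textbf{Gluing the steps together.} Finally I would run the iteration as in the sketch of Theorem \ref{thm.maingrouptheory}. Each intermediate quotient is cubulated by the images of the hyperplane subgroups, which stay quasiconvex for long fillings. The finitely many fillings prescribe, via their preimages in $P_i$, the finite index subgroups $P_i'$ in the statement. Given any $N_i<P_i'$ of finite index, the filling $\G\ra\G(N_1,\dots,N_m)$ factors compatibly through this tower of fillings. Hence it inherits hyperbolicity and virtual specialness, and by Agol it is cubulable. The point I am least sure of is the factorization when the $N_i$ are arbitrary rather than specifically chosen. I would handle it by making each stage's conclusion hold for \emph{all} sufficiently long fillings, which is a uniform condition stable under passing to deeper kernels.
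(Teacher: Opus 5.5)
Citing the theorem is exactly what the paper does: it quotes Wise \cite[Theorem~12.2]{wise.QCH}, with the alternate proof of Agol--Groves--Manning \cite{AGM.MSQT}, and gives no argument of its own. Your hyperbolicity paragraph is also standard. The cubulability sketch you offer as a substitute has a genuine gap, and the first problem is logical direction. Your height-reduction tower $\Gamma\to\ov\Gamma_1\to\cdots\to\ov\Gamma_k$ consists of $\mathcal{W}$-fillings, which kill finite-index subgroups of the peripherals induced by the hyperplane stabilizers. Their kernels are therefore much larger than $\langle\!\langle N_1,\dots,N_m\rangle\!\rangle$. So $\Gamma\to\Gamma(N_1,\dots,N_m)$ does not factor through the tower; at best the tower factors through it. Virtual specialness transfers neither from a quotient back to the group nor from a group to its quotients. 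Hence even a complete proof that $\ov\Gamma_k$ is virtually special would say nothing about $\Gamma(N_1,\dots,N_m)$, and requiring each stage to hold ``for all sufficiently long fillings'' does not change this. In the paper, the analogous tower (proof of Theorem~\ref{thm.maingrouptheory}) is used only to extract a property of a finite-index subgroup of the \emph{original} group, by pulling kernels back (Lemma~\ref{lem.compositionhomos}). It also takes the present theorem as input at every stage (Proposition~\ref{prop.combinedstatements}(1)), precisely to know that the intermediate quotients are cubulable and hence residually finite. Your sketch needs the same residual finiteness (``carried through the induction'') but has no source for it. Residual finiteness of long fillings of virtually special hyperbolic groups is essentially what this theorem provides, so the argument is circular.

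Second, the claim that each intermediate quotient is cubulated by the images of the hyperplane subgroups is unsupported and false in general. Long fillings keep those images quasiconvex, but not codimension-one, and nothing controls properness of a dual cube complex. At height $0$ every image is finite. A proper cocompact cubulation with finite hyperplane stabilizers has compact hyperplanes, which forces totally disconnected boundary, i.e.\ a virtually free group; nothing in your tower makes the last quotient virtually free. Two smaller points: Proposition~\ref{prop.intersectionfillingmulticoset} says how the filling kernel meets double cosets and is not a double coset separability statement. Also, $\calP_{\mathcal{W}}\cup\calP$ need not be almost malnormal, so you want the peripheral structure induced by $\mathcal{W}\cup\calP$. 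What is missing is the actual content of the theorem: a cubulation of $\Gamma(N_1,\dots,N_m)$ \emph{itself} (or of a finite-index subgroup), together with a route to virtual specialness. Wise obtains this from cubical small cancellation, coning off the finite covers of the peripheral subcomplexes determined by sufficiently deep $P_i'$. Agol--Groves--Manning avoid small cancellation by inducting on the dimension of the cube complex and using Agol's theorem.
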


Combining this theorem with the previous results of the section, we obtain the next result, which is our main tool to prove Theorem \ref{thm.maingrouptheory}.

\begin{proposition}\label{prop.combinedstatements}
    Let $\G$ be a hyperbolic and cubulable group and $\Phi$ a finite group acting on $\G$ by automorphisms. Suppose $\calH$ is a $\Phi$-invariant finite collection of quasiconvex subgroups of $\G$, and let $\G_0<\G$ be a finite index subgroup. If at least one group in $\calH$ is infinite, then there exists a normal, $\Phi$-invariant subgroup $K_1<\G$ with associated quotient map $\pi_1:\G \ra \G/K_1$ such that:
    \begin{enumerate}
        \item The quotient $\ov\G=\G/K_1$ is hyperbolic and cubulable. 
        \item $K_1< \G_0$.
        \item For any $H\in \calH$ the image $\ov{H}=\pi_1(H)$ is quasiconvex in $\ov\G$. 
        \item The height of $\ov\calH:=\{\ov H: H\in \calH\}$ in $\ov\G$ is strictly less than that of $\calH$ in $\G$.
        \item For all $H_1,H_2\in \calH$ we have
        \[K_1 \cap H_1H_2 \subset (K_1\cap H_1)(K_1 \cap H_2).\]
        \item The action of $\Phi$ on $\G$ descends to an action on $\ov\G$ such that the collection $\ov\calH$ is $\Phi$-invariant.
    \end{enumerate}
\end{proposition}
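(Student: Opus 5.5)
We combine the induced peripheral structure with Wise's malnormal special quotient theorem and the double-coset control from Proposition \ref{prop.intersectionfillingmulticoset}. The plan is to choose filling kernels that are simultaneously deep enough (for the MSQT and for the ``sufficiently long'' clauses), $\Phi$-invariant, contained in $\G_0$, and compatible with $\calH$.

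First, let $\calP=\calP_\calH=\{P_1,\dots,P_m\}$ be the peripheral structure induced by $\calH$. By Proposition \ref{prop.propertiesheight}, $\calP$ is a finite, $\Phi$-invariant, almost malnormal family of quasiconvex subgroups, $(\G,\calP)$ is relatively hyperbolic, and each $H\in\calH$ is full relatively quasiconvex. Since at least one member of $\calH$ is infinite, $\calP$ is non-empty. Theorem \ref{thm.MSQT} gives finite index normal subgroups $P_i'<P_i$. We then collect the finite sets $S\subset\bigcup\calP\smallsetminus\{1\}$ required by Theorem \ref{thm.heightdecreases}, by Proposition \ref{prop.intersectionfillingmulticoset}, and by the fact that quasiconvexity of images survives sufficiently long fillings. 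For item (3) I would cite the Agol--Groves--Manning results on images of full relatively quasiconvex subgroups: the image of $H$ is relatively quasiconvex in the hyperbolically filled group with finite peripherals, hence quasiconvex.

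Next, we choose the kernels. Each $P_i$ is quasiconvex in a cubulable hyperbolic group, so it is residually finite and separable. We take $N_i<P_i'\cap\G_0$ normal of finite index in $P_i$ and avoiding $S$. To make it an $\calH$-filling, we intersect further with $P_i\cap gHg^{-1}$ for every $H\in\calH$ and $g$ with $P_i\cap g^{-1}Hg$ infinite. Up to the relevant double cosets there are finitely many such conjugates, each of finite index in $P_i$ by fullness. Because the intersection need only contain $N_i$ in the conjugated sense, we take normal cores in $P_i$. This yields a finite index subgroup that is normal in $P_i$ and contained in every relevant conjugate, since conjugating by elements of $P_i$ preserves normal subgroups.

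For $\Phi$-invariance, each $\phi\in\Phi$ sends $P_i$ to some $g P_j g^{-1}$. We fix such a $g$ for each pair, which is well defined up to $P_j$ by almost malnormality, and replace $N_i$ by the intersection of all the pulled-back images $\phi^{-1}(gN_jg^{-1})\cap P_i$ over the $\Phi$-orbit. Normality in $P_j$ makes this independent of the choice of $g$, and the intersection is still normal of finite index. All the earlier properties are preserved because passing to a subgroup only makes the filling longer and deeper.

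Let $K_1$ be the normal closure of the $N_i$. Then (1) follows from the MSQT, (2) from $N_i<\G_0$ together with normality of $\G_0$ (we first replace $\G_0$ by its normal, $\Phi$-invariant core), (4) from Theorem \ref{thm.heightdecreases}, and (5) from Proposition \ref{prop.intersectionfillingmulticoset}. Item (6) follows from the $\Phi$-invariance of the kernels, as discussed after the definition of fillings. The main obstacle I expect is item (3) together with the bookkeeping that the chosen kernels satisfy all constraints at once. For (3) I would invoke \cite{AGM.QCERF} or \cite{groves-manning.improper}: long $H$-fillings send full relatively quasiconvex subgroups to relatively quasiconvex subgroups, and relative to finite peripheral groups this is genuine quasiconvexity.
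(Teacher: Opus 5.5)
Your proposal is correct and follows essentially the same route as the paper. Both arguments use the induced peripheral structure $\calP_\calH$ and choose filling kernels of finite index in $P_i'\cap\G_0'$ (the MSQT subgroups intersected with the normal core of $\G_0$) that avoid the finite set $S$ guaranteeing (3)--(5). They then $\Phi$-symmetrize by intersecting the translates $\phi(gN_jg^{-1})$, where independence of $g$ comes from almost malnormality and normality of $N_j$. Your extra step, intersecting with the normal cores in $P_i$ of the finite-index subgroups $P_i\cap g^{-1}Hg$, makes explicit the $\calH$-filling condition that the paper leaves implicit in ``sufficiently long $\calH$-filling''; note that the $gHg^{-1}$ in your text should read $g^{-1}Hg$.
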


\begin{proof}
    Let $\calP=\{P_1,\dots,P_m\}$ be the peripheral structure on $\G$ induced by $\calH$, and let $\pi_1:\G \ra \ov\G=\G(N_1,\dots,N_m)=\G/K_1$ be a sufficiently long $\calH$-filling with each $N_i$ finite index in $P_i$. By \cite[Theorem~1.1]{osin.peripheral} (or equivalently, by \cite[Corollary~1.2 \& Corollary~9.7]{groves-manning.dehnfilling}) we have that $\ov\G$ is hyperbolic. Moreover, from \cite[Proposition~4.3]{AGM.QCERF},  Theorem \ref{thm.heightdecreases}, and Proposition \ref{prop.intersectionfillingmulticoset}, we have that properties (3)-(5) also hold for the filling $\pi_1$. Let $S\subset (\bigcup_{i=1}^mP_i)\bs\{1\}$ be a finite set such that all these properties hold as long as $S\cap (\bigcup_i N_i)=\emptyset$. 

    To prove properties (1) and (2), let $P_1',\dots,P_m'$ be the set of subgroups of $P_1,\dots,P_m$ given by Theorem \ref{thm.MSQT}, and let $\G'_0$ be the intersection of all the conjugates of $\G_0$ in $\G$. Note that $\G'_0$ is a finite index normal subgroup of $\G$ that is contained in $\G_0$. Since $\G$ is hyperbolic and cubulable (hence virtually special by \cite{agol.haken}), it is residually finite, as well as all the subgroups $P_1,\dots,P_m$. Therefore we can find filling kernels $N_1,\dots,N_m$ such that \begin{itemize}
        \item[($i$)] $N_i$ has finite index in $P_i'\cap \G_0'$ for all $i$; and,
        \item[($ii$)] $S\cap (\bigcup_i N_i)=\emptyset$. 
    \end{itemize}
    The filling $\ov\G=\G(N_1,\dots,N_m)=\G/K_1$ then satisfies property (1) by ($i$) and (3)-(5) by ($ii$). To check it also satisfies property (2), recall that $K_1$ is the normal closure of $\bigcup_{i}{N_i}$ in $\G$. Then $K_1$ is contained in $\G_0$ since $\G'_0$ is normal in $\G$, and by ($ii$) we have that each $N_i$ is contained in $\G'_0<\G_0$.
     
     We are left to show that we can further ensure $\Phi$-invariance of $K_1$ and property (6). To this end, by Proposition \ref{prop.propertiesheight} we first note that $\calP$ is a $\Phi$-invariant peripheral structure. Then, for each $1\leq i\leq m$ we let $\calA_i$ be the set of all triplets $(\phi,j,gP_j)$ such that $\phi\in \Phi$, $j\in \{1,\dots,m\}$ and $g\in \G$ are such that $P_i=\phi(gP_jg^{-1})$. Note that each $\calA_i$ is finite (here we use that $\Phi$ is finite). 
     We define new filling kernels $\what N_1,\dots,\what N_m$ according to
     $\what N_i
     :=\bigcap_{(\phi,j,gP_j)\in \calA_i}\phi(gN_jg^{-1})$. Then each $\what N_i$ is a finite index normal subgroup of $P_i$ contained in $N_i$ (since $(1,i,P_i)\in \calA_i$). It is not hard to show that these filling kernels are $\Phi$-invariant, and hence the kernel $K_1$ associated to $\G\ra \G(\what N_1,\dots,\what N_m)$ is $\Phi$-invariant and all properties (1)-(6) above hold. This concludes the proof of the proposition.
\end{proof}

%%%%%%%%%%%%%%%%%%%%%%%%%%%%%%%%%%%%%%%%%%%%%%%%%%%%%%%%%

\subsection{Proof of Theorem \ref{thm.maingrouptheory}}

We end this section with the proof of Theorem \ref{thm.maingrouptheory}, for which we need a few preliminary lemmas.

\begin{lemma}\label{lem.compositionhomos}
    Let $\pi_1:L_0 \ra L_1$ and $\pi_2: L_1 \ra L_2$ be group homomorphisms and let $H_1,H_2<L_0$ be two subgroups such that
    \begin{enumerate}
        \item $\ker\pi_1 \cap H_1H_2 \subset (\ker \pi_1 \cap H_1)(\ker \pi_1 \cap H_2)$.
        \item $\ker\pi_2 \cap \pi_1(H_1)\pi_1(H_2) \subset (\ker \pi_2 \cap \pi_1(H_1))(\ker \pi_2 \cap \pi_1(H_2))$.
    \end{enumerate}

    Then $$\ker(\pi_2 \circ \pi_1) \cap H_1H_2 \subset (\ker(\pi_2 \circ \pi_1) \cap H_1)(\ker(\pi_2 \circ \pi_1) \cap H_2).$$
\end{lemma}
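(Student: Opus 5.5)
Let $K_1=\ker\pi_1$ and $K=\ker(\pi_2\circ\pi_1)=\pi_1^{-1}(\ker\pi_2)$. This is a direct element chase: take $g\in K\cap H_1H_2$, peel off the part of $g$ lying in $\ker\pi_2$ using hypothesis (2), and then handle the remainder, which lies in $K_1$, using hypothesis (1). Only the inclusions $K_1\subset K$ and the fact that $K$ is a subgroup are needed; no normality or finiteness is used.

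First I apply hypothesis (2). Write $g=h_1h_2$ with $h_i\in H_i$. Then
\[\pi_1(g)=\pi_1(h_1)\pi_1(h_2)\in \ker\pi_2\cap \pi_1(H_1)\pi_1(H_2).\]
By (2) there are $a_1\in H_1$ and $a_2\in H_2$ with $\pi_1(a_i)\in\ker\pi_2$ and $\pi_1(g)=\pi_1(a_1)\pi_1(a_2)$. Each such $a_i$ then lies in $K\cap H_i$. The only point needing care here is that the factors given by (2) live in $\pi_1(H_i)$, so I lift each to an element $a_i\in H_i$; this lift automatically lies in $K$.

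Next I isolate the correction term $g'=a_1^{-1}ga_2^{-1}$. Since $\pi_1(g')=1$, we have $g'\in K_1$. Moreover
\[g'=(a_1^{-1}h_1)(h_2a_2^{-1})\in H_1H_2,\]
so $g'\in K_1\cap H_1H_2$. Hypothesis (1) then gives $g'=b_1b_2$ with $b_i\in K_1\cap H_i$.

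Finally I reassemble. We get $g=(a_1b_1)(b_2a_2)$. Since $K_1\subset K$ and $K$ is a subgroup, $a_1b_1\in K\cap H_1$ and $b_2a_2\in K\cap H_2$. This gives the required inclusion.
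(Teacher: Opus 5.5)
Your proof is correct and follows essentially the same argument as the paper. You apply (2) to $\pi_1(g)$, lift the factors to $K\cap H_i$, apply (1) to the correction term in $\ker\pi_1\cap H_1H_2$, and reassemble using $\ker\pi_1\subset K$.
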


\begin{proof}
    Let $K_i=\ker\pi_i$ for $i=1,2$, $\psi=\pi_2\circ \pi_1$ and $\what K=\ker \psi$. Consider $k=h_1h_2 \in \what K \cap H_1H_2$ with $h_1\in H_1$ and $h_2\in H_2$. Then $\pi_1(k)=\pi_1(h_1)\pi_1(h_2)\in K_2 \cap \pi(H_1)\pi(H_2) \subset (K_2\cap \pi_1(H_1))(K_2\cap \pi_1(H_2))$. But $K_2\cap \pi_1(H_1)=\pi_1(\what K \cap H_1)$ and $K_2\cap \pi_1(H_2)=\pi_1(\what K \cap H_2)$, and hence  $\pi_1(h_1)\pi_1(h_2)=\pi_1(h_1')\pi_1(h_2')$ for $h_1'\in \what K \cap H_1$ and $h_2'\in \what K \cap H_2$. Then $h_1h_2=h_1'uh_2'$ for some $u\in K_1$. But $u=(h_1')^{-1}h_1h_2(h_2')^{-1}\in K_1 \cap H_1H_2\subset (K_1\cap H_1)(K_1 \cap H_2)$, so $u=\what h_1 \what h_2$ for $\what h_1\in K_1\cap H_1$ and $\what h_2\in K_1 \cap H_2$, concluding $k=h_1h_2=h_1'\what h_1 \what h_2 h_2'\in (\what K \cap H_1)(K_1 \cap H_1)(K_1 \cap H_2)(\what K \cap H_2)=(\what K \cap H_1)(\what K \cap H_2)$. 
    \end{proof}

The next lemma is immediate.
\begin{lemma}\label{lem.finitesubgroups}
    Let $\G$ be a residually finite group and let $\calH$ be a finite collection of finite subgroups of $\G$. Then for any finite  index subgroup $\G_0 <\G$ there exists a normal, finite index subgroup $K<\G$ such that
    \begin{itemize}
        \item $K< \G_0$; and,
        \item $K \cap H_1H_2 =\{1\}$ for all $H_1,H_2\in \calH$.
    \end{itemize}
  Moreover, if $\Phi$ is a finite subgroup acting on $\G$ by automorphisms and the collection $\calH$ is $\Phi$-invariant, then $K$ can be chosen to be $\Phi$-invariant. 
\end{lemma}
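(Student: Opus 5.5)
The plan is to use residual finiteness to separate the finite set
\[
S:=\Big(\bigcup_{H_1,H_2\in\calH} H_1H_2\Big)\setminus\{1\}.
\]
Since $\calH$ is a finite collection of finite subgroups, $S$ is finite. For each $s\in S$, residual finiteness gives a finite index normal subgroup $K_s\lhd\G$ with $s\notin K_s$.

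Let $\G_0'$ be the normal core of $\G_0$, that is, the intersection of the finitely many conjugates of $\G_0$. Then $\G_0'$ is finite index, normal, and contained in $\G_0$. Set
\[
K':=\G_0'\cap\bigcap_{s\in S}K_s .
\]
This is an intersection of finitely many finite index normal subgroups, so $K'$ is normal and of finite index in $\G$, and $K'<\G_0$. By construction $K'$ contains no element of $S$. Since $1\in K'$, we get $K'\cap H_1H_2=\{1\}$ for all $H_1,H_2\in\calH$.

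For the $\Phi$-invariant version, take
\[
K:=\bigcap_{\phi\in\Phi}\phi(K').
\]
Each $\phi(K')$ is normal of finite index, because $\phi$ is an automorphism. As $\Phi$ is finite, $K$ is again a finite index normal subgroup of $\G$.

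\begin{itemize}
\item $K$ is $\Phi$-invariant by construction.
\item Since $1\in\Phi$, we have $K<K'<\G_0$.
\item The condition $K\cap H_1H_2=\{1\}$ is inherited from $K'$ because $K\subset K'$.
\end{itemize}

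Note that the $\Phi$-invariance of $\calH$ is not actually needed for this argument.

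There is no real obstacle. The only care needed is to pass to the normal core of $\G_0$ so that normality and the containment $K<\G_0$ hold at the same time.
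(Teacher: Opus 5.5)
Your proof is correct and is the routine argument the paper has in mind: the paper gives no proof and simply calls the lemma immediate. The two devices you use, passing to the normal core of $\G_0$ and intersecting over the $\Phi$-translates, are the same ones the paper uses in the proofs of Lemma~\ref{lem.G_0} and Proposition~\ref{prop.combinedstatements}, and you are right that the $\Phi$-invariance of $\calH$ is not needed here.
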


\begin{proof}[Proof of Theorem \ref{thm.maingrouptheory}]
Let $\G,\Phi,\calH$ and $\G_0$ be as in the statement of the theorem. We will prove the result by induction on the height $k$ of $\calH$ in $\G$. 

If $k=0$ then each $H\in \calH$ is finite, and the result follows from Lemma \ref{lem.finitesubgroups} since $\G$ is residually finite by virtual specialness. Suppose now that $k$ is positive and let $K_1<\G$ be the $\Phi$-invariant normal subgroup given by Proposition \ref{prop.combinedstatements} with associated quotient $\pi: \G \ra \G/K_1=:\ov\G$. Let $\ov H=\pi_1(H)$ for each $H\in \calH$, and let $\ov\calH=\{\ov H:H\in \calH\}$. 
Since $K_1$ is $\Phi$-invariant, there exists a natural automorphism action of $\Phi$ on $\ov\G$. Moreover, we have:
\begin{enumerate}
    \item $ \ov{\G}_0:=\pi_1( \G_0)$ is a finite index subgroup of $\ov \G$;
    \item $\ov\calH$ is a $\Phi$-invariant collection of quasiconvex subgroups, so that the height of $\ov\calH$ is less than $k$; and,
    \item $K_1 \cap H_1H_2 \subset (K_1 \cap H_1)(K_1 \cap H_2)$ for all $H_1,H_2 \in \calH$.
\end{enumerate}  
By our inductive assumption, there exists a $\Phi$-invariant, finite index normal subgroup $\ov K_2 <\ov \G$ such that:
\begin{enumerate}
    \item $\ov K_2 < \ov \G_0$; and,
    \item $\ov K_2 \cap \ov H_1\ov H_2 \subset (\ov K_2\cap \ov H_1)(\ov K_2 \cap \ov H_2)$ for all $H_1,H_2\in \calH$. 
\end{enumerate}
If $\pi_2:\ov\G \ra \ov\G/\ov K_2$ is the quotient homomorphism and $K:=\ker(\pi_2 \circ \pi_1)<\G$, then $K$ is finite index in $\G$ and $\Phi$-invariant. Moreover, we have $K <K_1\G_0=\G_0$, and Lemma \ref{lem.compositionhomos} implies that
\[K\cap H_1H_2 \subset (K\cap H_1)(K\cap H_2)\]
for all $H_1,H_2\in \calH$. This concludes the proof by induction and hence of the theorem.    
\end{proof}

%%%%%%%%%%%%%%%%%%%%%%%%%%%%%%%%%%%%%%%%%%%%%%%%%%%%%%%%%%%%%%%%%%%%%%
    
\bibliographystyle{amsalpha}
\bibliography{refs1}

\end{document}